\DeclareFontFamily{OML}{rsfs}{\skewchar\font'177}
\DeclareFontShape{OML}{rsfs}{m}{n}{ <5> <6> rsfs5 <7> <8> <9> rsfs7
  <10> <10.95> <12> <14.4> <17.28> <20.74> <24.88> rsfs10 }{}
\DeclareMathAlphabet{\mathfs}{OML}{rsfs}{m}{n}
\newcommand{\x}{\ensuremath{\underline{x}}}
\newtheorem{thm}{Theorem}[section]
\newtheorem{lem}[thm]{Lemma}
\newtheorem{prop}[thm]{Proposition}
\newtheorem{cor}[thm]{Corollary}
\newtheorem*{theorem*}{Theorem}
\newtheorem*{example*}{Example}
\numberwithin{equation}{section}
\newcommand{\del}{\partial}
\renewcommand{\epsilon}{\varepsilon}
\def\wt{\widetilde}
\def\text#1{\textrm{#1}}
\def\emptyset{\varnothing}
\def\e{\epsilon}
\def\vf{\varphi}
\def\a{\alpha}
\def\d{\delta}
\def\g{\gamma}
\def\L{\Lambda}
\def\s{\sigma}
\def\x{\times}
\def \R{\mathbb R}
\def \N{{\mathbb N}}
\def \Z{\mathbb Z}
\def\ov{\overline}
\def\un{\underline}
\def\Q{\mathbb Q}
\def\wh{\widehat}
\def\({\biggl(}
\def\){\biggr)}
\def\<{\mathbf\langle}
\def\>{\mathbf\rangle}
\DeclareMathOperator\const{const}
\DeclareMathOperator\diam{diam}
\DeclareMathOperator\dist{dist}
\DeclareMathOperator\dom{dom}
\DeclareMathOperator\Lip{Lip}
\DeclareMathOperator\Span{span}
\DeclareMathOperator\NUH{NUH}
\DeclareMathOperator\Hol{H\ddot{o}l}
\DeclareMathOperator\id{Id}
\DeclareMathOperator\Exp{Exp}
\def\epto{\xrightarrow{\epsilon}}
\title[Symbolic dynamics for  flows]{Symbolic dynamics for three dimensional flows with positive topological entropy}
\author{Yuri Lima and Omri M. Sarig }\thanks{O.S. was partially supported by  ERC award  ERC-2009-StG n$^\circ$ 239885.}
\date{October 29, 2016}
\keywords{Markov partitions, symbolic dynamics, geodesic flows}
\subjclass[2010]{37B10, 37C10 (primary), 37C35 (secondary)}
\address{Laboratoire de Math\'ematiques d'Orsay, Universit\'e Paris-Sud\\ CNRS, Universit\'e
Paris-Saclay, $91405$ Orsay, France}
\email{yurilima@gmail.com}
\address{Faculty of Mathematics and Computer Science\\ The Weizmann Institute of Science\\ POB 26, Rehovot, Israel}
\email{omsarig@gmail.com}
\begin{document}
\maketitle
\begin{abstract}
We construct symbolic dynamics on sets of full measure (with respect to an ergodic measure of positive entropy)
for $C^{1+\epsilon}$ flows  on closed smooth three dimensional manifolds.
One consequence is that the geodesic flow on the unit tangent bundle of a closed $C^\infty$
surface has at least $\const\times(e^{hT}/T)$ simple closed orbits of period less than $T$,
whenever the topological entropy $h$ is positive -- and without further assumptions  on the curvature.
\end{abstract}

\section{Introduction}

The  aim of this paper  is to develop symbolic dynamics for smooth flows  with topological entropy $h>0$
on three dimensional closed (compact and boundaryless) Riemannian manifolds.

Earlier works treated geodesic flows on hyperbolic surfaces
\cite{Series-Symbolic-Acta,Series-ICM,Katok-Ugarcovici-Symbolic},
geodesic flows on surfaces with variable negative curvature \cite{Ratner-MP-three-dimensions},
and uniformly hyperbolic  flows in any dimension \cite{Ratner-MP-n-dimensions,Bowen-Symbolic-Flows}.
This work only assumes that $h>0$ and that the flow has {\em positive speed} (i.e. the vector that generates
the flow has no zeroes).
This  generality allows us to cover several cases of interest that could not be treated before, for example:
\begin{enumerate}[(1)]
\item {\em Geodesic flows with positive entropy in positive curvature\/:} There are  many Riemannian metrics with positive curvature somewhere (even everywhere) whose geodesic flow has positive topological  entropy \cite{Donnay,Burns-Gerber,Knieper-Weiss-Generic-Positive-entropy,Contreras-Paternain}.
\item {\em Reeb flows with positive entropy\/:} These arise from Hamiltonian flows on surfaces of constant energy, see \cite{Hutchings}.  Examples  with positive topological entropy are given in \cite{Macarini-2011}. (This application was suggested to us  by G. Forni.)
\item {\em Abstract non-uniformly hyperbolic flows in three dimensions}, see
\cite{Barreira-Pesin-Non-Uniform-Hyperbolicity-Book,Pesin-Izvestia-1976}.
\end{enumerate}


The statement of our main result is somewhat technical,  therefore we begin with a down-to-earth corollary.
Let $\vf$ be a flow. A {\em simple closed orbit} of {\em length} $\ell$ is  a parameterized curve $\gamma(t)=\vf^t(p)$,
$0\leq t\leq\ell$ s.t. $\gamma(0)=\gamma(\ell)$ and $\gamma(0)\neq \gamma(t)$ when $0<t<\ell$.
The {\em trace} of $\gamma$ is defined to be the set $\{\gamma(t):0\leq t\leq\ell\}$.  Let $[\gamma]$ denote the
equivalence class of the relation  $\gamma_1\sim\gamma_2\Leftrightarrow\gamma_1,\gamma_2$
have equal lengths and traces. Let
$
\pi(T):=\#\{[\gamma]:\ell(\gamma)\leq T,\gamma\textrm{ is simple}\}.
$

\begin{thm}\label{ThmClosedOrbits}
Suppose $\vf$ is a $C^\infty$ flow with positive speed  on a $C^\infty$ closed three dimensional manifold.
If $\vf$ has positive topological entropy $h$, then there is a positive constant $C$ s.t.
$\pi(T)\geq C{e^{hT}}/{T}$ for all $T$ large enough.
\end{thm}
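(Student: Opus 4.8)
The plan is to deduce the theorem from the symbolic model built in this paper, together with Newhouse's theorem on upper semicontinuity of entropy for $C^\infty$ systems and the thermodynamic formalism of countable Markov shifts. \emph{Reduction to the symbolic model.} Since $\vf$ is $C^\infty$, the map $m\mapsto h_m(\vf^1)$ is upper semicontinuous on the weak-$*$ compact space of $\vf$-invariant probability measures, so by the variational principle it attains the value $h$; an ergodic component of a maximizing measure is again maximizing (entropy is affine and $\le h$), so there is an ergodic $\mu$ with $h_\mu(\vf)=h>0$. Apply our main theorem to $\mu$: after discarding a $\mu$-null set, $(\vf^t)$ becomes a finite-to-one H\"older factor of a suspension flow $\sigma_r$ over a countable topological Markov shift $(\Sigma,\sigma)$ with H\"older roof $r$ bounded away from $0$, via a map $\pi_r\colon\Sigma_r\to M$ that is defined and equivariant everywhere and has at most $N$ preimages over each point, $N=N(\vf)$. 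Since $\mu$ is ergodic, its lift is carried over one topologically transitive component of $\Sigma$, to which we restrict.

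\emph{Counting upstairs, then descending.} Let $\nu$ be the base measure of the lifted suspension. By Abramov's formula $h_\nu(\sigma)=h\int r\,d\nu$, so $\nu$ is an equilibrium measure for the potential $-hr$ and the Gurevich pressure satisfies $P_\Sigma(-hr)=0$; by Sarig's thermodynamic formalism $-hr$ is then positive recurrent. The localized zeta function $\sum_{n\ge1}\sum_{\sigma^nx=x,\,x_0=a}e^{-sr_n(x)}$ has abscissa of convergence exactly $s=h$, and a renewal/Tauberian analysis at $s=h$ yields a lower bound $c\,e^{hT}/T$, with $c>0$, for the number of periodic $\sigma_r$-orbits of period $\le T$. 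We then push down: a periodic $\sigma_r$-orbit of period $\ell\le T$ maps under $\pi_r$ onto the trace of a simple closed $\vf$-orbit of length dividing $\ell$, hence $\le T$, and at most $N$ distinct periodic $\sigma_r$-orbits can share a given trace (each of them meets the at most $N$-point fiber $\pi_r^{-1}(p)$ over a fixed point $p$ of that trace, and distinct orbits are disjoint). Hence $\pi(T)\ge N^{-1}c\,e^{hT}/T$ for all large $T$, which is the assertion with $C=c/N$.

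\emph{Main obstacle.} The delicate step is the count on $(\Sigma,\sigma)$. Passing to a finite subsystem is not enough: such a subsystem has entropy only $h-\epsilon$, and $e^{(h-\epsilon)T}/T$ is not bounded below by a multiple of $e^{hT}/T$, so the count must genuinely be performed on the infinite shift, using that the lift of the maximal measure makes $-hr$ recurrent. Producing the polynomial correction $1/T$ with a positive constant requires, in addition, control of the zeta function at $s=h$ --- a simple pole with positive residue, i.e. a sufficiently strong form of positive recurrence --- together with a Tauberian theorem that survives the lattice case, in which $r$ is cohomologous to a cocycle with values in a discrete subgroup, the count grows along an arithmetic progression, and one keeps only the cumulative bound $e^{hT}/T$. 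Checking that the finite-to-one property of the symbolic model applies uniformly on periodic orbits is, by comparison, routine.
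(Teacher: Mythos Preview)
Your overall strategy --- pass to the symbolic model, count closed orbits of the suspension flow $\sigma_r$, and descend via the finite-to-one map $\pi_r$ --- is exactly the paper's. The differences, and one genuine gap, lie in how the two hard steps are executed.

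\textbf{The count upstairs.} The paper does \emph{not} go through a Tauberian argument on the localized zeta function. Instead it invokes the dichotomy of Theorem~\ref{theorem dichotomy for suspension}: either the equilibrium measure on $\Sigma_r$ is mixing, or $\sigma_r$ is (on a set of full measure) topologically conjugate to a constant suspension. In the mixing case the paper fixes a cylinder $A={_{-n_0}[}a_{-n_0},\ldots,a_{n_0}]$ and shows, using the Gibbs-type bounds for the equilibrium measure of $-hr$ from \cite{Buzzi-Sarig}, that
\[
\sum_{n\ge1}\ \sum_{\substack{\sigma^n\un{y}=\un{y}\\ \un{y}\in A,\ |r_n(\un{y})-T|<2\epsilon}} e^{-hr_n(\un{y})}\ \asymp\ \mu\bigl((A\times[0,\epsilon])\cap\sigma_r^{-T}(A\times[0,\epsilon])\bigr),
\]
and mixing gives a positive $\liminf$. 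In the non-mixing (lattice) case the constant-suspension structure reduces everything to Gurevich's asymptotics for periodic points of a positive-recurrent countable Markov shift. Your Tauberian route is a legitimate alternative in spirit, and you correctly identify it as the delicate step; but you have only \emph{asserted} the needed analytic input (simple pole with positive residue at $s=h$, resp.\ its lattice substitute) rather than established it, and this is substantial. The paper's dichotomy plus mixing is more elementary and sidesteps the issue.

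\textbf{The descent.} Here there is a real gap. You bound the number of periodic $\sigma_r$-orbits over a given closed $\vf$-orbit by the constant $N=N(\mu)$ from part~(6) of Theorem~\ref{ThmSymbolicDynamicsErgodic}. But part~(6) is an \emph{almost-everywhere} statement with respect to $\mu$, and closed orbits form a $\mu$-null set (since $h_\mu(\vf)>0$); there is no global bound on $|\pi_r^{-1}(p)\cap\Sigma_r^\#|$ valid at every $p$. The paper uses part~(5) instead: by restricting the upstairs count from the outset to periodic sequences $\un{y}$ lying in the fixed cylinder $A$ (so that the symbol $a_0$ recurs infinitely often in both tails), one obtains the uniform bound $c_0:=N(a_0,a_0)$ on the relevant fibers, and hence the two-sided estimate $n\le\#\Theta^{-1}([\gamma^s_{\un{y},n}])\le c_0 n$. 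This is precisely why the upstairs count is localized to a single cylinder. Your remark that this step is ``by comparison, routine'' underestimates it: without anchoring to a fixed vertex, the finiteness bound you need is simply not available.
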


The theorem strengthens  Katok's   bound $\liminf_{T\to\infty}\frac{1}{T}\log\pi(T)\geq h$,
see \cite{KatokIHES,Katok-Closed-Geodesics}. It  extends to flows of lesser regularity,
under the additional assumption that they possess a measure of maximal entropy
(Theorem \ref{ThmClosedOrbits(General)}).  The lower bound $C{e^{ht}}/{T}$ is
sharp in many special cases
\cite{Huber-Closed-Geodesics,Margulis-Closed-Orbits,Parry-Pollicott-PNT,Knieper-Closed-Orbits},
but not in the general setup of this paper. For more on this, see \S\ref{SectionClosedOrbits}.

We obtain Theorem \ref{ThmClosedOrbits} by constructing a symbolic model that is a finite-to-one extension
of $\vf$. The orbits of this model are easier to understand than those of the original flow.
This technique, called  {\em ``symbolic dynamics"},
can be traced back to the work of Hadamard, Morse, Artin, and Hedlund.

We proceed to describe the symbolic models used in this work.
Let $\mathfs G$ be a directed graph with a countable set of vertices $V$.
We write $v\to w$ if there is an edge from $v$ to $w$, and we assume throughout that
for every $v$  there are $u,w$ s.t. $u\to v, v\to w$.

\medskip
\noindent
{\sc Topological Markov shifts:}  The {\em topological Markov shift} associated to $\mathfs G$
is the discrete-time topological dynamical system $\sigma:\Sigma\to\Sigma$ where
$$
\Sigma=\Sigma(\mathfs G):=\{\textrm{paths on }\mathfs G\}=\{\{v_i\}_{i\in\Z}:v_i\to v_{i+1}\textrm{ for all }i\in\Z\},
$$
equipped with  the metric $d(\un{v},\un{w}):=\exp[-\min\{|n|:v_n\neq w_n\}]$,
and $\sigma:\Sigma\to\Sigma$ is  the {\em left shift map}, $\sigma:\{v_i\}_{i\in\Z}\mapsto \{v_{i+1}\}_{i\in\Z}$.

\medskip
\noindent
{\sc Birkhoff cocycle:} Suppose $r:\Sigma\to\R$ is a function. The {\em Birkhoff sums} of $r$
are $r_n:=r+r\circ\sigma+\cdots+r\circ\sigma^{n-1}$ $(n\geq 1)$. There is a unique way to extend
the definition to $n\leq 0$ in such a way that the {\em cocycle identity} $r_{m+n}=r_n+r_m\circ\sigma^n$
holds for all $m,n\in\Z$: $r_0:=0$ and $r_{n}:=-r_{|n|}\circ\sigma^{-|n|}$ $(n<0)$.\label{TopMarkovFlowDefiPage}

\medskip
\noindent
{\sc Topological Markov flow:}\label{TopMarkovFlowDefiPage} Suppose $r:\Sigma\to\R^+$
is H\"older continuous and bounded away from zero and infinity.
The {\em topological Markov flow} with {\em roof function} $r$ and {\em base map}
$\sigma:\Sigma\to\Sigma$ is the flow $\sigma_r:\Sigma_r\to\Sigma_r$ where
$$\Sigma_r:=\{(\un{v},t):\un{v}\in\Sigma,0\leq t<r(\un{v})\},\
\sigma_r^\tau(\un{v},t)=(\sigma^n(\un{v}),t+\tau-r_n(\un{v}))
$$
for the unique $n\in\Z$ s.t. $0\leq t+\tau-r_n(\un{v})<r(\sigma^{n}(\un{v}))$.

\medskip
Informally,  $\sigma_r$ increases the $t$ coordinate at unit speed subject to the identifications
$(\un{v}, r(\un{v}))\sim (\sigma(\un{v}),0)$. The cocycle identity guarantees that
$\sigma_r^{\tau_1+\tau_2}=\sigma_r^{\tau_1}\circ\sigma_r^{\tau_2}$.
There is a natural metric $d_r(\cdot,\cdot)$ on $\Sigma_r$, called the {\em Bowen-Walters metric},  s.t. $\sigma_r$
is a continuous flow \cite{Bowen-Walters-Metric}. Moreover, $\exists C>0, 0<\kappa<1$ s.t. $d_r(\sigma_r^\tau(\omega_1),\sigma_r^\tau(\omega_2))\leq C d_r(\omega_1,\omega_2)^\kappa$ for all $|\tau|<1$ and every $\omega_1,\omega_2\in\Sigma_r$ (Lemma \ref{Lemma-BW}).


\medskip
\noindent
{\sc Regular parts:\/}
The {\em regular part} of $\Sigma$ is the set
$$
\Sigma^\#:=\left\{\un v\in\Sigma:\exists v,w\in V\text{ s.t. }\begin{array}{l}v_n=v\text{ for infinitely many }n>0\\
v_n=w\text{ for infinitely many }n<0
\end{array}\right\},
$$
and the {\em regular part} of $\Sigma_r$ is $\Sigma_r^\#:=\{(\un{v},t)\in\Sigma_r: \un{v}\in\Sigma^\#\}$.

\medskip
By the Poincar\'e recurrence theorem, $\Sigma_r^\#$ has full measure with respect to
any $\sigma_r$--invariant probability measure, and it  contains all the closed orbits of $\sigma_r$.
We now state our main result. Let $M$ be a three dimensional closed $C^\infty$ Riemannian manifold,
let $X$ be a $C^{1+\beta}$ ($0<\beta<1$) vector field on $M$ s.t. $X_p\neq 0$ for all $p$,
let $\vf:M\to M$ be the flow  determined by $X$, and let $\mu$ be a $\vf$--invariant Borel probability measure.

\begin{thm}\label{ThmSymbolicDynamicsErgodic} If $\mu$ is ergodic and its Kolmogorov-Sina{\u\i}
entropy is positive, then there is a topological Markov flow $\sigma_r:\Sigma_r\to\Sigma_r$
and a map  $\pi_r:\Sigma_r\to M$ s.t.:
\begin{enumerate}[$(1)$]
\item $r:\Sigma\to\R^+$ is H\"older continuous and bounded away from zero and infinity.
\item $\pi_r$ is H\"older continuous with respect to the Bowen-Walters metric (see \S\ref{section-coding-flow}).
\item $\pi_r\circ\sigma^t_r=\vf^t\circ\pi_r$ for all $t\in\R$.
\item $\pi_r[\Sigma^\#_r]$ has full measure with respect to $\mu$.
\item If $p=\pi_r(\un{x},t)$ where $x_i=v$ for infinitely many $i<0$ and $x_i=w$ for infinitely many $i>0$,
then $\#\{(\un{y},s)\in\Sigma^\#_r:\pi_r(\un{y},s)=p\}\leq N(v,w)<\infty$.
\item $\exists N=N(\mu)<\infty$ s.t.  $\mu$--a.e. $p\in M$ has exactly $N$ pre-images in $\Sigma_r^\#$.
\end{enumerate}
\end{thm}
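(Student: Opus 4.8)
The plan is to follow Bowen's and Ratner's strategy for coding hyperbolic flows, but adapted to the non-uniformly hyperbolic setting of Katok--Sarig. First I would construct a good Poincar\'e section: because $X$ has positive speed and $\mu$ has positive entropy, one can choose a finite family of small smooth transversal disks $\Lambda$ whose union $\mathcal D$ meets $\mu$-a.e. orbit, with return times bounded away from $0$ and $\infty$ on a set of full measure. The return map $f:\mathcal D\to\mathcal D$ then carries the induced measure $\bar\mu$, which is ergodic of positive entropy (Abramov), and the key technical point is that although $f$ is only defined and ``nice'' on a full-measure set, one can choose the section so that the hyperbolicity of the flow (Pesin/Oseledets in dimension three: one positive, one zero, one negative Lyapunov exponent, the zero one along the flow) descends to genuine non-uniform hyperbolicity of $f$ on the section, with uniformly transverse stable/unstable directions along a set of full $\bar\mu$-measure.

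Next I would apply the main theorem of Sarig's symbolic-dynamics construction for $C^{1+\beta}$ surface-like maps (or rather its generalization to the two-dimensional non-uniformly hyperbolic maps $f$ arising here) to get a topological Markov shift $\sigma:\Sigma\to\Sigma$ and a H\"older factor map $\pi:\Sigma\to\mathcal D$ with $\pi\circ\sigma=f\circ\pi$, such that $\pi[\Sigma^\#]$ has full $\bar\mu$-measure and the finite-to-one property on $\Sigma^\#$ holds: points coded by symbols repeating infinitely in the past and future have a uniformly bounded number of preimages, controlled by a function $N(v,w)$ of the recurring symbols. Then I would set $r:=R\circ\pi$ where $R:\mathcal D\to\R^+$ is the first-return time; since $R$ is smooth in the flow direction and the section was chosen with return time bounded away from $0$ and $\infty$, and since $\pi$ is H\"older, $r$ is H\"older and bounded --- this gives item (1). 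Forming the suspension $\sigma_r:\Sigma_r\to\Sigma_r$ and defining $\pi_r(\un v,t):=\vf^t(\pi(\un v))$ gives a semiconjugacy onto $M$: item (3) is immediate from the cocycle identity and the definition of the suspension flow, and item (4) follows because $\bigcup_t\vf^t$ of a full-$\bar\mu$-measure subset of $\mathcal D$ is a full-$\mu$-measure subset of $M$ by the ergodic decomposition relating $\mu$ and $\bar\mu$.

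For item (2), H\"older continuity of $\pi_r$ with respect to the Bowen--Walters metric, I would use that the Bowen--Walters metric on $\Sigma_r$ combines the base metric on $\Sigma$ with the flow-time coordinate and the identification across roof jumps; H\"older continuity of $\pi$ on the base, smoothness of the flow, and the fact that $r$ is H\"older and bounded below let one compare flow-box coordinates on both sides and estimate $d(\pi_r(z_1),\pi_r(z_2))$ by a power of the Bowen--Walters distance, exactly as in Bowen--Walters' original argument for Axiom A flows. Items (5) and (6) are inherited from the corresponding properties of the base coding: a point $p=\pi_r(\un x,t)$ has preimages only of the form $(\un y, s)$ with $\pi(\un y)=\vf^{-t+kR_{\bullet}}(p)\in\mathcal D$ for the finitely many visits of the $\vf$-orbit of $p$ to $\mathcal D$ within one return, and since $\vf^{-t}(p)$ lies on $\mathcal D$ with a bounded number of section-preimages by the base finite-to-one property, multiplying by the bounded number of section hits gives a finite bound $N(v,w)$, yielding (5); averaging and using ergodicity of $\sigma_r$ on $\Sigma_r^\#$ (the multiplicity function is flow-invariant and integer-valued, hence a.e. constant) gives the exact constant $N=N(\mu)$ in (6).

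The main obstacle, and where essentially all the work lies, is the first step: manufacturing a Poincar\'e section on which the return map is genuinely a $C^{1+\beta}$ non-uniformly hyperbolic surface map to which Sarig-type coding applies. One must control the geometry of the transversals (uniform transversality to $X$, uniform bounds on curvature and injectivity radius of the section), ensure the return time is bounded and H\"older, and --- most delicate --- verify that the holonomy along the flow does not destroy the $C^{1+\beta}$ regularity or the Pesin-block structure, so that stable/unstable manifolds of $f$ are the traces of the weak-stable/weak-unstable manifolds of $\vf$. Handling the set where orbits are nearly tangent to the section, or where return times degenerate, requires excising a null set and checking that the excision is compatible with recurrence (Poincar\'e) so that $\Sigma_r^\#$ still captures full measure; this is the technical heart of the paper.
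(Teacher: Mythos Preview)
Your overall architecture matches the paper's: reduce positive entropy to $\chi_0$-hyperbolicity via Ruelle's inequality, build a Poincar\'e section, code the return map by a Sarig-type countable Markov partition, set $r:=R\circ\pi$, suspend, and read off (1)--(6). The derivations of (1)--(6) from the base coding are essentially as you describe.

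Your diagnosis of the main obstacle, however, is imprecise in a way that matters. You cannot manufacture a section on which the return map is a genuine $C^{1+\beta}$ surface diffeomorphism: any finite union of transverse disks produces a Poincar\'e map $f$ with an unavoidable singular set $\mathfrak S$ (the disk boundaries and the discontinuity loci of the return time). Tangency and degenerating return times are red herrings --- the canonical transverse disks are uniformly transverse to $X$, and the roof is automatically bounded above and below. The real problem is that Sarig's construction in \cite{Sarig-JAMS} is for diffeomorphisms and uses, among other things, a uniform bound on $Q_\epsilon\circ f/Q_\epsilon$; this fails near $\mathfrak S$.

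The paper's resolution is not to avoid $\mathfrak S$ but to choose the section \emph{adapted} to $\mu$, meaning $\lim_{n\to\pm\infty}\tfrac{1}{n}\log\dist_\Lambda(f^n(p),\mathfrak S)=0$ $\mu_\Lambda$-a.e. This is obtained by a Borel--Cantelli parameter-selection argument over a one-parameter family of concentric standard sections. Adaptedness places $f$ in the Katok--Strelcyn setting of non-uniformly hyperbolic maps with singularities: once $Q_\epsilon(x)$ is redefined to include a factor of $\dist_\Lambda(x,\mathfrak S)$, adaptedness forces $\tfrac{1}{n}\log Q_\epsilon(f^n(x))\to 0$, so Pesin's temperedness lemma survives and the gpo/shadowing machinery of \cite{Sarig-JAMS} goes through with only local modifications (chiefly a new proof of discreteness of the alphabet $\mathfs A$, since $Q_\epsilon\circ f/Q_\epsilon$ is unbounded). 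Your last paragraph gestures at ``excising a null set compatible with recurrence,'' but the actual mechanism is this quantitative subexponential-approach condition, and identifying it is the paper's main technical contribution.
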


Some of the applications we have in mind require a version of this result for non-ergodic measures.
To state it, we need to recall some facts from smooth ergodic theory \cite{Barreira-Pesin-Non-Uniform-Hyperbolicity-Book}.
Let $T_p M$ be the tangent space at $p$ and let $(d\vf^t)_p:T_p M\to T_{\vf^t(p)} M$
be the differential of $\vf^t$ at $p$. Suppose $\mu$ is a $\vf$--invariant Borel probability measure
on $M$ (not necessarily ergodic). By the Oseledets Theorem,  for $\mu$--a.e. $p\in M$,
for every  $0\neq\vec{v}\in T_p M$, the  limit
$
\chi(p,\vec{v}):=\lim_{t\to\infty}\frac{1}{t}\log\|(d\vf^t)_p \vec{v}\|_{\vf^t(p)}
$ exists.
The values of $\chi(p,\cdot)$ are called the {\em Lyapunov exponents} at $p$.
If $\dim(M)=3$, then  there are at most three distinct such values.
At least one of them, $\chi(p,X_p)$, equals zero.

\medskip
\noindent
{\sc Hyperbolic measures:} Suppose $\chi_0>0$.
A {\em $\chi_0$--hyperbolic} measure is an invariant measure $\mu$ s.t. $\mu$--a.e. $p\in M$  has one  Lyapunov exponent in $(-\infty,-\chi_0)$, one Lyapunov exponent in $(\chi_0,\infty)$ and one Lyapunov exponent equal to zero.

\medskip
In dimension three, every {\em ergodic} invariant measure with positive metric entropy is $\chi_0$--hyperbolic
for any $0<\chi_0<h_\mu(\vf)$, by the Ruelle inequality \cite{Ruelle-Entropy-Inequality}.
But some hyperbolic measures, e.g. those carried by hyperbolic closed orbits,  have zero entropy.

\begin{thm}\label{ThmSymbolicDynamics} Suppose $\mu$ is a $\chi_0$--hyperbolic
invariant probability measure for some  $\chi_0>0$. Then there is a topological Markov
flow $\sigma_r:\Sigma_r\to\Sigma_r$ and  a map $\pi_r:\Sigma_r\to M$ satisfying {\em (1)--(5)}
in Theorem \ref{ThmSymbolicDynamicsErgodic}. If $\mu$ is ergodic, then {\em (6)} holds as well.
\end{thm}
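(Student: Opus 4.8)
The plan is to \emph{reduce Theorem \ref{ThmSymbolicDynamics} to Theorem \ref{ThmSymbolicDynamicsErgodic} via the ergodic decomposition}, using the fact that the topological Markov flow produced there can be chosen uniformly over all ergodic components of a fixed $\chi_0$-hyperbolic measure. First I would fix $\chi_0>0$ and inspect the construction behind Theorem \ref{ThmSymbolicDynamicsErgodic}: it proceeds through a Poincar\'e section, a family of ``$\epsilon$-double-charts'' whose size is controlled by how close a point is to being $\chi_0$-hyperbolic, and a countable Markov partition built from them. The key observation is that \emph{none of the combinatorial data of this construction --- the alphabet $V$, the edge set of $\mathfs G$, the roof function $r$, and the map $\pi_r$ --- depends on the measure itself, only on the parameter $\chi_0$ and on the set $\NUH_{\chi_0}$ of points that are ``$\chi_0$-hyperbolic'' in the Pesin sense}. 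So I would carry out the construction once, relative to $\chi_0$, obtaining a single $\sigma_r:\Sigma_r\to\Sigma_r$ and a single $\pi_r:\Sigma_r\to M$ such that properties (1), (2), (3) and (5) hold unconditionally (they are statements about $r$, $\pi_r$, and $\Sigma$, with no reference to $\mu$), and such that $\pi_r[\Sigma_r^\#]\supseteq \NUH_{\chi_0}$ up to the usual null sets.

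The only property requiring the measure is (4), that $\pi_r[\Sigma_r^\#]$ has full $\mu$-measure. For an ergodic $\chi_0$-hyperbolic measure this is exactly what Theorem \ref{ThmSymbolicDynamicsErgodic} (combined with the Ruelle inequality remark) gives, once one checks that the two constructions coincide --- i.e. that the coding built ``relative to $\chi_0$'' refines/contains the one built relative to an individual ergodic component, which holds because enlarging the class of admissible base points only enlarges the alphabet and the shift. For a general $\chi_0$-hyperbolic $\mu$, I would write $\mu=\int \mu_\xi \, d\nu(\xi)$ for its ergodic decomposition. Each $\mu_\xi$ is again $\chi_0$-hyperbolic (the defining inequalities on the Lyapunov spectrum pass to a.e. ergodic component, since $\NUH_{\chi_0}$ is $\vf$-invariant and Borel), so $\pi_r[\Sigma_r^\#]$ has full $\mu_\xi$-measure for $\nu$-a.e. $\xi$; integrating gives $\mu(\pi_r[\Sigma_r^\#])=1$, which is (4). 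The last sentence of the theorem --- that (6) holds when $\mu$ is ergodic --- is then immediate, since in that case we are literally in the situation of Theorem \ref{ThmSymbolicDynamicsErgodic}, whose conclusion includes (6).

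The main obstacle I anticipate is not the ergodic-decomposition bookkeeping but rather \emph{isolating cleanly which parts of the proof of Theorem \ref{ThmSymbolicDynamicsErgodic} genuinely use ergodicity and which only use $\chi_0$-hyperbolicity}. In these Sarig-type constructions, ergodicity is sometimes invoked for convenience --- e.g. to ensure that a single $\Sigma$ suffices rather than a countable disjoint union, or to get recurrence statements ``on a set of full measure'' --- and one must verify that each such use can be replaced by the weaker input that the relevant Pesin sets $\NUH_{\chi_0}$ carry full measure and are $\vf$-invariant, with Poincar\'e recurrence applied componentwise. Concretely, the delicate point is (5): the finiteness bound $N(v,w)$ on the number of preimages in $\Sigma_r^\#$ of a point coded by symbols $v$ (infinitely often in the past) and $w$ (infinitely often in the future) is proved by a geometric argument about stable/unstable manifolds of controlled size, and one has to confirm this argument uses only the hyperbolicity estimates attached to the symbols $v,w$ and not any global mixing or ergodicity property of $\mu$. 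Once that auditing is done, the statement follows with essentially no new work, which is presumably why the authors state it as a corollary-style theorem rather than redoing the proof.
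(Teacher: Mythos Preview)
Your core observation --- that the construction underlying Theorem~\ref{ThmSymbolicDynamicsErgodic} uses only $\chi_0$-hyperbolicity and not ergodicity, except for part~(6) --- is exactly right, and is precisely how the paper proceeds. But you have the logical dependence reversed. The paper does \emph{not} prove Theorem~\ref{ThmSymbolicDynamicsErgodic} first and then deduce Theorem~\ref{ThmSymbolicDynamics}; rather, the entirety of Part~2 is carried out under the standing assumption that $\mu$ is a (possibly non-ergodic) $\chi_0$-hyperbolic measure, and the construction directly yields properties (1)--(5) of Theorem~\ref{ThmSymbolicDynamics}. Property~(6) in the ergodic case is the one-line observation that $p\mapsto\#\bigl(\pi_r^{-1}(p)\cap\Sigma_r^\#\bigr)$ is $\vf$-invariant, hence a.e.\ constant. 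Theorem~\ref{ThmSymbolicDynamicsErgodic} is then the special case obtained by noting (via Ruelle's inequality) that an ergodic measure of positive entropy is $\chi_0$-hyperbolic for any $0<\chi_0<h_\mu(\vf)$.

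So your ergodic-decomposition step, while not wrong, is a detour: once you have audited the construction and confirmed that only $\chi_0$-hyperbolicity is used for (1)--(5), you have already proved Theorem~\ref{ThmSymbolicDynamics} directly for $\mu$, and there is nothing left to integrate over components. One point you gloss over is that the construction is not \emph{entirely} measure-free: the adapted Poincar\'e section $\Lambda$ is chosen depending on $\mu$ (the adaptedness condition is that $\frac{1}{n}\log\dist_\Lambda(f^n(p),\mathfrak S)\to 0$ holds $\mu_\Lambda$-a.e.). But the existence of such a section does not require ergodicity --- it holds for any invariant probability measure --- so this is no obstacle to the direct approach, and once $\Lambda$ is fixed the alphabet, graph, roof, and coding map indeed depend only on $\chi_0$ and $\Lambda$, as you say.
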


Results in this spirit were first proved by Ratner and Bowen for Anosov flows and Axiom A flows
in any dimension \cite{Ratner-MP-three-dimensions,Ratner-MP-n-dimensions,Bowen-Symbolic-Flows}, using the technique of {\em Markov partitions} introduced by Adler \& Weiss and Sina{\u \i} for discrete-time dynamical systems \cite{Adler-Weiss-PNAS,Adler-Weiss-Similarity-Toral-Automorphisms,Sinai-Construction-of-MP,Sinai-MP-U-diffeomorphisms}.\footnote{For geodesic flows on hyperbolic surfaces, alternative geometric and number theoretic methods are possible,
see \cite{Series-Symbolic-Acta,Series-Green-Book,Katok-Ugarcovici-Symbolic} and references therein.
These methods are more restrictive than those of Ratner and Bowen, but they make the coding procedure more transparent.}

In 1975 Bowen gave a new construction of Markov partitions for Axiom A diffeomorphisms,
using shadowing techniques \cite{Bowen-LNM,Bowen-Regional-Conference}.
The second author extended these techniques to general $C^{1+\beta}$ surface diffeomorphisms with
positive topological entropy \cite{Sarig-JAMS}. Our strategy is to apply these methods to a suitable
Poincar\'e section  for the flow. The main difficulty is that \cite{Sarig-JAMS} deals with diffeomorphisms,
while Poincar\'e sections are discontinuous.

In part 1 of the paper, we construct a  Poincar\'e section $\Lambda$ with the following property:
If $f:\Lambda\to\Lambda$ is the Poincar\'e return map and $\mathfrak S\subset\Lambda$
is the set of discontinuities of $f$, then
$\liminf_{|n|\to\infty}\frac{1}{n}\log\dist_\Lambda(f^n(p),\mathfrak S)=0$ a.e. in $\Lambda$.
This places us in the context of ``non-uniformly hyperbolic maps with singularities" studied in \cite{Katok-Strelcyn}.

In part 2 we explain why the  methods of \cite{Sarig-JAMS} apply to $f:\Lambda\to\Lambda$
despite its discontinuities. The result is a countable Markov partition for $f:\Lambda\to\Lambda$,
which leads to a coding of  $f$ as a topological Markov shift, and a coding of  $\vf:M\to M$
as a topological Markov flow.

In part 3, we provide two applications:  Theorem \ref{ThmClosedOrbits} on the growth of the
number of closed orbits, and a  result saying that the set of measures of maximal entropy is
finite or countable.  The proof of Theorem \ref{ThmClosedOrbits} uses a mixing/constant
suspension dichotomy for topological Markov flows, in the spirit of  \cite{Plante-Anosov-Flows-Dichotomy}.


\subsection*{Standing assumptions} Let $M$ be a three dimensional closed $C^\infty$ Riemannian manifold,
with tangent bundle $TM=\bigcup_{p\in M} T_p M$, Riemannian metric  $\<\cdot,\cdot\>_p$, norm $\|\cdot\|_p$, and exponential map $\exp_p$ (this is different from the  $\Exp_p$ in \S \ref{SectionPesinCharts}).

Given $Y\subset M$,  $\dist_Y(y_1,y_2):=\inf\{\textrm{lengths of rectifiable curves in $Y$ from $y_1$}$
$\textrm{to $y_2$}\}$, where $\inf\emptyset:=\infty$. Given two metric spaces $(A,d_A),(B,d_B)$
and a map $F:A\to B$, $\mathrm{\Hol}_\alpha(F):=\sup_{x\neq y}\frac{d_B(F(x),F(y))}{d_A(x,y)^\alpha}$
for $0<\alpha\leq 1$, and $\Lip(F):=\Hol_1(F)$.

We let $X:M\to TM$ be a $C^{1+\beta}$ vector field on $M$ $(0<\beta<1)$, and $\vf:M\to M$ be
the flow generated by $X$. This means that $\vf$ is a one-parameter family of maps $\vf^t:M\to M$
s.t. $\vf^{t+s}=\vf^t\circ\vf^s$ for all $t,s\in\R$, and s.t.  $X_p(f)=\left.\frac{d}{dt}\right|_{t=0}f[\vf^t(p)]$
for all $f\in C^\infty(M)$. In this case $(t,p)\mapsto \vf^t(p)$  is a
$C^{1+\beta}$ map $[-1,1]\x M\to M$ \cite[page 112]{Ebin-Marsden}.
We assume throughout that $X_p\neq 0$ for all $p$.

\part{The Poincar\'e section}

\section{Poincar\'e sections}

\subsection*{Basic definitions}
Suppose $\vf:M\to M$ is a flow.

\medskip
\noindent
{\sc Poincar\'e section:\/} $\L\subset M$ Borel set s.t. for every $p\in M$, $\{t>0: \vf^t(p)\in\Lambda\}$
is a sequence tending to $+\infty$, and $\{t<0:\vf^t(p)\in\Lambda\}$ is a sequence tending to $-\infty$.

\medskip
\noindent
{\sc Roof function:\/} $R_\Lambda:\Lambda\to (0,\infty)$,
$
R_\L(p):=\min\{t>0:\vf^t(p)\in\L\}.
$

\medskip
\noindent
{\sc Poincar\'e map:\/}  $f_{\Lambda}:\Lambda\to\Lambda$,
$
f_\Lambda(p):=\vf^{R_{\Lambda}(p)}(p).
$

\medskip
\noindent
{\sc Induced measure:\/} Every $\vf$--invariant probability measure $\mu$ on $M$ induces an $f_\Lambda$--invariant  measure $\mu_\L$ on $\L$ defined by the equality
$$
\int_M g d\mu=\frac{1}{\int_\Lambda R_\Lambda d\mu_\Lambda}\int_{\Lambda} \left(\int_0^{R_\Lambda(p)} g[\vf^t(p)]dt\right)d\mu_\Lambda(p),\ \text{for all }g\in L^1(\mu).
$$

\medskip
\noindent
{\sc Uniform Poincar\'e section:\/} The Poincar\'e section $\Lambda$ is called {\em uniform}
if its roof function is bounded away from zero and infinity.
If $\Lambda$ is uniform, then $\mu_\Lambda$ is finite and it can be normalized.
 With this normalization, for every Borel subset $E\subset \L$ and  $0<\e<\inf R_\Lambda$ it holds
$
\mu_\L(E)=\mu[\bigcup_{0<t<\epsilon}\vf^t(E)]\big/\mu[\bigcup_{0<t<\epsilon}\vf^t(\L)].
$

\medskip
All the Poincar\'e sections considered in this paper will be uniform, and each of them
will be the disjoint union of finitely many embedded smooth two dimensional discs.
Let $\partial\Lambda$ denote the union of the boundaries of these discs.
The set $\partial \Lambda$  will introduce discontinuities to the Poincar\'e map of $\Lambda$.

\medskip
\noindent
{\sc Singular set:\/} The {\em singular set} of a Poincar\'e section  $\L$ is
$$
\mathfrak S(\Lambda):=\left\{p\in\L:\begin{array}{l} p\textrm{ does {\em not} have a relative neighborhood $V\subset\L\setminus\partial\Lambda$ s.t. }\\
 \textrm{$V$ is diffeomorphic to an open disc, and $f_{\Lambda}:V\to f_{\Lambda}(V)$ }\\
 \textrm{and $f^{-1}_{\Lambda}:V\to f^{-1}_{\Lambda}(V)$ are diffeomorphisms}
 \end{array}\right\}.
$$

\medskip
\noindent
{\sc Regular set:\/} $\Lambda':=\Lambda\setminus\mathfrak S(\Lambda)$.

\subsection*{Basic constructions}
Let $\vf$ be a flow satisfying our standing assumptions.

\medskip
\noindent
 {\sc Canonical transverse disc:\/}
$
S_r(p):=\{\exp_p(\vec{v}):\vec{v}\in T_p M, \vec{v}\perp X_p, \|\vec{v}\|_p\leq r\}.
$

\medskip
\noindent
{\sc Canonical flow box:\/}
$
{\rm FB}_r(p):=\{\vf^t(q):q\in S_r(p),|t|\leq r\}.
$

\medskip
The following  lemmas are standard, see the appendix for proofs.

\begin{lem}\label{LemmaTD}
There is a constant $\mathfrak r_s>0$ which only depends on $M$ and $\vf$ s.t. for every
$p\in M$ and $0<r<\mathfrak r_s$, $S:=S_r(p)$ is a $C^\infty$ embedded closed disc,
$|\measuredangle(X_q,T_q S)|\geq \frac{1}{2}$ radians for all $q\in S$, and
$\dist_M(\cdot,\cdot)\leq \dist_{S}(\cdot,\cdot)\leq 2\dist_M(\cdot,\cdot)$.
\end{lem}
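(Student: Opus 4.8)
The plan is to deduce all three assertions from the standard comparison estimates for the Riemannian exponential map on small balls, the only real content being to make the error terms uniform in the basepoint $p$ via compactness of $M$. First I would set $\rho_0:=\inj(M)>0$, the infimum of the injectivity radii, which is positive since $M$ is compact, and take $\mathfrak r_s<\rho_0/3$ (to be shrunk finitely many more times below). For $r<\rho_0$ the map $\exp_p$ is a $C^\infty$ diffeomorphism from the closed Euclidean $r$--ball of $T_pM$ onto its image; restricting it to the linear $2$--plane $X_p^\perp$ exhibits $S:=S_r(p)$ as a $C^\infty$ embedded closed $2$--disc, which is the first claim. The inequality $\dist_M(\cdot,\cdot)\le\dist_S(\cdot,\cdot)$ is then free, since every rectifiable curve contained in $S\subset M$ is a rectifiable curve in $M$; so only the reverse inequality will require work.

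For the angle bound, observe that at the center $T_pS=X_p^\perp$, so $\measuredangle(X_p,T_pS)=\pi/2$. To propagate this to $q=\exp_p(\vec v)$ with $\vec v\in X_p^\perp$, $\|\vec v\|\le r$, I would combine two estimates, each uniform in $p$ by compactness: (i) $\|d(\exp_p)_{\vec v}-P_{\vec v}\|$ is small for small $\|\vec v\|$, where $P_{\vec v}$ is parallel transport along $t\mapsto\exp_p(t\vec v)$ (a Jacobi field estimate, uniform because the curvature of $M$ is bounded), hence $T_qS$ lies within a small angle of $P_{\vec v}(X_p^\perp)$; and (ii) $X$ is uniformly continuous and $0<\inf_M\|X\|\le\sup_M\|X\|<\infty$, hence $X_q/\|X_q\|$ lies within a small angle of $P_{\vec v}(X_p/\|X_p\|)$ once $\dist_M(p,q)$ is small. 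Since $P_{\vec v}$ is an isometry it preserves the right angle between $X_p$ and $X_p^\perp$, so $|\measuredangle(X_q,T_qS)|$ is close to $\pi/2$; as $\pi/2>1/2$, shrinking $\mathfrak r_s$ (uniformly in $p$) gives $|\measuredangle(X_q,T_qS)|\ge\tfrac12$ for all $q\in S$.

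For the upper metric bound I would coordinatize $S$ by $\Phi_p:=\exp_p|_{X_p^\perp\cap\overline{B}_r(0)}$. The same Jacobi field package gives, after a further uniform shrinking of $\mathfrak r_s$, $(1-\epsilon)|\xi|\le\|d(\Phi_p)_{\vec v}\xi\|\le(1+\epsilon)|\xi|$ for all $\vec v$ in the parameter disc and all tangent $\xi$, with $\epsilon\downarrow0$ as $\mathfrak r_s\downarrow0$. Joining $\exp_p(\vec v)$ and $\exp_p(\vec w)$ by the $\Phi_p$--image of the Euclidean segment $[\vec v,\vec w]$ --- which lies in the convex parameter disc, hence in $S$ --- yields $\dist_S(\exp_p\vec v,\exp_p\vec w)\le(1+\epsilon)|\vec v-\vec w|$. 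Conversely, a minimizing $M$--geodesic between two points of $\exp_p(\overline{B}_r(0))$ stays (by the triangle inequality) inside $\exp_p(\overline{B}_{3r}(0))$, on which $\exp_p$ is still a $(1+\epsilon)$--bi-Lipschitz chart; pulling such a geodesic back gives a Euclidean curve from $\vec v$ to $\vec w$ of length $\le(1+\epsilon)\dist_M(\exp_p\vec v,\exp_p\vec w)$, whence $\dist_M(\exp_p\vec v,\exp_p\vec w)\ge(1+\epsilon)^{-1}|\vec v-\vec w|$. Combining the two, $\dist_S\le(1+\epsilon)^2\dist_M$, and a last shrinking of $\mathfrak r_s$ makes $(1+\epsilon)^2\le 2$.

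The only genuine obstacle in the whole argument is uniformity in $p\in M$: all of the ``$\epsilon$''s above must be bounded by a single $\mathfrak r_s$ independent of the basepoint. This is precisely what compactness provides --- a uniform lower bound on the injectivity radius, uniform bounds on the curvature (hence in the Jacobi field estimates), a uniform modulus of continuity for $p\mapsto X_p$, and uniform two-sided bounds on $\|X_p\|$ --- and once these are in hand the three claims follow as above. (I expect the write-up itself to be routine, which is presumably why the paper defers it to the appendix.)
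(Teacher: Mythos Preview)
Your proposal is correct and the overall strategy---uniformity via compactness, straight-line segments in the tangent plane for the intrinsic metric bound---matches the paper's. The main difference is in the angle estimate: where you invoke parallel transport and Jacobi field comparison to show that $T_qS$ stays near $P_{\vec v}(X_p^\perp)$ and $X_q$ near $P_{\vec v}(X_p)$, the paper takes a more elementary route. It writes down the triple product
\[
A(p,\un{x}):=\left|\frac{\bigl\langle X_{J_p(\un{x})},\,(dJ_p)_{\un{x}}\tfrac{\partial}{\partial x},\,(dJ_p)_{\un{x}}\tfrac{\partial}{\partial y}\bigr\rangle}{\|X_{J_p(\un{x})}\|\cdot\|(dJ_p)_{\un{x}}\tfrac{\partial}{\partial x}\|\cdot\|(dJ_p)_{\un{x}}\tfrac{\partial}{\partial y}\|}\right|
\]
as a lower bound for $|\sin\measuredangle(X_q,T_qS)|$, observes that $A(p,\un{0})=1$, and then extracts a uniform radius by a finite cover of $M\times\{\un{0}\}$ and a Lebesgue number argument. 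Similarly, for the metric inequality the paper simply quotes that $\exp_p$ is $\sqrt{2}$--bi-Lipschitz on balls of radius $r_{\mathrm{inj}}$ (a standard fact from Riemannian geometry) rather than deriving the bi-Lipschitz constant from Jacobi field estimates as you do. Your argument is more geometric and explains \emph{why} the angle stays large; the paper's is shorter and avoids any mention of parallel transport or curvature bounds.
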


\begin{lem}\label{LemmaFB_1}
There are constants $\mathfrak r_f,\mathfrak d\in (0,1)$ which only depend on $M$
and $\vf$ s.t. for every $p\in M$, ${\rm FB}_{\mathfrak r_f}(p)$ contains an open ball
with center $p$ and radius $\mathfrak d$, and $(q,t)\mapsto\vf^t(q)$ is a diffeomorphism
from $S_{\mathfrak r_f}(p)\x[-\mathfrak r_f,\mathfrak r_f]$ onto ${\rm FB}_{\mathfrak r_f}(p)$.
\end{lem}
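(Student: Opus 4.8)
The plan is to realise the flow box $FB_r(p)$ as the image of an explicit map and then to apply a quantitative inverse function theorem whose constants are uniform in $p$. First I would fix $p\in M$, use an orthonormal frame of $T_pM$ whose third vector is $X_p/\|X_p\|_p$ to identify $X_p^\perp$ with $\R^2$ and $T_pM$ with $\R^3$, write $B_\rho\subset\R^2$ for the closed $\rho$--ball, and consider
$$
\Psi_p\colon B_\rho\x[-\rho,\rho]\longrightarrow M,\qquad \Psi_p(\vec v,t):=\vf^t(\exp_p\vec v).
$$
Then $S_\rho(p)=\exp_p(B_\rho)$ and $FB_\rho(p)=\Psi_p(B_\rho\x[-\rho,\rho])$, so it suffices to find $\mathfrak r_f$ and $\mathfrak d$ depending only on $M,\vf$ such that $\Psi_p$ is a diffeomorphism on $B_{\mathfrak r_f}\x[-\mathfrak r_f,\mathfrak r_f]$ whose image contains $B_M(p,\mathfrak d)$. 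To keep all estimates in $\R^3$ I would replace $\Psi_p$ by $G_p:=\exp_p^{-1}\circ\Psi_p$, defined on a fixed-size neighbourhood of the origin once $\mathfrak r_f$ is small (possible since $FB_{\mathfrak r_f}(p)\subset B_M(p,(1+\sup\|X\|)\mathfrak r_f)$ and the injectivity radius of $M$ is bounded below). Since $(d\exp_p)_0=\id$ and $\vf^0=\id$, we get $(dG_p)_{(0,0)}(\vec v,s)=\vec v+sX_p$ in $T_pM\cong\R^3$, which is a linear isomorphism because $X_p\perp X_p^\perp$; and by compactness of $M$ and $X\neq 0$ there is $L_0=L_0(M,\vf)\geq 1$ with $\|(dG_p)_{(0,0)}\|\leq L_0$ and $\|(dG_p)_{(0,0)}^{-1}\|\leq L_0$ for every $p$.

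Next I would establish a uniform modulus of continuity for $dG_p$ near the origin. Because $(t,q)\mapsto\vf^t(q)$ is $C^{1+\beta}$ on $[-1,1]\x M$ and $\exp$ is $C^\infty$ near the zero section, compactness of $M$ yields $\rho_0>0$ and $H=H(M,\vf)$ with $\|(dG_p)_{(\vec v,t)}-(dG_p)_{(0,0)}\|\leq H(\|\vec v\|+|t|)^\beta$ for all $p$ and all $(\vec v,t)\in B_{\rho_0}\x[-\rho_0,\rho_0]$. I would then choose $\mathfrak r_f\in(0,1)$, depending only on $M,\vf$, so small that $\mathfrak r_f<\min\{\mathfrak r_s,\rho_0/2\}$ and $H(2\mathfrak r_f)^\beta<\tfrac1{2L_0}$, so that on $U_p:=B_{2\mathfrak r_f}\x(-2\mathfrak r_f,2\mathfrak r_f)$ every $(dG_p)_z$ is a $\tfrac1{2L_0}$--perturbation of the isomorphism $(dG_p)_0$. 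The standard ``perturbed linear map'' estimate then gives, for $z,z'\in U_p$, that $\|G_p(z)-G_p(z')\|\geq\tfrac1{2L_0}\|z-z'\|$ and that each $(dG_p)_z$ is invertible, so $G_p$ is a diffeomorphism of $U_p$ onto the open set $G_p(U_p)$; consequently $\Psi_p=\exp_p\circ G_p$ is a diffeomorphism of the manifold-with-corners $\{(q,t):q\in S_{\mathfrak r_f}(p),\,|t|\leq\mathfrak r_f\}$ onto $FB_{\mathfrak r_f}(p)$. Moreover the same estimate (quantitative inverse function theorem) shows that $G_p(B_{\mathfrak r_f}\x[-\mathfrak r_f,\mathfrak r_f])$ contains the Euclidean ball of radius $c\mathfrak r_f$ about the origin for some $c=c(L_0)>0$; comparing $d_M(p,\cdot)$ with the Euclidean norm in the chart $\exp_p^{-1}$ (a comparison uniform in $p$ by compactness) and pushing forward by $\exp_p$ then gives $FB_{\mathfrak r_f}(p)\supset B_M(p,\mathfrak d)$ with $\mathfrak d:=\tfrac12 c\mathfrak r_f$.

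The only genuinely delicate point is the uniformity in $p$ of $L_0$, $H$ and the normal-coordinate comparison; each follows from compactness of $M$ and the $C^{1+\beta}$ regularity of $\vf$ (together with smoothness of $\exp$), but one must check that the dependence of $\Psi_p$ on $p$ — through the base point of $\exp_p$ and through the choice of orthonormal frame — does not spoil the bounds. This can be arranged by covering $M$ with finitely many coordinate charts on each of which $X/\|X\|$ extends to a $C^1$ orthonormal frame, carrying out the estimates chart by chart, and taking the worst of the finitely many resulting constants.
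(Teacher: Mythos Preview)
Your proposal is correct and follows essentially the same route as the paper: define the parametrisation $F_p(x,y,t)=\vf^t[\exp_p(x\vec u_p+y\vec v_p)]$, compose with a chart to land in $\R^3$, and apply a uniform (quantitative) inverse function theorem, extracting the global constants from compactness of $M$ via a finite cover. The only cosmetic difference is that the paper composes with a \emph{fixed} coordinate chart $C_V^{-1}$ (one per element of the finite cover) rather than with the $p$--dependent chart $\exp_p^{-1}$ that you first write down; this sidesteps the issue of how $\exp_p^{-1}$ varies with $p$, which you correctly flag and then resolve in exactly the same way at the end of your sketch.
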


\begin{lem}\label{LemmaFB_2}
There are constants $\mathfrak L, \mathfrak H>1$ which only depend on $M$ and $\vf$ s.t.
$\mathfrak t_p:{\rm FB}_{\mathfrak r_f}(p)\to [-\mathfrak r_f,\mathfrak r_f]$ and
$\mathfrak q_p:{\rm FB}_{\mathfrak r_f}(p)\to S_{\mathfrak r_f}(p)$ defined by
$z=\vf^{\mathfrak t_p(z)}[\mathfrak q_p(z)]$ are well-defined maps with
$\mathrm{Lip}(\mathfrak t_p), \mathrm{Lip}(\mathfrak q_p)\leq \mathfrak L$ and
$\|\mathfrak t_p\|_{C^{1+\beta}}, \|\mathfrak q_p\|_{C^{1+\beta}}\leq \mathfrak H$.
\end{lem}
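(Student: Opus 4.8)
The plan is to realize $\mathfrak t_p$ and $\mathfrak q_p$ as the two components of the inverse of the flow-box chart and to read off the uniform estimates from a quantitative $C^{1+\beta}$ inverse function theorem combined with the compactness of $M$. Parametrize the transverse disc by the exponential map: let $D_p:=\{\vec v\in T_pM:\vec v\perp X_p,\ \|\vec v\|_p\leq\mathfrak r_f\}$ and define $\Psi_p\colon D_p\times[-\mathfrak r_f,\mathfrak r_f]\to M$ by $\Psi_p(\vec v,t):=\vf^t(\exp_p\vec v)$. By Lemma \ref{LemmaTD}, $\vec v\mapsto\exp_p\vec v$ is a bi-Lipschitz parametrization of $S_{\mathfrak r_f}(p)$ with constants depending only on $M$, and the intrinsic metric on $S_{\mathfrak r_f}(p)$ is comparable to $\dist_M$; by Lemma \ref{LemmaFB_1}, $\Psi_p$ is a diffeomorphism onto $FB_{\mathfrak r_f}(p)$. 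Hence $\mathfrak t_p$ and $\mathfrak q_p$ are well defined --- they are, respectively, the $t$-component of $\Psi_p^{-1}$ and $\exp_p$ post-composed with the $\vec v$-component of $\Psi_p^{-1}$ --- and it suffices to bound $\|\Psi_p^{-1}\|_{C^{1+\beta}}$ uniformly in $p$; the asserted bounds on $\mathfrak t_p,\mathfrak q_p$ then follow by the chain rule, using the uniform $C^{1+\beta}$ control of $\exp_p$ over a disc of fixed radius (compactness of $M$) and the metric comparabilities above.

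To bound $\|\Psi_p^{-1}\|_{C^{1+\beta}}$ uniformly I would first note that $(t,q)\mapsto\vf^t(q)$ is $C^{1+\beta}$ on $[-1,1]\times M$ (standing assumptions) and $(p,\vec v)\mapsto\exp_p\vec v$ is smooth on a disc bundle of fixed radius, so by compactness of $M$ the first derivatives of $\Psi_p$ are bounded and $\beta$-Hölder with a constant $L=L(M,\vf,\beta)$ uniform in $p$. At the center, $d(\Psi_p)_{(\vec 0,0)}$ sends $\partial_t$ to $X_p$ and restricts to the identity on $X_p^\perp$ (because $d(\exp_p)_0=\id$), so it is the linear isomorphism $(\vec w,s)\mapsto\vec w+sX_p$ of $X_p^\perp\oplus\R$ onto $T_pM$. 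Since $X_p\perp X_p^\perp$, one has $\|\vec w+sX_p\|^2=\|\vec w\|^2+s^2\|X_p\|^2$, and because $\inf_{p}\|X_p\|>0$ (the flow has positive speed and $M$ is compact), the inverse of $d(\Psi_p)_{(\vec 0,0)}$ has norm $\leq m=m(M,\vf)$, uniformly in $p$.

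Now I would apply a quantitative inverse function theorem for $C^{1+\beta}$ maps (contraction mapping applied to $y\mapsto y-[d(\Psi_p)_{(\vec 0,0)}]^{-1}(\Psi_p(y)-z)$, with the size of the good domain controlled by $m$ and $L$): there is $\rho=\rho(m,L,\beta)>0$ such that every $C^{1+\beta}$ map whose derivative at a point has inverse of norm $\leq m$ and whose derivative is $\beta$-Hölder with constant $\leq L$ is a $C^{1+\beta}$ diffeomorphism of the $\rho$-ball about that point onto its image, with inverse of $C^{1+\beta}$ norm $\leq\mathfrak H_0=\mathfrak H_0(m,L,\beta)$. After possibly shrinking $\mathfrak r_f$ (which leaves the conclusions of Lemmas \ref{LemmaTD} and \ref{LemmaFB_1} intact) so that $D_p\times[-\mathfrak r_f,\mathfrak r_f]$ sits inside this $\rho$-ball, we obtain $\|\Psi_p^{-1}\|_{C^{1+\beta}}\leq\mathfrak H_0$ and $\Lip(\Psi_p^{-1})\leq\mathfrak H_0$ uniformly in $p$; projecting onto the two factors and post-composing with $\exp_p$ as above then yields the constants $\mathfrak L$ and $\mathfrak H$.

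\textbf{The main obstacle} is the uniformity in $p$: the chart $\Psi_p$ depends on $p$ through the moving $2$-plane $X_p^\perp$, which need not carry a globally continuous orthonormal frame. I would handle this either by covering $M$ by finitely many open sets over each of which $X^\perp$ is trivialized by a smooth orthonormal frame, running the estimates chart by chart, and taking the worst of the finitely many resulting constants; or by a soft compactness argument: if no uniform $\rho,\mathfrak H_0$ worked, points $p_n$ witnessing failure would subconverge to some $p_*$, contradicting the ordinary inverse function theorem at $p_*$ together with the continuity in $p$ of all the data entering the estimate. Everything else is routine bookkeeping with the chain rule.
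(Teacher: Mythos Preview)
Your proposal is correct and follows essentially the same route as the paper: both invert the flow-box parametrization $(\vec v,t)\mapsto\vf^t(\exp_p\vec v)$ via a quantitative $C^{1+\beta}$ inverse function theorem and obtain uniformity in $p$ through a finite cover of $M$ (the paper uses the atlas $\mathfs V$ from the proof of Lemma~\ref{LemmaFB_1}, composing with coordinate charts $C_V$ to reduce to maps $\R^3\to\R^3$ before applying its ``Uniform Inverse Function Theorem''). The only cosmetic difference is that the paper, having already chosen $\mathfrak r_f$ in the proof of Lemma~\ref{LemmaFB_1} to be compatible with the inverse function theorem, does not need to shrink it further.
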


\noindent
We call $\mathfrak t_p,\mathfrak q_p$ the {\em flow box coordinates}.
Set  $\mathfrak r:=10^{-1}\min\{1,\mathfrak r_s,\mathfrak r_f,\mathfrak d\}/(1+\max\|X_p\|)$.

\medskip
\noindent
{\sc Standard Poincar\'e section:\/}
A  Poincar\'e section $\Lambda$ is {\em standard} if it has the form
$$
\Lambda=\L(p_1,\ldots,p_N;r):=\biguplus_{i=1}^N S_r(p_i)
$$
where $r< \mathfrak r$, $\sup R_\Lambda<\mathfrak r$, and $S_r(p_i)$ are pairwise disjoint.
The points $p_1,\ldots,p_N$ are called the {\em centers} of $\Lambda$,
and $r$ is called the {\em radius} of $\Lambda$. (Here and throughout, $\biguplus$ means the union of  pairwise disjoint sets.)  

\medskip

Standard Poincar\'e sections are special cases of the ``proper families" Bowen used in
\cite[\S2]{Bowen-Symbolic-Flows} to build Markov partitions for Axiom A flows. Their existence is discussed below (Lemma \ref{LemmaStandardSection}).
For the moment, let us assume Standard Poincar\'e sections exist, and discuss some of  their properties.

Fix  a standard Poincar\'e section $\Lambda=\Lambda(p_1,\ldots,p_N;r)$ and write
$f=f_\Lambda$, $R=R_\Lambda$, $\mathfrak S:=\mathfrak S(\Lambda)$,
and $\Lambda':=\Lambda\setminus\mathfrak S$.

\begin{lem}
Every standard Poincar\'e section is a uniform Poincar\'e section.
\end{lem}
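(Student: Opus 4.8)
The plan is to produce a uniform positive lower bound for $R=R_\Lambda$ on all of $\Lambda$ by combining the local product structure of the flow inside the canonical flow boxes $FB_{\mathfrak r_f}(p_i)$ with the fact that the finitely many discs $S_r(p_1),\dots,S_r(p_N)$ are pairwise disjoint compact sets. (That $R<\infty$ everywhere --- indeed $\sup R<\mathfrak r$ --- is already built into the definition of a standard Poincar\'e section, so only the lower bound is at issue.)

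First I would fix $p\in\Lambda$, say $p\in S_r(p_i)$, and observe that since $r<\mathfrak r<\mathfrak r_f$ we have $S_r(p_i)\subseteq S_{\mathfrak r_f}(p_i)$ and hence $\vf^t(p)\in FB_{\mathfrak r_f}(p_i)$ for all $|t|\le\mathfrak r_f$. By Lemma~\ref{LemmaFB_1} the map $(q,s)\mapsto\vf^s(q)$ is injective on $S_{\mathfrak r_f}(p_i)\times[-\mathfrak r_f,\mathfrak r_f]$; since $\vf^t(p)$ is represented there with base point $p$ and time $t$, it cannot also be represented with base point $\vf^t(p)$ and time $0$ unless $t=0$, so $\vf^t(p)\notin S_{\mathfrak r_f}(p_i)\supseteq S_r(p_i)$ for every $0<|t|\le\mathfrak r_f$. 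In words: the orbit of $p$ does not return to its own component of $\Lambda$ within time $\mathfrak r_f$.

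Next I would rule out a short return to some other component $S_r(p_j)$, $j\neq i$. By Lemma~\ref{LemmaTD} each $S_r(p_k)$ is a $C^\infty$ embedded closed disc, hence compact (as $M$ is), so $\delta:=\min_{j\neq i}\dist_M(S_r(p_i),S_r(p_j))>0$; set $\mathfrak m:=\max_{q\in M}\|X_q\|_q<\infty$. For $0<t<\delta/(1+\mathfrak m)$ the bounded-speed estimate $\dist_M(\vf^t(p),p)\le\int_0^t\|X_{\vf^s(p)}\|_{\vf^s(p)}\,ds\le\mathfrak m t<\delta$ shows $\vf^t(p)$ stays within distance $\delta$ of $S_r(p_i)$, so it cannot lie in any $S_r(p_j)$ with $j\neq i$ (else $\delta\le\dist_M(S_r(p_i),S_r(p_j))\le\dist_M(p,\vf^t(p))<\delta$).

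Putting the two steps together, $\vf^t(p)\notin\Lambda$ for all $0<t<\varepsilon_0:=\min\{\mathfrak r_f,\ \delta/(1+\mathfrak m)\}$, with $\varepsilon_0$ independent of $p$; hence $R\ge\varepsilon_0>0$, i.e.\ $\Lambda$ is uniform. I do not expect a genuine obstacle here: the proof is short, and the only point that needs minding is the bookkeeping of constants --- $\mathfrak r_f$ and $\mathfrak m$ depend only on $M$ and $\vf$, whereas the separation $\delta$ (hence $\varepsilon_0$) also depends on the chosen centers $p_1,\dots,p_N$, which is harmless since for a fixed section uniformity only requires $\inf R>0$.
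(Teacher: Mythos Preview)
Your proof is correct and follows essentially the same approach as the paper: you split into the same-disc case (handled via the injectivity of $(q,s)\mapsto\vf^s(q)$ on $S_{\mathfrak r_f}(p_i)\times[-\mathfrak r_f,\mathfrak r_f]$) and the different-disc case (handled via the positive distance between disjoint compact discs and the bounded speed of the flow). The paper's write-up is slightly terser and extracts $2\mathfrak r_f$ rather than $\mathfrak r_f$ in the same-disc case, but the structure is identical.
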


\begin{proof}
We have $\sup R<\infty$ by the definition of standard sections,
so it remains to see that $\inf R>0$.
Let $x\in S_r(p_i)$, $f(x)\in S_r(p_j)$. If $i=j$ then $R(x)>\mathfrak r_f$, otherwise
there would exist $0<t\leq \mathfrak r_f$ s.t. $\vf^0(f(x))=f(x)=\vf^t(x)$, which contradicts
the last part of Lemma \ref{LemmaFB_1}. If $i\neq j$ then $\{\vf^t(x)\}_{0\leq t\leq R(x)}$
is a curve from $S_r(p_i)$ to $S_r(p_j)$, thus $R(x)\geq \dist_M(S_r(p_i),S_r(p_j))/\max\|X_p\|$.
Hence $\inf R>0$.
\end{proof}

\begin{lem}\label{Lemma_Smooth_Section}
$R$, $f$ and $f^{-1}$ are differentiable on $\Lambda'$, and  $\exists\mathfrak C>0$
only depending on $M$ and $\vf$ s.t. $\sup_{x\in\Lambda'}\|dR_x\|<\mathfrak C$,
$\sup_{x\in\Lambda'}\|df_x\|<\mathfrak C$, $\sup_{x\in\Lambda'}\|(df_x)^{-1}\|<\mathfrak C$,
$\|f\restriction_{U}\|_{C^{1+\beta}}<\mathfrak C$ and $\|f^{-1}\restriction_{U}\|_{C^{1+\beta}}<\mathfrak C$
for all open and connected $U\subset\Lambda'$.
\end{lem}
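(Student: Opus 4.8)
The plan is to show that near every regular point the Poincar\'e map literally \emph{equals} a flow-box projection $\mathfrak q_{p_\bullet}$, so that every assertion is inherited from Lemmas~\ref{LemmaFB_1} and~\ref{LemmaFB_2}.

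The crux is the following identity, valid for \emph{every} $y\in\Lambda$: if $f(y)\in S_r(p_\ell)$, then $y\in FB_{\mathfrak r_f}(p_\ell)$ and $\mathfrak q_{p_\ell}(y)=f(y)$, $\mathfrak t_{p_\ell}(y)=-R(y)$. Indeed, from $f(y)=\vf^{R(y)}(y)$ we get $y=\vf^{-R(y)}(f(y))$ with $f(y)\in S_r(p_\ell)\subseteq S_{\mathfrak r_f}(p_\ell)$ and $0<R(y)\le\sup R<\mathfrak r<\mathfrak r_f$; by the definition of the canonical flow box this puts $y$ in $FB_{\mathfrak r_f}(p_\ell)$, and comparing $y=\vf^{-R(y)}(f(y))=\vf^{\mathfrak t_{p_\ell}(y)}(\mathfrak q_{p_\ell}(y))$ with the injectivity of $(q,t)\mapsto\vf^t(q)$ on $S_{\mathfrak r_f}(p_\ell)\times[-\mathfrak r_f,\mathfrak r_f]$ (Lemma~\ref{LemmaFB_1}) forces the equalities. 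This is exactly where the requirement $\sup R_\Lambda<\mathfrak r$ in the definition of a standard section enters. Reversing time gives the companion statement: if $f^{-1}(y)\in S_r(p_k)$ then $y\in FB_{\mathfrak r_f}(p_k)$ and $\mathfrak q_{p_k}(y)=f^{-1}(y)$, $\mathfrak t_{p_k}(y)=R(f^{-1}(y))$.

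Now I would use regularity to freeze the component indices. Let $x\in\Lambda'$ with $f(x)\in S_r(p_j)$. Since $x\notin\mathfrak S$ there is a relatively open $V\ni x$ in $\Lambda\setminus\partial\Lambda$ with $f|_V$ and $f^{-1}|_V$ diffeomorphisms onto relatively open discs; in particular $x\in V\subseteq\Lambda\setminus\partial\Lambda$, $f(x)\notin\partial\Lambda$ (an open disc in $\Lambda$ misses $\partial\Lambda$), and $f,f^{-1}$ are continuous on $V$. Shrinking $V$ so that $f(V)$ lies in the relative interior of $S_r(p_j)$, the identity gives $f|_V=\mathfrak q_{p_j}|_V$ and $R|_V=-\mathfrak t_{p_j}|_V$, whence by Lemma~\ref{LemmaFB_2} $R$ and $f$ are $C^{1+\beta}$ near $x$ with $\|dR_x\|,\|df_x\|\le\mathfrak L$ and local $C^{1+\beta}$ norms $\le\mathfrak H$. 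Likewise, on the neighborhood $f(V)$ of $f(x)$ the map $f^{-1}$ sends everything into the relative interior of $S_r(p_i)$ (because $f^{-1}(f(x))=x\in S_r(p_i)$), so the companion identity gives $f^{-1}|_{f(V)}=\mathfrak q_{p_i}|_{f(V)}$, of class $C^{1+\beta}$ with derivative norm $\le\mathfrak L$; hence $\|(df_x)^{-1}\|=\|d(f^{-1})_{f(x)}\|\le\mathfrak L$. For an arbitrary open connected $U\subseteq\Lambda'$, $f|_U$ is continuous and $U$ is connected, so $f(U)$ lies in a single component $S_r(p_j)$ of $\Lambda=\biguplus_i S_r(p_i)$; the identity then gives $U\subseteq FB_{\mathfrak r_f}(p_j)$ and $f|_U=\mathfrak q_{p_j}|_U$, so $\|f|_U\|_{C^{1+\beta}}\le\|\mathfrak q_{p_j}\|_{C^{1+\beta}}\le\mathfrak H$, and symmetrically for $f^{-1}|_U$. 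Taking $\mathfrak C$ to be $\max\{\mathfrak L,\mathfrak H\}$ times a fixed factor that compares the intrinsic metric on a transverse disc with $\dist_M$ (Lemma~\ref{LemmaTD}) completes the proof.

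The only genuinely delicate point is the flow-box identity: the observation that, because $\sup R_\Lambda<\mathfrak r_f$, every point of a standard section automatically lies in the flow box centered at the center of the component containing its image, which is what turns $f$ into the manifestly smooth $\mathfrak q_{p_j}$. The rest --- the behaviour of $C^{1+\beta}$ norms under restriction, and the equivalence of the metrics on $S_r(p_i)$ and $M$ --- is routine, and the set $\Lambda'$ enters only to keep the component index of $f(y)$ (resp.\ $f^{-1}(y)$) locally constant, so that the projection formula describes a smooth map rather than one with jumps.
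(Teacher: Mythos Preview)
Your proof is correct and follows essentially the same strategy as the paper: reduce $f$, $f^{-1}$, and $R$ locally to the flow-box projections $\mathfrak q_{p_\bullet}$ and $\mathfrak t_{p_\bullet}$, then invoke Lemma~\ref{LemmaFB_2}. The only difference is in how you place $y$ inside $FB_{\mathfrak r_f}(p_\ell)$: the paper bounds $\dist_M(y,p_\ell)$ by $\mathfrak d$ and uses $B_{\mathfrak d}(p_\ell)\subset FB_{\mathfrak r_f}(p_\ell)$, whereas you observe directly that $y=\vf^{-R(y)}(f(y))$ with $f(y)\in S_{\mathfrak r_f}(p_\ell)$ and $R(y)<\mathfrak r<\mathfrak r_f$, which is slightly cleaner; you also spell out the connected-$U$ case (freezing the target component via connectedness of $f(U)$), which the paper leaves implicit.
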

\begin{proof}
Suppose $x\in\Lambda'$, then $\exists i,j,k$ s.t.
$f^{-1}(x)\in S_r(p_i)$, $x\in S_r(p_j)$, and $f(x)\in S_r(p_k)$.
Since $f$ is continuous at $x$ and the canonical discs composing $\Lambda$ are closed
and disjoint, $x$ has an open neighborhood  $V$ in $S_r(p_j)$ s.t. for all $y\in V$
it holds $f(y)\in S_r(p_k)$ and $f^{-1}(y)\in S_r(p_i)$.
Since $\sup R<\mathfrak r< 10^{-1}\mathfrak d/\max\|X_p\|$, if $y\in V$ then
$
\dist_M(y,p_k)\leq \dist_M(y,f(y))+\dist_M(f(y),p_k)\leq \max\|X_p\|\sup R+\mathfrak r<\mathfrak d.
$ Similarly, $
\dist_M(y,p_i)<\mathfrak d$. Thus
 $V\subset B_{\mathfrak d}(p_i)\cap B_{\mathfrak d}(p_k)\subset {\rm FB}_{\mathfrak r_f}(p_i)\cap {\rm FB}_{\mathfrak r_f}(p_k)$, whence  $R\restriction_V=-\mathfrak t_{p_k}$,
$f\restriction_V=\mathfrak q_{p_k}$,  $f^{-1}\restriction_V=\mathfrak q_{p_i}$.
Now use  Lemma \ref{LemmaFB_2}.
\end{proof}

Let $\mu$ be a $\vf$--invariant probability measure, and let $\mu_\Lambda$ be
the induced measure on $\Lambda$. If $\mu_\Lambda(\mathfrak S)=0$, then
$\mu_\Lambda[\bigcup_{n\in\Z}f^n(\mathfrak S)]=0$, and the derivative cocycle
$df^n_x:T_x\Lambda\to T_{f^n(x)}\Lambda$ is well-defined $\mu_\Lambda$--a.e.
By Lemma \ref{Lemma_Smooth_Section}, $\log\|df_x\|, \log\|df_x^{-1}\|$ are integrable
(even bounded), so the Oseledets Multiplicative Ergodic Theorem applies, and $f$ has
well-defined Lyapunov exponents $\mu_\Lambda$--a.e. Fix $\chi>0$.

\begin{lem}\label{Lemma_Lyap_Exp}
Suppose $\mu_\Lambda(\mathfrak S)=0$.
If $\mu$ is $\chi$--hyperbolic then $f$ has one Lyapunov exponent in $(-|\ln\mathfrak C|,-\chi\inf R)$ and
another in $(\chi\inf R,|\ln\mathfrak C|)$ for $\mu_{\Lambda}$--a.e. $x\in\Lambda$.
\end{lem}

\begin{proof}
Let $\Omega_\chi$ denote the set of points where the flow has one zero Lyapunov exponent,
one Lyapunov exponent in $(-\infty,-\chi)$ and another in $(\chi,\infty)$. By assumption $\mu[\Omega_\chi^c]=0$, thus
$\Lambda_\chi:=\{x\in\Lambda\setminus\bigcup_{n\in\Z} f^{-n}(\mathfrak S):\exists t>0\textrm{ s.t. }\vf^t(x)\in\Omega_\chi\}$
has full measure with respect to $\mu_\Lambda$.

Let $\Lambda_\chi^\ast:=\{x\in\Lambda_\chi:
\chi(x,\vec{v}):=\lim\limits_{n\to\pm\infty}\frac{1}{n}\log\|df_x^n\vec{v}\|
\text{ exists for all }0\neq \vec{v}\in T_x\Lambda\}.$ By the Oseledets theorem,
$\Lambda_\chi^\ast$ has full $\mu_\Lambda$--measure. By Lemma \ref{Lemma_Smooth_Section},
$|\chi(x,\vec{v})|\leq |\ln\mathfrak C|$. The Lyapunov exponents of $\vf$ are constant along flow lines,
therefore for every $x\in\Lambda^\ast_\chi$ there are vectors $\un{e}^s_x, \un{e}^u_x\in T_x M$ s.t.
$
\lim\limits_{t\to\infty}\frac{1}{t}\log\|d\vf^t_x \un{e}^s_x\|_{\vf^t(x)}<-\chi$ and
 $\lim\limits_{t\to \infty}\frac{1}{t}\log\|d\vf^t_x \un{e}^u_x\|_{\vf^t(x)}>\chi.
$
Let $\vec{n}(x):=\frac{X_x}{\|X_x\|}$. Since $\lim\limits_{t\to\infty}\frac{1}{t}\log\|d\vf^t_x \vec{n}(x)\|_{\vf^t(x)}=0$,
$\{\un{e}^s_x, \un{e}^u_x,\vec{n}(x)\}$ span $T_x M$. Note that $\un{e}^s_x,\un{e}^u_x$
are {\em not} necessarily in $T_x\Lambda$.

Pick two independent vectors $\vec{v}_1,\vec{v}_2\in T_x\Lambda$ and write
$\vec{v}_i=\alpha_i \un{e}^s_x+\beta_i \un{e}^u_x+\gamma_i\vec{n}(x)$, $i=1,2$.
The vectors ${\alpha_1\choose\beta_1},{\alpha_2\choose\beta_2}$ must be linearly independent,
otherwise some non-trivial linear combination of $\vec{v}_1,\vec{v}_2$ equals $\vec{n}(x)$,
which is impossible since $\Span\{\vec{v}_1,\vec{v}_2\}=T_x \Lambda$
and $\Lambda$ is tranverse to the flow. It follows that $T_x\Lambda$ contains two vectors of the form
$$
\vec{v}^s_x=\un{e}^s_x+\gamma_s\vec{n}(x)\, , \  \vec{v}^u_x=\un{e}^u_x+\gamma_u\vec{n}(x).
$$
These vectors are the projections of $\un{e}^s_x, \un{e}^u_x$ to $T_x\Lambda$ along $\vec{n}(x)$.
We will estimate their Lyapunov exponents.

Write $\Lambda=\Lambda(p_1,\ldots,p_N;r)$. As in the proof of Lemma \ref{Lemma_Smooth_Section},
for every $x\in\Lambda\setminus\mathfrak S$, if $f(x)\in S_r(p_i)$, then $x$ has a neighborhood
$V$ in $\Lambda$ s.t.  $V\subset {\rm FB}_{\mathfrak r_f}(p_i)$, $R\restriction_V=-\mathfrak t_{p_i}$,
and $f\restriction_V=\mathfrak q_{p_i}$. More generally, suppose $f^n(x)\in S_r(q_n)$ for $q_n\in\{p_1,\ldots,p_N\}$.
If $x\not\in \bigcup_{k\in\Z}f^k(\mathfrak S)$ then there are open neighborhoods $V_n$ of $x$ in $\Lambda$ s.t.
\begin{equation}\label{pre-tau-n}
f^{n-1}(V_n)\in {\rm FB}_{\mathfrak r_f}(q_n),
\textrm{ and }f^n\restriction_{V_n}=({\mathfrak q}_{q_n}\circ\cdots\circ {\mathfrak q}_{q_1})\restriction_{V_n}.
\end{equation}
By the definition of the flow box coordinates,
${\mathfrak q}_{q_i}(\cdot)=\vf^{-{\mathfrak t}_{q_i}(\cdot)}(\cdot)$ for every $i$.
Since $x\not\in \bigcup_{k\in\Z}f^k(\mathfrak S)$, ${\mathfrak t}_{q_i}$
is continuous on a neighborhood of $f^{i-1}(x)$, hence the smaller $V_n$
the closer $-{\mathfrak t}_{q_i}\restriction_{f^{i-1}(V_n)}$ is to $R(f^{i-1}(x))$.
If $V_n$ is small enough and
$$
R_n:=R(x)+R(f(x))+\cdots+R(f^{n-1}(x)),
$$
then $\vf^{R_n}(y)\in {\rm FB}_{\mathfrak r_f}(q_n)$ for all $y\in{V_n}$,
and we can decompose
\begin{equation}\label{tau-n}
({\mathfrak q}_{q_n}\circ\cdots\circ {\mathfrak q}_{q_1})(y)=({\mathfrak q}_{q_n}\circ \vf^{R_n})(y),\text{ for }y\in V_n.
\end{equation}
We emphasize that the power $R_n$ is the same for all $y\in V_n$.

We use (\ref{pre-tau-n})--(\ref{tau-n}) to calculate $df^n_x \vec{v}^s_x$.
First note that $(d{\mathfrak q}_{q_1})_x\vec{n}(x)=\vec{0}$: let $\gamma(t)=\vf^t(x)$, then ${\mathfrak q}_{q_1}[\gamma(t)]={\mathfrak q}_{q_1}(x)$ for all $|t|$ small, so
$\left.\frac{d}{dt}\right|_{t=0}{\mathfrak q}_{q_1}[\gamma(t)]=\vec{0}$.
By (\ref{pre-tau-n}), $df_x^n\vec{v}^s_x=d({\mathfrak q}_{q_n}\circ\cdots\circ {\mathfrak q}_{q_1})\un{e}^s_x$.
By (\ref{tau-n}),
$\|df^n_x \vec{v}^s_x\|\leq \max_i\|d{\mathfrak q}_{q_i}\|\cdot\|d\vf_x^{R_n}\un{e}^s_x\|_{\vf^{R_n}(x)}$, whence
$
\limsup_{n\to\infty}\frac{1}{n}\log\|df_x^n\vec{v}^s_x\|_{f^n(x)}\leq -\chi\limsup_{n\to\infty}\frac{R_n}{n}\leq -\chi\inf R.
$

Applying this argument to  the reverse  flow $\psi^t:=\vf^{-t}$, we find that the other
Lyapunov exponent belongs to $(\chi\inf R,\infty)$.
\end{proof}

\noindent
{\em Remark.\/} If $\mu$ is ergodic then $\limsup_{n\to\infty}\frac{R_n}{n}=\int R d\mu_\Lambda=1$,
and we get the stronger estimate that the Lyapunov exponents of $f$ are outside $(-\chi,\chi)$ almost surely.

\subsection*{Adapted Poincar\'e sections}
Let $\Lambda$ be a standard Poincar\'e section, and let $\dist_\L$ denote its intrinsic Riemannian distance
(with the convention that the distance between different connected components of $\L$ is infinite).
Let $\mu$ be a $\vf$--invariant probability measure, and let $\mu_\Lambda$ be the induced
probability measure on $\Lambda$.
Recall that $f_{\Lambda}:\Lambda\to\Lambda$ may have singularities. The following definition is motivated by the treatment of Pesin theory for maps with singularities in \cite{Katok-Strelcyn}.

\medskip
\noindent
{\sc Adapted Poincar\'e section:\/}
A standard Poincar\'e section $\Lambda$ is  {\em adapted to $\mu$} if:
\begin{enumerate}[$(1)$]
\item $\mu_\Lambda(\mathfrak S)=0$, where $\mathfrak S=\mathfrak S(\Lambda)$ is the singular set of $\Lambda$.
\item $\lim\limits_{n\to\infty}\frac{1}{n}\log \dist_\Lambda(f^{n}_\Lambda(p),\mathfrak S)=0$
for $\mu_\Lambda$--a.e. $p\in\Lambda$.
\item $\lim\limits_{n\to\infty}\frac{1}{n}\log \dist_\Lambda(f^{-n}_\Lambda(p),\mathfrak S)=0$
for $\mu_\Lambda$--a.e. $p\in\Lambda$.
\end{enumerate}
\noindent
Notice that  $(2)$ implies $(1)$, by the Poincar\'e recurrence theorem.

\medskip
We wish to show that any $\vf$--invariant Borel probability measure has adapted Poincar\'e sections.
The idea is to construct a one-parameter family of standard Poincar\'e sections $\Lambda_r$,
and show that $\Lambda_r$ is adapted to $\mu$ for a.e. $r$. The family is constructed in the next lemma.

\begin{lem}\label{LemmaStandardSection}
For every $h_0>0, K_0>1$ there are $p_1,\ldots,p_N\in M$, $0<\rho_0<h_0/K_0$
s.t. for every $r\in [\rho_0,K_0\rho_0]$ the set $\Lambda(p_1,\ldots,p_N;r)$ is a
standard Poincar\'e section with roof function and radius bounded above by $h_0$.
\end{lem}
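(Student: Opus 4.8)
The plan is to produce a \emph{single} finite set of centers $P=\{p_1,\dots,p_N\}$ and a small radius $\rho_0$ for which $\Lambda(P;r)$ works simultaneously for all $r\in[\rho_0,K_0\rho_0]$. This is natural because enlarging $r$ only shortens return times while shrinking $r$ only preserves disjointness, so it suffices to meet two opposing demands: the Poincar\'e--section property with fast return, which concerns the \emph{small} discs $S_{\rho_0}(p_i)$, and pairwise disjointness of the \emph{large} discs $S_{K_0\rho_0}(p_i)$. Replacing $h_0$ by $\min\{h_0,\mathfrak r\}/2$ I may assume $h_0<\mathfrak r$. Fix a finite cover $M=\bigcup_{j=1}^m B_{\epsilon_1}(q_j)$ by balls of a tiny \emph{fixed} radius $\epsilon_1$, chosen small enough that each $B_{\epsilon_1}(q_j)$ sits deep inside the flow box $FB_{\mathfrak r_f}(q_j)$, in whose coordinates $(\mathfrak q_{q_j},\mathfrak t_{q_j})$ (Lemma~\ref{LemmaFB_2}) the flow is the unit shift of the time coordinate $\mathfrak t_{q_j}$; all constructions will take place inside these $m$ flow boxes. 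From the uniform bounds of Lemmas~\ref{LemmaFB_1}--\ref{LemmaFB_2} one extracts uniform constants $A_1\ge 1$, $a_2\in(0,1)$ so that, in these coordinates, $S_\rho(c)$ lies in the box of transverse-radius and time-radius $A_1\rho$ about $c$; the ``thin flow box'' $\{\vf^t(w):w\in S_\rho(c),\ |t|<h_0\}$ contains every point whose transverse coordinate is within $a_2\rho$ of $c$'s and whose time coordinate lies within $h_0-A_1\rho$ of $c$'s; and in $M$ one has $S_\rho(c)\subseteq B_{2\rho}(c)$, so $S_{K_0\rho_0}(c)\cap S_{K_0\rho_0}(c')=\emptyset$ whenever $\dist_M(c,c')>4K_0\rho_0$.

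\emph{The staggered construction.} In each flow box $FB_{\mathfrak r_f}(q_j)$ fix a maximal $(a_2\rho_0/2)$--separated set $N_j$ of transverse positions (an $(a_2\rho_0/2)$--net of the transverse disc $S_{\mathfrak r_f/2}(q_j)$) and equally spaced time levels $\mu_1<\dots<\mu_K$ with spacing $h_0/3$. For each triple $(j,a,k)$ I place one disc $S_{\rho_0}(c^j_{a,k})$ whose center has transverse coordinate $a$ and time coordinate $\tau^j_{a,k}$, to be chosen in the window $[\mu_k-\tfrac{h_0}{7},\mu_k+\tfrac{h_0}{7}]$; the choice is greedy --- process all triples in a fixed order, each time picking $\tau^j_{a,k}$ in its window so that $S_{K_0\rho_0}(c^j_{a,k})$ is disjoint from every disc placed so far. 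The key point is that this never fails once $\rho_0<c_*h_0/(mK_0^3)$: a previously placed disc $S_{K_0\rho_0}(c'')$ obstructs only those $\tau$ for which the orbit arc carrying $c^j_{a,\cdot}$ passes within $4K_0\rho_0$ of $c''$; this arc has time-length $\tfrac{2h_0}{7}<\mathfrak r$, hence $M$-length $<\mathfrak d$, so it lies in a single flow box $FB_{\mathfrak r_f}(x_0)$ (Lemma~\ref{LemmaFB_1}) in whose coordinates it occupies a \emph{fixed} transverse position; thus the obstructed set of $\tau$'s is one interval of length $\le 8K_0\rho_0/\min_p\|X_p\|$, and all obstructing $c''$ lie in a box of transverse-size $O(K_0\rho_0)$ and time-length $<h_0/3$ there. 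Because the $(a_2\rho_0/2)$--separation of the $N_j$ and the $h_0/3$--spacing of the levels force each of the $m$ boxes to contribute at most $O(K_0^2)$ discs to such a box, there are at most $\Xi=\Xi(m,K_0)$ obstructing discs; hence the obstructed part of the window has total length $\le 8\Xi K_0\rho_0/\min_p\|X_p\|<\tfrac{2h_0}{7}$, and a valid $\tau^j_{a,k}$ exists. I also impose $K_0\rho_0<h_0$, which yields $\rho_0<h_0/K_0$ and $K_0\rho_0<\mathfrak r$; note $N$ is finite, since $m$, each $N_j$, and $K$ are.

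\emph{Verification.} By construction the discs $S_{K_0\rho_0}(p_i)$ are pairwise disjoint, so for every $r\le K_0\rho_0$ the discs $S_r(p_i)$ are pairwise disjoint and, since $r<\mathfrak r_s$, are embedded discs transverse to the flow (Lemma~\ref{LemmaTD}). Given $p\in M$, pick $j$ with $p\in B_{\epsilon_1}(q_j)$; then $p$ is deep inside $FB_{\mathfrak r_f}(q_j)$, its transverse coordinate is within $a_2\rho_0/2$ of some $a\in N_j$, and the times $\tau^j_{a,1},\dots,\tau^j_{a,K}$ are $\le\tfrac{13h_0}{21}$--dense in an interval containing the target interval $(\mathfrak t_{q_j}(p)+A_1\rho_0,\ \mathfrak t_{q_j}(p)+h_0-A_1\rho_0)$, whose length $h_0-2A_1\rho_0$ exceeds $\tfrac{13h_0}{21}$; so some $\tau^j_{a,k}$ lies in it, and the thin--flow--box estimate then shows the forward $\vf$--orbit of $p$ meets $\Lambda(P;\rho_0)\subseteq\Lambda(P;r)$ within time $<h_0$. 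The mirror argument bounds the backward return time, and the reverse flow satisfies the same hypotheses. Since each $S_r(p_i)$ is transverse to $X$, the return times of any orbit form a discrete bi--infinite sequence; hence $\Lambda(P;r)$ is a Poincar\'e section, and it is standard with roof function and radius bounded above by $h_0<\mathfrak r$, for every $r\in[\rho_0,K_0\rho_0]$, while $\rho_0<h_0/K_0$.

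\emph{Main obstacle.} The crux is the tension just exploited: a section with return time $<h_0$ made of discs of radius $<h_0$ must be dense along orbits, forcing many tiny discs, whose $K_0$--fold thickenings cannot be disjoint if the centers are merely spread out in $M$. The device that breaks this is to stagger the tiny discs along the \emph{flow} direction --- available precisely because $\rho_0\ll h_0$ --- and the delicate part, which I expect to demand the most care, is making the greedy time--assignment succeed \emph{globally}: one must verify, using the geometry of the finitely many ambient flow boxes together with the separation built into the nets $N_j$ and the spacing of the levels, that only boundedly many (in terms of $m$ and $K_0$) previously placed discs can obstruct any single placement. The flow--box estimates underlying the uniform constants $A_1,a_2$ are routine but must be set up with uniform control over $M$.
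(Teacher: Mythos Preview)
Your strategy---cover by flow boxes, lay a fine transverse net in each, stagger the centers along the flow via a greedy height assignment, and bound the number of obstructing discs per placement---is essentially the paper's; its central ``Claim'' plays exactly the role of your bound $\Xi=O(mK_0^2)$. The gap is in your justification of that bound. You assert that the $(a_2\rho_0/2)$--separation of $N_{j'}$ and the $h_0/3$--spacing of levels force at most $O(K_0^2)$ centers from each $j'$ into the obstructing region in the $x_0$ flow box. But that separation is measured in the $q_{j'}$ box; transferring it requires projecting via $\mathfrak q_{q_{j'}}$, and for that both the obstructing center $c^{j'}_{a',k'}$ and the nearby arc point $\gamma\in\Gamma$ must lie in $FB_{\mathfrak r_f}(q_{j'})$. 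With your choice $a'\in S_{\mathfrak r_f/2}(q_{j'})$ and $|\tau'|$ ranging over the full span of the $\mu_k$, one only gets $\dist_M(c^{j'}_{a',k'},q_{j'})\le(\max_p\|X_p\|)|\tau'|+\mathfrak r_f/2$, and nothing in Lemmas~\ref{LemmaFB_1}--\ref{LemmaFB_2} forces this to be $<\mathfrak d$. Without that containment the argument fails: two obstructing centers from $j'$ are only known to be $O(h_0)$ apart in $M$ (via the arc $\Gamma$, whose diameter is $O(h_0)$), so their base points $a',a''\in N_{j'}$ are only $O(h_0)$ apart, and an $O(h_0)$--ball in an $(a_2\rho_0/2)$--net holds $O((h_0/\rho_0)^2)$ points---the total obstructed length then swamps the window $2h_0/7$ as $\rho_0\to 0$.

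The paper resolves this by making the ambient flow-box radius $r_0$ tiny (of order $h_0\mathfrak d/K_0$), so that each center $p^i_{jk}$, its base grid point $z^i_{jk}\in S_{r_0}(z_i)$, its shift $\theta^i_{jk}\in[-r_0,r_0]$, and the point being processed all lie in $B_{\mathfrak d}(z_i)\subset FB_{\mathfrak r_f}(z_i)$; then $\mathfrak q_{z_i}$ returns each center to its grid point with Lipschitz constant $\mathfrak L$, and the Claim gives $\dist(z^i_{jk},z^i_{\ell m})=O(K_0\rho_0)$ directly. In your framework this amounts to taking $\epsilon_1$ of order $h_0$ (not ``fixed'' independent of $h_0$), putting $N_j$ in $S_{O(\epsilon_1)}(q_j)$ rather than $S_{\mathfrak r_f/2}(q_j)$, and letting the $\mu_k$ span only $O(h_0)$; once you do that $K=O(1)$, the multi-level device becomes superfluous, and your construction collapses to the paper's one-disc-per-grid-point scheme (with an explicit height assignment for the first box and greedy placement thereafter).
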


\medskip
The existence of standard Poincar\'e sections is treated in \cite[\S2]{Bowen-Symbolic-Flows} as a self-evident fact, but we do not think it is completely obvious. We provide a detailed proof of Lemma \ref{LemmaStandardSection} in the appendix.
The next result shows the existence of adapted sections.

\begin{thm}\label{Thm-Adapted-Section}
Every $\vf$--invariant probability measure $\mu$ has adapted Poincar\'e sections
with arbitrarily small roof functions.
\end{thm}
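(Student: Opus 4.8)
The plan is to fix a flow invariant probability measure $\mu$ and produce adapted sections from the one-parameter family supplied by Lemma \ref{LemmaStandardSection}. Given $h_0>0$, apply Lemma \ref{LemmaStandardSection} with, say, $K_0=2$ to obtain centers $p_1,\ldots,p_N$ and a scale $\rho_0<h_0/2$ such that $\Lambda_r:=\Lambda(p_1,\ldots,p_N;r)$ is a standard Poincar\'e section with roof function and radius bounded by $h_0$ for every $r\in[\rho_0,2\rho_0]$. Since $h_0$ is arbitrary, it suffices to show that $\Lambda_r$ is adapted to $\mu$ for Lebesgue--a.e. $r$ in this interval; then pick one such $r$. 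The point of having a whole interval of $r$'s is that the singular sets $\mathfrak S(\Lambda_r)$ are, up to a controlled error, nested along the flow direction, so a Fubini argument in the $r$ variable will let us avoid the bad behavior.

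The key steps are as follows. First, identify the singular set: a point $x\in\Lambda_r$ lies in $\mathfrak S(\Lambda_r)$ essentially when the forward or backward flow orbit of $x$ hits $\partial\Lambda_r=\biguplus_i \partial S_r(p_i)$ before returning to the interior of $\Lambda_r$, i.e. when $\dist_{\Lambda_r}(x,\mathfrak S(\Lambda_r))$ is controlled by how close $f_{\Lambda_r}(x)$ or $f_{\Lambda_r}^{-1}(x)$ comes to the circle $\partial S_r(p_i)$ of radius $r$. So the quantity $\dist_{\Lambda_r}(f^n(x),\mathfrak S)$ is comparable to $\min\{\,|\,r-\|\mathrm{proj}\,\|\,|,\ \dist(\cdot,\partial)\,\}$ evaluated at neighboring iterates, and the crucial object is, for a point $q$ flowing near some $S_\bullet(p_i)$, the distance from its transverse coordinate to the circle of radius $r$. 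Second, write $u_n(x):=\frac1n\log\dist_{\Lambda_r}(f^n_{\Lambda_r}(x),\mathfrak S(\Lambda_r))$. We always have $u_n\le 0$ eventually (diameters are bounded), and by Poincar\'e recurrence $\limsup_n u_n=0$ a.e.; the content is the lower bound $\liminf_n u_n\ge 0$. By Borel--Cantelli it is enough to show $\sum_n \mu_{\Lambda_r}\{x:\dist_{\Lambda_r}(f^n_{\Lambda_r}(x),\mathfrak S(\Lambda_r))<e^{-\epsilon n}\}<\infty$ for a.e. $r$; by $f_{\Lambda_r}$--invariance of $\mu_{\Lambda_r}$ this reduces to controlling $\mu_{\Lambda_r}\{x:\dist_{\Lambda_r}(x,\mathfrak S(\Lambda_r))<\delta\}$. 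Third, and this is the heart, estimate
$$
\int_{\rho_0}^{2\rho_0}\mu_{(\Lambda_r)}\{x\in\Lambda_r:\dist_{\Lambda_r}(x,\mathfrak S(\Lambda_r))<\delta\}\,dr\le C\,\delta
$$
with $C$ independent of $\delta$. The mechanism: using the uniform formula $\mu_{\Lambda_r}(E)=\mu[\bigcup_{0<t<\epsilon}\vf^t E]/\mu[\bigcup_{0<t<\epsilon}\vf^t\Lambda_r]$ and Lemma \ref{Lemma_Smooth_Section}/Lemma \ref{LemmaFB_2} to pass between $\Lambda_r$-distance and $M$-distance with bi-Lipschitz bounds, the set $\{x\in\Lambda_r:\dist_{\Lambda_r}(x,\mathfrak S(\Lambda_r))<\delta\}$ is contained in a $C\delta$-neighborhood (inside $\Lambda_r$) of the finitely many analytic curves obtained as the transverse coordinate of $f_{\Lambda_r}^{\pm1}(\partial S_r(p_j))$, i.e. of the preimages under the flow-box maps of the circles $\partial S_r(p_i)$. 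Flowing a thin annulus $\{q: r-C\delta<\|\mathrm{proj}_i q\|<r\}$ around $S_\bullet(p_i)$ forward to $S_\bullet(p_j)$ has Jacobian bounded by $\mathfrak C$, so its $\mu$-mass — smeared over the flow direction — is at most $\const\cdot\mu[\text{annulus of width }C\delta\text{ around }S_r(p_i)]$; integrating this in $r$ from $\rho_0$ to $2\rho_0$ sweeps the annuli across a flow box of total $\mu$-mass $\le1$, so the $r$-integral of each such term is $O(\delta)$, and there are only finitely many terms (at most $N^2$ ordered pairs, plus the analogous backward count). Summing the geometric series $\sum_n e^{-\epsilon n}$ after inserting $\delta=e^{-\epsilon n}$ gives $\int_{\rho_0}^{2\rho_0}\sum_n \mu_{\Lambda_r}\{\dist_{\Lambda_r}(f^n x,\mathfrak S)<e^{-\epsilon n}\}\,dr<\infty$, hence for a.e. $r$ the sum is finite, Borel--Cantelli gives condition (2) of ``adapted'' for this $r$ (with $\epsilon$ replaced by any fixed positive number, and then intersecting over a sequence $\epsilon=1/m$), the reversed-flow version gives (3), and $(2)\Rightarrow(1)$.

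The main obstacle I expect is the third step: making precise and uniform (in $r$, and in the iterate index through $f$-invariance) the claim that the $\delta$-neighborhood of the singular set inside $\Lambda_r$ has $\mu_{\Lambda_r}$-measure $O(\delta)$ after averaging over $r$. Two subtleties must be handled with care. First, $\mathfrak S(\Lambda_r)$ is not literally $\partial\Lambda_r\cup f_{\Lambda_r}^{-1}(\partial\Lambda_r)\cup f_{\Lambda_r}(\partial\Lambda_r)$: one must check, using Lemmas \ref{LemmaTD}--\ref{Lemma_Smooth_Section}, that being far (in $\Lambda_r$) from these three explicit curves forces $x$ to have the required diffeomorphism neighborhood, so that $\dist_{\Lambda_r}(x,\mathfrak S(\Lambda_r))\ge c\,\dist_{\Lambda_r}(x,\partial\Lambda_r\cup f^{-1}\partial\Lambda_r\cup f\,\partial\Lambda_r)$. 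Second, the change of variables that trades a neighborhood of $f_{\Lambda_r}(\partial S_r(p_j))$ for a neighborhood of $\partial S_r(p_j)$ itself, and then the co-area-type integration $\int dr\ \mu[\{r-C\delta<\rho_i(\cdot)<r\}]\le C\delta\cdot\mu[\text{flow box}]$ where $\rho_i(q):=\|\mathrm{proj}$ of $q$ onto $T_{p_i}M$ along $X\|$, needs the transversality bound $|\measuredangle(X_q,T_qS)|\ge\frac12$ of Lemma \ref{LemmaTD} and the Lipschitz bounds of Lemma \ref{LemmaFB_2} so that the sweep in $r$ genuinely fills a nondegenerate solid region with uniformly bounded overlap. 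Everything else — Borel--Cantelli, the geometric summation, $f$-invariance, reducing $\epsilon$ along a sequence — is routine.
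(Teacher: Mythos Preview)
Your approach is correct and rests on the same geometric core as the paper's: use the one-parameter family $\Lambda_r$ from Lemma~\ref{LemmaStandardSection}, cover $\mathfrak S(\Lambda_r)$ by the flow-box projections $\mathfrak q_{p_i}(\partial S_r(p_j))$ (the paper's set $\mathfrak S_r$), and observe that $\dist_{\Lambda_r}(x,\mathfrak q_{p_i}(\partial S_r(p_j)))<\delta$ forces $|D_{ij}(x)-r|<C\delta$ where $D_{ij}(x):=\dist_{S}(p_j,\mathfrak q_{p_j}(x))$ is an $r$-independent function of $x$. Both proofs then exploit this via Fubini in the $r$ variable.

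The organizational difference is where Borel--Cantelli is applied. The paper fixes a point $p$, rewrites the bad set of radii as $I_\alpha(p)\subset\bigcup_j\{r:|D_{jn}(p)-r|\le 2\mathfrak L e^{-\alpha|n|}\text{ for infinitely many }n\}$, and applies Borel--Cantelli to Lebesgue measure on $r$ to get $\mathrm{Leb}[I_\alpha(p)]=0$ for \emph{every} $p$; Fubini then gives $\mu_{\Lambda_b}[A_\alpha(r)]=0$ for a.e.\ $r$. You instead integrate in $r$ first to obtain $\int_r\mu_{\Lambda_r}\{\dist(x,\mathfrak S_r)<\delta\}\,dr\le C\delta$, invoke $f_{\Lambda_r}$--invariance of $\mu_{\Lambda_r}$ to replace $f^n x$ by $x$, sum the geometric series in $n$, and apply Borel--Cantelli to $\mu_{\Lambda_r}$ for a.e.\ $r$. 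Your route is a bit more streamlined, since the $f$-invariance step cleanly decouples the iterate index $n$ from the geometry; the paper's route yields the slightly stronger pointwise statement that every $p$ has a null set of bad radii, though this is not needed. Your description of the key integral estimate via ``Jacobian bounded by $\mathfrak C$'' is roundabout---what you actually need (and sketch correctly at the end) is just Fubini: $\int_r 1_{\{|D_{ij}(x)-r|<C\delta\}}\,dr\le 2C\delta$ for each $x$.
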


\begin{proof}
We use parameter selection, as in \cite{Ledrappier-Strelcyn}.
Let $\Lambda_r:=\L(p_1,\ldots,p_N;r)$, $a\leq r\leq b$, be a one-parameter family of
standard Poincar\'e sections as in Lemma \ref{LemmaStandardSection}.
We will show that $\Lambda_r$ is adapted to $\mu$ for Lebesgue a.e. $r\in[a,b]$.

Without loss of generality
$a,b,r,\sup R_{\Lambda_r}\leq h_0<\mathfrak r=\frac{1}{10}\bigl[\frac{\min\{1,\mathfrak r_s,\mathfrak r_f,\mathfrak d\}}{S_0}\bigr]$ for all $r\in [a,b]$,
where $\mathfrak r_s,\mathfrak r_f, \mathfrak d$ are given by Lemmas
\ref{LemmaTD}--\ref{LemmaFB_2}, and $S_0:=1+\max\|X_p\|$.
We define the {\em boundary} of  a canonical transverse disc $S_r(p)$ by the formula
$\del S_r(p):=\{\exp_{p}(\vec{v}): \vec{v}\in T_{p} M, \vec{v}\perp X_{p}, \|\vec{v}\|_p=r\}$. Let
$$
\mathfrak S_r:=\bigcup\big\{{\mathfrak q}_{p_i}\bigl[\del S_r(p_j)\bigr]:
1\leq i,j\leq N\, , \dist_M(S_r(p_i),S_r(p_j))\leq h_0 S_0\bigr\},
$$
where ${\mathfrak q}_{p_i}: {\rm FB}_{\mathfrak r_f}(p_i)\to S_{\mathfrak r_f}(p_i)$
is given by Lemma \ref{LemmaFB_1}.
The assumption that $\dist_M(S_r(p_i),S_r(p_j))\leq h_0 S_0$ ensures the inclusion
$\partial S_r(p_j)\subset {\rm FB}_{\mathfrak r_f}(p_i)$, since for all $q\in \partial S_r(p_j)$,
$\dist_M(q,p_i)\leq \diam[S_r(p_j)]+\dist_M(S_r(p_j),S_r(p_i))+\diam[S_r(p_i)]<h_0 S_0+4r<5\mathfrak rS_0<\mathfrak d$, whence $q\in B_\mathfrak d(p_i)\subset {\rm FB}_{\mathfrak r_f}(p_i)$.

\medskip
\noindent
{\sc Claim.} {\em $\mathfrak S_r$ contains the singular set of $\Lambda_r$.}

\medskip
\noindent
{\em Proof.\/} Fix $r$ and write $R=R_{\Lambda_r}$, $f=f_{\Lambda_r}$.
We show that if $p\in\Lambda_r\setminus\mathfrak S_r$ then $f,f^{-1}$ are local
diffeomorphisms on a neighborhood of $p$.
Let $i,j$ be the unique indices s.t.  $p\in S_r(p_i)$ and $f(p)\in S_r(p_j)$. The speed of the flow is
less than $S_0$, so $\dist_M(p,p_j)\leq \dist_M(p,f(p))+\dist_M(f(p),p_j)< h_0 S_0+r<\mathfrak d$.
Thus $p\in {\rm FB}_{\mathfrak r_f}(p_j)$. Similarly, $\dist_M(f(p),p_i)<\mathfrak d$, so
$f(p)\in {\rm FB}_{\mathfrak r_f}(p_i)$. It follows that
$R(p)=-\mathfrak t_{p_j}(p)=|\mathfrak t_{p_j}(p)|\textrm{ and }
f(p)=\mathfrak q_{p_j}(p).$
Similarly, $p=\mathfrak q_{p_i}[f(p)]$. Since $\dist_M(S_r(p_i),S_r(p_j))\leq \dist_M(p,\vf^{R(p)}(p))<h_0 S_0$
and $p\not\in\mathfrak S_r$,
$$
p\not\in \partial S_r(p_i)\textrm{ and }f(p)\not\in \partial S_r(p_j).
$$
So $\exists V\subset \Lambda_r\setminus\partial \Lambda_r$ relatively open s.t. $V\owns p$ and  $\mathfrak q_{p_j}(V)\subset \Lambda_r\setminus\partial\Lambda_r$.
The map ${\mathfrak q}_{p_j}:V\to {\mathfrak q}_{p_j}(V)$  is a diffeomorphism, because $\mathfrak q_{p_j}$ is differentiable and $\mathfrak q_{p_i}\circ\mathfrak q_{p_j}=\id$ on $V$. We will show that $f\upharpoonright_W=\mathfrak q_{p_j}\upharpoonright_W$ on some open $W\subset \Lambda_r\setminus\partial\Lambda_r$ containing $p$.

Since $R(p)=|\mathfrak t_{p_j}(p)|$, the curve $\{\vf^t(p):0< t< |{\mathfrak t}_{p_j}(p)|\}$
does not intersect $\Lambda_r$. The set $\Lambda_r$ is compact and $\vf,{\mathfrak t}_{p_j}$
are continuous, so $p$ has a relatively open neighborhood $W\subset V$ s.t.
$\{\vf^t(q):0< t< |{\mathfrak t}_{p_j}(q)|\}$  does not intersect $\Lambda_r$  for all $q\in W$.
So
$
f\upharpoonright_W={\mathfrak q}_{p_j}\upharpoonright_W$, and we see that $f$ is a local diffeomorphism at $p$.
Similarly,  $f^{-1}$ is a local diffeomorphism at $p$, which proves that $p\not\in\mathfrak S(\Lambda_r)$,
and hence the claim.

\medskip
We now proceed to the proof of the theorem. We begin with some reductions. Let $f_r:=f_{\Lambda_r}$.
%
By the claim it is enough to show that
\begin{equation}\label{DSC}
\mu_{\L_r}\biggl\{p\in\L_r: \liminf\limits_{|n|\to\infty}\frac{1}{|n|}\log\dist_{\Lambda_r}(f^n_r(p),\mathfrak S_r)<0\biggr\}=0\textrm{  for  a.e. $r\in[a,b]$.}
\end{equation}
Indeed, this implies $\exists r$ s.t.
$\liminf\limits_{|n|\to\infty}\frac{1}{|n|}\log\dist_{\Lambda_r}(f^n_r(p),\mathfrak S(\Lambda_r))\geq 0$
for $\mu_{\Lambda_r}$--a.e. $p\in\Lambda_r$, and the limit is non-positive, because
$\dist_{\Lambda_r}(q,\mathfrak S(\Lambda_r))\leq \dist_{\Lambda_r}(q,\partial\Lambda)\leq r$
for all $q\in\Lambda$. Let
$$
A_\alpha(r):=\{p\in\L_b:\exists \textrm{ infinitely many }n\in\Z\textrm{ s.t. } \tfrac{1}{|n|}\log\dist_{\Lambda_b}(f^{n}_b(p),\mathfrak S_r)<-\alpha\}.
$$
We have $\Lambda_r\subset\Lambda_b$, so $\mu_{\Lambda_r}\ll\mu_{\Lambda_b}$,
$\dist_{\Lambda_r}\geq \dist_{\Lambda_b}$, and $f_r(x)=f_b^{n(x)}(x)$ with
$1\leq n(x)\leq \frac{\sup R_r}{\inf R_b}$. Therefore (\ref{DSC}) follows from the statement
\begin{equation}\label{mu(A)=0}
\forall \textrm{  $\alpha>0$ rational }\bigl(\mu_{\L_b}[A_\alpha(r)]=0\textrm{ for  a.e. }r\in[a,b]\bigr).
\end{equation}
Let
$
I_\alpha(p):=\{a\leq r\leq b:p\in A_\alpha(r)\},
$
then $1_{A_\alpha(r)}(p)=1_{I_\alpha(p)}(r)$, whence by Fubini's Theorem
$\int_a^b \mu_{\L_b}[A_\alpha(r)]dr=\int_{\L_b}\mathrm{Leb}[I_\alpha(p)]d\mu_{\L_b}(p)$.
So  (\ref{mu(A)=0}) follows from
\begin{equation}\label{Leb(I)}
\mathrm{Leb}[I_\alpha(p)]=0\textrm{ for all }p\in\L_b.
\end{equation}
In summary, (\ref{Leb(I)}) $\Rightarrow$ (\ref{mu(A)=0}) $\Rightarrow$ (\ref{DSC}) $\Rightarrow$ the theorem.

\medskip
\noindent
{\sc Proof of (\ref{Leb(I)}):} Fix $p\in\L_b$. If $r\in I_\alpha(p)$ then
$\dist_{\L_b}(f^n_b(p),\mathfrak S_r)<e^{-\alpha |n|}$  for infinitely many $n\in\Z$.
The section $\Lambda_b$ is a finite union of canonical transverse discs $S_b(p_i)$,
and $S_b(p_i)\cap\mathfrak S_r$ is a finite union of projections $S_b(p_i)\cap\mathfrak q_{p_i}[\partial S_r(p_j)]$,
each satisfying $\dist_M(S_r(p_i),S_r(p_j))\leq h_0 S_0$. It follows that there are infinitely many
$n\in\Z$ such that  for some {\em fixed} $1\leq i,j \leq N$ s.t. $\dist_M(S_r(p_i),S_r(p_j))\leq h_0 S_0$,
it holds that
\begin{equation}\label{n}
f^n_b(p)\in S_{b}(p_i)\textrm{ and }
\dist_{\L_b}(f^n_b(p), {\mathfrak q}_{p_i}[\del S_r(p_j)])<e^{-\alpha |n|}.
\end{equation}
Since $\dist_M(S_r(p_i),S_r(p_j))\leq h_0 S_0$, $\dist_M(p_i,p_j)\leq h_0 S_0+2r<\mathfrak d$.
This, and our assumptions on $b$ and $h_0$, guarantee that
${S_b(p_j)},f^n_b(p),{\mathfrak q}_{p_i}(\del S_r(p_j))$ are inside
${\rm FB}_{\mathfrak r_f}(p_i)\cap {\rm FB}_{\mathfrak r_f}(p_j)$, which are in the domains of definition
of $\mathfrak q_{p_i}$ and $\mathfrak q_{p_j}$.

Suppose $n$ satisfies (\ref{n}). Let $q$ be the point that minimizes
$\dist_{\L_b}(f^n_b(p),{\mathfrak q}_{p_i}(q))$ over all $q\in \partial S_r(p_j)$, then
\begin{align*}
&e^{-\alpha |n|}>\dist_{\L_b}(f^n_b(p), {\mathfrak q}_{p_i}(q))\geq \dist_{M}(f^n_b(p), {\mathfrak q}_{p_i}(q))\\
&\geq \mathfrak L^{-1}\dist_{M}({\mathfrak q}_{p_j}[f^n_b(p)], {\mathfrak q}_{p_j}[{\mathfrak q}_{p_i}(q)])& \because\mathrm{Lip}({\mathfrak q}_j)\leq \mathfrak L\\
&= \mathfrak L^{-1}\dist_{M}({\mathfrak q}_{p_j}[f^n_b(p)], q)& \because q\in  S_r(p_j)\\
&\geq (2\mathfrak L)^{-1}\dist_{\L_b}({\mathfrak q}_{p_j}[f^n_b(p)], q).& \because\dist_{S_b(p_j)}\leq 2\dist_M
\end{align*}
Thus $\dist_{\L_b}({\mathfrak q}_{p_j}[f^n_b(p)], q)\leq 2\mathfrak L e^{-\alpha |n|}$, which implies that
$$
|\dist_{\L_b}(p_j, {\mathfrak q}_{p_j}[f^n_b(p)])-\dist_{\L_b}(p_j,q)|\leq 2\mathfrak L e^{-\alpha |n|}.
$$
We now use the special geometry of canonical transverse discs:
$q\in \del S_r(p_j)$, so $\dist_{\L_b}(p_j,q)=r$.
Writing $D_{jn}(p):=\dist_{\L_b}(p_j, {\mathfrak q}_{p_j}[f^n_b(p)])$,
we see that for every $n$ which satisfies (\ref{n}), $|D_{jn}(p)-r|\leq 2\mathfrak L e^{-\alpha |n|}$.
Thus every $r\in I_\alpha(p)$ belongs to
$$
\bigcup_{j=1}^N\{r\in [a,b]: \exists \textrm{ infinitely many  $n\in\Z$} \textrm{ s.t.  }
|r- D_{jn}(p)|\leq 2\mathfrak L e^{-\alpha |n|}\}.
$$
By the Borel-Cantelli Lemma, this set has zero Lebesgue measure.
\end{proof}

\medskip
Two standard Poincar\'e sections with the same set of centers are called {\em concentric}.
Since $b/a$ can be chosen arbitrarily large, the last proof shows the following.

\begin{cor}\label{Cor-Adapted}
Let $\mu$ be a $\vf$--invariant probability measure.
For every $h_0>0$ there are two concentric standard Poincar\'e sections $\Lambda_i=\Lambda(p_1,\ldots,p_N;r_i)$ with height functions bounded above by $h_0$, s.t. $\Lambda_1$ is adapted to $\mu$ and $r_2>2 r_1$.
\end{cor}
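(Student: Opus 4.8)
The plan is to re-run the argument of the preceding theorem, but to start it from a one-parameter family of concentric sections whose parameter interval is long enough to accommodate a factor-two gap between the two radii. First I would fix $h_0>0$ and apply Lemma \ref{LemmaStandardSection} with this $h_0$ and with, say, $K_0:=3$ (any $K_0>2$ will do, and $K_0$ may be taken arbitrarily large). This produces points $p_1,\dots,p_N\in M$ and a number $\rho_0\in(0,h_0/3)$ such that $\Lambda_r:=\Lambda(p_1,\dots,p_N;r)$ is a standard Poincar\'e section with roof function and radius bounded above by $h_0$ for every $r\in[\rho_0,3\rho_0]$. Crucially, all of these sections share the centers $p_1,\dots,p_N$, so any two of them are concentric by definition.

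Next I would apply the preceding theorem with $[a,b]:=[\rho_0,3\rho_0]$. Its proof shows more than ``$\Lambda_r$ is adapted for a.e.\ $r$'': it shows that the set $E\subseteq[\rho_0,3\rho_0]$ of parameters $r$ for which $\Lambda_r$ is \emph{not} adapted to $\mu$ is Lebesgue-null. Indeed, the proof establishes $\mu_{\Lambda_b}[A_\alpha(r)]=0$ for a.e.\ $r$ and every rational $\alpha>0$, which is exactly the statement that the bad parameter set is null; together with the Claim ($\mathfrak S_r\supseteq\mathfrak S(\Lambda_r)$) and the a priori bound $\dist_{\Lambda_r}(\cdot,\mathfrak S(\Lambda_r))\le r$, this yields conditions $(2)$ and $(3)$ in the definition of an adapted section (and $(2)\Rightarrow(1)$) for every $r\in[\rho_0,3\rho_0]\setminus E$.

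To conclude, I would pick $r_1\in[\rho_0,\tfrac32\rho_0)\setminus E$ --- possible since $[\rho_0,\tfrac32\rho_0)$ has positive length while $E$ is null --- and set $r_2:=3\rho_0$. Then $\Lambda_1:=\Lambda(p_1,\dots,p_N;r_1)$ and $\Lambda_2:=\Lambda(p_1,\dots,p_N;r_2)$ are standard Poincar\'e sections with roof functions bounded above by $h_0$ (Lemma \ref{LemmaStandardSection}), they are concentric, $\Lambda_1$ is adapted to $\mu$, and $r_2=3\rho_0>2r_1$ because $r_1<\tfrac32\rho_0$. The section $\Lambda_2$ need not be adapted, which the statement permits.

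There is no genuine obstacle here beyond bookkeeping, since all the substance is already contained in Lemma \ref{LemmaStandardSection} and in the preceding theorem. The only points that need a little care are: first, extracting from that theorem's proof the \emph{quantitative} conclusion that the bad parameter set is null (the bare ``a.e.'' statement already suffices for this); and second, observing that the family produced by Lemma \ref{LemmaStandardSection} is concentric throughout its parameter range, so that varying $r$ does not disturb the centers --- which is what lets us take $r_2=b$ and $r_1$ well below $b/2$ simultaneously.
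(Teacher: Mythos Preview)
Your proposal is correct and follows essentially the same approach as the paper: choose $K_0>2$ in Lemma~\ref{LemmaStandardSection}, use the a.e.\ adaptedness from the preceding theorem to pick $r_1$ near the bottom of the parameter interval, and set $r_2=b$. The paper's one-line proof (``take $r_1$ close to $a$ s.t.\ $\Lambda_{r_1}$ is adapted, and $r_2=b$'') is exactly what you have spelled out in detail.
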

\noindent
To see this   take $r_1$ close to $a$ s.t. $\Lambda_{r_1}$ is adapted, and $r_2=b$.

\medskip
\noindent
{\em Remark.\/} The adapted Poincar\'e section given in Theorem \ref{Thm-Adapted-Section} and Corollary \ref{Cor-Adapted} depends on the measure $\mu$. It would be  interesting to construct, for a given $\chi>0$,  a Poincar\'e section which is adapted to all
ergodic hyperbolic measures with one Lyapunov exponent bigger than $\chi$ and one Lyapunov exponent smaller than $-\chi$.

\section{Pesin charts for adapted  Poincar\'e sections}\label{SectionPesinCharts}

One of the central tools in Pesin theory is a system of local coordinates which present
a non-uniformly hyperbolic map as a perturbation of a uniformly hyperbolic linear map
\cite{Pesin-Izvestia-1976,Katok-Hasselblatt-Book,Barreira-Pesin-Non-Uniform-Hyperbolicity-Book}.
We will construct such coordinates for the Poincar\'e map of an adapted  Poincar\'e section.
Adaptability is used, as in \cite{Katok-Strelcyn}, to control the size of the coordinate patches
along typical orbits (Lemma \ref{Lemma_Q_decay}).

Suppose $\mu$ is a $\vf$--invariant probability measure on $M$, and assume that
$\mu$ is $\chi_0$--hyperbolic for some $\chi_0>0$. We do not assume ergodicity.
Fix once and for all a standard Poincar\'e section $\Lambda=\Lambda(p_1,\ldots,p_N;r)$
for $\vf$, which is adapted to $\mu$. Set $f:=f_\Lambda, R:=R_{\Lambda}, \mathfrak S:=\mathfrak S(\Lambda)$,
and let $\mu_\Lambda$ be the induced measure on $\Lambda$.

Without loss of generality, there is a larger concentric standard Poincar\'e section
$\wt{\Lambda}:=\L(p_1,\ldots,p_N;\wt{r})$  s.t. $\wt{r}>2r$. Thus $\wt{\Lambda}\supset\Lambda$,
and $\dist_{\wt{\Lambda}}(\Lambda,\partial\wt{\Lambda})>r$. We will use $\wt{\Lambda}$ as a safety
margin in the following definition of the {\em exponential map} of $\Lambda$:
 $$
\mathrm{Exp}_x: \{\vec{v}\in T_x \L: \|\vec{v}\|_x<r\}\to \wt{\L},\ \Exp_x(\vec{v}):=\g_x(\|\vec{v}\|_x),
$$
where $\g_x(\cdot)$ is the geodesic in $\wt{\L}$ s.t. $\g(0)=x$ and $\dot{\g}(0)=\vec{v}$.
This makes sense even near $\partial\Lambda$, because every geodesic of $\L$ can be
prolonged $r$ units of distance into $\wt{\Lambda}$ without falling off the edge.
Notice that geodesics of $\wt{\Lambda}$ are usually not geodesics of $M$, therefore
$\Exp_x$ is usually different from $\exp_x$.
As in \cite[chapter 9]{Spivak}, there are $\rho_{\textrm{dom}}, \rho_{\textrm{im}}\in(0,r)$
s.t. for every $x\in \Lambda$, $\mathrm{Exp}_x$ is a 2--bi-Lipschitz diffeomorphism from
$\{\vec{v}\in T_x \Lambda: \|\vec{v}\|_x<\sqrt{2}\rho_{\mathrm{dom}}\}$ onto a relative
neighborhood of $\{y\in \wt{\Lambda}:\dist_{\wt{\Lambda}}(y,x)<\rho_{\mathrm{im}}\}$.
%

\subsection*{Non-uniform hyperbolicity}
Since $\Lambda$ is adapted to $\mu$, $\mu_\Lambda(\mathfrak S)=0$. By Lemma \ref{Lemma_Lyap_Exp},
for $\mu_\Lambda$--a.e. $x\in \Lambda$, $f$ has one Lyapunov exponent in $(-\infty,-\chi_0\inf R)$
and one Lyapunov exponent in $(\chi_0\inf R,\infty)$. Let $\chi:=\chi_0\inf R$.

\medskip
\noindent
{\sc Non-uniformly hyperbolic set:}
Let $\NUH_\chi(f)$ be the set  of $x\in \Lambda\setminus\bigcup_{n\in\mathbb Z}f^{-n}(\mathfrak S)$
s.t. $T_{f^n(x)}\L=E^u(f^n(x))\oplus E^s(f^n(x))$, $n\in\Z$, where $E^u,E^s$ are one-dimensional
linear subspaces, and:
\begin{enumerate}[(i)]
\item $\lim\limits_{n\to\pm\infty}\frac{1}{n}\log\|df^n_x\vec{v}\|<-\chi$ for all non-zero $\vec{v}\in E^s(x)$.
\item $\lim\limits_{n\to\pm\infty}\frac{1}{n}\log\|df^{-n}_x\vec{v}\|<-\chi$ for all non-zero $\vec{v}\in E^u(x)$.
\item $\lim\limits_{n\to\pm\infty}\frac{1}{n}\log|\sin\measuredangle(E^s(f^n(x)),E^u(f^n(x))) |=0$.
\item $df_x E^s(x)=E^s(f(x))$ and $df_x E^u(x)=E^u(f(x))$.
\end{enumerate}
By the Oseledets Theorem and Lemma \ref{Lemma_Lyap_Exp}, $\mu_\L[\NUH_{\chi}(f)]=1$.

\subsection*{Pesin charts}

These are a system of coordinates on $\NUH_\chi(f)$ which
simplifies the form of $f$. The following definition is slightly different than in Pesin's original
work \cite{Pesin-Izvestia-1976}, but the proofs are essentially the
same.\footnote{The difference is in the choice of $\chi$ in the exponential terms
$e^{2k\chi}$ in the definitions of the Pesin parameters $s(x),u(x)$. In Pesin's work,
$\chi=$ Lyapunov exponent of $x$ minus $\epsilon$, while here it is constant.}
Fix a measurable family of unit vectors $\un{e}^u(x)\in E^u(x),\un{e}^s(x)\in E^s(x)$ on
$\NUH_{\chi}(f)$. Since $\dim E^{u/s}(x)=1$,  $\un{e}^{u/s}(x)$ are determined up to a sign.
To make the choice, let $(\un{e}^1_x,\un{e}^2_x)$ be a continuous choice of basis for $T_x\L$
so that $\<\un{e}^1_x,\un{e}^2_x,X_x\>$ has   positive orientation. Pick $\un{e}^u(x),\un{e}^s(x)$
s.t. $\measuredangle(\un{e}^1_x,\un{e}^s(x))\in [0,\pi)$, and $\measuredangle(\un{e}^s(x),\un{e}^u(x))>0$.

\medskip
\noindent
{\sc Pesin parameters:\/}
Given  $x\in\NUH_{\chi}(f)$, let
\begin{enumerate}[$\circ$]
\item $\a(x):=\measuredangle(\un{e}^s(x),\un{e}^u(x))$,
\item $s(x):=\sqrt{2}\left(\sum_{k=0}^\infty e^{2k\chi}\|df^k_x \un{e}^s(x)\|^2_{f^k(x)}\right)^{\frac{1}{2}}$,
\item $u(x):=\sqrt{2}\left(\sum_{k=0}^\infty e^{2k\chi}\|df^{-k}_x \un{e}^u(x)\|^2_{f^{-k}(x)}\right)^{\frac{1}{2}}$.
\end{enumerate}
The infinite series converge, because $x\in\NUH_\chi(f)$.

\medskip
\noindent
{\sc Oseledets-Pesin reduction:}
Define a  linear transformation $C_{\chi}(x):\R^2\to T_x\Lambda$
by mapping ${1\choose 0}\mapsto s(x)^{-1}\un{e}^s(x)$ and ${0\choose 1}\mapsto u(x)^{-1}\un{e}^u(x)$.

\medskip
This diagonalizes the derivative cocycle $df_x:T_x M\to T_{f(x)} M$ as follows.

\begin{thm}\label{Thm_OP_Reduction}
$\exists C_\vf$ s.t.  $\forall x\in\NUH_{\chi}(f)$,  $
C_{\chi}(f(x))^{-1}\circ df_x\circ C_{\chi}(x)=\bigl({\tiny
\begin{array}{cc}
A_x & 0 \\
0 & B_x
\end{array}}
\bigr)
$, where $C_\vf^{-1}\leq |A_x|\leq e^{-\chi}$, and $e^{\chi}\leq |B_x|\leq C_\vf$.
\end{thm}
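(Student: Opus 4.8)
The plan is to show that $C_\chi(x)$ is precisely the Oseledets--Pesin reduction matrix adapted to the Lyapunov splitting $E^s(x)\oplus E^u(x)$, and then to read off the entries of the conjugated cocycle directly from the definitions of $s(x)$, $u(x)$, and $\alpha(x)$. Since $df_x$ maps $E^s(x)$ to $E^s(f(x))$ and $E^u(x)$ to $E^u(f(x))$ (property (iv) of $\NUH_\chi(f)$), and $C_\chi(x)$ sends ${1\choose 0}$ into $E^s(x)$ and ${0\choose 1}$ into $E^u(x)$, the conjugated matrix $C_\chi(f(x))^{-1}\circ df_x\circ C_\chi(x)$ is automatically diagonal; the content of the theorem is the size estimate on the diagonal entries $A_x,B_x$.

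First I would compute $A_x$. Write $df_x\,\un{e}^s(x)=\lambda_s(x)\,\un{e}^s(f(x))$ for a scalar $\lambda_s(x)$ (up to the sign ambiguity in the choice of $\un e^s$, which only affects $A_x$ by a sign and is irrelevant for $|A_x|$). Then $df_x\circ C_\chi(x)({1\choose 0})=s(x)^{-1}\lambda_s(x)\,\un e^s(f(x))$, so $A_x=\dfrac{s(x)}{s(f(x))}\lambda_s(x)$. The key algebraic identity is the ``telescoping'' relation for $s$: from the definition,
\[
s(x)^2=2\|\un e^s(x)\|_x^2+e^{2\chi}\lambda_s(x)^2\, s(f(x))^2 = 2 + e^{2\chi}\lambda_s(x)^2\, s(f(x))^2,
\]
using that $\un e^s$ is a unit vector and regrouping the defining series after applying $df$. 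This gives $A_x^2=e^{-2\chi}\Bigl(1-\dfrac{2}{s(x)^2}\Bigr)$, hence immediately $|A_x|\le e^{-\chi}$. For the lower bound one notes $s(x)\ge\sqrt 2$ (the $k=0$ term alone), so $1-2/s(x)^2\ge 0$, and to get $|A_x|\ge C_\vf^{-1}$ one uses that $s(x)$ is finite — in fact $s(x)\le s_{\max}$ for a constant $s_{\max}$ depending only on the uniform bounds $\|df^{\pm1}\|\le\mathfrak C$ from Lemma~\ref{Lemma_Smooth_Section} and on $\chi$; this gives $1-2/s(x)^2\ge 1-2/s_{\max}^2>0$ provided one first checks $s_{\max}>\sqrt 2$, which holds because the $k\ge 1$ terms are strictly positive. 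So $C_\vf:=e^{\chi}(1-2/s_{\max}^2)^{-1/2}$ works for this entry. The computation of $B_x$ is the mirror image, running the same telescoping identity for $u$ along the inverse cocycle: $df_x\,\un e^u(x)=\lambda_u(x)\un e^u(f(x))$ leads to $u(f(x))^2 = 2 + e^{2\chi}\lambda_u(x)^{-2}u(x)^2$, whence $B_x=\dfrac{u(x)}{u(f(x))}\lambda_u(x)$ satisfies $B_x^{-2}=e^{-2\chi}\bigl(1-2/u(f(x))^2\bigr)$, giving $|B_x|\ge e^{\chi}$ and $|B_x|\le C_\vf$ after enlarging $C_\vf$ if necessary.

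The main obstacle is establishing the two-sided uniform bound $\sqrt 2 \le s(x)\le s_{\max}$ (and likewise for $u$) with $s_{\max}$ depending only on $\vf$ (through $\mathfrak C$) and on $\chi$ — without this, $A_x$ could degenerate to $e^{-\chi}$ only (trivial) or the lower bound could fail. The upper bound is where the global Lipschitz control of $df$ on $\Lambda'$ enters: since $\|df_x^k\un e^s(x)\|\le\mathfrak C^k$ only gives a divergent series against $e^{2k\chi}$, one genuinely needs the Oseledets/$\NUH_\chi$ structure — the point is that $\|df_x^k\un e^s(x)\|$ decays like $e^{-k\chi'}$ for some $\chi'$ slightly larger than $\chi$ along a.e. orbit, so $\sum_k e^{2k\chi}\|df_x^k\un e^s(x)\|^2<\infty$ pointwise, but the \emph{uniform} bound $s_{\max}$ is not pointwise-obvious. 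The resolution (standard in Pesin theory, cf.\ \cite{Barreira-Pesin-Non-Uniform-Hyperbolicity-Book}) is that $s(x)<\infty$ for all $x\in\NUH_\chi(f)$ by definition, and one does \emph{not} claim a uniform $s_{\max}$ over all of $\NUH_\chi(f)$; instead the constant $C_\vf$ in the statement is allowed to depend only on $\vf$ because the estimates $C_\vf^{-1}\le|A_x|$ and $|B_x|\le C_\vf$ are in fact obtained differently: from $A_x^2 s(f(x))^2=e^{-2\chi}\lambda_s(x)^2 s(f(x))^2 = s(x)^2-2\le \mathfrak C^2\|df_x\|^2\cdot(\text{shifted series})$, one gets $|A_x|\ge |\lambda_s(x)|e^{-\chi}\cdot\dfrac{s(x)}{s(f(x))}$ and then bounds $|\lambda_s(x)|\ge\mathfrak C^{-1}$ and $s(x)/s(f(x))\ge$ a constant via the same telescoping relation rearranged — so I would set up the recursion carefully so that the ratio $s(x)/s(f(x))$ is controlled by $e^{-\chi}$ and $\mathfrak C$ alone, which is exactly the classical argument and makes $C_\vf$ depend only on $\mathfrak C$ and $\chi_0,\inf R$.
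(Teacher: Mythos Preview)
Your approach is the standard Oseledets--Pesin reduction, which is exactly what the paper defers to (the references \cite{Barreira-Pesin-Non-Uniform-Hyperbolicity-Book}, \cite{Katok-Hasselblatt-Book}, together with the uniform bounds $\|df_x^{\pm1}\|\le\mathfrak C$ from Lemma~\ref{Lemma_Smooth_Section}). The diagonality is clear, the telescoping identities are correct, and they yield $A_x^2=e^{-2\chi}\bigl(1-2/s(x)^2\bigr)$ and $B_x^{-2}=e^{-2\chi}\bigl(1-2/u(f(x))^2\bigr)$, from which $|A_x|\le e^{-\chi}$ and $|B_x|\ge e^{\chi}$ are immediate. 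One minor slip: your displayed expression $A_x=\tfrac{s(x)}{s(f(x))}\lambda_s(x)$ has the ratio inverted --- applying $C_\chi(f(x))^{-1}$ to $s(x)^{-1}\lambda_s(x)\,\un e^s(f(x))$ gives $A_x=\tfrac{s(f(x))}{s(x)}\lambda_s(x)$ --- though the formula you then write for $A_x^2$ is the one consistent with the correct expression.

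The genuine gap is in the lower bound for $|A_x|$ (and, symmetrically, the upper bound for $|B_x|$). You argue that $s(x)\le s_{\max}$ implies $1-2/s(x)^2\ge 1-2/s_{\max}^2$, but $s\mapsto 1-2/s^2$ is \emph{increasing}, so an upper bound on $s$ yields an \emph{upper} bound on this quantity, not a lower one. Moreover, as you correctly suspect in your last paragraph, there is no uniform upper bound on $s(\cdot)$ over $\NUH_\chi(f)$, and none is needed. What is needed is a uniform \emph{lower} bound $s(x)^2\ge 2+\delta$ with $\delta>0$ depending only on $\mathfrak C$ and $\chi$, and this falls out of the $k=1$ term of the series: since $\|(df_x)^{-1}\|\le\mathfrak C$ forces $\|df_x\un e^s(x)\|\ge\mathfrak C^{-1}$,
\[
s(x)^2\ \ge\ 2+2e^{2\chi}\|df_x\un e^s(x)\|^2\ \ge\ 2+2e^{2\chi}\mathfrak C^{-2},
\]
whence $1-2/s(x)^2\ge e^{2\chi}\mathfrak C^{-2}\big/(1+e^{2\chi}\mathfrak C^{-2})$ and $|A_x|\ge(e^{2\chi}+\mathfrak C^2)^{-1/2}$. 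The dual bound $u(f(x))^2\ge 2+2e^{2\chi}\mathfrak C^{-2}$ (now using $\|df_x\|\le\mathfrak C$, hence $\|(df_x)^{-1}\un e^u(f(x))\|\ge\mathfrak C^{-1}$) gives $|B_x|\le(e^{2\chi}+\mathfrak C^2)^{1/2}$. This is precisely where the paper's appeal to the uniform derivative bounds of Lemma~\ref{Lemma_Smooth_Section} enters; once you make this substitution your argument is complete and $C_\vf$ depends only on $\mathfrak C$ and $\chi$ as claimed.
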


\noindent
The proof is a routine modification of the proofs in \cite[theorem 3.5.5]{Barreira-Pesin-Non-Uniform-Hyperbolicity-Book}
or \cite[theorem S.2.10]{Katok-Hasselblatt-Book}, using the uniform bounds on
$df\upharpoonright_{\Lambda\setminus\mathfrak S}$ (Lemma \ref{Lemma_Smooth_Section}).

Our conventions for $\un{e}^s(x),\un{e}^u(x)$ guarantee that $C_\chi(x)$ is orientation-preserving,
and one can show exactly as in \cite[Lemmas 2.4--2.5]{Sarig-JAMS} that
\begin{equation}\label{C-inverse-norm}
\|C_\chi(x)\|\leq 1\textrm{ and }\frac{1}{\sqrt{2}}\frac{\sqrt{s(x)^2+u(x)^2}}{|\sin\a(x)|}\leq \|C_{\chi}(x)^{-1}\|\leq \frac{\sqrt{s(x)^2+u(x)^2}}{|\sin\a(x)|}.
\end{equation}
We see that $\|C_\chi(x)^{-1}\|$ is large exactly when $E^s(x)\approx E^u(x)$ (small $|\sin\alpha(x)|$),
or when it takes a long time to notice the exponential decay of $\frac{1}{n}\log\|df_x^n \un{e}^s(x)\|$
or of $\frac{1}{n}\log\|df_x^{-n} \un{e}^u(x)\|$ (large $s(x)$ or large $u(x)$).
In summary, large $\|C_\chi(x)^{-1}\|$ means bad hyperbolicity.

\medskip
\noindent
{\sc Pesin Maps:}
The {\em Pesin map} at $x\in\NUH_{\chi}(f)$ (not to be confused with the Pesin {\em chart} defined below) is
$\Psi_x:[-\rho_{\dom},\rho_{\dom}]^2\to \wt{\L}$, given by
$$
\Psi_x(u,v)=\Exp_x\biggl[C_{\chi}(x){u\choose v}\biggr].
$$
The map $\Psi_x$  is orientation-preserving, and it maps $[-\rho_{\dom},\rho_{\dom}]^2$
diffeomorphically onto a neighborhood of $x$ in $\wt{\Lambda}\setminus\partial\wt{\Lambda}$.
We have $\Lip(\Psi_x)\leq 2$, because $\|C_\chi(x)\|\leq 1$, but $\Lip(\Psi_x^{-1})$ is not
uniformly bounded, because $\|C_\chi(x)^{-1}\|$ can be arbitrarily large.

\medskip
\noindent
{\sc Maximal size:} Fix some parameter $0<\epsilon<2^{-\frac{3}{2}}$ (which will be calibrated later).
Although $\Psi_x$ is well-defined on all of $[-\rho_{\dom},\rho_{\dom}]^2$,
it will only be useful for us on the smaller set $[-Q_\epsilon(x),Q_\epsilon(x)]^2$, where
\begin{equation}\label{Q_def}
Q_\epsilon(x):=\left\lfloor\e^{3/\beta}\left(\frac{\sqrt{s(x)^2+u(x)^2}}{|\sin\a(x)|}\right)^{-12/\beta}\wedge\bigl(\epsilon\dist_{\Lambda}(x,\mathfrak S) \bigr)\wedge \rho_{\dom}\right\rfloor_\epsilon.
\end{equation}
Here $\mathfrak S$ is the singular set of $\Lambda$, $a\wedge b:=\min\{a,b\}$,
$\beta$ is the constant in the $C^{1+\beta}$ assumption on $\vf$, and
$\lfloor t \rfloor_\epsilon:=\max\{\theta\in I_\epsilon: \theta\leq t\}$
where $I_\epsilon:=\{e^{-\frac{1}{3}\ell\epsilon}:\ell\in\N\}$.

\medskip
The value $Q_\epsilon(x)$ is called the {\em maximal size} (of the Pesin charts defined
below).\footnote{We do not claim that Theorem \ref{Thm_Pesin_Coord} below does not
hold on larger boxes $[-Q',Q']^2$.} Notice that
$Q_\epsilon\leq \e^{3/\beta} \|C_\chi^{-1}\|^{-12/\beta}$, so $Q_\epsilon(x)$
is small when $x$ is close to $\mathfrak S$ or when the hyperbolicity at $x$ is bad.
Another important property of $Q_\epsilon$ is that, thanks to the inequalities
$\|C_\chi\|\leq 1$ and $Q_\epsilon<2^{-\frac{3}{2}}\dist_{\Lambda}(x,\mathfrak S)$,
\begin{equation}\label{Pesin-Inside}
\Psi_x\bigl([-Q_\epsilon(x),Q_\epsilon(x)]^2\bigr)\subset \Lambda\setminus\mathfrak S.
\end{equation}
This is in contrast to $\Psi_x\bigl([-\rho_{\dom},\rho_{\dom}]^2\bigr)$,
which may intersect $\mathfrak S$ or $\wt{\Lambda}\setminus\Lambda$.

\medskip
\noindent
{\sc Pesin Charts:}
The {\em maximal Pesin chart} at $x\in\NUH_\chi(f)$ (with parameter $\e$) is
$\Psi_x:[-Q_\epsilon(x),Q_\epsilon(x)]^2\to\Lambda\setminus\mathfrak S$,
$\Psi_x(u,v)=\mathrm{Exp}_x[C_{\chi}(x){u\choose v}]$. The {\em Pesin chart of size $\eta$} is
$\Psi_x^\eta:=\Psi_x\upharpoonright_{[-\eta,\eta]^2}$ for $0<\eta\leq Q_\e(x)$.

\medskip
The Pesin charts provide a system of local coordinates on a neighborhood of $\NUH_\chi(f)$.
The following theorem says that the Poincar\'e map ``in coordinates"
$$
f_x:=\Psi_{f(x)}^{-1}\circ f\circ\Psi_x:[-Q_\epsilon(x),Q_\epsilon(x)]^2\to\R^2
$$
is close to a uniformly hyperbolic {\em linear} map.
In what  follows, $\un{0}={0\choose 0}$ and  the statement ``{\em for all $\epsilon$ small enough $P$ holds}"
means ``$\exists\epsilon_0>0$ which depends only on $M,\vf,\Lambda,\beta,\chi_0$ s.t.
for all $0<\epsilon<\epsilon_0$, $P$ holds".

\begin{thm}[Pesin]\label{Thm_Pesin_Coord}
For all $\epsilon$ small enough the following holds. For every $x\in\NUH_{\chi}(f)$,
$f_x$ is well-defined and injective on $[-Q_\epsilon(x),Q_\epsilon(x)]^2$,
and can be put there in the form
$f_x(u,v)=\bigl(A_x u+h_x^1(u,v),B_x v+h_x^2(u,v)\bigr)$, where:
\begin{enumerate}[$(1)$]
\item $C_{\vf}^{-1}\leq |A_x|\leq e^{-\chi}$ and $e^{\chi}\leq |B_x|\leq C_{\vf}$,
with $C_{\vf}$ as in Theorem  \ref{Thm_OP_Reduction}.
\item $h_x^i$ are $C^{1+\frac{\beta}{2}}$ functions s.t. $h_x^i(\un{0})=0$, $(\nabla h_x^i)(\un{0})=\un{0}$.
\item $\|h_x^i\|_{C^{1+\frac{\beta}{2}}}<\epsilon$ on $[-Q_\epsilon(x),Q_{\epsilon}(x)]^2$.
\end{enumerate}
A similar statement holds for
$f_x^{-1}:=\Psi_{f^{-1}(x)}^{-1}\circ f^{-1}\circ\Psi_x:[-Q_\epsilon(x),Q_\epsilon(x)]^2\to\R^2$.
\end{thm}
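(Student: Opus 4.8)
The plan is to write $f_x$ as a composition of a linear map, a uniformly $C^{1+\beta}$ map, and another linear map, and then read off the three conclusions. Set $\widetilde f_x:=\Exp_{f(x)}^{-1}\circ f\circ\Exp_x$, i.e. the Poincar\'e map in $\Exp$--coordinates on $T\Lambda$. By \eqref{Pesin-Inside}, the definition \eqref{Q_def} of $Q_\epsilon$ (which forces $Q_\epsilon(x)\le\epsilon\,\dist_\Lambda(x,\mathfrak S)\le\epsilon r$), and the choice of $\rho_{\dom},\rho_{\mathrm{im}}$, for all $\epsilon$ small $\widetilde f_x$ is defined and $C^{1+\beta}$ on $C_\chi(x)([-Q_\epsilon(x),Q_\epsilon(x)]^2)$ with a $C^{1+\beta}$ norm bounded by a constant $H=H(M,\vf,\Lambda)$; this uses Lemma \ref{Lemma_Smooth_Section} (uniform $C^{1+\beta}$ bounds for $f$ on connected subsets of $\Lambda'$) together with the uniform regularity of $\Exp$ from \cite[Ch.~9]{Spivak}. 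Since $D(\Exp_x)_{\un0}=\Id$, we get $f_x=C_\chi(f(x))^{-1}\circ\widetilde f_x\circ C_\chi(x)$ with $f_x(\un0)=\un0$ and, by Theorem \ref{Thm_OP_Reduction}, $D(f_x)_{\un0}=C_\chi(f(x))^{-1}\circ df_x\circ C_\chi(x)=\mathrm{diag}(A_x,B_x)$. Defining $h_x^1(u,v):=(f_x(u,v))_1-A_xu$ and $h_x^2(u,v):=(f_x(u,v))_2-B_xv$ makes (1)--(2) immediate, and $h_x^i\in C^{1+\beta}\subset C^{1+\beta/2}$. Injectivity of $f_x$ on the box will then follow from (3): once $\sup\|Dh_x\|<\tfrac1{2C_\vf}$, the map $\mathrm{diag}(A_x,B_x)^{-1}\circ f_x$ has derivative within $\tfrac12$ of the identity on the convex box, hence is injective there.

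So the work is in (3). As $Dh_x$ differs from $Df_x$ by a constant, the chain rule gives, for $\xi,\xi'$ in the box,
$$
D(h_x)_\xi-D(h_x)_{\xi'}=C_\chi(f(x))^{-1}\bigl[D(\widetilde f_x)_{C_\chi(x)\xi}-D(\widetilde f_x)_{C_\chi(x)\xi'}\bigr]C_\chi(x),
$$
so, using $\|C_\chi(x)\|\le1$ and the $\beta$--H\"older bound $H$ on $D\widetilde f_x$, $\|D(h_x)_\xi-D(h_x)_{\xi'}\|\le H\,\|C_\chi(f(x))^{-1}\|\,\|\xi-\xi'\|^\beta$. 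Combined with $h_x(\un0)=\un0$, $Dh_x(\un0)=\un0$ and $\diam[-Q_\epsilon(x),Q_\epsilon(x)]^2\le 2\sqrt2\,Q_\epsilon(x)$, this bounds $\sup|h_x|$, $\sup\|Dh_x\|$ and $\Hol_{\beta/2}(Dh_x)$ on the box all by $\const\cdot H\,\|C_\chi(f(x))^{-1}\|\,Q_\epsilon(x)^{\beta/2}$ (the $\Hol_{\beta/2}$ term dominates, and this is exactly where the loss from $\beta$ to $\beta/2$ buys a positive power of $Q_\epsilon$). By \eqref{Q_def} and \eqref{C-inverse-norm}, $Q_\epsilon(x)^{\beta/2}\le\epsilon^{3/2}\bigl(\tfrac{\sqrt{s(x)^2+u(x)^2}}{|\sin\alpha(x)|}\bigr)^{-6}\le\epsilon^{3/2}\|C_\chi(x)^{-1}\|^{-6}$.

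It remains to establish the one--step temperedness bound $\|C_\chi(f^{\pm1}(x))^{-1}\|\le\const\cdot\|C_\chi(x)^{-1}\|$. This is elementary: $\un e^{s}(f(x)),\un e^{u}(f(x))$ are $df_x\un e^{s}(x),df_x\un e^{u}(x)$ normalized, so substituting into the series defining $s,u$ and using the uniform bounds $\mathfrak C^{-1}\|v\|\le\|df^{\pm1}_yv\|\le\mathfrak C\|v\|$ of Lemma \ref{Lemma_Smooth_Section} yields $s(f(x))\le\mathfrak C e^{-\chi}s(x)$ and $u(f(x))\le\const\,u(x)$; the same bounds give $|\sin\alpha(f(x))|\ge\mathfrak C^{-2}|\sin\alpha(x)|$ because $E^{s/u}(f(x))=df_xE^{s/u}(x)$; now combine with \eqref{C-inverse-norm}. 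Plugging this in, $\sup|h_x|,\sup\|Dh_x\|,\Hol_{\beta/2}(Dh_x)\le\const\cdot H\,\epsilon^{3/2}\|C_\chi(x)^{-1}\|^{-5}\le\const\cdot H\,\epsilon^{3/2}$ since $\|C_\chi(x)^{-1}\|\ge1$, and this is $<\epsilon$ once $\epsilon$ is small enough in terms of $M,\vf,\Lambda,\beta,\chi_0$. This proves (3) and injectivity. The statement for $f_x^{-1}$ (meaning $f^{-1}$ in $x$--coordinates) follows by running the identical argument for the Poincar\'e map of the time--reversed flow, which by Lemma \ref{Lemma_Smooth_Section} also has uniform $C^{1+\beta}$ bounds, with the roles of $s$ and $u$ (and of $A_x,B_x$) interchanged.

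The only genuinely delicate point is (3): it is not a soft perturbation estimate but relies on the precise exponent $-12/\beta$ hard-wired into \eqref{Q_def}, which is calibrated to absorb both the $\beta\to\beta/2$ H\"older loss and the one--step growth $\|C_\chi(f(x))^{-1}\|\le\const\cdot\|C_\chi(x)^{-1}\|$, while $\|C_\chi(x)\|\le1$ guarantees no compensating blow-up occurs in the forward direction of the change of coordinates. Everything else --- well-definedness on the box (via \eqref{Pesin-Inside}), the diagonal linear part (Theorem \ref{Thm_OP_Reduction}), and injectivity --- is routine.
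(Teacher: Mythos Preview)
Your proof is correct and is precisely the standard argument the paper defers to when it writes ``continue as in \cite[Theorem 2.7]{Sarig-JAMS} or \cite[Theorem 5.6.1]{Barreira-Pesin-Non-Uniform-Hyperbolicity-Book}'': decompose $f_x=C_\chi(f(x))^{-1}\circ\widetilde f_x\circ C_\chi(x)$, use the uniform $C^{1+\beta}$ bound on $\widetilde f_x$ coming from \eqref{Pesin-Inside} and Lemma~\ref{Lemma_Smooth_Section}, and absorb $\|C_\chi(f(x))^{-1}\|$ via the one-step temperedness bound (the paper invokes this same bound later, in the proof of the Precompactness Lemma in Proposition~\ref{Prop_Coarse_Gr}) together with the exponent $-12/\beta$ built into \eqref{Q_def}. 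Your write-up simply makes explicit what the paper leaves to the references.
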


\begin{proof}
Let $U:=\Psi_x([-Q_\epsilon(x),Q_\epsilon(x)]^2)$. By (\ref{Pesin-Inside}), $f$ and $f^{-1}$
are $C^{1+\beta}$ on $U$, with uniform bounds on their $C^{1+\beta}$ norms
(Lemma \ref{Lemma_Smooth_Section}). Now continue as in \cite[Theorem 2.7]{Sarig-JAMS} or
 \cite[Theorem 5.6.1]{Barreira-Pesin-Non-Uniform-Hyperbolicity-Book},
 replacing $M$ by $\Lambda$ and $\exp_p$ by $\Exp_p$.
\end{proof}

\subsection*{Adaptability and temperedness}

The maximal size of Pesin charts may not be bounded below on $\NUH_\chi(f)$.
A central idea in Pesin theory is that it is nevertheless possible to control how fast
$Q_\epsilon$ decays along typical orbits.

Define for this purpose the set $\NUH^\ast_{\chi}(f)$ \label{NUH-star} of all $x\in \NUH_{\chi}(f)$
which on top of the defining properties (i)--(iv) of $\NUH_{\chi}(f)$ also satisfy:
\begin{enumerate}[\ (i)]
\setcounter{enumi}{4}
\item $\lim\limits_{n\to\pm\infty}\frac{1}{n}\log\dist_{\Lambda}(f^n(x),\mathfrak S)=0$.
\item $\lim\limits_{n\to\pm\infty}\frac{1}{n}\log\|C_{\chi}(f^n(x))^{-1}\|=0$.
\item $\lim\limits_{n\to\pm\infty}\frac{1}{n}\log\|C_{\chi}(f^n(x))\un{v}\|=0$ for $\un{v}={1\choose 0}, {0\choose 1}$.
\item $\lim\limits_{n\to\pm\infty}\frac{1}{n}\log|\det C_{\chi}(f^n(x))|=0$.
\end{enumerate}

\begin{lem}\label{Lemma_Q_decay}
$\NUH^\ast_{\chi}(f)$ is an $f$--invariant Borel set of full $\mu_\L$--measure, and for every
$x\in \NUH^\ast_{\chi}(f)$, $\lim\limits_{n\to\pm\infty}\frac{1}{n}\log Q_\epsilon(f^n(x))=0$.
\end{lem}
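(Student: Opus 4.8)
The plan is to show that $\NUH^\ast_\chi(f)$ differs from the full--measure set $\NUH_\chi(f)$ only through condition (v), that conditions (vi)--(viii) are automatic on all of $\NUH_\chi(f)$, and that the temperedness of $Q_\epsilon$ then falls out of formula (\ref{Q_def}). The soft points come first. The maps $x\mapsto E^{s/u}(x)$, the measurably--chosen unit vectors $\un{e}^{s/u}(x)$, the Pesin parameters $s(x),u(x),\alpha(x)$, the matrices $C_\chi(x)$, and $x\mapsto\dist_\Lambda(x,\mathfrak S)$ (continuous where finite, since $\mathfrak S$ is closed) are Borel, so each of (v)--(viii) is a Borel condition; and since every condition defining $\NUH^\ast_\chi(f)$ constrains the two--sided asymptotics of a quantity evaluated along the whole orbit $(f^n x)_{n\in\Z}$, the set is $f$--invariant. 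Condition (v), $\frac1n\log\dist_\Lambda(f^{\pm n}(x),\mathfrak S)\to0$, is precisely conditions (2)--(3) in the definition of an adapted Poincaré section, which $\Lambda$ satisfies by hypothesis; hence (v) holds $\mu_\Lambda$--a.e.

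The core of the argument is that $s$ and $u$ are tempered along the orbit of every $x\in\NUH_\chi(f)$ --- i.e. $\frac1n\log s(f^n x)\to0$ and $\frac1n\log u(f^n x)\to0$ as $n\to\pm\infty$ --- the analogous statement for $|\sin\alpha|$ being condition (iii). I would prove it for $s$ (for $u$, apply the argument to the inverse flow). From $df^m_x E^s(x)=E^s(f^m x)$ one gets $\un{e}^s(f^m x)=\pm df^m_x\un{e}^s(x)/\|df^m_x\un{e}^s(x)\|$, and plugging this into the series defining $s(f^m x)$ gives, for all $m\in\Z$,
\[
s(f^m x)^2=\frac{2\,e^{-2m\chi}}{\|df^m_x\un{e}^s(x)\|^2}\sum_{j\geq m}e^{2j\chi}\|df^j_x\un{e}^s(x)\|^2 .
\]
Fix $\eta>0$ with $\chi+\lambda^s(x)+\eta<0$, where $\lambda^s(x):=\lim_{n\to\pm\infty}\frac1n\log\|df^n_x\un{e}^s(x)\|<-\chi$ by (i). For $|j|\geq J(x,\eta)$ one has $e^{j(\lambda^s(x)-\eta)}\leq\|df^j_x\un{e}^s(x)\|\leq e^{j(\lambda^s(x)+\eta)}$; substituting these into the identity, bounding the geometric tail sum by a constant times its largest term, and cancelling the matching powers of $\chi$ and $\lambda^s(x)$, one is left with $s(f^n x)^2\leq C(x)e^{4|n|\eta}+o(1)$ (for $m<0$ one first peels off the finitely many $j<0$ terms). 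Since $s\geq\sqrt2$, this gives $0\leq\log s(f^n x)\leq 2|n|\eta+O_x(1)$, hence $\limsup_{|n|\to\infty}\frac1{|n|}\log s(f^n x)\leq 2\eta$; as $\eta$ was arbitrary, $\frac1n\log s(f^n x)\to0$. This is the step requiring the most care; it is the standard tempered--cocycle estimate of Pesin theory and a routine modification of \cite[\S 3.5]{Barreira-Pesin-Non-Uniform-Hyperbolicity-Book} (cf. \cite{Sarig-JAMS}), using the uniform bounds on $df\upharpoonright_{\Lambda\setminus\mathfrak S}$ of Lemma \ref{Lemma_Smooth_Section}.

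With $s,u$ tempered and (iii) in hand, conditions (vi), (vii), (viii) follow on all of $\NUH_\chi(f)$ from the identities $\|C_\chi(x){1\choose0}\|=s(x)^{-1}$, $\|C_\chi(x){0\choose1}\|=u(x)^{-1}$, $|\det C_\chi(x)|=|\sin\alpha(x)|s(x)^{-1}u(x)^{-1}$ and from (\ref{C-inverse-norm}), which yields $\|C_\chi(x)^{-1}\|\asymp\sqrt{s(x)^2+u(x)^2}/|\sin\alpha(x)|$, each evaluated along $(f^n x)_n$; since $\mu_\Lambda[\NUH_\chi(f)]=1$ by Oseledets' theorem and Lemma \ref{Lemma_Lyap_Exp}, combining with (v) gives $\mu_\Lambda[\NUH^\ast_\chi(f)]=1$. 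Finally, by (\ref{Q_def}), $Q_\epsilon(x)$ is the $\lfloor\cdot\rfloor_\epsilon$--rounding of $\min\{\epsilon^{3/\beta}(\sqrt{s(x)^2+u(x)^2}/|\sin\alpha(x)|)^{-12/\beta},\,\epsilon\dist_\Lambda(x,\mathfrak S),\,\rho_{\dom}\}$; the rounding distorts a positive number by at most the factor $e^{\epsilon/3}$, the first entry is $\asymp\epsilon^{3/\beta}\|C_\chi(x)^{-1}\|^{-12/\beta}$, and $\frac1n\log$ of a finite minimum (for $n>0$) or maximum (for $n<0$) of sequences each tending to $0$ tends to $0$. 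Hence, for $x\in\NUH^\ast_\chi(f)$, conditions (vi) and (v) give $\frac1n\log Q_\epsilon(f^n x)\to0$ as $n\to\pm\infty$, completing the proof.
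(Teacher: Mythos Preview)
Your proof is correct and follows the same three--step skeleton as the paper: (v) from adaptedness, (vi)--(viii) from temperedness of $s,u,\alpha$, and then $Q_\epsilon$--temperedness from (\ref{Q_def}) and (\ref{C-inverse-norm}). The paper's own proof is a two--line sketch that cites the Oseledets theorem applied to the ergodic components of $\mu_\Lambda$ (deferring to \cite[Lemma~2.6]{Sarig-JAMS}) to obtain (vi)--(viii) $\mu_\Lambda$--a.e., whereas you give a direct pointwise estimate showing that (vi)--(viii) hold at \emph{every} point of $\NUH_\chi(f)$, using only the defining conditions (i)--(iii). This is a slightly sharper statement --- it shows that $\NUH^\ast_\chi(f)=\NUH_\chi(f)\cap\{x:\text{(v) holds}\}$, so the only new constraint is the adaptedness condition --- and the argument is more self--contained, avoiding a second appeal to Oseledets or to ergodic decomposition. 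The trade--off is that you have to carry out the tail--sum estimate for $s(f^m x)$ by hand, but you have done this correctly; the identity for $s(f^m x)^2$ and the geometric bound are exactly right, and the symmetric treatment of $u$ via the inverse map is the clean way to finish.
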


\begin{proof}
Condition (v) holds $\mu_\Lambda$--a.e. because $\Lambda$ is adapted to $\mu$.
Conditions (vi)--(viii) hold $\mu_\Lambda$--a.e. because of the Oseledets Theorem
(apply the proof of \cite[Lemma 2.6]{Sarig-JAMS} to the ergodic components of $\mu_{\Lambda}$).
By (\ref{C-inverse-norm}), conditions (v)--(vi) imply that
$\lim_{n\to\pm\infty}\frac{1}{n}\log Q_{\epsilon}(f^n(x))=0$.
\end{proof}

\begin{lem}[Pesin's Temperedness Lemma]\label{PropTemp}
There exists a positive Borel function $q_\epsilon:\NUH_{\chi}^\ast(f)\to (0,1)$ s.t. for every
$x\in\NUH_{\chi}^\ast(f)$, $0<q_\epsilon(x)\leq \epsilon Q_{\epsilon}(x)$ and
$e^{-\epsilon/3}\leq \frac{q_\epsilon\circ f}{q_\epsilon}\leq e^{\epsilon/3}$.
\end{lem}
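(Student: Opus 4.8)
The plan is to apply the abstract "temperedness lemma" of Pesin theory to the function $Q_\epsilon$ along orbits. By Lemma \ref{Lemma_Q_decay}, for every $x\in\NUH^\ast_\chi(f)$ one has $\frac{1}{n}\log Q_\epsilon(f^n(x))\to 0$ as $n\to\pm\infty$; in other words $Q_\epsilon$ is a \emph{tempered} function along the orbit of $x$. The standard device is then to define
$$
q_\epsilon(x):=\epsilon\inf_{n\in\Z}\Bigl\{e^{-|n|\epsilon/3}\,Q_\epsilon(f^n(x))\Bigr\}.
$$
First I would check that this infimum is attained and is strictly positive: since $\frac{1}{|n|}\log Q_\epsilon(f^n(x))\to 0$, we have $e^{-|n|\epsilon/3}Q_\epsilon(f^n(x))\to 0$, so all but finitely many terms are smaller than, say, $Q_\epsilon(x)$, and the infimum is a minimum over a finite set of positive numbers, hence positive. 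The bound $q_\epsilon(x)\le \epsilon Q_\epsilon(x)$ is immediate by taking $n=0$ in the infimum (and $q_\epsilon<1$ follows since $Q_\epsilon\le\rho_{\dom}<1$ and $\epsilon<1$). Measurability of $q_\epsilon$ follows from measurability of $x\mapsto Q_\epsilon(f^n(x))$ for each $n$ (a countable infimum of Borel functions is Borel), using that $Q_\epsilon$ is Borel — it is built from the Borel functions $s,u,\alpha$ and $\dist_\Lambda(\cdot,\mathfrak S)$ via the floor operation $\lfloor\cdot\rfloor_\epsilon$.

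The main point is the slow-variation estimate $e^{-\epsilon/3}\le q_\epsilon(f(x))/q_\epsilon(x)\le e^{\epsilon/3}$. This is a purely formal consequence of the definition: reindexing,
$$
q_\epsilon(f(x))=\epsilon\inf_{n\in\Z}\Bigl\{e^{-|n|\epsilon/3}\,Q_\epsilon(f^{n+1}(x))\Bigr\}
=\epsilon\inf_{m\in\Z}\Bigl\{e^{-|m-1|\epsilon/3}\,Q_\epsilon(f^{m}(x))\Bigr\},
$$
and since $|m-1|\le |m|+1$ and $|m-1|\ge|m|-1$ for all $m\in\Z$, each term of the infimum defining $q_\epsilon(f(x))$ lies between $e^{-\epsilon/3}$ and $e^{\epsilon/3}$ times the corresponding term $e^{-|m|\epsilon/3}Q_\epsilon(f^m(x))$ of the infimum defining $q_\epsilon(x)$; taking infima preserves this, giving the claimed two-sided bound.

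Finally I would note that $\NUH^\ast_\chi(f)$ is $f$-invariant (Lemma \ref{Lemma_Q_decay}), so $q_\epsilon$ is genuinely defined on the full domain and the cocycle relation makes sense there. I do not expect any serious obstacle here: the only thing to be slightly careful about is that the infimum in the definition of $q_\epsilon$ is actually achieved — this uses temperedness (Lemma \ref{Lemma_Q_decay}) in an essential way, and without it $q_\epsilon$ could vanish. Everything else is a routine manipulation of the definition. (This is the same argument as in \cite[Lemma 2.9 and Section 3]{Sarig-JAMS} or \cite[Lemma 3.5.7]{Barreira-Pesin-Non-Uniform-Hyperbolicity-Book}, adapted to the function $Q_\epsilon$ on $\Lambda$.)
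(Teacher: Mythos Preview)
Your approach is the standard one and matches what the paper has in mind (the paper simply cites \cite[Lemma 3.5.7]{Barreira-Pesin-Non-Uniform-Hyperbolicity-Book}). However, there is a sign error in your formula that makes the positivity argument collapse. With
\[
q_\epsilon(x)=\epsilon\inf_{n\in\Z}e^{-|n|\epsilon/3}\,Q_\epsilon(f^n(x)),
\]
the terms tend to $0$ as $|n|\to\infty$ (as you yourself observe, since $Q_\epsilon\le\rho_{\dom}$), and an infimum of a sequence tending to $0$ is $0$, not a positive minimum over a finite set. Your sentence ``all but finitely many terms are smaller than $Q_\epsilon(x)$'' is true but points in the wrong direction: it says the small terms, not the large ones, are the tail.

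The correct definition has the opposite sign in the exponent:
\[
q_\epsilon(x)=\epsilon\inf_{n\in\Z}e^{+|n|\epsilon/3}\,Q_\epsilon(f^n(x)).
\]
Now Lemma~\ref{Lemma_Q_decay} gives $Q_\epsilon(f^n(x))\ge e^{-|n|\epsilon/6}$ for all sufficiently large $|n|$, so $e^{|n|\epsilon/3}Q_\epsilon(f^n(x))\ge e^{|n|\epsilon/6}\to\infty$; hence only finitely many terms lie below $Q_\epsilon(x)$ and the infimum is a positive minimum. The slow-variation estimate and the bound $q_\epsilon(x)\le\epsilon Q_\epsilon(x)$ go through exactly as you wrote, since the inequality $\bigl||m-1|-|m|\bigr|\le 1$ is insensitive to the sign. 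This corrected formula is also what produces the paper's inequality~(\ref{Jonah}): $q_\epsilon(x)\le\epsilon\, e^{|n|\epsilon/3}Q_\epsilon(f^n(x))$ rearranges to $Q_\epsilon(f^n(x))\ge\epsilon^{-1}e^{-|n|\epsilon/3}q_\epsilon(x)>e^{-|n|\epsilon/3}q_\epsilon(x)$.
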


\noindent
This lemma follows from Lemma \ref{Lemma_Q_decay} as in
\cite[Lemma 3.5.7]{Barreira-Pesin-Non-Uniform-Hyperbolicity-Book}.
It implies that
\begin{equation}\label{Jonah}
Q_\epsilon(f^n(x))> e^{-\frac{1}{3}|n|\epsilon}q_\epsilon(x)\textrm{ for all }n\in\Z,
\end{equation}
which gives a control on the decay of $Q_\epsilon$ along typical orbits.


\subsection*{Overlapping Pesin charts}

Theorem \ref{Thm_Pesin_Coord} says that $f_x:=\Psi_{f(x)}^{-1}\circ f \circ\Psi_x$
is close to a linear hyperbolic map. This property is stable under perturbations,
therefore we expect $f_{xy}:=\Psi_{y}^{-1}\circ f \circ\Psi_x$ to be close to a linear
hyperbolic map, whenever $\Psi_y$ is ``sufficiently close" to $\Psi_{f(x)}$.
We now specify the meaning of ``sufficiently close".

Recall that $\Lambda$ is the disjoint union of a finite number of canonical transverse discs $S_r(p_i)$.
Let
$
D_r(p_i):=S_r(p_i)\setminus\partial S_r(p_i)=\{\exp_{p_i}(\vec{v}):\vec{v}\perp X_{p_i},\|\vec{v}\|< r\}
$.
Choose for every $D=D_r(p_i)$ a map  $\Theta:TD\to\R^2$ s.t.:
\begin{enumerate}[(1)]
\item $\Theta:T_x D\to \R^2$ is a linear isometry for all $x\in D$.
\item Let $\vartheta_x=(\Theta\upharpoonright_{T_x D})^{-1}:\R^2\to T_x D$,
then $(x,\underline{u})\mapsto(\mathrm{Exp}_x\circ\vartheta_x)(\underline{u})$ is smooth and
Lipschitz on $\Lambda\x\{\underline{u}\in\R^2:\|\underline{u}\|<\rho_{\dom}\}$ with respect to the metric
$d\bigl((x,\underline{u}),(x',\underline{u}')\bigr):=\dist_\Lambda(x,x')+\|\underline{u}-\underline{u}'\|$.
\item $x\mapsto \vartheta_x^{-1}\circ\mathrm{Exp}_x^{-1}$ is a Lipschitz map from $D$
to $C^2(D,\R^2)$, the space of $C^2$ maps from $D$ to $\R^2$.
\end{enumerate}
\label{SectionOverlap}

Recall that the Pesin map is $\Psi_x(u,v)=\Exp_x[C_\chi(x){u\choose v}]$, and the Pesin chart
of size $0<\eta<Q_\epsilon(x)$ is $\Psi_x^\eta:=\Psi_x\upharpoonright_{[-\eta,\eta]^2}$.

\medskip
\noindent
{\sc Overlapping charts:}  Let $x_1,x_2\in\NUH_\chi(f)$. We say that
$\Psi_{x_1}^{\eta_1}, \Psi_{x_2}^{\eta_2}$ {\em $\epsilon$--overlap}, and write
$\Psi_{x_1}^{\eta_1}\overset{\epsilon}{\approx}\Psi_{x_2}^{\eta_2}$,  if $x_1,x_2$
lie in the same transversal disc of $\Lambda$,
$e^{-\epsilon}<\frac{\eta_1}{\eta_2}<e^{\epsilon}$, and
$\dist_\Lambda(x_1,x_2)+\|\Theta\circ C_{\chi}(x_1)-\Theta\circ C_{\chi}(x_2)\|<\eta_1^4\eta_2^4.$


\begin{prop}\label{Prop_Overlap_Meaning}
The following holds for all $\epsilon$  small enough.
If $\Psi_{x_1}^{\eta_1}\overset{\epsilon}{\approx}\Psi_{x_2}^{\eta_2}$ then:
\begin{enumerate}[$(1)$]
\item {\em $\Psi_{x_i}^{\eta_i}$ chart nearly the same patch:}
$\Psi_{x_i}\bigl([-e^{-2\epsilon}\eta_i,e^{-2\epsilon}\eta_i]^2\bigr)\subset\Psi_{x_j}\bigl([-\eta_j,\eta_j]^2\bigr)$.
\item {\em $\Psi_{x_i}$ define  nearly the same coordinates:}
$\dist_{C^{1+\frac{\beta}{2}}}(\Psi_{x_i}^{-1}\circ\Psi_{x_j},\mathrm{Id})<\epsilon\eta_i^2\eta_j^2$,
where the $C^{1+\frac{\beta}{2}}$ distance is calculated on $[-e^{-\epsilon}\rho_{\dom},e^{-\epsilon}\rho_{\dom}]^2$.
\end{enumerate}
\end{prop}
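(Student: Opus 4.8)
The plan is to follow the scheme of \cite[Prop.~3.2]{Sarig-JAMS}, recasting both assertions as one computation with a mildly nonlinear map sandwiched between two linear maps. Fix the transversal disc $D=D_r(p_\bullet)$ containing $x_1,x_2$, with its auxiliary chart $\Theta$; put $\tilde\Psi_x:=\Exp_x\circ\vartheta_x$ and $L_x:=(\Theta\upharpoonright_{T_xD})\circ C_\chi(x)$, a linear self-map of $\R^2$ with $\|L_x\|=\|C_\chi(x)\|\le 1$, $\|L_x^{-1}\|=\|C_\chi(x)^{-1}\|$, and $\|L_{x_1}-L_{x_2}\|=\|\Theta\circ C_\chi(x_1)-\Theta\circ C_\chi(x_2)\|$. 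Then $\Psi_x=\tilde\Psi_x\circ L_x$, so for $\{i,j\}=\{1,2\}$
\[
\Psi_{x_i}^{-1}\circ\Psi_{x_j}=L_{x_i}^{-1}\circ g\circ L_{x_j},\qquad g:=\tilde\Psi_{x_i}^{-1}\circ\tilde\Psi_{x_j},
\]
and it remains to control $g$ and the two linear factors.

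First I would record two a priori bounds. From $\eta_\ell\le Q_\epsilon(x_\ell)$, the definition (\ref{Q_def}) of $Q_\epsilon$ and the upper estimate in (\ref{C-inverse-norm}) one gets $\|C_\chi(x_\ell)^{-1}\|^{12/\beta}\le\epsilon^{3/\beta}/Q_\epsilon(x_\ell)$, hence $\|L_{x_\ell}^{-1}\|\le\epsilon^{1/4}\eta_\ell^{-\beta/12}$; also $\eta_\ell\le Q_\epsilon(x_\ell)\le\epsilon\,\dist_\Lambda(x_\ell,\mathfrak S)\le\const\cdot\epsilon$. On the other side, the overlap hypothesis gives $\dist_\Lambda(x_1,x_2)<\eta_1^4\eta_2^4$, and properties (2)--(3) of $\Theta$ together with the compactness of $\Lambda$ (which bounds the $C^2$-norms of the $\tilde\Psi_x$ and makes $x\mapsto\tilde\Psi_x^{-1}$ Lipschitz into $C^2(D,\R^2)$) yield, writing $g-\Id=(\tilde\Psi_{x_i}^{-1}-\tilde\Psi_{x_j}^{-1})\circ\tilde\Psi_{x_j}$ and restricting to the relevant box,
\[
\|g-\Id\|_{C^{1+\beta/2}}\le\|g-\Id\|_{C^2}\le\const\cdot\dist_\Lambda(x_1,x_2)\le\const\cdot\eta_i^4\eta_j^4 .
\]

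For part (2), write $g=\Id+\Delta$ and expand $\Psi_{x_i}^{-1}\circ\Psi_{x_j}-\Id=L_{x_i}^{-1}(L_{x_j}-L_{x_i})+L_{x_i}^{-1}\circ\Delta\circ L_{x_j}$. Since $L_{x_i}^{-1},L_{x_j}$ are linear with $\|L_{x_j}\|\le 1$, pre- and post-composition by them neither raises Hölder exponents nor enlarges $[-e^{-\epsilon}\rho_{\dom},e^{-\epsilon}\rho_{\dom}]^2$, so the $C^{1+\beta/2}$-norm of both summands is $\le\const\,\|L_{x_i}^{-1}\|\bigl(\|L_{x_j}-L_{x_i}\|+\|\Delta\|_{C^{1+\beta/2}}\bigr)\le\const\,\epsilon^{1/4}\eta_i^{4-\beta/12}\eta_j^4$. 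Because $\beta<1$ forces $4-\beta/12>2$ while $\eta_i,\eta_j\le\const\cdot\epsilon$, the surplus powers of $\eta$ override the factor $\epsilon^{1/4}$, so for $\epsilon$ small this is $<\epsilon\eta_i^2\eta_j^2$, which is (2). Part (1) then falls out: applying (2) with $i,j$ interchanged bounds $\|\Psi_{x_j}^{-1}\circ\Psi_{x_i}-\Id\|_\infty$ by $\epsilon\eta_i^2\eta_j^2$ on a box containing $[-e^{-2\epsilon}\eta_i,e^{-2\epsilon}\eta_i]^2$, and using $\eta_i<e^{\epsilon}\eta_j$ (from $e^{-\epsilon}<\eta_1/\eta_2<e^{\epsilon}$) a short computation shows $\|u\|_\infty\le e^{-2\epsilon}\eta_i$ forces $\|(\Psi_{x_j}^{-1}\circ\Psi_{x_i})(u)\|_\infty<\eta_j$, i.e.\ $\Psi_{x_i}([-e^{-2\epsilon}\eta_i,e^{-2\epsilon}\eta_i]^2)\subset\Psi_{x_j}([-\eta_j,\eta_j]^2)$; the points involved stay inside $\wt\Lambda$ by the construction of $\Exp$ and inside $\Lambda\setminus\mathfrak S$ by (\ref{Pesin-Inside}).

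The step I expect to be the main obstacle is the $C^2$-estimate $\|g-\Id\|_{C^2}\le\const\,\dist_\Lambda(x_i,x_j)$ for $g=\tilde\Psi_{x_i}^{-1}\circ\tilde\Psi_{x_j}$: this is where the differential-geometric content sits — the existence and Lipschitz regularity of the charts $\Theta$, the uniform bounds coming from compactness of $\Lambda$, and the verification that $L_{x_i}^{-1}\circ g\circ L_{x_j}$ really stays inside the domains on which $\Psi_{x_i},\Psi_{x_j}$ and $\Exp$ are defined (which is precisely why the larger section $\wt\Lambda$ and the calibration of $\rho_{\dom}$ were introduced). Everything afterwards is power-counting in $\epsilon$ and $\eta$; the only subtle point is that the innocuous-looking factor $\epsilon^{1/4}$, which is larger than $\epsilon$, gets absorbed only because $\eta_i,\eta_j$ are themselves $\le\const\cdot\epsilon$ and enter with a surplus exponent $4-\beta/12>2$.
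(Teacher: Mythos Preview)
Your proposal is correct and takes essentially the same approach as the paper: the paper gives no self-contained argument and simply refers to \cite[Prop.~3.2]{Sarig-JAMS} with the substitutions $M\rightsquigarrow$ transverse disc and $\exp\rightsquigarrow\Exp$, which is exactly the scheme you follow. Your factorization $\Psi_{x_i}^{-1}\circ\Psi_{x_j}=L_{x_i}^{-1}\circ(\tilde\Psi_{x_i}^{-1}\circ\tilde\Psi_{x_j})\circ L_{x_j}$, the bound $\|L_{x_\ell}^{-1}\|\le\epsilon^{1/4}\eta_\ell^{-\beta/12}$ coming from (\ref{Q_def}) and (\ref{C-inverse-norm}), and the power-counting are all as in \cite{Sarig-JAMS}; the domain issue you flag (that $\tilde\Psi_{x_i}^{-1}$ must be interpreted on the larger set $\{y:\dist_{\wt\Lambda}(y,x_i)<\rho_{\mathrm{im}}\}$ rather than merely on $\Psi_{x_i}([-\rho_{\dom},\rho_{\dom}]^2)$) is indeed the only point requiring care, and it is handled exactly by the calibration of $\rho_{\dom},\rho_{\mathrm{im}}$ and the safety margin $\wt\Lambda$, as you note.
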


\begin{cor}\label{Cor_Pesin_Coord}
The following holds for all $\epsilon$ small enough. If $x,y\in\NUH_{\chi}(f)$ and
$\Psi_{f(x)}^{\eta'}\overset{\epsilon}{\approx}\Psi_y^{\eta}$ then
$f_{xy}:=\Psi_y^{-1}\circ f\circ\Psi_x:[-Q_\epsilon(x),Q_{\epsilon}(x)]^2\to\R^2$ is well-defined,
injective, and can be put in the form
$f_{xy}(u,v)=\bigl(A_{xy} u+h_{xy}^1(u,v), B_{xy} v+h_{xy}^2(u,v)\bigr)$, where:
\begin{enumerate}[$(1)$]
\item $C_\vf^{-1}\leq |A_{xy}|\leq e^{-\chi}$ and $e^{\chi}\leq |B_{xy}|\leq C_\vf$, with $C_{\vf}$
as in Theorem~\ref{Thm_OP_Reduction}.
\item $|h_{xy}^i(\un{0})|<\epsilon\eta$, $\|\nabla h_{xy}^i(\un{0})\|<\epsilon\eta^{\beta/3}$.
\item $\|h^i_{xy}\|_{C^{1+\frac{\beta}{3}}}<\epsilon$ for $i=1,2$, where the norm is taken
on $[-Q_\epsilon(x),Q_\epsilon(x)]^2$.
\end{enumerate}
A similar statement holds for $f_{xy}^{-1}:=\Psi_{x}^{-1}\circ f^{-1}\circ\Psi_y$
whenever $\Psi_{f^{-1}(y)}^{\eta'}\overset{\epsilon}{\approx}\Psi_x^\eta$.
\end{cor}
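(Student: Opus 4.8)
The plan is to deduce the corollary from Theorem~\ref{Thm_Pesin_Coord} and Proposition~\ref{Prop_Overlap_Meaning} by regarding $f_{xy}$ as a small perturbation of the already-understood map $f_x=\Psi_{f(x)}^{-1}\circ f\circ\Psi_x$. The mechanism is the factorization, valid on $[-Q_\epsilon(x),Q_\epsilon(x)]^2$,
$$
f_{xy}=\bigl(\Psi_y^{-1}\circ\Psi_{f(x)}\bigr)\circ f_x .
$$
Before composing I would check that this makes sense. Because $s(x),u(x)\geq\sqrt2$ and $|\sin\a(x)|\leq1$, the definition (\ref{Q_def}) forces $Q_\epsilon(x)<\epsilon^{3/\beta}$ for \emph{every} $x$, so $Q_\epsilon\to0$ uniformly as $\epsilon\to0$. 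Since $f_x(\un{0})=\un{0}$ and $\|Df_x\|$ is bounded on $[-Q_\epsilon(x),Q_\epsilon(x)]^2$ by a constant depending only on $M,\vf,\Lambda,\chi_0$ (Theorem~\ref{Thm_Pesin_Coord}), the image $f_x([-Q_\epsilon(x),Q_\epsilon(x)]^2)$ lies in a square centered at $\un{0}$ whose side is $O(\epsilon^{3/\beta})$, hence inside $[-e^{-\epsilon}\rho_{\dom},e^{-\epsilon}\rho_{\dom}]^2$ once $\epsilon$ is small. On the latter square, Proposition~\ref{Prop_Overlap_Meaning}(2) applied to $\Psi_{f(x)}^{\eta'}\overset{\epsilon}{\approx}\Psi_y^{\eta}$ says that $\Psi_y^{-1}\circ\Psi_{f(x)}$ is a well-defined injection of the form $(u,v)\mapsto(u+\d^1(u,v),v+\d^2(u,v))$ with $\|\d^i\|_{C^{1+\beta/2}}<\epsilon(\eta')^2\eta^2$. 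In particular $\Psi_{f(x)}([-e^{-\epsilon}\rho_{\dom},e^{-\epsilon}\rho_{\dom}]^2)\subset\Psi_y([-\rho_{\dom},\rho_{\dom}]^2)$, so $f_{xy}$ is well-defined on $[-Q_\epsilon(x),Q_\epsilon(x)]^2$, and it is injective since both factors above are.

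Now I would carry out the composition. Writing $f_x(u,v)=(A_xu+h_x^1(u,v),\,B_xv+h_x^2(u,v))$ as in Theorem~\ref{Thm_Pesin_Coord}, put $A_{xy}:=A_x$ and $B_{xy}:=B_x$, so that (1) holds verbatim by Theorem~\ref{Thm_OP_Reduction} with no $\epsilon$-dependence, and
$$
h_{xy}^i(u,v)=h_x^i(u,v)+\d^i\bigl(f_x(u,v)\bigr).
$$
For (2): since $h_x^i(\un{0})=0$, $\nabla h_x^i(\un{0})=\un{0}$, and $f_x(\un{0})=\un{0}$, the chain rule gives $h_{xy}^i(\un{0})=\d^i(\un{0})$ and $\nabla h_{xy}^i(\un{0})=(\nabla\d^i)(\un{0})\,\mathrm{diag}(A_x,B_x)$, whence $|h_{xy}^i(\un{0})|<\epsilon(\eta')^2\eta^2<\epsilon\eta$ and $\|\nabla h_{xy}^i(\un{0})\|<C_\vf\,\epsilon(\eta')^2\eta^2<\epsilon\eta^{\beta/3}$, using $e^{-\epsilon}<\eta'/\eta<e^{\epsilon}$ and $\eta\leq Q_\epsilon(x)<\epsilon^{3/\beta}$ (so $C_\vf\eta^{4-\beta/3}<1$ for $\epsilon$ small). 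For (3) the point is that \emph{each} summand has $C^{1+\beta/3}$-norm that is $o(\epsilon)$, uniformly in $x,y$: the summand $\d^i\circ f_x$ because $\|\d^i\|_{C^{1+\beta/2}}<\epsilon(\eta')^2\eta^2<\epsilon^{1+12/\beta}$ while $\|f_x\|_{C^{1+\beta/2}}$ is uniformly bounded, so the standard composition estimate gives $\|\d^i\circ f_x\|_{C^{1+\beta/2}}=o(\epsilon)$; and the summand $h_x^i$ because it vanishes to first order at $\un{0}$ on a box of diameter $\leq2\sqrt2\,\epsilon^{3/\beta}$, so $\|h_x^i\|_{C^1}=o(\epsilon)$ directly, while lowering the Hölder exponent from $\beta/2$ to $\beta/3$ multiplies its Hölder seminorm by at most $(2\sqrt2\,\epsilon^{3/\beta})^{\beta/6}=o(1)$, so $\Hol_{\beta/3}(\nabla h_x^i)=o(\epsilon)$ as well. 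Adding the two estimates yields $\|h_{xy}^i\|_{C^{1+\beta/3}}<\epsilon$ for $\epsilon$ small, which is (3).

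This is the flow-section analogue of the corresponding step in \cite{Sarig-JAMS} (replace $M$ by $\Lambda$ and $\exp$ by $\Exp$, using the uniform bounds on $df\upharpoonright_{\Lambda\setminus\mathfrak S}$ from Lemma~\ref{Lemma_Smooth_Section}); no new idea is needed. The one place I would be careful is the bookkeeping in (3): one must check that the loss of Hölder exponent from $\beta/2$ to $\beta/3$, together with $Q_\epsilon(x)<\epsilon^{3/\beta}$, genuinely absorbs the composition constant and the geometric constants supplied by Proposition~\ref{Prop_Overlap_Meaning}, so that the final bounds are $<\epsilon$, $<\epsilon\eta$, $<\epsilon\eta^{\beta/3}$ and not merely $<\mathrm{const}\cdot\epsilon$. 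Since every smallness requirement above is ultimately forced by shrinking $\epsilon$ alone --- no threshold depends on $x$, $y$, $\eta$, or $\eta'$ --- there is no circularity, and ``for all $\epsilon$ small enough'' in the statement is legitimate.
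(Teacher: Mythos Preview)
Your proposal is correct and follows exactly the approach the paper intends: factor $f_{xy}=(\Psi_y^{-1}\circ\Psi_{f(x)})\circ f_x$, apply Theorem~\ref{Thm_Pesin_Coord} to the second factor and Proposition~\ref{Prop_Overlap_Meaning}(2) to the first, then compose. The paper itself gives no details here, simply stating that ``the proofs are routine modifications of the proofs in \cite{Sarig-JAMS}: Replace $M$ by one of the canonical transverse discs in $\Lambda$, and replace $\exp_x$ by $\Exp_x$''; your write-up is precisely that routine modification carried out, and your care with the $\beta/2\to\beta/3$ exponent drop and the absorption of constants via $Q_\epsilon<\epsilon^{3/\beta}$ is exactly the bookkeeping that makes the argument go through.
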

\noindent

The proofs are routine modifications of \cite[Props. 3.2 and 3.4]{Sarig-JAMS} once
we replace $M$ by one of the canonical transverse discs in $\Lambda$ and
$\exp_x$ by $\Exp_x$. For Proposition \ref{Prop_Overlap_Meaning}, we use the definition
of overlap, and for Corollary \ref{Cor_Pesin_Coord} we treat $f_{xy}=(\Psi_y^{-1}\circ\Psi_{f(x)})\circ f_x$
as a small perturbation of $f_x$ and then use Theorem \ref{Thm_Pesin_Coord}.

\part{Symbolic dynamics}

Throughout this part we assume that $M,X$ and $\vf$ satisfy our standing assumptions,
and that $\mu$ is a $\chi_0$--hyperbolic $\vf$--invariant probability measure on $M$.
We fix a standard Poincar\'e section $\Lambda=\Lambda(p_1,\ldots,p_N;r)$ adapted to $\mu$,
and a larger concentric standard section $\wt{\Lambda}:=\Lambda(p_1,\ldots,p_N;\wt{r})$ s.t. $\wt{r}>2r$.
Let $f,R$ and $\mathfrak S$ denote the Poincar\'e map, roof function, and singular set of $\Lambda$,
and let $\chi:=\chi_0\inf R$ (a bound for the Lyapunov exponents of $f$ at $\mu_\Lambda$--a.e. point,
see Lemma~\ref{Lemma_Lyap_Exp}).


In this part of the paper we construct a countable Markov partition for $f$ on a set of full measure
with respect to $\mu_\Lambda$,  and then use it to develop symbolic dynamics for $\vf$.
This was done in \cite{Sarig-JAMS} for surface diffeomorphisms, and the proof would have applied
to our setup verbatim had $\mathfrak S$ been empty. We will indicate the changes needed to treat
the case  $\mathfrak S\neq \emptyset$.

Not many changes are needed, because most of the work is done inside Pesin charts, where
$f$ and $f^{-1}$ are smooth with uniformly bounded $C^{1+\beta}$ norm. One point is worth
mentioning, though: \cite{Sarig-JAMS} uses a uniform bound on $|\ln Q_\epsilon(f(x))/Q_\epsilon(x)|$,
where $Q_\epsilon(x)$ is the maximal size of a Pesin chart. This quantity is no longer bounded when
$\mathfrak S\neq \emptyset$. When this or other effects of $\mathfrak S$ matter, we will give
complete details. Otherwise, we will just sketch the general idea and refer to \cite{Sarig-JAMS} for details.

\section{Generalized pseudo-orbits and shadowing}\label{Section_GPO}

\subsection*{Generalized pseudo-orbits (gpo)}

Fix some small $\epsilon>0$. Recall that a pseudo-orbit with parameter $\epsilon$ is a sequence
of points $\{x_i\}_{i\in\Z}$ satisfying the nearest neighbor conditions $\dist(f(x_i),x_{i+1})<\epsilon$ for all $i\in\Z$.
A gpo is also a sequence of objects satisfying nearest neighbor conditions, but the objects and the
conditions are more complicated, because of the need to record the hyperbolic features of each point.

\medskip
\noindent
{\sc   $\epsilon$--Double charts:} Ordered pairs
$
\Psi_x^{p^u,p^s}\!:=(\Psi_x\upharpoonright_{[-p^u,p^u]^2}, \Psi_x\upharpoonright_{[-p^s,p^s]^2})
$
where $x\in\NUH_{\chi}(f)$ and $0<p^u,p^s\leq Q_{\epsilon}(x)$ (same Pesin chart, different domains).

\medskip
\noindent
{\sc
$\epsilon$--Generalized pseudo-orbit (gpo):} A sequence $\{\Psi_{x_i}^{p^u_i,p^s_i}\}_{i\in\mathbb Z}$
of $\epsilon$--double charts which satisfies the following nearest neighbor conditions for all $i\in\mathbb Z$:
\begin{enumerate}[{(GPO1)}]
\item
 $\Psi_{f(x_i)}^{p^u_{i+1}\wedge p^s_{i+1}}\overset{\epsilon}{\approx}\Psi_{x_{i+1}}^{p^u_{i+1}\wedge p^s_{i+1}}$   and $
 \Psi_{f^{-1}(x_{i+1})}^{p^u_i\wedge p^s_i}\overset{\epsilon}{\approx}\Psi_{x_i}^{p^u_i\wedge p^s_i}$\label{GPO1}, cf. Prop. \ref{Prop_Overlap_Meaning}.
\item $p^u_{i+1}=\min\{e^{\epsilon} p^u_i,Q_{\epsilon}(x_{i+1})\}$ and $
p^s_i=\min\{e^{\epsilon} p^s_{i+1},Q_{\epsilon}(x_i)\}$.\label{GPO2}
\end{enumerate}
A {\em positive gpo} is a one-sided sequence $\{\Psi_{x_i}^{p^u_i,p^s_i}\}_{i\geq 0}$
with {(GPO1)}, {(GPO2)}. A {\em negative gpo} is a one-sided sequence $\{\Psi_{x_i}^{p^u_i,p^s_i}\}_{i\leq 0}$
with {(GPO1)}, {(GPO2)}. Gpos were called ``chains" in \cite{Sarig-JAMS}.

\medskip
\noindent
{\sc Shadowing:}
A gpo $\{\Psi_{x_i}^{p^u_i,p^s_i}\}_{i\in\Z}$ {\em shadows the orbit of $x$}, if
$f^i(x)\in\Psi_{x_i}\bigl([-\eta_i,\eta_i]^2\bigr)$ for all $i\in\Z$, where $\eta_i:=p^u_i\wedge p^s_i$.

\medskip
This notation is heavy, so we will sometimes abbreviate it by writing $v_i$ instead of
$\Psi_{x_i}^{p^u_i,p^s_i}$, and letting $p^u(v_i):=p^u_i, p^s(v_i):=p^s_i, x(v_i):=x_i$.
The nearest neighbor conditions [(GPO\ref{GPO1}) + (GPO\ref{GPO2})] will be expressed
by the notation $v_i\epto v_{i+1}$.

\begin{lem}\label{LemmaEasy}
Suppose $0<p^u_i,p^s_i\leq Q_i$ satisfy $p^u_{i+1}=\min\{e^\epsilon p^u_i, Q_{i+1}\}$ and
$p^s_{i}=\min\{e^\epsilon p^s_{i+1}, Q_{i}\}$ for  $i=0,1$. If $\eta_i:=p^u_i\wedge p^s_i$, then
$\eta_{i+1}/\eta_i\in [e^{-\epsilon},e^{\epsilon}]$.
\end{lem}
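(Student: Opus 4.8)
The plan is to prove the bound $\eta_{i+1}/\eta_i\in[e^{-\epsilon},e^\epsilon]$ by a direct case analysis on which of the two terms achieves the minimum in the recursions for $p^u_{i+1}$ and $p^s_i$. The key observation is that (GPO2) forces $p^u$ to either grow by exactly the factor $e^\epsilon$ or to be reset to the ceiling value $Q_{i+1}$, and dually $p^s$ either grows by $e^\epsilon$ going backwards or is reset to $Q_i$; combining this with the constraint $p^u_i,p^s_i\le Q_i$ should pin down $\eta_{i+1}/\eta_i$ to the desired window.

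First I would record the two elementary consequences of the hypotheses. From $p^u_{i+1}=\min\{e^\epsilon p^u_i,Q_{i+1}\}$ and $p^u_{i+1}\le Q_{i+1}$, $p^s_{i+1}\le Q_{i+1}$ we get $p^u_{i+1}\le e^\epsilon p^u_i$ and $\eta_{i+1}\le e^\epsilon p^u_i$ — but I actually want to compare with $\eta_i$. Similarly from $p^s_i=\min\{e^\epsilon p^s_{i+1},Q_i\}$ we get $p^s_i\le e^\epsilon p^s_{i+1}$, hence $p^s_{i+1}\ge e^{-\epsilon}p^s_i\ge e^{-\epsilon}\eta_i$. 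For the unstable direction: either $p^u_{i+1}=e^\epsilon p^u_i\ge e^\epsilon\eta_i\ge e^{-\epsilon}\eta_i$, or $p^u_{i+1}=Q_{i+1}\ge p^s_{i+1}\ge e^{-\epsilon}\eta_i$, so in both cases $p^u_{i+1}\ge e^{-\epsilon}\eta_i$. Together with $p^s_{i+1}\ge e^{-\epsilon}\eta_i$ this gives $\eta_{i+1}=p^u_{i+1}\wedge p^s_{i+1}\ge e^{-\epsilon}\eta_i$, i.e. one of the two inequalities.

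For the reverse inequality $\eta_{i+1}\le e^\epsilon\eta_i$ I would argue symmetrically, exploiting that the $p^s$ recursion runs "backwards". We have $p^u_{i+1}\le e^\epsilon p^u_i$, so $p^u_{i+1}\le e^\epsilon p^u_i$; and from $p^s_i=\min\{e^\epsilon p^s_{i+1},Q_i\}$: either $p^s_i=e^\epsilon p^s_{i+1}$, giving $p^s_{i+1}=e^{-\epsilon}p^s_i\le e^{-\epsilon}p^s_i\le e^\epsilon p^s_i$; or $p^s_i=Q_i$, in which case $p^s_i=Q_i\ge p^u_i$, hence $p^u_i=\eta_i$ and we can bound $\eta_{i+1}\le p^u_{i+1}\le e^\epsilon p^u_i=e^\epsilon\eta_i$. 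In the first subcase we also have $p^u_{i+1}\le e^\epsilon p^u_i$, so $\eta_{i+1}=p^u_{i+1}\wedge p^s_{i+1}\le e^\epsilon(p^u_i\wedge p^s_i)=e^\epsilon\eta_i$ once one checks the min distributes correctly. The one genuinely fiddly point — and the place to be careful — is making sure the case split is exhaustive and that the bound $p^u_i,p^s_i\le Q_i$ is invoked at exactly the right moment so that a "reset to $Q$" never produces a value smaller than the companion coordinate; I expect this bookkeeping, rather than any real estimate, to be the only obstacle. I would present it as: since $\eta_i\le Q_i$ always, write out the four combinations of (achieves $e^\epsilon$-growth / achieves $Q$-reset) for $(p^u_{i+1},p^s_i)$ and verify $e^{-\epsilon}\le\eta_{i+1}/\eta_i\le e^\epsilon$ in each, which is short in every case.
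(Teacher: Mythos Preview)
Your argument is correct. The paper does not actually supply a proof of this lemma; it simply cites \cite[Lemma 4.4]{Sarig-JAMS}. Your direct case analysis is exactly the sort of elementary verification one expects, and both inequalities go through as you describe: the lower bound $\eta_{i+1}\ge e^{-\epsilon}\eta_i$ follows because $p^s_{i+1}\ge e^{-\epsilon}p^s_i$ always, while $p^u_{i+1}$ is either $e^\epsilon p^u_i$ or else $Q_{i+1}\ge p^s_{i+1}$; the upper bound is handled by the symmetric split on whether $p^s_i$ equals $e^\epsilon p^s_{i+1}$ or $Q_i$, and in the latter case the constraint $p^u_i\le Q_i$ forces $\eta_i=p^u_i$. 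One small remark: your sentence ``$p^s_{i+1}=e^{-\epsilon}p^s_i\le e^{-\epsilon}p^s_i\le e^\epsilon p^s_i$'' contains a redundant middle term (presumably a typo), and the ``min distributes correctly'' step is the elementary fact that $a\le Kc$ and $b\le Kd$ imply $a\wedge b\le K(c\wedge d)$, which holds since $a\wedge b\le a\le Kc$ and $a\wedge b\le b\le Kd$.
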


See \cite[Lemma 4.4]{Sarig-JAMS} for the proof. Let $v=\Psi_{x}^{p^u,p^s}$ be an $\epsilon$--double chart.


\medskip
\noindent
{\sc Admissible manifolds:}
An {\em $s$--admissible manifold} in $v$ is a set of the form $\Psi_x\{(t,F(t)):|t|\leq p^s\}$,
where $F:[-p^s,p^s]\to \R$ satisfies:
\begin{enumerate}[(${\mathrm {Ad}}$1)]
\item $|F(0)|\leq 10^{-3}(p^u\wedge p^s)$.
\item $|F'(0)|\leq \frac{1}{2}(p^u\wedge p^s)^{\beta/3}$.
\item $F$ is $C^{1+\frac{\beta}{3}}$  and $\sup|F'|+\Hol_{\beta/3}(F)\leq \frac{1}{2}$.
\end{enumerate}
Similarly, a {\em $u$--admissible manifold} in $v$ is a set of the form $\Psi_x\{(F(t),t):|t|\leq p^u\}$,
where $F:[-p^u,p^u]\to \R$  satisfies (Ad1--3).

\medskip
The constant $10^{-3}$ in (Ad1) implies that $s$--admissible manifolds in $v$ intersect
$u$--admissible manifolds in $v$ inside the smaller set $\Psi_x([-10^{-2}\eta,10^{-2}\eta]^2)$,
where $\eta=p^u\wedge p^s$ (see Theorem \ref{Thm_Shadowing} below).
We call $F$ the {\em representing function}, and we denote the collections of all $s/u$--admissible
manifolds in $v$ by $\mathfs M^s(v)$ and $\mathfs M^u(v)$.
The representing function satisfies $\|F\|_\infty\leq Q_\epsilon(x)$, because $p^u,p^s\leq Q_\epsilon(x)$,
$|F(0)|\leq 10^{-3}(p^u\wedge p^s)$ and $|F'|\leq \frac{1}{2}$.\footnote{In fact
$|F'(t)|\leq |F'(0)|+\frac{1}{2}|t|^{\beta/3}\leq \epsilon$ for $t\in\dom(F)$, since
$|t|\leq p^{u/s}\leq Q_\epsilon(x)\leq \epsilon^{3/\beta}$.}
As a result, $s/u$--admissible manifolds
are subsets of $\Psi_x([-Q_\epsilon(x),Q_\epsilon(x)]^2)$, a set where $f$ is smooth, and where
if $\epsilon$ is small enough then $f$ is a perturbation of a uniformly hyperbolic linear map in
Pesin coordinates (Theorem \ref{Thm_Pesin_Coord}). This implies the following fact.

\medskip
\noindent
{\sc Graph Transform Lemma:\/} {\em
For all $\epsilon$ small enough, if $v_i\epto v_{i+1}$, then the forward image of a $u$--admissible manifold
$V^u\in \mathfs M^u(v_i)$ contains a unique $u$--admissible manifold $\mathfs F_{v_i v_{i+1}}^u[V^u]$,
called the {\em (forward) graph transform} of $V^u$.}

\medskip
\noindent
{\em Sketch of proof\/} (see \cite[Prop. 4.12]{Sarig-JAMS}, \cite[Supplement]{Katok-Hasselblatt-Book}, or \cite{Barreira-Pesin-Non-Uniform-Hyperbolicity-Book} for details). Let
$
f_{x_i x_{i+1}}:=\Psi_{x_{i+1}}^{-1}\circ f \circ\Psi_{x_i}:[-Q_\epsilon(x_i),Q_\epsilon(x_i)]^2\to\R^2
$.
By (GPO1) and Corollary \ref{Cor_Pesin_Coord}, $f_{x_i x_{i+1}}$ is $\epsilon$ close in the $C^{1+\frac{\beta}{3}}$ norm on $[-Q_\epsilon(x_i),Q_\epsilon(x_i)]^2$ to a linear map which contracts the $x$--coordinate by at least $e^{-\chi}$ and expands the $y$--coordinate by at least $e^{\chi}$. Direct calculations show that
if $\epsilon$ is much smaller than $\chi$ and  $V^u\in\mathfs M^u(v_i)$, then  $
f(V^u)\supset\Psi_{x_{i+1}}\{(G(t),t):t\in [a,b]\}
$
where $G$ satisfies (Ad1--3), and $[a,b]\supset [-e^{{\chi}/{2}}p^u_i,e^{{\chi}/{2}}p^u_i]$.
By (GPO2), if $\epsilon<\chi/2$ then $[-e^{\chi/2}p^u_i,e^{\chi/2}p^u_i]\supset [-p^u_{i+1},p^u_{i+1}]$,
so $f(V^u)$ restricts to a $u$--admissible manifold in $v_{i+1}$.

\medskip
There is also a {\em (backward) graph transform}
$\mathfs F_{v_{i+1}v_{i}}^s: \mathfs M^s(v_{i+1})\to \mathfs M^s(v_{i})$, ob\-tained by
applying $f^{-1}$ to $s$--admissible manifolds in $v_{i+1}$ and restricting the result to
an $s$--admissible manifold in $v_{i}$. Put a metric on $\mathfs M^u(v_i)$ and $\mathfs M^s(v_{i+1})$
by measuring the sup-norm distance between the representing functions.
Using the form of $f_{x_i x_{i+1}}$ in coordinates, one can  show by direct calculations that
$\mathfs F^u_{v_i v_{i+1}}:\mathfs M^u(v_i)\to \mathfs M^u(v_{i+1})$ and
$\mathfs F^s_{v_{i+1} v_i}:\mathfs M^s(v_{i+1})\to \mathfs M^s(v_{i})$ contract distances
by at least $e^{-\chi/2}$, see \cite[Prop. 4.14]{Sarig-JAMS}.

Suppose $\un{v}^-=\{v_i\}_{i\leq 0}$ is a {\em negative} gpo, and pick arbitrary
$V^u_{-n}\in\mathfs M^u(v_{-n})$ $(n\geq 0)$, then
$
V^u_{0,n}:=({\mathfs F}^u_{v_{-1} v_0}\circ \cdots\circ{\mathfs F}^u_{v_{-n+1} v_{-n+2}}
\circ{\mathfs F}^u_{v_{-n} v_{-n+1}})(V^u_{-n})\in\mathfs M^u(v_0)$.
Using the uniform contraction of $\mathfs F_{v_{-i-1} v_{-i}}$, it is easy to see that $\{V^u_{0,n}\}_{n\geq 1}$
is a Cauchy sequence, and that its limit is independent of the choice of $V^u_{-n}$ \cite[Prop. 4.15]{Sarig-JAMS}.
Thus we can make the following definition for all $\epsilon$ small enough.

\medskip
\noindent
{\sc The unstable manifold of a negative gpo $\un{v}^{-}$:}
$$
V^u[\un{v}^-]:=\lim_{n\to\infty} ({\mathfs F}^u_{v_{-1} v_0}\circ \cdots\circ{\mathfs F}^u_{v_{-n+1} v_{-n+2}}\circ{\mathfs F}^u_{v_{-n} v_{-n+1}})(V^u_{-n})
$$
for some (any) choice of $V^u_{-n}\in\mathfs M^u(v_{-n})$.

\medskip
Working with {\em positive} gpos and {\em backward} graph transforms, we can also make the following definition.

\medskip
\noindent
{\sc The stable manifold of a positive gpo $\un{v}^{+}$:}
$$
V^s[\un{v}^+]:=\lim_{n\to\infty}
({\mathfs F}^s_{v_{1} v_0}\circ \cdots\circ{\mathfs F}^s_{v_{n-1} v_{n-2}}\circ{\mathfs F}^s_{v_{n} v_{n-1}})(V^s_{n})
$$
for some (any) choice of $V^s_{n}\in\mathfs M^s(v_{n})$.

\medskip
The following properties hold:
\begin{enumerate}[(1)]
\item {\sc Admissibility:} $V^u[\un{v}^-]\in\mathfs M^u(v_0)$ and $V^s[\un{v}^+]\in\mathfs M^s(v_0)$.
This is because $\mathfs M^u(v_0),\mathfs M^s(v_0)$ are closed in the supremum norm.
\item {\sc Invariance:} $f^{-1}(V^u[\{v_i\}_{i\leq 0}])\subset V^u[\{v_i\}_{i\leq -1}]$,
$f(V^s[\{v_i\}_{i\geq 0}])\subset V^s[\{v_i\}_{i\geq 1}]$. This is immediate from the definition.
\item {\sc Hyperbolicity:}\label{Hyperbolicity-property} if $x,y\in V^{u}[\un{v}^-]$ then
$\dist_\Lambda(f^{-n}(x),f^{-n}(y))\xrightarrow[n\to\infty]{}0$, and if $x,y\in V^s[\un{v}^+]$ then
$\dist_\Lambda(f^{n}(x),f^{n}(y))\xrightarrow[n\to\infty]{}0$. The rates are exponential.
\item {\sc H\"older property:}\label{Holder-property-stable-vectors}
The maps $\un{v}\mapsto V^u[\{v_i\}_{i\leq 0}],V^s[\{v_i\}_{i\geq 0}]$
are H\"older continuos, i.e. there exist constants $K>0$ and $0<\theta<1$ s.t. for all $n\geq 0$,
if $\{u_i\}_{i\in\Z},\{v_i\}_{i\in\Z}$ are gpo's with $u_i=v_i$ for all $|i|\leq n$ then
\begin{align*}
{\rm dist}_{C^1}( V^u[\{u_i\}_{i\leq 0}], V^u[\{v_i\}_{i\leq 0}])&\leq K\theta^n\\
{\rm dist}_{C^1}( V^s[\{u_i\}_{i\geq 0}], V^s[\{v_i\}_{i\geq 0}])&\leq K\theta^n.
\end{align*}
\end{enumerate}
Above, ${\rm dist}_{C^1}$ is the $C^1$ distance between two admissible manifolds: if
$V_1=\Psi_x\{(t,F_1(t)):|t|\leq p^{s}\}$, $V_2=\Psi_x\{(t,F_2(t)):|t|\leq p^{s}\}$ are $s$--admissible
manifolds then
$$
{\rm dist}_{C^1}(V_1,V_2):=\max|F_1-F_2|+\max|F_1'-F_2'|,
$$
and a similar definition holds for $u$--admissible manifolds.

\medskip
To prove part (3) notice first that by the invariance property, $f^n(V^s[\un{v}^+])$ and $f^{-n}(V^u[\un{v}^-])$
remain inside Pesin charts. Therefore $f^n\upharpoonright_{V^s[\un{v}^+]}$ and
$f^{-n}\upharpoonright_{V^u[\un{v}^-]}$ can be written in Pesin coordinates as compositions
of $n$ uniformly hyperbolic maps on $\R^2$.  One can then use direct calculations as in the
proof of Pesin's Stable Manifold Theorem to prove (3). See e.g. \cite[Prop. 6.3]{Sarig-JAMS}.

Part (4) is proved almost verbatim as in the case of diffeomorphisms \cite[Prop. 4.15(5)]{Sarig-JAMS}.
Here is a crude explanation: the Pesin charts avoid the singular set hence their $C^{1+\beta}$ norms
are uniformly bounded, and since $V^u[\{v_i\}_{i\leq 0}],V^s[\{v_i\}_{i\geq 0}]$ are limits of admissible
manifolds via the graph transform method, they depend H\"older continuously on $\un{v}$.

\subsection*{The Shadowing Lemma}

\begin{thm}\label{Thm_Shadowing}
The following holds for all $\epsilon$ small enough: Every gpo with parameter $\epsilon$ shadows a unique orbit.
\end{thm}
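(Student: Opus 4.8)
The plan is to reproduce the shadowing argument of \cite{Sarig-JAMS}, the only structural point being that the singular set $\mathfrak S$ creates no new obstruction. By (\ref{Pesin-Inside}), every $s$-- or $u$--admissible manifold, and every forward or backward graph transform of one, lies inside a set $\Psi_x([-Q_\epsilon(x),Q_\epsilon(x)]^2)\subset\Lambda\setminus\mathfrak S$, where $f,f^{-1}$ are $C^{1+\beta}$ with uniform bounds (Lemma \ref{Lemma_Smooth_Section}) and where, in Pesin coordinates, $f$ is an $\epsilon$--perturbation of a hyperbolic linear map (Theorem \ref{Thm_Pesin_Coord}, Corollary \ref{Cor_Pesin_Coord}). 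In particular I will never need a uniform bound on $|\ln(Q_\epsilon\circ f/Q_\epsilon)|$: for an abstract gpo the box sizes $p^u_i,p^s_i$ are prescribed and already satisfy the slowly--varying relations (GPO2). So the whole proof runs as in \cite{Sarig-JAMS}, with $M$ replaced by a canonical transverse disc of $\Lambda$ and $\exp$ by $\Exp$.

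Write $\underline v=\{v_i\}_{i\in\Z}=\{\Psi_{x_i}^{p^u_i,p^s_i}\}_{i\in\Z}$ and $\eta_i:=p^u_i\wedge p^s_i$. I would first note that $\sigma^k\underline v:=\{v_{i+k}\}_{i\in\Z}$ is again an $\epsilon$--gpo for each $k\in\Z$, since (GPO1) and (GPO2) are shift invariant, so $V^u[(\sigma^k\underline v)^-]\in\mathfs M^u(v_k)$ and $V^s[(\sigma^k\underline v)^+]\in\mathfs M^s(v_k)$ are defined. Then I would show that a $u$--admissible manifold and an $s$--admissible manifold in one $\epsilon$--double chart $v_k$ meet in exactly one point: in $\Psi_{x_k}$--coordinates they are graphs $\{(F_1(t),t)\}$ and $\{(t,F_2(t))\}$ with $\sup|F_j'|\le\epsilon$ and $|F_j(0)|\le 10^{-3}\eta_k$ on the relevant ranges, so the fixed--point equation $a=F_1(F_2(a))$ has a unique solution, which yields an intersection point $\Psi_{x_k}(a_k,b_k)$ with $|a_k|,|b_k|<2\cdot10^{-3}\eta_k$. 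Define $\pi(\underline v):=V^u[\underline v^-]\cap V^s[\underline v^+]$; this point lies in $\Psi_{x_0}([-\eta_0,\eta_0]^2)$.

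The next and main step is to show that $\pi(\underline v)$ is actually shadowed by $\underline v$, i.e.\ $f^n(\pi(\underline v))\in\Psi_{x_n}([-\eta_n,\eta_n]^2)$ for all $n\in\Z$. For $n\ge 0$ I would argue by induction: by the Invariance property $f(V^s[(\sigma^n\underline v)^+])\subset V^s[(\sigma^{n+1}\underline v)^+]$, so $f^n(\pi(\underline v))$ lies on $V^s[(\sigma^n\underline v)^+]$ and hence has the form $\Psi_{x_n}(c_n,G_n(c_n))$, where $G_n$ is the representing function of that $s$--admissible manifold; writing $f$ in Pesin coordinates via Corollary \ref{Cor_Pesin_Coord} (applicable by (GPO1)), the stable coordinate satisfies $|c_{n+1}|\le e^{-\chi}|c_n|+O(\epsilon\eta_{n+1})$, which together with $|c_0|<2\cdot10^{-3}\eta_0$ and the comparison $\eta_{n+1}/\eta_n\in[e^{-\epsilon},e^{\epsilon}]$ (Lemma \ref{LemmaEasy}) gives $|c_n|<\eta_n$ and then $|G_n(c_n)|\le 10^{-3}\eta_n+\epsilon|c_n|<\eta_n$; hence $f^n(\pi(\underline v))\in\Psi_{x_n}([-\eta_n,\eta_n]^2)$. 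The case $n\le 0$ is symmetric, using $\pi(\underline v)\in V^u[\underline v^-]$, the Invariance property for $f^{-1}$, and the contraction of the unstable coordinate under $f^{-1}$.

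Finally, for uniqueness, suppose $y$ is also shadowed by $\underline v$. Then $f^n(y)\in\Psi_{x_n}([-\eta_n,\eta_n]^2)$ for all $n$, and since $f^n$ near $v_0$ is, in Pesin coordinates, a composition of maps $\epsilon$--close to hyperbolic linear maps that expand the unstable direction by at least $e^{\chi/2}>e^{\epsilon}$ (Theorem \ref{Thm_Pesin_Coord}), a point whose forward orbit never escapes the $\eta_n$--boxes must have zero unstable coordinate relative to the local stable graph, i.e.\ must lie on $V^s[\underline v^+]$; symmetrically, controlling the backward orbit, $y\in V^u[\underline v^-]$, so $y=\pi(\underline v)$. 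The step I expect to be the real work is the third paragraph together with its uniqueness counterpart: carrying out the quantitative estimates that keep the orbit inside the ``core'' boxes $[-\eta_n,\eta_n]^2$ rather than merely inside the $[-Q_\epsilon(x_n),Q_\epsilon(x_n)]^2$ on which $f$ is controlled. This is exactly where (GPO1), (GPO2) and Lemma \ref{LemmaEasy} are used, and it is precisely the content of the corresponding calculations in \cite{Sarig-JAMS}, which transfer to our setting verbatim once one replaces $M$ by a canonical transverse disc of $\Lambda$ (and $\exp$ by $\Exp$), since by (\ref{Pesin-Inside}) the admissible manifolds and their transforms never reach $\mathfrak S$ or $\partial\widetilde\Lambda$.
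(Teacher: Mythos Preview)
Your proposal is correct and follows the same overall architecture as the paper's proof: define $x$ as the unique intersection of $V^u[\un{v}^-]$ and $V^s[\un{v}^+]$, show it is shadowed, and prove uniqueness. The one genuine methodological difference is in the middle step.

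You show $f^n(x)\in\Psi_{x_n}([-\eta_n,\eta_n]^2)$ by a direct inductive estimate on the stable coordinate $c_n$, using Corollary \ref{Cor_Pesin_Coord} and Lemma \ref{LemmaEasy} to propagate the bound $|c_{n+1}|\le e^{-\chi}|c_n|+O(\epsilon\eta_{n+1})$. The paper instead reverses the order: it first proves uniqueness on the \emph{larger} boxes $[-Q_\epsilon(x_n),Q_\epsilon(x_n)]^2$ (any $y$ whose full orbit stays in these boxes must equal $x$), and then applies this to the point $y_k:=V^u[\{v_i\}_{i\le k}]\cap V^s[\{v_i\}_{i\ge k}]$. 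Since both $f^n(y_k)$ and $f^{n+k}(x)$ lie in $\Psi_{x_{n+k}}([-Q_\epsilon(x_{n+k}),Q_\epsilon(x_{n+k})]^2)$ for all $n$ (by invariance and admissibility), uniqueness forces $y_k=f^k(x)$; but $y_k$ is an intersection of admissible manifolds in $v_k$, hence automatically in the $\eta_k$--box. This sidesteps your inductive coordinate tracking entirely. Your route is more hands-on and makes the role of (GPO2) and Lemma \ref{LemmaEasy} explicit; the paper's route is shorter and explains why the $\eta_k$--containment comes ``for free'' once uniqueness on the $Q_\epsilon$--boxes is known.
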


\medskip
\noindent
{\em Sketch of proof.\/}
Let $\un{v}=\{v_i\}_{i\in\Z}$ be a gpo, $v_i=\Psi_{x_i}^{p^u_i,p^s_i}$.
We have to show that there exists a unique $x$ s.t. $f^i(x)\in\Psi_{x_i}([-\eta_i,\eta_i]^2)$
for all $i\in\Z$, where $\eta_i=p^u_i\wedge p^s_i$.
The sets $V^u:=V^u[\{v_i\}_{i\leq 0}]$ and $V^s:=V^s[\{v_i\}_{i\geq 0}]$ are admissible manifolds in $v_0$.
Because of properties (Ad1--3), $V^u$ and $V^s$ intersect at a unique point $x$,
and $x$ belongs to $\Psi_{x_0}([-10^{-2}\eta_0,10^{-2}\eta_0]^2)$
\cite[Prop. 4.11]{Sarig-JAMS}, see also \cite[Cor. S.3.8]{Katok-Hasselblatt-Book}.
By the invariance property,
$$
f^n(x)\in\Psi_{x_n}([-Q_\epsilon(x_n),Q_\epsilon(x_n)]^2)\textrm{ for all }n\in\Z.
$$
We will show that $\un{v}$ shadows $x$, and that $x$ is the only such point.

Any $y$ s.t. $f^n(y)\in\Psi_{x_n}([-Q_\epsilon(x_n),Q_\epsilon(x_n)]^2)$ for all $n\in\Z$ equals $x$.
The map $f_{x_n x_{n+1}}:=\Psi_{x_{n+1}}^{-1}\circ f\circ\Psi_{x_n}$ is uniformly hyperbolic
on $[-Q_\epsilon(x_n),Q_\epsilon(x_n)]^2$. If  $\Psi_{x_0}^{-1}(x)$ and $\Psi_{x_0}^{-1}(y)$
have different $y$--coordinates, then successive application of $f_{x_n x_{n+1}}$ will expand
the difference between the $y$--coordinates of $\Psi_{x_n}^{-1}(f^n(x))$, $\Psi_{x_n}^{-1}(f^n(y))$
exponentially as $n\to\infty$. If $\Psi_{x_0}^{-1}(x),\Psi_{x_0}^{-1}(y)$  have different $x$--coordinates,
then successive application of $f_{x_{-n-1},x_{-n}}^{-1}$ will expand the difference between
the $x$--coordinates of $\Psi_{x_{n}}^{-1}(f^{n}(x))$,$\Psi_{x_{n}}^{-1}(f^{n}(y))$ exponentially
as $n\to-\infty$. But these differences are bounded by $2Q_\epsilon(x_n)$ whence by a constant,
so $\Psi_{x_0}^{-1}(x)=\Psi_{x_0}^{-1}(y)$, whence $x=y$.

Let $y_k$ denote the unique intersection point of $V^u[\{v_i\}_{i\leq k}]$ and $V^s[\{v_i\}_{i\geq k}]$,
then $f^n(y_k),f^{n+k}(x)\in \Psi_{x_{n+k}}([-Q_\epsilon(x_{n+k}),Q_\epsilon(x_{n+k})]^2)$ for all $n\in\Z$.
By the previous paragraph, $y_k=f^k(x)$. Since $y_k$ is the intersection of a $u$--admissible
manifold and an $s$--admissible manifold in $v_k$, $y_k\in\Psi_{x_k}([-\eta_k,\eta_k]^2)$ where
$\eta_k:=p^u_k\wedge p^s_k$. It follows that $f^k(x)\in \Psi_{x_k}([-\eta_k,\eta_k]^2)$ for all $k\in\Z$.
Thus $\un{v}$ shadows the orbit of $x$, and $x$ is unique with this property.
\hfill$\Box$

\subsection*{Which points are shadowed by gpos?}

To appreciate where the difficulty lies, let us try the na\"{i}ve approach:
given $x\in\NUH_\chi(f)$, set $x_i:=f^i(x)$, and look for $p^u_i,p^s_i$ s.t.
$\{\Psi_{x_i}^{p^u_i,p^s_i}\}_{i\in\Z}$ is a gpo. (GPO1) is automatic, but without
additional information on $Q_\epsilon(f^i(x))$, it is not clear that there exist
$p^{u}_i, p^s_i$ satisfying (GPO2).
This is where we will use the adaptedness of $\Lambda$:
$\lim\limits_{|n|\to\infty}\frac{1}{n}\log Q_{\epsilon}(f^n(x))=0$ for a.e. $x$, whence by
(\ref{Jonah}) there exist $q_\epsilon(x)>0$ s.t.
$Q_\epsilon(f^n(x))> e^{-\frac{1}{3}\epsilon|n|}q_\epsilon(x)>e^{-\epsilon|n|}q_\epsilon(x)$ for all $n\in\Z$.
So the following suprema range over non-empty sets:
\begin{align*}
p^u_i&:=\sup\{t>0: Q_\epsilon(f^{i-n}(x))\geq e^{-{\epsilon}n}t\textrm{ for all }n\geq 0\},\\
p^s_i&:=\sup\{t>0: Q_\epsilon(f^{i+n}(x))\geq e^{-{\epsilon}n}t \textrm{ for all }n\geq 0\}.
\end{align*}
It is easy to see that $p^u_i,p^s_i$ satisfy (GPO2), so
$\{\Psi_{f^i(x)}^{p^u_i,p^s_i}\}_{i\in\Z}$ is a gpo shadowing $x$.

If we want to use the  previous construction to shadow a set of full measure of orbits, then we need uncountably many ``letters" $\Psi_x^{p^u,p^s}$.  The following proposition achieves this with  a countable discrete collection.
 Recall the definition of $\NUH_\chi^\ast(f)$ from Lemma \ref{Lemma_Q_decay}, and let
\begin{equation}\label{NUH-sharp}
\NUH_{\chi}^\#(f):=\{x\in\NUH_{\chi}^\ast(f):\limsup_{n\to\infty}q_\epsilon(f^n(x)), \limsup_{n\to-\infty}q_\epsilon(f^{n}(x))\neq 0\}.
\end{equation}
Thus $\NUH_{\chi}^\#(f)$ is $f$--invariant of full measure (by the Poincar\'e recurrence theorem).

\begin{prop}\label{Prop_Coarse_Gr}
The following holds for all $\epsilon$ small enough. There exists a countable collection
of $\epsilon$--double charts $\mathfs A$ with the following properties:
\begin{enumerate}[$(1)$]
\item {\sc Discreteness:} Let $D(x):=\dist_\Lambda\bigl(\{x,f(x),f^{-1}(x)\},\mathfrak S\bigr)$,
then for every $t>0$ the set $\{\Psi_x^{p^u,p^s}\in\mathfs A:D(x), p^u,p^s>t\}$ is finite.
\item {\sc Sufficiency:} For every $x\in\NUH_{\chi}^\#(f)$ there is a gpo $\{v_n\}_{n\in\Z}\in\mathfs A^\Z$
which shadows $x$, and which satisfies $p^u(v_n)\wedge p^s(v_n)\geq e^{-\epsilon/3}q_{\epsilon}(f^n(x))$ for all $n\in\Z$.
\item {\sc Relevance:} For every $v\in\mathfs A$ there is a gpo $\un{v}\in\mathfs A^\Z$ s.t. $v_0=v$
and $\un{v}$ shadows a point in $\NUH_\chi(f)$.
\end{enumerate}
\end{prop}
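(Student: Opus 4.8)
\emph{Proof plan.} I would adapt the coarse--graining construction of \cite[\S 3]{Sarig-JAMS}; the one genuinely new point is that the discretization must also record the distance to the singular set $\mathfrak S$, since $Q_\epsilon(x)$ — and hence the nearest--neighbour conditions (GPO1)--(GPO2) — depends on it through the factor $\epsilon\,\dist_\Lambda(x,\mathfrak S)$ in (\ref{Q_def}). First I would attach to each $\epsilon$--double chart $v=\Psi_x^{p^u,p^s}$ the ``coordinate vector''
$$
\Gamma(v):=\bigl(x,\ \Theta\circ C_\chi(x),\ \alpha(x),\ s(x),\ u(x),\ D(x),\ p^u,\ p^s\bigr),
$$
which lies in a separable metric space. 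For each $\ell\in\N$, those $\Gamma(v)$ whose last three entries $D(x),p^u,p^s$ lie in $(e^{-\ell},\rho_{\dom}]$ range over a relatively compact set: indeed $p^u\le Q_\epsilon(x)$ together with (\ref{Q_def}) bounds $s(x),u(x)$, bounds $|\sin\alpha(x)|$ away from $0$ (in terms of $\ell$), and confines $x$ to a compact subset of $\Lambda\setminus\mathfrak S$. Fixing a finite $e^{-N\ell}$--net of that set for a large $N=N(\epsilon)$, and retaining for each net point one double chart $v$ with $x(v)\in\NUH_\chi(f)$ whose $\Gamma(v)$ is $e^{-N\ell}$--close to it (when such $v$ exists), I obtain a countable collection $\mathfs A_0$. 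Discreteness is then immediate: if $v\in\mathfs A_0$ and $D(x),p^u,p^s>t$, then $\Gamma(v)$ lies near one of finitely many net points (from levels $\ell\le\lceil-\log t\rceil$), and each net point contributed at most one chart; this passes to any subcollection of $\mathfs A_0$.

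For sufficiency I would use the canonical gpo over $x\in\NUH_\chi^\#(f)$: with $x_i:=f^i(x)$ set $p^u_i:=\inf_{n\ge 0}Q_\epsilon(x_{i-n})e^{\epsilon n}$ and $p^s_i:=\inf_{n\ge 0}Q_\epsilon(x_{i+n})e^{\epsilon n}$. Since $q_\epsilon(x_i)>0$, $q_\epsilon\le\epsilon Q_\epsilon$, and $q_\epsilon$ is tempered (Lemma \ref{PropTemp}), one has $Q_\epsilon(x_{i\mp n})e^{\epsilon n}\ge q_\epsilon(x_i)$, so $p^u_i,p^s_i$ are positive with $p^u_i\wedge p^s_i\ge q_\epsilon(x_i)$, and a one--line splitting of the infimum gives (GPO2). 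Then I would replace each $\Psi_{x_i}^{p^u_i,p^s_i}$ by the chart $v_i\in\mathfs A_0$ retained for the net point $e^{-N\ell_i}$--close to $\Gamma\bigl(\Psi_{x_i}^{\lfloor p^u_i\rfloor_\epsilon,\lfloor p^s_i\rfloor_\epsilon}\bigr)$ (for the level $\ell_i$ such that $q_\epsilon(x_i)\in(e^{-\ell_i},e^{-\ell_i+1}]$); such $v_i$ exists because $\Psi_{x_i}^{\lfloor p^u_i\rfloor_\epsilon,\lfloor p^s_i\rfloor_\epsilon}$ itself realizes that net point and $x_i\in\NUH_\chi(f)$. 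Because $D(x_i)$ is recorded and the net is fine relative to it, $\dist_\Lambda(x(v_i),\mathfrak S)\asymp D(x_i)$, hence by (\ref{Q_def}) $Q_\epsilon(x(v_i))\in[e^{-\epsilon}Q_\epsilon(x_i),e^{\epsilon}Q_\epsilon(x_i)]$; enlarging $N$ so the $x$-- and $\Theta\circ C_\chi$--entries of $\Gamma(v_i),\Gamma(v_{i+1})$ are closer than $(p^u(v_i)\wedge p^s(v_i))^{8}$, Proposition \ref{Prop_Overlap_Meaning} gives (GPO1) for $\{v_i\}$ as in \cite{Sarig-JAMS}, and (GPO2) follows from the $Q_\epsilon$--comparison just made. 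Since rounding loses at most a factor $e^{-\epsilon/3}$, $p^u(v_i)\wedge p^s(v_i)\ge e^{-\epsilon/3}q_\epsilon(x_i)$; and since $f^i(x)$ sits well inside $\Psi_{x_i}([-\eta_i,\eta_i]^2)$ while the charts moved only microscopically, Proposition \ref{Prop_Overlap_Meaning}(1) shows $\{v_i\}$ shadows $x$.

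For relevance I would let $\mathfs A\subset\mathfs A_0$ be the set of charts that occur as some coordinate of a bi-infinite gpo in $\mathfs A_0^{\Z}$ shadowing a point of $\NUH_\chi(f)$; it still satisfies (1). If $v\in\mathfs A$, say $v=w_k$ for such a gpo $\un{w}$ shadowing $y\in\NUH_\chi(f)$, then $\sigma^k\un{w}$ is a gpo beginning at $v$ that shadows $f^k(y)\in\NUH_\chi(f)$ and all of whose letters occur in $\un{w}$, hence lie in $\mathfs A$ — this is (3). Finally the gpo $\{v_i\}$ built in the previous paragraph shadows $x\in\NUH_\chi(f)$, so each $v_i\in\mathfs A$ and $\{v_i\}\in\mathfs A^{\Z}$, which gives (2).

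I expect the main obstacle to be the sufficiency step in the presence of $\mathfrak S$: recording $D(x)=\dist_\Lambda(\{x,f(x),f^{-1}(x)\},\mathfrak S)$ among the coordinates and making the discretization fine relative to it is exactly what forces the passage from the canonical gpo to its $\mathfs A_0$--approximant to change $Q_\epsilon$ — and hence (GPO2) — only within the allowed $e^{\pm\epsilon}$ tolerance. Once this is arranged, the remaining verifications (existence of realizing charts, the $C_\chi$--continuity estimates behind (GPO1), and the Cauchy argument for shadowing) are the same separable--space bookkeeping as in \cite{Sarig-JAMS}.
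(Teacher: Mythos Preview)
Your overall strategy---coarse-graining the parameter space into precompact levels, taking finite nets, and approximating the canonical gpo $\{\Psi_{f^i(x)}^{p^u_i,p^s_i}\}$ by net points---is the same as the paper's. However, your coordinate vector $\Gamma(v)$ records the Pesin data $\Theta\circ C_\chi,\alpha,s,u$ only at the point $x$, and this is not enough to verify (GPO1). The overlap condition in (GPO1) reads
\[
\dist_\Lambda\bigl(f(x(v_i)),x(v_{i+1})\bigr)+\bigl\|\Theta\circ C_\chi\bigl(f(x(v_i))\bigr)-\Theta\circ C_\chi\bigl(x(v_{i+1})\bigr)\bigr\|<\eta_{i+1}^8.
\]
Your net gives $x(v_i)$ close to $f^i(x)$ together with $C_\chi(x(v_i))$ close to $C_\chi(f^i(x))$, and similarly at $i+1$; continuity of $f$ then makes $f(x(v_i))$ close to $f^{i+1}(x)$ and hence to $x(v_{i+1})$. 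But the map $y\mapsto C_\chi(y)$ is only Borel measurable, not continuous (the parameters $s(y),u(y),\alpha(y)$ depend on the entire future and past of $y$), so there is no reason for $C_\chi\bigl(f(x(v_i))\bigr)$ to be close to $C_\chi\bigl(f(f^i(x))\bigr)=C_\chi(f^{i+1}(x))$. The second summand above is therefore uncontrolled, and your claim that ``Proposition~\ref{Prop_Overlap_Meaning} gives (GPO1)'' does not go through. The same issue obstructs the comparison $Q_\epsilon(x(v_i))\asymp Q_\epsilon(f^i(x))$ you need for (GPO2), since $Q_\epsilon$ depends on $s,u,\alpha$ as well as on $\dist_\Lambda(\cdot,\mathfrak S)$.

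The paper's remedy is to discretize not the point $x$ but the \emph{triple} $\bigl(x,f(x),f^{-1}(x)\bigr)$ together with the values of $Q_\epsilon$ and $C_\chi$ at all three points; concretely it works in $Y\subset[\Lambda\setminus\bigcup f^i(\mathfrak S)]^3\times(0,\infty)^3\times\mathrm{GL}(2,\R)^3$ and stratifies by the level of $Q_\epsilon(x)$ and by the three distances $\dist_\Lambda(f^i(x),\mathfrak S)$, $i=-1,0,1$ (not just their minimum $D(x)$). With the net chosen fine in this enlarged space one gets directly $\|\Theta\circ C_\chi(f(x_n))-\Theta\circ C_\chi(f^{n+1}(x))\|$ small (condition (a$_n$) with $i=1$), and the triangle inequality against (a$_{n+1}$) with $i=0$ yields (GPO1). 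If you augment your $\Gamma(v)$ to carry $\Theta\circ C_\chi(f^{\pm1}(x))$ and $Q_\epsilon(f^{\pm1}(x))$ (and stratify by the three distances separately, as needed for precompactness), the rest of your outline goes through essentially as in the paper.
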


\begin{proof}
The proof for diffeomorphisms in \cite[Props. 3.5 and 4.5]{Sarig-JAMS} does not extend to our case,
because it uses a uniform bound $F^{-1}\leq Q_\epsilon\circ f/Q_\epsilon\leq F$ which does not
hold in the presence of singularities. We bypass this difficulty as follows.

Let $X:=[\Lambda\setminus\bigcup_{i=-1,0,1}f^i(\mathfrak S)]^3\x(0,\infty)^3\x\mathrm{GL}(2,\R)$,
together with the product topology, and let $Y\subset X$ denote the subset of $(\un{x},\un{Q},\un{C})\in X$
of the form
\begin{align*}
\un{x}&=(x,f(x),f^{-1}(x)), \textrm{ where }x\in\NUH_\chi^\ast(f),\\
\un{Q}&=(Q_\e(x),Q_\epsilon(f(x)),Q_\epsilon(f^{-1}(x))),\\
\un{C}&=(C_{\chi}(x),C_{\chi}(f(x)),C_{\chi}(f^{-1}(x))).
\end{align*}
\noindent
Cut $Y$ into the countable disjoint union $Y=\biguplus_{(k,\un{\ell})\in\mathbb N_0^4} Y_{k,\un{\ell}}$
where $\N_0=\N\cup\{0\}$ and
$$
Y_{k,\un{\ell}}:=\biggl\{(\un{x},\un{Q},\un{C})\in Y:  \begin{array}{c}x\in\NUH_{\chi}^\ast(f),
\ e^{-(k+1)}< Q_\epsilon(x)\leq e^{-k},\text{ and}\\
e^{-(\ell_i+1)}< \dist_\L(f^i(x),\mathfrak S)\leq e^{-\ell_i}\ \ (i=0,1,-1)\end{array}\biggr\}.
$$

\medskip
\noindent
{\sc Precompactness Lemma.} {\em  $Y_{k,\un{\ell}}$ are precompact in $X$.}

\noindent
{\em Proof.\/}
Suppose $(\un{x},\un{Q},\un{C})\in Y_{k,\un{\ell}}$. By (\ref{C-inverse-norm}) and (\ref{Q_def}),
$\|C_{\chi}(x)^{-1}\|\leq \epsilon^{\frac{1}{4}}e^{\frac{\beta (k+1)}{12}}$. One can show as in
\cite[page 403]{Sarig-JAMS}, that  $C^{-1}\leq \|C_{\chi}(y)^{-1}\|/\|C_{\chi}(f(y))^{-1}\|\leq C$
for all $y\in\NUH_{\chi}(f)$ for some global constant $C$. It follows that
$$
\|C_{\chi}(f^i(x))^{-1}\|\leq C\epsilon^{\frac{1}{4}}e^{\frac{\beta (k+1)}{12}} \textrm{ for }i=0,1,-1.
$$
Since $C_{\chi}(\cdot)$ is a contraction, $\un{C}\in G_k\x G_k \x G_k$, where $G_k$ is the compact set
$\{A\in\mathrm{GL}(2,\R): \|A\|\leq 1, \|A^{-1}\|\leq C\epsilon^{\frac{1}{4}}e^{\frac{\beta (k+1)}{12}}\}$.

Next we bound $\un{Q}$ in a compact set. By (\ref{C-inverse-norm}),
if $(\un{x},\un{Q},\un{C})\in Y_{k,\un{\ell}}$, then
$$
\frac{Q_\epsilon(f^i(x))}{Q_{\epsilon}(x)}\geq \biggl(\frac{\sqrt{2}\|C_{\chi}(f^i(x))^{-1}\|}{\|C_{\chi}(x)^{-1}\|}\biggr)^{-\frac{12}{\beta}}\wedge \left(\frac{\dist_{\Lambda}(f^i(x),\mathfrak S)}{\dist_{\Lambda}(x,\mathfrak S)}\right)\geq  2^{-\frac{6}{\beta}}C^{-\frac{12}{\beta}}\wedge e^{-(\ell_i+1)},
$$
whence $Q_\epsilon(f^i(x))\geq 2^{-\frac{6}{\beta}}C^{-\frac{12}{\beta}}e^{-(\ell_i+1)}e^{-(k+1)}$.
By definition, $Q_\epsilon(f^i(x))\leq \rho_{\dom}$, so $\un{Q}\in (F_{k,\un{\ell}})^3$ with
$F_{k,\un{\ell}}\subset \R$ compact.

Finally, $\un{x}\in E_{\ell_0}\x f(E_{\ell_0})\x f^{-1}(E_{\ell_0})$, with
$E_{\ell_0}:=\{y\in\Lambda:\dist_{\Lambda}(y,\mathfrak S)\geq e^{-\ell_0-1}\}$.
The set $E_{\ell_0}$ is compact because $\Lambda$ is compact, and $f(E_{\ell_0}), f^{-1}(E_{\ell_0})$
are compact because $f\upharpoonright_{E_{\ell_0}},f^{-1}\upharpoonright_{E_{\ell_0}}$ are continuous.

In summary, $Y_{k,\un{\ell}}\subset \prod_{i=0,1,-1}f^i(E_{\ell_0})\x (F_{k,\un{\ell}})^3\x (G_k)^3$,
a compact subset of $X$. So $Y_{k,\un{\ell}}$ is precompact in $X$, proving the lemma.

\medskip
Since $Y_{k,\un{\ell}}$ is precompact in $X$, $Y_{k,\un{\ell}}$ contains a finite set $Y_{k,\un{\ell}}(m)$
s.t. for every $(\un{x},\un{Q},\un{C})\in Y_{k,\un{\ell}}$ there exists some
$(\un{y},\un{Q}',\un{C}')\in Y_{k,\un{\ell}}(m)$ s.t. for every $|i|\leq 1$:
\begin{enumerate}[(a)]
\item $\dist_\Lambda(f^i(x),f^i(y))+\|\Theta\circ C_{\chi}(f^i(x))-\Theta\circ C_{\chi}(f^i(y))\|<e^{-8(m+3)}$.
\item $e^{-\epsilon/3}<Q_\epsilon(f^i(x))/Q_\epsilon(f^i(y))<e^{\epsilon/3}$.
\end{enumerate}
($\Theta$ is defined at the end of \S\ref{SectionPesinCharts}.)

\medskip
\noindent
{\sc Definition of $\mathfs A$:}
The set of $\epsilon$--double charts $\Psi_{x}^{p^u,p^s}$ s.t. for some $k,\ell_0,\ell_{1},\ell_{-1},m$:
\begin{enumerate}[(\textrm{A}1)]
\item $x$ is the first coordinate of some $(\un{x},\un{Q},\un{C})\in Y_{k,\un{\ell}}(m)$.
\item  $0<p^u,p^s\leq Q_\epsilon(x)$ and $p^u,p^s\in I_{\epsilon}=\{e^{-\frac{1}{3}\ell\epsilon}:\ell\in\N\}$.
\item  $p^u\wedge p^s\in [e^{-m-2},e^{-m+2}]$.
\end{enumerate}

\medskip
\noindent
{\sc Proof that  $\mathfs A$ is discrete:} Fix $t>0$. Suppose $\Psi_x^{p^u,p^s}\in\mathfs A$ and
let $k,\un{\ell},m$ be as above. If $D(x),p^u,p^s>t$, then:
\begin{enumerate}[$\circ$]
\item $k\leq |\log t|$ because $t<p^u\leq Q_\epsilon(x)\leq e^{-k}$.
\item $\ell_i\leq |\log t|$ because $t<D(x)\leq \dist_\Lambda(f^i(x),\mathfrak S)\leq e^{-\ell_i}$.
\item $m\leq |\log t|+2$ because $t<p^u\wedge p^s\leq e^{-m+2}$.
\end{enumerate}
So
$
\#\{x:\Psi_x^{p^u,p^s}\in\mathfs A, D(x),p^u,p^s>t\}\leq \sum\limits_{k,\ell_0,\ell_1,\ell_{-1}=0}^{\big\lceil|\log t|\big\rceil}
\sum\limits_{m=0}^{\big\lceil|\log t|\big\rceil+2}\# Y_{k,\un{\ell}}(m)<\infty$.
Also,
$
\#\{(p^u,p^s):\Psi_x^{p^u,p^s}\in\mathfs A, D(x),p^u,p^s>t\}\leq (\#(I_\epsilon\cap [t,1]))^2<\infty$.
Thus $\#\{\Psi_x^{p^u,p^s}\in\mathfs A: D(x),p^u,p^s>t\}<\infty$, proving that $\mathfs A$ is discrete.

\medskip
The proof of sufficiency requires some preparation. A sequence $\{(p^u_n,p^s_n)\}_{n\in\Z}$ is called
{\em $\epsilon$--subordinated} to a sequence $\{Q_n\}_{n\in\Z}\subset I_{\epsilon}$,
if $0<p^u_n,p^s_n\leq Q_n$; $p^u_n,p^s_n\in I_\epsilon$; $p^u_{n+1}=\min\{e^\epsilon p^u_n, Q_{n+1}\}$
and $p^s_{n}=\min\{e^\epsilon p^s_{n+1}, Q_{n}\}$ for all $n$.

\medskip
\noindent
{\sc First Subordination Lemma.\/} {\em Let $\{q_n\}_{n\in\Z}, \{Q_n\}_{n\in\Z}\subset I_{\epsilon}$.
If for every $n\in\Z$ $0<q_n\leq Q_n$ and $e^{-\epsilon}\leq q_n/q_{n+1}\leq e^{\epsilon}$, then
there exists $\{(p^u_n,p^s_n)\}_{n\in\Z}$ which is $\epsilon$--subordinated to $\{Q_n\}_{n\in\Z}$,
and such that $p^u_n\wedge p^s_n\geq q_n$ for all $n$.}

\medskip
\noindent
{\sc Second Subordination Lemma.\/} {\em Suppose $\{(p^u_n,p^s_n)\}_{n\in\Z}$ is
$\epsilon$--subordinated to $\{Q_n\}_{n\in\Z}$. If $\limsup\limits_{n\to\infty}(p^u_n\wedge p^s_n)>0$
and $\limsup\limits_{n\to-\infty}(p^u_n\wedge p^s_n)>0$, then $p^u_n$ (resp. $p^s_n$)
is equal to $Q_n$ for infinitely many $n>0$, and for infinitely many $n<0$.}

\medskip
\noindent
These are Lemmas 4.6 and 4.7 in \cite{Sarig-JAMS}.

\medskip
\noindent
{\sc Proof of sufficiency:}
Fix $x\in\NUH^\#_{\chi}(f)$. Recall the definition of $q_{\epsilon}(\cdot)$ from Pesin's
Temperedness Lemma (Lemma \ref{PropTemp}), and choose  $q_n\in I_\epsilon$ s.t.
$q_n/q_\epsilon(f^n(x))\in [e^{-\epsilon/3},e^{\epsilon/3}]$.
Necessarily $e^{-\epsilon}\leq q_n/q_{n+1}\leq e^{\epsilon}$.

By the first subordination lemma there exists $\{(q^u_n,q^s_n)\}_{n\in\Z}$ s.t. $\{(q^u_n,q^s_n)\}_{n\in\Z}$
is $\epsilon$--subordinated to $\{e^{-\epsilon/3}Q_\epsilon(f^n(x))\}_{n\in\Z}$, and
$q^u_n\wedge q^s_n\geq q_n$ for all $n\in\Z$. Let $\eta_n:=q^u_n\wedge q^s_n$.
By Lemma \ref{LemmaEasy}, $e^{-\epsilon}\leq \eta_{n+1}/\eta_n\leq e^{\epsilon}$.
Since $\eta_n\geq q_n\geq e^{-\epsilon/3}q_\epsilon(f^n(x))$ and $x\in\NUH_{\chi}^\#(f)$,
$\limsup_{n\to\pm\infty}\eta_n>0$.

Choose non-negative integers $m_n, k_n,\un{\ell}_n=(\ell^n_0,\ell^n_1,\ell^n_{-1})$ s.t. for all $n\in\Z$:
\begin{enumerate}[$\circ$]
\item $\eta_n\in [e^{-m_n-1},e^{-m_n+1}]$.
\item $Q_\epsilon(f^n(x))\in (e^{-k_n-1},e^{-k_n}]$.
\item $\dist_{\Lambda}(f^{n+i}(x),\mathfrak S)\in (e^{-\ell^n_i-1},e^{-\ell^n_i}]$ for $i=0,1,-1$.
\end{enumerate}
Choose an element of $Y_{k_n,\un{\ell}_n}$ with first coordinate $f^n(x)$, and approximate it
by some element of $Y_{k_n,\un{\ell}_n}(m_n)$ with first coordinate $x_n$ s.t. for $i=0,1,-1$:
\begin{enumerate}[(a$_n$)]
\item $\dist_\Lambda(f^i(f^n(x)),f^i(x_n))+\|\Theta\circ C_{\chi}(f^i(f^n(x)))- \Theta\circ C_{\chi}(f^i(x_n))\|<e^{-8(m_n+3)}$.
\item $e^{-\epsilon/3}<Q_{\epsilon}(f^i(f^n(x)))/Q_{\epsilon}(f^i(x_n))<e^{\epsilon/3}$.
\end{enumerate}
By $(\mathrm b_n)$ with $i=0$, $Q_{\epsilon}(x_n)\geq e^{-\epsilon/3}Q_{\epsilon}(f^n(x))\geq \eta_n$.
By the first subordination lemma, there exists $\{(p^u_n,p^s_n)\}_{n\in\Z}$ $\epsilon$--subordinated
to $\{Q_\epsilon(x_n)\}_{n\in\Z}$ such that $p^u_n\wedge p^s_n\geq \eta_n$ for all $n\in\Z$.
Necessarily, $p^u_n\wedge p^s_n\geq e^{-\epsilon/3}q_{\epsilon}(f^n(x))$. Let
$$
\un{v}:=\{\Psi_{x_n}^{p^u_n,p^s_n}\}_{n\in\Z}.
$$
We will show that $\un{v}\in\mathfs A^\Z$, $\un{v}$ is a gpo, and $\un{v}$ shadows the orbit of $x$.

\medskip
\noindent
{\em Proof that $\Psi_{x_n}^{p^u_n,p^s_n}\in\mathfs A$:\/} (A1), (A2) are clear, so we focus on (A3).
It is enough to show that $1\leq (p^u_n\wedge p^s_n)/\eta_n\leq e$. The lower bound is by construction.
For the upper bound, recall that $\limsup_{n\to\pm\infty}\eta_n>0$, so by the second subordination
lemma $q^u_n=e^{-\epsilon/3}Q_\epsilon(f^n(x))$ for infinitely many $n<0$. By (b$_n$) with $i=0$,
$q^u_n\geq e^{-\epsilon}Q_\epsilon(x_n)\geq e^{-\epsilon}p^u_n$ for infinitely many $n<0$.
If $q^u_n\geq e^{-\epsilon}p^u_n$ then $q^u_{n+1}\geq e^{-\epsilon}p^u_{n+1}$:
\begin{align*}
&q^u_{n+1}=\min\{e^{\epsilon}q^u_n,e^{-\epsilon/3}Q_\epsilon(f^{n+1}(x))\}\\
&\geq \min\{e^{\epsilon} e^{-\epsilon}p^u_n,e^{-2\epsilon/3}Q_\epsilon(x_{n+1})\}, \textrm{ by (b$_{n+1}$) with $i=0$}\\
&\geq e^{-\epsilon}\min\{e^{\epsilon} p^u_n,Q_\epsilon(x_{n+1})\}=e^{-\epsilon}p^u_{n+1}.
\end{align*}
It follows that $q^u_n\geq e^{-\epsilon}p^u_n$ for all $n\in\Z$.
Similarly $q^s_n\geq e^{-\epsilon}p^s_n$ for all $n\in\Z$, whence
$\eta_n\geq e^{-\epsilon}(p^u_n\wedge p^s_n)$ for all $n\in\Z$, giving us (A3).

\medskip
\noindent
{\em Proof that
$\{\Psi_{x_n}^{p^u_n,p^s_n}\}_{n\in\Z}$ is a gpo.\/} (GPO2) is true by construction,
so we just need to check (GPO1). We write  (a$_n$) with $i=1$, and (a$_{n+1}$) with $i=0$:
\begin{enumerate}[$\circ$]
\item $\dist_\Lambda(f^{n+1}(x),f(x_n))+\|\Theta\circ C_{\chi}(f^{n+1}(x))- \Theta\circ C_{\chi}(f(x_n))\|<e^{-8(m_n+3)}$.
\item $\dist_\Lambda(f^{n+1}(x),x_{n+1})+\|\Theta\circ C_{\chi}(f^{n+1}(x))- \Theta\circ C_{\chi}(x_{n+1})\|<e^{-8(m_{n+1}+3)}$.
\end{enumerate}
So $x_{n+1},f(x_n),f^{n+1}(x)$ are all in the same canonical transverse disc, and
\begin{equation}\label{Ineq_m_n}
\dist_\Lambda(f(x_n),x_{n+1})+\|\Theta\circ C_{\chi}(f(x_n))-\Theta\circ C_{\chi}(x_{n+1})\|<e^{-8(m_n+3)}+e^{-8(m_{n+1}+3)}.
\end{equation}
The proof of (A3) shows that  $\xi_{n}:=p^u_{n}\wedge p^s_{n}\in [e^{-m_{n}-2},e^{-m_{n}+2}]$.
Also $\xi_n/\xi_{n+1}\in [e^{-\epsilon},e^{\epsilon}]$, because $\{(p^u_n,p^s_n)\}_{n\in\Z}$ is
$\epsilon$--subordinated (see Lemma \ref{LemmaEasy}). So the right hand side of (\ref{Ineq_m_n})
is less than $e^{-8}(1+e^{8\epsilon})\xi_{n+1}^8<(p^u_{n+1}\wedge p^s_{n+1})^8$.
Thus $\Psi_{f(x_n)}^{p^u_{n+1}\wedge p^s_{n+1}}\overset{\epsilon}{\approx}\Psi_{x_{n+1}}^{p^u_{n+1}\wedge p^s_{n+1}}$.
A similar argument with ($\mathrm a_n$) and $i=-1$, and with ($\mathrm a_{n-1}$) and $i=0$ shows that
$\Psi_{f^{-1}(x_n)}^{p^u_{n-1}\wedge p^s_{n-1}}\overset{\epsilon}{\approx}\Psi_{x_{n-1}}^{p^u_{n-1}\wedge p^s_{n-1}}$.
So (GPO1) holds, and $\un{v}$ is a gpo.

\medskip
\noindent
{\em Proof that  $\un{v}$ shadows $x$\/:} By (a$_n$) with $i=0$,
$\Psi_{x_{n}}^{p^u_{n}\wedge p^s_{n}}\overset{\epsilon}{\approx}\Psi_{f^{n}(x)}^{p^u_{n}\wedge p^s_{n}}$
for all $n\in\Z$. By Proposition \ref{Prop_Overlap_Meaning},
$f^n(x)=\Psi_{f^n(x)}(\un{0})\in \Psi_{x_n}([-p^u_{n}\wedge p^s_{n},p^u_{n}\wedge p^s_{n}]^2)$.
So $\un{v}$ shadows $x$.

\medskip
\noindent
{\sc Arranging relevance:}
Call an element $v\in\mathfs A$ {\em relevant}, if there is a gpo $\un{v}\in\mathfs A^\Z$ s.t. $v_0=v$
and $\un{v}$ shadows a point in $\NUH_\chi(f)$. In this case every $v_i$ is relevant, because
$\NUH_\chi(f)$ is $f$--invariant. So $\mathfs A':=\{v\in\mathfs A: v\textrm{ is relevant}\}$ is sufficient.
It is discrete, because $\mathfs A'\subseteq\mathfs A$ and $\mathfs A$ is discrete.
The theorem follows with $\mathfs A'$.
\end{proof}

\subsection*{The inverse shadowing problem}

The same orbit can be shadowed by many different gpos. The ``inverse shadowing problem"
is to control the set of gpos $\{\Psi_{x_i}^{p^u_i,p^s_i}\}_{i\in\Z}$ which shadow the orbit of a
given point $x$. (GPO1) and (GPO2) were designed to make this possible. We need the following condition.

\medskip
\noindent
{\sc Regularity:\/} Let $\mathfs A$ be as in Proposition \ref{Prop_Coarse_Gr}.
A gpo $\un{v}\in\mathfs A^\Z$ is called {\em regular}, if $\{v_i\}_{i\geq 0}$, $\{v_i\}_{i\leq 0}$
have constant subsequences.

\begin{prop}\label{PropOnto}
Almost every $x\in\Lambda$ is shadowed by a regular gpo in $\mathfs A^\Z$.
\end{prop}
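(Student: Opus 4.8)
The plan is to take, for $\mu_\Lambda$--a.e.\ $x$, the very gpo furnished by the Sufficiency clause of Proposition \ref{Prop_Coarse_Gr}, and then check that it is automatically regular. Fix $\epsilon$ small (in particular $\epsilon<\tfrac12$), let $\mathfs A$ be the collection of $\epsilon$--double charts from Proposition \ref{Prop_Coarse_Gr}, and recall that $\NUH_\chi^\#(f)$ has full $\mu_\Lambda$--measure. Given $x\in\NUH_\chi^\#(f)$, Proposition \ref{Prop_Coarse_Gr}(2) produces $\un v=\{v_n\}_{n\in\Z}=\{\Psi_{x_n}^{p^u_n,p^s_n}\}_{n\in\Z}\in\mathfs A^\Z$ which shadows $x$ and satisfies $\eta_n:=p^u_n\wedge p^s_n\geq e^{-\epsilon/3}q_\epsilon(f^n(x))$ for all $n$. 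It then remains only to prove that $\un v$ is regular, since regularity is exactly the requirement that $\{v_n\}_{n\geq0}$ and $\{v_n\}_{n\leq0}$ each have a constant subsequence.

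The one point that needs attention --- and the only genuine difference from the surface-diffeomorphism argument of \cite{Sarig-JAMS} --- is that the Discreteness clause of Proposition \ref{Prop_Coarse_Gr}(1) now also involves the quantity $D(x_n):=\dist_\Lambda\bigl(\{x_n,f(x_n),f^{-1}(x_n)\},\mathfrak S\bigr)$, which one must control from below along the orbit of $\un v$. I would establish the self-improving bound $D(x_n)\geq e^{-\epsilon}\eta_n$ for all $n$. For the center $x_n$ this is immediate from the defining formula (\ref{Q_def}), which forces $\eta_n\leq Q_\epsilon(x_n)\leq\epsilon\,\dist_\Lambda(x_n,\mathfrak S)$, so $\dist_\Lambda(x_n,\mathfrak S)\geq\epsilon^{-1}\eta_n$. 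For the neighbours $f^{\pm1}(x_n)$ I would use the overlap conditions built into (GPO1): they give $\dist_\Lambda(f(x_n),x_{n+1})<\eta_{n+1}^{8}$ and $\dist_\Lambda(f^{-1}(x_n),x_{n-1})<\eta_{n-1}^{8}$; combining with the triangle inequality, the bound just obtained applied at $n\pm1$, the inequality $\eta_{n\pm1}^{8}<\eta_{n\pm1}$ (since $\eta_{n\pm1}\leq\rho_{\dom}<1$), and $\eta_{n\pm1}\geq e^{-\epsilon}\eta_n$ (Lemma \ref{LemmaEasy}), one gets $\dist_\Lambda(f^{\pm1}(x_n),\mathfrak S)\geq\eta_{n\pm1}(\epsilon^{-1}-1)\geq\eta_{n\pm1}\geq e^{-\epsilon}\eta_n$. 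Hence $D(x_n)\geq e^{-\epsilon}\eta_n$.

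Finally, since $x\in\NUH_\chi^\#(f)$, definition (\ref{NUH-sharp}) provides $c>0$ and indices $n_k\to+\infty$, $m_k\to-\infty$ with $q_\epsilon(f^{n_k}(x)),q_\epsilon(f^{m_k}(x))>c$; therefore $\eta_{n_k},\eta_{m_k}\geq e^{-\epsilon/3}c$, so $p^u$, $p^s$ and $D$ of each letter $v_{n_k}$ and $v_{m_k}$ all exceed some fixed $t>0$ (e.g.\ $t=\tfrac12 e^{-4\epsilon/3}c$). By the Discreteness clause of Proposition \ref{Prop_Coarse_Gr}, only finitely many elements of $\mathfs A$ satisfy this, so $\{v_{n_k}\}_k$ and $\{v_{m_k}\}_k$ each contain a constant subsequence. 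Thus $\un v$ is a regular gpo in $\mathfs A^\Z$ shadowing $x$, and the proposition follows. I do not anticipate a serious obstacle here; the only care needed is the bookkeeping of constants (the choice $\epsilon<\tfrac12$, the eighth powers coming from the definition of $\overset{\epsilon}{\approx}$, and Lemma \ref{LemmaEasy}), and the remark that this extra estimate is exactly what replaces the uniform bound on $|\ln Q_\epsilon\circ f/Q_\epsilon|$ unavailable in the presence of $\mathfrak S$.
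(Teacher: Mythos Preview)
Your argument is correct and follows essentially the same route as the paper's proof: both use Proposition~\ref{Prop_Coarse_Gr}(2) to produce a gpo with $\eta_n\geq e^{-\epsilon/3}q_\epsilon(f^n(x))$, then establish a lower bound on $D(x_n)$ via the overlap in (GPO1) plus the triangle inequality, and finally invoke discreteness along subsequences where $q_\epsilon$ is bounded below. The only cosmetic difference is that you read off $\dist_\Lambda(f(x_n),x_{n+1})<\eta_{n+1}^8$ directly from the definition of $\overset{\epsilon}{\approx}$, whereas the paper passes through Proposition~\ref{Prop_Overlap_Meaning} and the Lipschitz bound on $\Psi_{x_{k+1}}$ to get $\dist_\Lambda(f(x_k),x_{k+1})\leq 3\epsilon\,\dist_\Lambda(x_{k+1},\mathfrak S)$; either estimate suffices.
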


\begin{proof}
We will show that this is the case for all $x\in\NUH_\chi^\#(f)$.
Since $\mathfs A$ is sufficient (Proposition \ref{Prop_Coarse_Gr}(2)), for every $x\in\NUH_\chi^\#(f)$ there is a gpo $\un{v}=\{\Psi_{x_k}^{p^u_k,p^s_k}\}_{k\in\Z}\in\mathfs A^\Z$ which shadows $x$ s.t. for all $k\in\Z$,
$\eta_k:=p^u_k\wedge p^s_k\geq e^{-\epsilon/3}q_\epsilon(f^k(x))$.

Since $p^{u/s}\leq Q_\epsilon(\cdot)\leq \epsilon \dist_\Lambda(\cdot,\mathfrak S)$,
\begin{equation}\label{guitar}
\dist_\Lambda(x_k,\mathfrak S)\geq \epsilon^{-1}e^{-\epsilon/3}q_\epsilon(f^k(x))\text{ for all $k\in\Z$}.
\end{equation}
Since $v_k\xrightarrow{\epsilon}v_{k+1}$, $\Psi_{f(x_k)}^{\eta_{k+1}}\overset{\epsilon}\approx\Psi_{x_{k+1}}^{\eta_{k+1}}$,
whence $f(x_k)\in\Psi_{x_{k+1}}([-Q(x_{k+1}),Q(x_{k+1})]^2)$. Since $\Lip(\Psi_{x_{k+1}})\leq 2$,
$\dist_{\Lambda}(f(x_k),x_{k+1})\leq 2\sqrt{2}Q(x_{k+1})\leq 3\epsilon \dist_{\Lambda}(x_{k+1},\mathfrak S)$.
By the triangle inequality, relation (\ref{guitar}), and the inequality
$e^{-\epsilon/3}\leq q_\epsilon\circ f/q_{\epsilon}\leq e^{\epsilon/3}$,
\begin{align*}
&\dist_\Lambda(f(x_k),\mathfrak S)\geq \dist_\Lambda(x_{k+1},\mathfrak S)-
\dist_\Lambda(f(x_{k}),x_{k+1})\geq (1-3\epsilon)\dist_{\Lambda}(x_{k+1},\mathfrak S)\\
&\geq (1-3\epsilon)\epsilon^{-1}e^{-\epsilon/3}q_\epsilon(f^{k+1}(x))
>(1-3\epsilon)\epsilon^{-1}e^{-\epsilon}q_\epsilon(f^{k}(x))>q_{\epsilon}(f^k(x)),
\end{align*}
provided $\epsilon$ is small enough.
Similarly, $\dist_\Lambda(f^{-1}(x_k),\mathfrak S)>q_\epsilon(f^{k}(x))$, and we obtain
that $\min\{D(x_k),p^u_k,p^s_k\}\geq e^{-\epsilon/3}q_{\epsilon}(f^k(x))$ for all $k\in\Z$.

Since $x\in\NUH_\chi^\#(f)$, $\exists k_i,\ell_i\uparrow\infty$ and $c>0$ s.t.
$q_\epsilon(f^{-k_i}(x))\geq c$, $q_\epsilon(f^{\ell_i}(x))\geq c$. Since $\mathfs A$ is discrete,
there must be some constant subsequences $v_{-k_{i_j}}, v_{\ell_{i_j}}$.
\end{proof}

The next theorem says, in a precise way, that if $\un{u}$ is a regular gpo which shadows $x$,
then $u_i$ is determined ``up to bounded error". Together with the discreteness of $\mathfs A$,
this implies that for every $i$ there are only finitely many choices for
$u_i$.\footnote{But the set of all  possible full sequences $\un{u}$ can  be uncountable.}

\begin{thm}\label{Thm_Inverse}
The following holds for all $\epsilon$ small enough. Let $\un{u},\un{v}$ be regular gpos which shadow
the orbit of the same point $x$. If $u_i=\Psi_{x_i}^{p^u_i,p^s_i}$ and $v_i=\Psi_{y_i}^{q^u_i,q^s_i}$, then:
\begin{enumerate}[$(1)$]
\item $\dist_{\Lambda}(x_i,y_i)<10^{-1}\max\{p^u_i\wedge p^s_i,q^u_i\wedge q^s_i\}$.
\item ${\dist_{\Lambda}(f^k(x_i),\mathfrak S)}/{\dist_{\Lambda}(f^k(y_i),\mathfrak S)}\in [e^{-\sqrt{\epsilon}},e^{\sqrt{\epsilon}}]$ for $k=0,1,-1$.
\item $Q_{\epsilon}(x_i)/Q_{\epsilon}(y_i)\in [e^{-\sqrt[3]{\epsilon}},e^{\sqrt[3]{\epsilon}}]$.
\item $(\Psi_{y_i}^{-1}\circ\Psi_{x_i})=(-1)^{\sigma_i}\mathrm{Id}+\un{c}_i+\Delta_i\textrm{ on }[-\epsilon,\epsilon]^2$,
where $\|\un{c}_i\|<10^{-1}(q^u_i\wedge q^s_i)$, $\sigma_i\in\{0,1\}$ are constants,
and $\Delta_i:[-\epsilon,\epsilon]^2\to\R^2$ is a vector field s.t. $\Delta_i(\un{0})=\un{0}$ and
$\|(d\Delta_i)_{\un{v}}\|<\sqrt[3]{\epsilon}$ on $[-\epsilon,\epsilon]^2$.
\item $p^u_i/q^u_i, p^s_i/q^s_i\in [e^{-\sqrt[3]{\epsilon}},e^{\sqrt[3]{\epsilon}}]$.
\end{enumerate}
\end{thm}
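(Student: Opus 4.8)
The plan is to follow closely the structure of the corresponding statement in \cite{Sarig-JAMS} (Theorem 5.2 there), tracking the new quantities $\dist_\Lambda(f^k(\cdot),\mathfrak S)$ and $Q_\epsilon$ that must now be compared. The five assertions are proved more or less in the order listed, each building on the previous one.

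\medskip
\noindent
\textbf{Step 1 (part (1)).} Both $\un u$ and $\un v$ shadow the orbit of $x$, so $f^i(x)\in\Psi_{x_i}([-\eta_i,\eta_i]^2)\cap\Psi_{y_i}([-\eta'_i,\eta'_i]^2)$ where $\eta_i=p^u_i\wedge p^s_i$, $\eta'_i=q^u_i\wedge q^s_i$. Since $\Lip(\Psi_{x_i}),\Lip(\Psi_{y_i})\leq 2$, the point $f^i(x)$ is within $2\sqrt2\,\eta_i$ of $x_i$ and within $2\sqrt2\,\eta'_i$ of $y_i$; the triangle inequality then gives a crude bound $\dist_\Lambda(x_i,y_i)\leq 2\sqrt2(\eta_i+\eta'_i)$. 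This is not yet the factor $10^{-1}$ one wants, and the improvement comes exactly as in \cite[Prop.~5.3]{Sarig-JAMS}: one uses that a shadowed point actually lands in the much smaller box $\Psi_{x_i}([-10^{-2}\eta_i,10^{-2}\eta_i]^2)$ (this was recorded in the proof of the Shadowing Lemma, Theorem \ref{Thm_Shadowing}), and feeds this back into the triangle inequality. This is where I would be careful, since it is the one estimate that is genuinely quantitative rather than qualitative.

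\medskip
\noindent
\textbf{Step 2 (part (2)).} By Step 1, $\dist_\Lambda(x_i,y_i)$ is small compared with $\eta_i\vee\eta'_i\leq Q_\epsilon(x_i)\wedge Q_\epsilon(y_i)\leq \epsilon\,\dist_\Lambda(x_i,\mathfrak S)\wedge\epsilon\,\dist_\Lambda(y_i,\mathfrak S)$ by \eqref{Q_def}. Hence $\dist_\Lambda(x_i,y_i)\ll \epsilon\dist_\Lambda(x_i,\mathfrak S)$, and the triangle inequality $|\dist_\Lambda(x_i,\mathfrak S)-\dist_\Lambda(y_i,\mathfrak S)|\leq\dist_\Lambda(x_i,y_i)$ gives the ratio estimate for $k=0$. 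For $k=\pm1$ one needs the same inequality for $f(x_i),f(y_i)$ and $f^{-1}(x_i),f^{-1}(y_i)$: here the key point is that $f^{\pm1}$ is Lipschitz on the relevant neighborhood with the uniform constant $\mathfrak C$ of Lemma \ref{Lemma_Smooth_Section} (the Pesin charts at $x_i,y_i$ sit inside $\Lambda\setminus\mathfrak S$ by \eqref{Pesin-Inside}, so the relevant points are away from $\mathfrak S$), and again $\dist_\Lambda(x_i,y_i)$ is tiny relative to $\dist_\Lambda(f^{\pm1}(x_i),\mathfrak S)$ because $Q_\epsilon(x_i)\leq\epsilon\dist_\Lambda(f(x_i),\mathfrak S)$ is \emph{not} quite true --- so instead I would invoke the ratio bound $\dist_\Lambda(f^{\pm1}(x_i),\mathfrak S)/\dist_\Lambda(x_i,\mathfrak S)$ being controlled, which is built into membership of the same $Y_{k,\underline\ell}$-block cell in the construction of $\mathfrak A$ in Proposition \ref{Prop_Coarse_Gr}. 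Tracking this block-index constraint for the two gpos $\un u,\un v$ is the \textbf{main obstacle}: one must show that $x_i$ and $y_i$ lie in cells with comparable $k,\underline\ell$, which follows from parts (1)--(2) proved so far together with the defining inequalities (A1)--(A3) of $\mathfrak A$, but requires care in chaining the estimates so that the final exponent is $\pm\sqrt\epsilon$ and not merely $\pm O(1)$.

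\medskip
\noindent
\textbf{Step 3 (parts (3)--(5)).} With (1) and (2) in hand, part (3) follows from formula \eqref{Q_def} for $Q_\epsilon$: the three ingredients $\sqrt{s^2+u^2}/|\sin\alpha|=\|C_\chi^{-1}\|$ (up to a $\sqrt2$, by \eqref{C-inverse-norm}), $\dist_\Lambda(\cdot,\mathfrak S)$, and $\rho_{\dom}$ each have controlled ratios between $x_i$ and $y_i$ --- the first because $\|\Theta\circ C_\chi(x_i)-\Theta\circ C_\chi(y_i)\|$ is small (again by (1) and the overlap condition inherited through $\mathfrak A$, exactly as \cite[Lemma~5.4 ff.]{Sarig-JAMS}), the second by (2), the third trivially --- and one checks the floor operation $\lfloor\cdot\rfloor_\epsilon$ costs at most a factor $e^{\pm\epsilon/3}$; raising to the power $12/\beta$ degrades $\sqrt\epsilon$ to $\sqrt[3]\epsilon$ after absorbing constants, which is the reason for the asymmetric exponents in (2) versus (3). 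Part (4) is then the ``same coordinates'' statement: write $\Psi_{y_i}^{-1}\circ\Psi_{x_i}=\Exp_{y_i}^{-1}\circ\Exp_{x_i}$ conjugated by $C_\chi(y_i)^{-1}(\cdot)C_\chi(x_i)$, expand $\Exp_{y_i}^{-1}\circ\Exp_{x_i}$ near the basepoint using property (3) of the charts $\Theta,\vartheta$ in \S\ref{SectionOverlap} and the smallness of $\dist_\Lambda(x_i,y_i)$, and use that $C_\chi(y_i)^{-1}C_\chi(x_i)$ is close to $\pm\Id$ (the sign $(-1)^{\sigma_i}$ records the orientation ambiguity of $\underline e^{u/s}$); this is a verbatim adaptation of \cite[Prop.~5.5]{Sarig-JAMS} with $M$ replaced by a transverse disc and $\exp$ by $\Exp$. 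Finally part (5): since $\un u,\un v$ are \emph{regular} gpos, $p^u_i$ equals $Q_\epsilon(x_i)$ infinitely often in the past and $p^s_i$ equals $Q_\epsilon(x_i)$ infinitely often in the future (Second Subordination Lemma, used as in Proposition \ref{PropOnto}), and likewise for $q^{u/s}_i,Q_\epsilon(y_i)$; combining this with (GPO2), the ratio estimate (3) propagates along the sequence by the multiplicativity of (GPO2) exactly as in \cite[Prop.~5.6]{Sarig-JAMS}, yielding $p^u_i/q^u_i,p^s_i/q^s_i\in[e^{-\sqrt[3]\epsilon},e^{\sqrt[3]\epsilon}]$. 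Throughout, the only genuinely new bookkeeping beyond \cite{Sarig-JAMS} is the $\dist_\Lambda(f^k(\cdot),\mathfrak S)$ factor, and it is handled uniformly by the block decomposition $Y=\biguplus Y_{k,\underline\ell}$ already set up in Proposition \ref{Prop_Coarse_Gr}.
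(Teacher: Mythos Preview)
Your sketch for part (1) is fine and matches the paper. The real problems are in parts (2) and (3).

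\medskip
\textbf{Part (2), $k=\pm1$.} Your proposed route through the block decomposition $Y_{k,\un\ell}$ is circular. Membership in $Y_{k,\un\ell}$ records the values of $\dist_\Lambda(f^i(\cdot),\mathfrak S)$ for $i=0,\pm1$, but it does not relate these values to each other, and it gives no a priori reason why $x_i$ and $y_i$ should sit in cells with comparable $\un\ell$-indices --- that comparability is exactly the content of part (2). The paper avoids this entirely: it first applies the $k=0$ argument to the \emph{shifted} gpos $\sigma\un u,\sigma\un v$ to get $\dist_\Lambda(x_{i+1},\mathfrak S)/\dist_\Lambda(y_{i+1},\mathfrak S)\in[e^{-\epsilon},e^{\epsilon}]$, and then uses (GPO1) --- specifically $\Psi_{f(x_i)}\overset{\epsilon}{\approx}\Psi_{x_{i+1}}$ --- to get $\dist_\Lambda(f(x_i),x_{i+1})\leq 2\sqrt2\,Q_\epsilon(x_{i+1})<6\epsilon\,\dist_\Lambda(x_{i+1},\mathfrak S)$, whence $\dist_\Lambda(f(x_i),\mathfrak S)=e^{\pm7\epsilon}\dist_\Lambda(x_{i+1},\mathfrak S)$. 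The same for $f(y_i)$, and the ratios combine. You actually had the right tool (GPO1) in hand but discarded it in favour of the blocks.

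\medskip
\textbf{Part (3).} You write that $\|C_\chi(x_i)^{-1}\|/\|C_\chi(y_i)^{-1}\|$ is controlled ``because $\|\Theta\circ C_\chi(x_i)-\Theta\circ C_\chi(y_i)\|$ is small by (1) and the overlap condition inherited through $\mathfrak A$''. But there is no overlap condition between $\Psi_{x_i}$ and $\Psi_{y_i}$: they come from two \emph{different} gpos, and nothing in the construction of $\mathfrak A$ forces their Oseledets--Pesin reductions to be close. The closeness of $s(x_i)/s(y_i)$, $u(x_i)/u(y_i)$, $\sin\alpha(x_i)/\sin\alpha(y_i)$ is a genuine dynamical fact, proved (as in \cite[\S6--7]{Sarig-JAMS}) by analysing the admissible manifolds $V^{s/u}$ along the shadowed orbit and using that $f^n(V^s)$, $f^{-n}(V^u)$ stay inside Pesin charts. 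This is where regularity of the gpos is actually used in (3). Once the ``polyphemus'' ratios $s,u,\sin\alpha$ are controlled, they combine with part (2) in \eqref{Q_def} to give the $Q_\epsilon$ ratio. Your remarks on (4) and (5) are then broadly right, but note that (4) also rests on the $s,u,\alpha$ comparison just described, not on any inherited overlap.
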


\begin{proof}
Denote the unstable and stable manifolds of $\un{u}$ and $\un{v}$ by $U^u,U^s$ and $V^u,V^s$.
By the proof of the shadowing lemma,
$
U^u\cap U^s=V^u\cap V^s=\{x\}
$.

\medskip
\noindent
{\sc Part (1).\/}  $U^{u/s}$ are admissible manifolds. By  (Ad1--3), their intersection point must
satisfy $x=\Psi_{x_0}(\un{\xi})$ where $\|\un{\xi}\|_\infty\leq 10^{-2}(p^u_0\wedge p^s_0)$,
see \cite[Prop. 4.11]{Sarig-JAMS}. Since $\Lip(\Psi_{x_0})\leq 2$,
$\dist_{\Lambda}(x_0,x)\leq 50^{-1}(p^u_0\wedge p^s_0)$.
Similarly $\dist_{\Lambda}(y_0,x)\leq 50^{-1}(q^u_0\wedge q^s_0)$, whence
$\dist_\Lambda(x_0,y_0)\leq 25^{-1}\max\{p^u_0\wedge p^s_0,q^u_0\wedge q^s_0\}$.

\medskip
\noindent
{\sc Part (2).\/} In what follows $a=b\pm c$ means $b-c\leq a\leq b+c$.
\begin{align*}
&\dist_{\Lambda}(x_0,\mathfrak S)=\dist_{\Lambda}(x,\mathfrak S)\pm\dist_{\Lambda}(x,x_0)=\dist_{\Lambda}(x,\mathfrak S)\pm 50^{-1}(p^u_0\wedge p^s_0), \textrm{ by part 1}\\
&=\dist_{\Lambda}(x,\mathfrak S)\pm 50^{-1}Q_{\epsilon}(x_0),\ \ \textrm{because }p^u_0,p^s_0\leq Q_{\epsilon}(x_0)\\
&=\dist_{\Lambda}(x,\mathfrak S)\pm 50^{-1}\epsilon\dist_{\Lambda}(x_0,\mathfrak S),
\textrm{ by the definition of $Q_\epsilon(x_0)$}.
\end{align*}
Therefore $\frac{\dist_{\Lambda}(x,\mathfrak S)}{\dist_{\Lambda}(x_0,\mathfrak S)}= 1\pm 50^{-1}\epsilon$.
Similarly $\frac{\dist_{\Lambda}(x,\mathfrak S)}{\dist_{\Lambda}(y_0,\mathfrak S)}= 1\pm 50^{-1}\epsilon$.
It follows that if $\epsilon$ is small enough then
$\frac{\dist_{\Lambda}(x_0,\mathfrak S)}{\dist_{\Lambda}(y_0,\mathfrak S)}\in [e^{-{\epsilon}},e^{{\epsilon}}]$.
Applying this argument to suitable shifts of $\un{u}$ and $\un{v}$, we obtain
$\frac{\dist_{\Lambda}(x_{1},\mathfrak S)}{\dist_{\Lambda}(y_{1},\mathfrak S)}\in [e^{-{\epsilon}},e^{{\epsilon}}]$
and $\frac{\dist_{\Lambda}(x_{-1},\mathfrak S)}{\dist_{\Lambda}(y_{-1},\mathfrak S)}\in [e^{-{\epsilon}},e^{{\epsilon}}]$.

Since $u_0\epto u_1$,
$\Psi_{f(x_0)}^{p^u_1\wedge p^s_1}\overset{\epsilon}{\approx}\Psi_{x_1}^{p^u_1\wedge p^s_1}$.
So $x_1,f(x_0)\in \Psi_{x_1}\bigl([-Q_{\epsilon}(x_1),Q_{\epsilon}(x_1)]^2\bigr)$.
Since $\mathrm{Lip}(\Psi_{x_1})\leq 2$,
$\dist_{\Lambda}\bigl(x_1,f(x_0)\bigr)\leq 2\sqrt{2}Q_{\epsilon}(x_1)< 6\epsilon\dist_{\Lambda}(x_1,\mathfrak S)$.
Thus
\begin{align*}
&\dist_{\Lambda}(f(x_0),\mathfrak S)=\dist_{\Lambda}(x_1,\mathfrak S)\pm \dist_{\Lambda}(x_1,f(x_0))
=\dist_{\Lambda}(x_1,\mathfrak S)\pm 6\epsilon\dist_{\Lambda}(x_1,\mathfrak S)\\
&=e^{\pm 7\epsilon}\dist_{\Lambda}(x_1,\mathfrak S), \text{ provided $\epsilon$ is small enough.}
\end{align*}
Similarly $\dist_{\Lambda}(f(y_0),\mathfrak S)=e^{\pm 7\epsilon}\dist_{\Lambda}(y_1,\mathfrak S)$. Since $\frac{\dist_{\Lambda}(x_{1},\mathfrak S)}{\dist_{\Lambda}(y_{1},\mathfrak S)}\in [e^{-{\epsilon}},e^{{\epsilon}}]$,
$$\frac{\dist_{\Lambda}(f(x_0),\mathfrak S)}{\dist_{\Lambda}(f(y_0),\mathfrak S)}\in [e^{-15\epsilon},e^{15\epsilon}]\subset [e^{-\sqrt{\epsilon}},e^{\sqrt{\epsilon}}],\textrm{ provided $\epsilon$ is small enough.}
$$
Similarly $\frac{\dist_{\Lambda}(f^{-1}(x_0),\mathfrak S)}{\dist_{\Lambda}(f^{-1}(y_0),\mathfrak S)}\in[e^{-\sqrt{\epsilon}},e^{\sqrt{\epsilon}}]$.

\medskip
\noindent
{\sc Part (3).\/} One shows as in  \cite[\S 6 and \S 7]{Sarig-JAMS} that for all $\epsilon$  small enough,
\begin{equation}\label{polyphemus}
\frac{\sin\alpha(x_i)}{\sin\alpha(y_i)}\in [e^{-\sqrt{\epsilon}},e^{\sqrt{\epsilon}}]\ \ \textrm{ and }
\ \ \frac{s(x_i)}{s(y_i)}, \frac{u(x_i)}{u(y_i)}\in [e^{-4\sqrt{\epsilon}},e^{4\sqrt{\epsilon}}].
\end{equation}
The proof carries over without change, because all the calculations are done on $f^n(U^s), f^n(V^s)$ ($n\geq 0$)
and $f^{n}(U^u), f^{n}(V^u)$ $(n\leq 0)$, and these sets stay inside Pesin charts, away from $\mathfrak S$.
By (\ref{polyphemus}), for all $\epsilon$ small enough we have
$$
\biggl(\frac{\sqrt{s(x_0)^2+u(x_0)^2}}{|\sin\a(x_0)|}\biggr)^{-\frac{12}{\beta}}
\biggl(\frac{\sqrt{s(y_0)^2+u(y_0)^2}}{|\sin\a(y_0)|}\biggr)^{\frac{12}
{\beta}}\in [e^{-\sqrt[3]{\epsilon}},e^{-\sqrt[3]{\epsilon}}].
$$
By part (2) and the definition of $Q_\epsilon$,
$\frac{Q_{\epsilon}(x_0)}{Q_{\epsilon}(y_0)}\in [e^{-\sqrt[3]{\epsilon}},e^{\sqrt[3]{\epsilon}}]$.

\medskip
\noindent
{\sc Part (4).\/} This is done exactly as in \cite[\S 9]{Sarig-JAMS}, except that one needs to add the
constraint $\epsilon<\rho_{\dom}$ to be able to use the smoothness of $p\mapsto \Exp_p$ on $\Lambda$.

\medskip
\noindent
{\sc Part (5).\/} This is done exactly as in the proof of \cite[Prop. 8.3]{Sarig-JAMS}, except that step 1
there should be replaced by part (3) here.
\end{proof}

\noindent
{\em Remark.\/} Regularity is needed in parts (3), (4), (5). Parts (3), (4) also use the full force of
(Ad1--3), and Part (5) is based on (GPO2). See \cite{Sarig-JAMS}.

\section{Countable Markov partitions and symbolic dynamics}

Sina{\u \i} and Bowen gave several methods for constructing Markov partitions for uniformly
hyperbolic diffeomorphisms
\cite{Sinai-Construction-of-MP,Sinai-MP-U-diffeomorphisms,Bowen-MP-Axiom-A,Bowen-LNM}.
One of these constructions, due to Bowen, uses pseudo-orbits and
shadowing \cite{Bowen-LNM}. The theory of gpos we developed in the previous section allows us to
apply this method to adapted Poincar\'e sections. The result is a Markov partition for $f:\Lambda\to\Lambda$.
It is then a standard procedure to code $f:\Lambda\to\Lambda$ by a topological Markov shift,
and $\vf:M\to M$ by a topological Markov flow.

\subsection*{Step 1: A Markov extension}

Let $\mathfs A$ be the countable set of double charts we constructed in Proposition \ref{Prop_Coarse_Gr},
and let $\mathfs G$ denote the countable directed graph with set of vertices $\mathfs A$
and set of edges $\{(v,w)\in\mathfs A\x\mathfs A:v\epto w\}$.

\begin{lem}\label{Lemma_Finite_Degree}
Every vertex of $\mathfs G$ has finite ingoing degree, and finite outgoing degree.
\end{lem}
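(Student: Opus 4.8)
The idea is to play the discreteness of $\mathfs A$ (Proposition~\ref{Prop_Coarse_Gr}(1)) against the nearest-neighbour conditions (GPO1)--(GPO2) and Lemma~\ref{LemmaEasy}. The two statements are proved by the same estimates, with the roles of $f$ and $f^{-1}$ (and of $p^s,p^u$) interchanged, so I would write out the bound on the \emph{outgoing} degree and leave the ingoing case to the reader. Fix $v=\Psi_x^{p^u,p^s}\in\mathfs A$ and set $\eta:=p^u\wedge p^s>0$. By Proposition~\ref{Prop_Coarse_Gr}(1), for every $t>0$ the set $\{\Psi_y^{q^u,q^s}\in\mathfs A:D(y),q^u,q^s>t\}$ is finite, where $D(y)=\dist_\Lambda(\{y,f(y),f^{-1}(y)\},\mathfrak S)$; so it suffices to produce one $t=t(v,\epsilon)>0$ such that every $w=\Psi_y^{q^u,q^s}\in\mathfs A$ with $v\epto w$ satisfies $D(y),q^u,q^s>t$.

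The size parameters are easy: (GPO2) is exactly the hypothesis of Lemma~\ref{LemmaEasy} with $Q_0=Q_\epsilon(x)$, $Q_1=Q_\epsilon(y)$, so $q^u\wedge q^s\geq e^{-\epsilon}\eta$, hence $q^u,q^s\geq e^{-\epsilon}\eta$. For $D(y)$ I would bound the three distances separately. Since $w$ is an $\epsilon$--double chart, $q^u\wedge q^s\leq Q_\epsilon(y)\leq\epsilon\,\dist_\Lambda(y,\mathfrak S)$ (by the definition (\ref{Q_def}) of $Q_\epsilon$), so $\dist_\Lambda(y,\mathfrak S)\geq\epsilon^{-1}e^{-\epsilon}\eta$. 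Next, the second half of (GPO1) gives $\Psi_{f^{-1}(y)}^{\eta}\overset{\epsilon}{\approx}\Psi_x^{\eta}$, so $f^{-1}(y)$ and $x$ lie in the same transverse disc with $\dist_\Lambda(f^{-1}(y),x)<\eta^{8}\leq\eta$ (recall $\eta\leq\rho_{\dom}<1$); combined with $\dist_\Lambda(x,\mathfrak S)\geq\epsilon^{-1}\eta$ (because $p^u\leq Q_\epsilon(x)\leq\epsilon\,\dist_\Lambda(x,\mathfrak S)$), the triangle inequality gives $\dist_\Lambda(f^{-1}(y),\mathfrak S)\geq(\epsilon^{-1}-1)\eta\geq\tfrac12\epsilon^{-1}\eta$ once $\epsilon<\tfrac12$.

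The term $\dist_\Lambda(f(y),\mathfrak S)$ is the only one not controlled by the relation $v\epto w$ alone, and this is where the Relevance clause of Proposition~\ref{Prop_Coarse_Gr} enters: since $w\in\mathfs A$, there is a gpo in $\mathfs A^\Z$ through $w$, in particular an edge $w\epto w_1$ with $w_1=\Psi_{y_1}^{q_1^u,q_1^s}\in\mathfs A$. The first half of (GPO1) gives $\dist_\Lambda(f(y),y_1)<(q_1^u\wedge q_1^s)^8\leq q_1^u\wedge q_1^s$ with $f(y),y_1$ in the same transverse disc, Lemma~\ref{LemmaEasy} gives $q_1^u\wedge q_1^s\geq e^{-\epsilon}(q^u\wedge q^s)\geq e^{-2\epsilon}\eta$, and $q_1^u\wedge q_1^s\leq Q_\epsilon(y_1)\leq\epsilon\,\dist_\Lambda(y_1,\mathfrak S)$; hence $\dist_\Lambda(f(y),\mathfrak S)\geq(\epsilon^{-1}-1)(q_1^u\wedge q_1^s)\geq\tfrac12\epsilon^{-1}e^{-2\epsilon}\eta$. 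Altogether $D(y)\geq\tfrac12\epsilon^{-1}e^{-2\epsilon}\eta>0$ uniformly over all outgoing neighbours $w$ of $v$, so choosing $t$ below this and below $e^{-\epsilon}\eta$ forces all such $w$ into a finite subset of $\mathfs A$. For the ingoing degree I would fix $w=\Psi_y^{q^u,q^s}\in\mathfs A$, take $v=\Psi_x^{p^u,p^s}$ with $v\epto w$, and repeat: $p^u,p^s\geq e^{-\epsilon}(q^u\wedge q^s)$ by Lemma~\ref{LemmaEasy}; $\dist_\Lambda(x,\mathfrak S)$ is bounded below from $p^u\leq Q_\epsilon(x)$; $\dist_\Lambda(f(x),\mathfrak S)$ from $\dist_\Lambda(f(x),y)<(q^u\wedge q^s)^8$ (first half of (GPO1)) together with the lower bound on $\dist_\Lambda(y,\mathfrak S)$; and $\dist_\Lambda(f^{-1}(x),\mathfrak S)$ from the predecessor of $v$ in a gpo through $v$, exactly as above.

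The only real difficulty compared with the non-singular case in \cite{Sarig-JAMS} — where this lemma is essentially ``$q^u,q^s$ are bounded below, now apply discreteness'' — is that $D$ (equivalently $Q_\epsilon$) is not almost multiplicative along a single gpo step: $Q_\epsilon$ at a point records only $\dist_\Lambda(\cdot,\mathfrak S)$ there, not at its $f$--image. Extending the pseudo-orbit by one step via the Relevance property is precisely what supplies the missing bound on $\dist_\Lambda(f(y),\mathfrak S)$ (resp.\ $\dist_\Lambda(f^{-1}(x),\mathfrak S)$), and this is the step I would take most care with.
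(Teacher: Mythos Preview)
Your argument is correct and follows the same strategy as the paper: extend the edge by one step via the Relevance property, use Lemma~\ref{LemmaEasy} to bound the size parameters below, use the overlap condition in (GPO1) together with $Q_\epsilon\leq\epsilon\,\dist_\Lambda(\cdot,\mathfrak S)$ to bound $D$ below, and finish with discreteness. The only cosmetic difference is that the paper reads off $\dist_\Lambda(f(y),z)\leq 2\sqrt{2}Q_\epsilon(z)$ via Proposition~\ref{Prop_Overlap_Meaning} and the Lipschitz bound on $\Psi_z$, whereas you use the raw overlap inequality $\dist_\Lambda(\cdot,\cdot)<(\cdot)^8$ directly; the resulting constants differ but the structure is identical.
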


\begin{proof}
We fix $v\in\mathfs A$, and bound the number of $w$ s.t. $v\epto w$, using the
discreteness and relevance of $\mathfs A$ (cf. Prop. \ref{Prop_Coarse_Gr}).

By the relevance property, $v\epto w$ extends to a path $v\epto w\epto u$.
Write $v=\Psi_x^{p^u,p^s},w=\Psi_y^{q^u,q^s}$,  $u=\Psi_z^{r^u,r^s}$, then:
\begin{enumerate}[$\circ$]
\item $\frac{r^u\wedge r^s}{q^u\wedge q^s}, \frac{q^u\wedge q^s}{p^u\wedge p^s} \in [e^{-\epsilon},e^{\epsilon}]$,
by Lemma \ref{LemmaEasy}.
\item $\dist_\Lambda(y,\mathfrak S)\geq \epsilon^{-1} e^{-\epsilon}(p^u\wedge p^s)$,
because $\dist_\Lambda(y,\mathfrak S)\geq\epsilon^{-1}Q_\epsilon(y)$,  $Q_\epsilon(y)\geq q^u\wedge q^s$.
\item $\dist_\Lambda(f(y),\mathfrak S)\geq e^{-2\epsilon}(\epsilon^{-1}-3)(p^u\wedge p^s)$, because
\begin{align*}
&\dist_\Lambda(f(y),\mathfrak S)\geq \dist_{\Lambda}(z,\mathfrak S)-\dist_{\Lambda}(z,f(y))\\
&\geq \epsilon^{-1}Q_\epsilon(z)-2\sqrt{2}Q_\epsilon(z)
\hspace{0.3cm} \because f(y)\in \Psi_z([-Q_\epsilon(z),Q_\epsilon(z)]^2)\text{ and } \mathrm{Lip}(\Psi_z)\leq 2\\
&\geq (\epsilon^{-1}-3)(r^u\wedge r^s)\geq e^{-2\epsilon}(\epsilon^{-1}-3)(p^u\wedge p^s).
\end{align*}
\item $\dist_\Lambda(f^{-1}(y),\mathfrak S)\geq e^{-2\epsilon}(\epsilon^{-1}-3)(p^u\wedge p^s)$, for similar reasons.
\end{enumerate}
So $D(y):=\dist_\Lambda(\{y,f(y),f^{-1}(y)\},\mathfrak S)\geq t:=e^{-2\epsilon}(\epsilon^{-1}-3)(p^u\wedge p^s)$.

By the discreteness of $\mathfs A$ (and assuming $\epsilon<\frac{1}{3}$), $\#\{w\in\mathfs A:v\epto w\}<\infty$.
The finiteness of the ingoing degree is proved in the same way.
\end{proof}

\noindent
{\sc The Markov Extension:}
Let $\Sigma(\mathfs G)$ denote the set of two-sided paths on $\mathfs G$:
$$
\Sigma(\mathfs G):=\{\un{v}\in \mathfs A^\Z: v_i\epto v_{i+1}\textrm{ for all }i\in\Z\}.
$$
We equip $\Sigma(\mathfs G)$ with the metric $d(\un{u},\un{v})=\exp[-\min\{|n|:u_n\neq v_n\}]$,
and with the action of the {\em left shift map}
$
\sigma:\Sigma(\mathfs G)\to\Sigma(\mathfs G),\ \sigma:\{v_i\}_{i\in\Z}\mapsto \{v_{i+1}\}_{i\in\Z}.
$
The set $\Sigma(\mathfs G)$ is exactly the collection of gpos in $\mathfs A^\Z$,
hence $\pi:\Sigma(\mathfs G)\to \Lambda$ given by
$$
\pi(\un{v}):=\textrm{unique point whose $f$--orbit is shadowed by $\un{v}$}
$$
is well-defined. Necessarily $f\circ\pi=\pi\circ\sigma$, so $\sigma:\Sigma(\mathfs G)\to\Sigma(\mathfs G)$
is an extension of $f:\Lambda\to\Lambda$ (at least on a subset of full measure, by Prop. \ref{PropOnto}).

It is easy to see, using the finite degree of the vertices of $\mathfs G$, that $(\Sigma(\mathfs G),d)$
is a locally compact, complete and separable metric space. The left shift map is a bi-Lipschitz homeomorphism.
The subset of {\em regular gpos}
$$
\Sigma^\#(\mathfs G):=\{\un{v}\in\Sigma(\mathfs G): \{v_i\}_{i\leq 0},\{v_i\}_{i\geq 0}\textrm{ contain constant subsequences}\}
$$
has full measure with respect to every $\sigma$--invariant Borel probability measure.

As we saw in the proof of the shadowing lemma (Theorem \ref{Thm_Shadowing}),
$\pi(\un{v})$ is the unique intersection of $V^u(\un{v}^-)$ and $V^s(\un{v}^+)$ where
$\un{v}^\pm$ are the half gpos determined by $\un{v}$. The proof shows that the
following holds for all $\epsilon$ small enough:
\begin{enumerate}[(1)]
\item {\sc H\"older continuity:} $\pi$ is H\"older continuous (because $\mathfs F^u, \mathfs F^s$
are contractions, see \cite[Thm 4.16(2)]{Sarig-JAMS}).
\item {\sc Almost surjectivity:} $\mu_\Lambda(\Lambda\setminus\pi[\Sigma^\#(\mathfs G)])=0$ (Proposition \ref{PropOnto}).
\item {\sc Inverse Property:} for all $x\in \Lambda, i\in\Z$,
$\#\{v_i: \un{v}\in\Sigma^\#(\mathfs G),\pi(\un{v})=x\}<\infty.$
(Theorem \ref{Thm_Inverse} and the discreteness of $\mathfs A$.)
\end{enumerate}

The inverse property does {\em not} imply that $\pi$ is finite-to-one or even countable-to-one.
The following steps will lead us to an a.e. {\em finite-to-one} Markov extension.

\subsection*{Step 2: A Markov cover}

Given $v\in\mathfs A$, let $_0[{v}]:=\{\un{v}\in\Sigma(\mathfs G):v_0=v\}$.
This is a partition of $\Sigma(\mathfs G)$. The projection to $\Lambda$,
$$
\mathfs Z:=\{Z(v):v\in\mathfs A\},\textrm{ where }Z(v):=\{\pi(\un{v}):\un{v}\in\Sigma^\#(\mathfs G),\ v_0=v\},
$$
is not a partition. It could even be the case that $Z(v)=Z(w)$ for $v\neq w$
(in this case, we agree to think of $Z(v),Z(w)$ as different elements of $\mathfs Z$).
Here are some important properties of $\mathfs Z$.

\medskip
\noindent
{\sc Covering property:\/}\label{covering-property} {\em $\mathfs Z$ covers a set of full $\mu_\Lambda$--measure.}

\medskip
\noindent
{\em Proof.\/} $\mathfs Z$ covers $\NUH_\chi^\#(f)$.

\medskip
\noindent
{\sc Local finiteness:} {\em
For all $Z\in\mathfs Z$,  $\#\{Z'\in\mathfs Z:Z'\cap Z\neq\emptyset\}<\infty$.
Even better: $\#\{v\in\mathfs A: Z(v)\cap Z\neq\emptyset\}<\infty$ for all $Z\in\mathfs Z$.}

\medskip
\noindent
{\em Proof.\/} Write $Z=Z(\Psi_x^{p^u,p^s})$. If $Z(\Psi_y^{q^u,q^s})\cap Z\neq \emptyset$,
then $q^u\wedge q^s\geq e^{-\sqrt[3]{\epsilon}}(p^u\wedge p^s)$ and
$\dist_\Lambda(\{y,f(y),f^{-1}(y)\},\mathfrak S)\geq e^{-\sqrt{\epsilon}}\dist_\Lambda(\{x,f(x),f^{-1}(x)\},\mathfrak S)$
(Theorem \ref{Thm_Inverse}). Since $\mathfs A$ is discrete, there are only finitely many such
$\Psi_y^{q^u,q^s}$ in $\mathfs A$.

\medskip
\noindent
{\sc Product structure:} {\em Suppose $v\in\mathfs A$ and $Z=Z(v)$.
For every $x\in Z$ there are sets $W^u(x,Z)$ and $W^s(x,Z)$ called the {\em $s$--fibre}
and {\em $u$--fibre of $x$ in $Z$} s.t.:
\begin{enumerate}[$(1)$]
\item $Z=\bigcup_{x\in Z}W^u(x,Z)$, $Z=\bigcup_{x\in Z}W^s(x,Z)$.
\item Any two $s$--fibres in $Z$ are either equal or disjoint, and the same for $u$--fibres.
\item For every $x,y\in Z$, $W^u(x,Z)\cap W^s(y,Z)$ consists of a single point.
\end{enumerate}}
\noindent
{\em Notation: $[x,y]_Z:=$ unique point in $W^u(x,Z)\cap W^s(x,Z)$.}

\medskip
\noindent
{\em Proof.\/} Recall from \S\ref{Section_GPO} the notation for the stable and unstable manifolds
of positive and negative gpos. Fix $Z=Z(v)$ in $\mathfs Z$, $x\in Z$, and let:
\begin{enumerate}[$\circ$]
\item $V^s(x,Z):=V^s[\{v_i\}_{i\geq 0}]\textrm{ for some (any) }\un{v}\in\Sigma^\#(\mathfs G)\textrm{ s.t. }v_0=v\textrm{ and }\pi(\un{v})=x$.
\item $V^u(x,Z):=V^u[\{v_i\}_{i\leq 0}]\textrm{ for some (any) }\un{v}\in\Sigma^\#(\mathfs G)\textrm{ s.t. }v_0=v\textrm{ and }\pi(\un{v})=x$.
\item $W^s(x,Z):=V^s(x,Z)\cap Z$.
\item $W^u(x,Z):=V^u(x,Z)\cap Z$.
\end{enumerate}
To see that the definition is proper, suppose $\un{u},\un{v}$ are two regular gpos such that $u_0=v_0=v$.
If $V^{t}[\un{u}],V^t[\un{v}]$ intersect at some point $z$ for $t=s$ or $u$, then  $V^{t}[\un{u}]=V^t[\un{v}]$,
because both are equal to the piece of the  local stable/unstable manifold of $z$ at
$\Psi_{x(v)}([-p^t(v),p^t(v)]^2)$.  See \cite[Prop. 6.4]{Sarig-JAMS} for details.
In particular, $\pi(\un{u})=\pi(\un{v})\Rightarrow V^t[\un{u}]=V^t[\un{v}]$ for $t=u,s$.

This argument also shows that any two $t$--fibres $(t=s$ or $u$) are equal or disjoint,
hence (2) holds. (1) is because $W^u(x,Z)$, $W^s(x,Z)$ both contain $x$.
For (3), write $W^u(x,Z)=V^u[\un{u}]\cap Z$ and $W^s(y,Z)=V^s[\un{v}]\cap Z$
where $\un{u},\un{v}\in\Sigma^\#(\mathfs G)$ satisfy $u_0=v_0=v$.
Let $\un{w}:=(\ldots,u_{-2},u_{-1},\dot{v},v_1,v_2,\ldots)$ with the dot indicating the zeroth coordinate.
Clearly $\pi(\un{w})\in W^u(x,Z)\cap W^s(y,Z)$. Since $W^u(x,Z)\cap W^s(y,Z)\subset V^u(x,Z)\cap V^s(y,Z)$
and a $u$--admissible manifold intersects an  $s$--admissible at most
once \cite[Cor. S.3.8]{Katok-Hasselblatt-Book}, \cite[Prop. 4.11]{Sarig-JAMS},
$W^u(x,Z)\cap W^s(y,Z)=\{\pi(\un{w})\}$.


\medskip
\noindent
{\sc Symbolic Markov property:} {\em If $x=\pi(\un{v})$ with $\un{v}\in\Sigma^\#(\mathfs G)$, then}
$$
f\bigl[W^s(x,Z(v_0))\bigr]\subset W^s(f(x),Z(v_1))\textrm{ and }f^{-1}\bigl[W^u(f(x),Z(v_1))\bigr]\subset W^u(x,Z(v_0)).
$$

\medskip
\noindent
{\em Proof.\/} Fix $y\in W^s(x,Z(v_0))$. Choose $\un{u}\in\Sigma^\#(\mathfs G)$ s.t.
$u_0=v_0$ and $y=\pi(\un{u})$. Write $u_i=\Psi_{y_i}^{q^u_i,q^s_i}$ and $\eta_i:=q^u_i\wedge q^s_i$, then
$
f^{-n}(y)\in \Psi_{y_{-n}}([-\eta_{-n}, \eta_{-n}]^2)\textrm{ for all }n\geq 0.
$
Write $v_i=\Psi_{x_i}^{p^u_i,p^s_i}$ and $\xi_i:=p^u_i\wedge p^s_i$.
Since $y\in W^s(x,Z(v_0))\subset V^s[\un{v}^+]$,
$f^n(y)\in f^n(V^s[\un{v}^+])\subset V^s[\sigma^n\un{v}^+]\subset \Psi_{x_{n}}([-\xi_n, \xi_n]^2)\textrm{ for all }n\geq 0$.
It follows that the gpo
$\un{w}=(\ldots,u_{-2},u_{-1},\dot{v},v_1,v_2,\ldots)
$ shadows $y$ (the dot indicates the position of the zeroth coordinate).
Necessarily, $f(y)=f[\pi(\un{w})]=\pi[\sigma(\un{w})]\in V^s[\{v_i\}_{i\geq 1}]\cap Z(v_1)=W^s(f(x),Z(v_1))$.
Thus $f(y)\in W^s(f(x),Z(v_1))$.

Since $y\in W^s(x,Z(v_0))$ was arbitrary, $f[W^s(x,Z(v_0))]\subset W^s(f(x),Z(v_1))$.
The other inequality is  symmetric.

\medskip
\noindent
{\sc Overlapping charts property:} {\em The following holds for all $\epsilon$ small enough.
Suppose $Z,Z'\in\mathfs Z$ and $Z\cap Z'\neq\emptyset$.
\begin{enumerate}[$(1)$]
\item If $Z=Z(\Psi_{x_0}^{p^u_0,p^s_0}),Z'=Z(\Psi_{y_0}^{q^u_0,q^s_0})$, then
$Z\subset \Psi_{y_0}([-(q^u_0\wedge q^s_0),(q^u_0\wedge q^s_0)]^2)$.
\item For all $x\in Z,y\in Z'$, $V^u(x,Z)$ intersects $V^s(y,Z')$ at a unique point.
\item For any $x\in Z\cap Z'$, $W^u(x,Z)\subset V^u(x,Z')$ and $W^s(x,Z)\subset V^s(x,Z')$.
\end{enumerate}
}

\medskip
\noindent
{\em Sketch of proof.\/} If $Z\cap Z'\neq \emptyset$, then there are $\un{u},\un{v}\in\Sigma^\#(\mathfs G)$
s.t. $u_0=\Psi_{x_0}^{p^u_0,p^s_0}$, $v_0=\Psi_{y_0}^{q^u_0,q^s_0}$, and $\pi(\un{u})=\pi(\un{v})$.
By Theorem \ref{Thm_Inverse}(4), $\Psi_{y_0}^{-1}\circ \Psi_{x_0}$ is close to $\pm\mathrm{Id}$.
This is enough to prove (1)--(3), see Lemmas 10.8 and 10.10 in \cite{Sarig-JAMS} for details.

\subsection*{Step 3 (Bowen, Sina{\u \i}): A countable Markov partition}

We refine $\mathfs Z$ into a partition without destroying the Markov property or the product structure.
The refinement procedure we use below is due to Bowen \cite{Bowen-LNM}, building on earlier
work of Sina{\u \i} \cite{Sinai-Construction-of-MP,Sinai-MP-U-diffeomorphisms}.
It was designed for finite Markov covers, but works equally well for locally finite infinite covers.
Local finiteness is essential: a general non-locally finite cover may not have a countable refining
partition as can be seen in the example of the cover $\{(\alpha,\beta):\alpha,\beta\in\Q\}$ of $\R$.

Enumerate $\mathfs Z=\{Z_i:i\in\N\}$. For every $Z_i,Z_j\in\mathfs Z$ s.t. $Z_i\cap Z_j\neq\emptyset$, let
\begin{align*}
T^{us}_{ij}&:=\{x\in Z_i:W^u(x,Z_i)\cap Z_j\neq\emptyset\ ,\ W^s(x,Z_i)\cap Z_j\neq\emptyset\},\\
T^{u\emptyset}_{ij}&:=\{x\in Z_i:W^u(x,Z_i)\cap Z_j\neq\emptyset\ ,\ W^s(x,Z_i)\cap Z_j=\emptyset\},\\
T^{\emptyset s}_{ij}&:=\{x\in Z_i:W^u(x,Z_i)\cap Z_j=\emptyset\ ,\ W^s(x,Z_i)\cap Z_j\neq\emptyset\},\\
T^{\emptyset\emptyset}_{ij}&:=\{x\in Z_i:W^u(x,Z_i)\cap Z_j=\emptyset\ ,\ W^s(x,Z_i)\cap Z_j=\emptyset\}.
\end{align*}
This is a partition of $Z_i$.
Let $\mathfs T:=\bigl\{T^{\alpha\beta}_{ij}:i,j\in\N, \alpha\in\{u,\emptyset\}, \beta\in\{s,\emptyset\}\bigr\}$.
This is a countable set, and $\mathfs T\supset \mathfs Z$ (since $T^{us}_{ii}=Z_i$, $\forall i$).
Necessarily, $\mathfs T$ covers $\NUH_{\chi}^\#(f)$.

\medskip
\noindent
{\sc The Markov partition:}
$\mathfs R$:= the collection of sets which can be put in the form
$
R(x):=\bigcap\{T\in\mathfs T:T\owns x\}
$
for some $x\in \bigcup_{i\geq 1}Z_i$.

\begin{prop}\label{prop-Z-and-R}
$\mathfs R$ is a countable pairwise disjoint cover of $\NUH_{\chi}^\#(f)$.
It refines $\mathfs Z$, and every element of $\mathfs Z$ contains only finitely
many elements of $\mathfs R$.
\end{prop}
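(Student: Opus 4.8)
The plan is to verify the four assertions --- countability, pairwise disjointness, covering $\NUH_\chi^\#(f)$, and the finiteness statement --- by exploiting the local finiteness of $\mathfs Z$ proved in Step 2, exactly as in Bowen's original argument \cite{Bowen-LNM} adapted to a locally finite countable cover.

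\emph{Countability and covering.} Since $\mathfs T$ is countable, the family $\{R(x):x\in\bigcup_{i\geq 1}Z_i\}$ is contained in the countable collection of all intersections of subfamilies of $\mathfs T$, hence $\mathfs R$ is countable. Every $x\in\NUH_\chi^\#(f)$ lies in some $Z_i\in\mathfs Z\subseteq\mathfs T$ by the covering property, so $R(x)$ is well-defined and contains $x$; thus $\mathfs R$ covers $\NUH_\chi^\#(f)$. Moreover each $R(x)\subseteq T^{us}_{ii}=Z_i$ whenever $x\in Z_i$, so $\mathfs R$ refines $\mathfs Z$.

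\emph{Pairwise disjointness.} This is the step that requires the Markov cover structure. Suppose $R(x)\cap R(y)\neq\emptyset$, say $z\in R(x)\cap R(y)$. I claim $R(x)=R(y)$. The key point is that the defining condition ``$T\owns x$'' for $T=T^{\alpha\beta}_{ij}$ depends only on which $Z_j$'s meet $W^u(x,Z_i)$ and $W^s(x,Z_i)$; so it suffices to show that for every $i$ with $x,y\in Z_i$, and every $j$, one has $W^u(x,Z_i)\cap Z_j\neq\emptyset \iff W^u(y,Z_i)\cap Z_j\neq\emptyset$ (and similarly for the $s$--fibres), provided $x$ and $y$ are ``linked'' through the common point $z$. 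The mechanism is the standard one: from $z\in R(x)$ and $z\in R(y)$ one reads off, using the product structure of each $Z_i$ containing $x,y,z$ (property (2): two $u$--fibres are equal or disjoint; property (3): fibres of complementary type meet in one point) together with the overlapping charts property, that $W^u(x,Z_i)$ and $W^u(z,Z_i)$ coincide, and likewise $W^u(y,Z_i)=W^u(z,Z_i)$, and the analogous equalities for $s$--fibres. Hence $x,y,z$ have literally the same $u$- and $s$--fibres in every $Z_i$ containing them, so they satisfy exactly the same membership relations in each $T^{\alpha\beta}_{ij}$, giving $R(x)=R(y)=R(z)$. This is the main obstacle, and it is where I expect to invoke Lemmas~10.8 and~10.10 of \cite{Sarig-JAMS} (the overlapping charts property) to guarantee that the relevant fibres from different charts $Z_i,Z_j$ are nested consistently.

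\emph{Finiteness of $\mathfs R$ inside each $Z\in\mathfs Z$.} Fix $Z=Z_i\in\mathfs Z$. By local finiteness there are only finitely many indices $j$ with $Z_j\cap Z_i\neq\emptyset$; call this finite set $J$. Any $R=R(x)$ with $R\subseteq Z_i$ (equivalently $x\in Z_i$) is an intersection of the form $\bigcap\{T\in\mathfs T:T\owns x\}$, and only the sets $T^{\alpha\beta}_{ij}$ with $j\in J$ can cut $Z_i$ nontrivially --- for $j\notin J$ we have $Z_j\cap Z_i=\emptyset$, so $W^{u/s}(x,Z_i)\cap Z_j=\emptyset$ automatically and the condition contributes no new constraint. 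Thus $R(x)$ is determined by the finitely many bits $\bigl(\alpha_j(x),\beta_j(x)\bigr)_{j\in J}\in(\{u,\emptyset\}\times\{s,\emptyset\})^{J}$ recording, for each $j\in J$, whether $W^u(x,Z_i)$ meets $Z_j$ and whether $W^s(x,Z_i)$ meets $Z_j$. There are at most $4^{|J|}<\infty$ such tuples, hence at most $4^{|J|}$ distinct sets $R\subseteq Z_i$ in $\mathfs R$. This completes the proof. $\hfill\Box$
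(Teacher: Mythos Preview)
Your countability argument has a slip: the collection of \emph{all} intersections of subfamilies of a countably infinite $\mathfs T$ has cardinality $2^{\aleph_0}$. The paper's one-line proof pinpoints the fix: local finiteness of $\mathfs Z$ gives $\#\{T\in\mathfs T: T\owns x\}<\infty$ for every $x$, so each $R(x)$ is a \emph{finite} intersection, and there are only countably many of those. (Alternatively, countability follows a posteriori from your finiteness-inside-each-$Z_i$ argument, since $\mathfs Z$ is countable.)

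The more serious gap is in the disjointness step. You claim that $z\in R(x)$ forces $W^u(x,Z_i)=W^u(z,Z_i)$ \emph{and} $W^s(x,Z_i)=W^s(z,Z_i)$. But then the product structure (a $u$--fibre and an $s$--fibre meet in a single point) would give $x=z$, collapsing every $R(x)$ to a singleton. What Bowen's argument actually proves is weaker and sufficient: $z\in R(x)$ implies $\{T\in\mathfs T: T\owns z\}=\{T\in\mathfs T: T\owns x\}$, i.e.\ $x$ and $z$ lie in the same $T^{\alpha\beta}_{ij}$ for every pair $i,j$. One inclusion is immediate from the definition of $R(x)$. For the other, the nontrivial step is $z\in Z_k\Rightarrow x\in Z_k$: pick $i$ with $x\in Z_i$; then $z\in Z_i\cap Z_k$ forces $z\in T^{us}_{ik}$, hence $x\in T^{us}_{ik}$, so there exist $p\in W^u(x,Z_i)\cap Z_k$ and $q\in W^s(x,Z_i)\cap Z_k$. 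The overlapping charts property now places $x$ on both $V^u(p,Z_k)$ and $V^s(q,Z_k)$, whose unique intersection point is the Smale bracket $[p,q]_{Z_k}\in Z_k$; thus $x\in Z_k$. This is exactly the mechanism in \cite[Prop.~11.2]{Sarig-JAMS}, and it uses the overlapping charts property in a different (and essential) way from what you wrote.

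Your finiteness argument is morally right but undercounts: $R(x)$ is cut out not only by the $T^{\alpha\beta}_{ij}$ with your fixed first index $i$, but by $T^{\alpha\beta}_{kj}$ for \emph{every} $k$ with $x\in Z_k$. Local finiteness still makes this a finite amount of data (all such $k$ lie in $\{k: Z_k\cap Z_i\neq\emptyset\}$, and each corresponding $J_k$ is finite), so the conclusion survives with a larger bound than $4^{|J|}$.
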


\begin{proof}
See \cite{Bowen-LNM} or \cite[Prop. 11.2]{Sarig-JAMS}. The local finiteness of $\mathfs Z$
is needed to show that $\mathfs R$ is countable: it implies that $\#\{T\in\mathfs T:T\owns x\}<\infty$ for all $x$.
%
%
\end{proof}

The following proposition says that $\mathfs R$ is a {\em Markov partition}
in the sense of Sina{\u \i} \cite{Sinai-MP-U-diffeomorphisms}. First, some definitions.
The {\em $u$--fibre} and {\em $s$--fibre} of $x\in R\in\mathfs R$ are
\begin{align*}
W^u(x,R)&:=\bigcap\{W^u(x,Z_i)\cap T^{\alpha\beta}_{ij}:T^{\alpha\beta}_{ij}\in\mathfs T\textrm{ contains }R\},\\
W^s(x,R)&:=\bigcap\{W^s(x,Z_i)\cap T^{\alpha\beta}_{ij}:T^{\alpha\beta}_{ij}\in\mathfs T\textrm{ contains }R\}.
\end{align*}

\begin{prop} The following properties hold.
\begin{enumerate}[$(1)$]
\item {\sc Product structure:\/} Suppose $R\in\mathfs R$.
\begin{enumerate}[{\rm (a)}]
\item If $x\in R$, then the $s$ and $u$ fibres of $x$ contain $x$, and are contained in $R$,
therefore $R=\bigcup_{x\in R}W^u(x,R)$ and $R=\bigcup_{x\in R}W^s(x,R)$.
\item For all $x,y\in R$, either the $u$--fibres of $x,y$ in $R$ are equal, or they are disjoint. Similarly for $s$--fibres.
\item For all $x,y\in R$, $W^u(x,R)$ and $W^s(y,R)$ intersect at a unique point,
denoted by $[x,y]$ and called the {\em Smale bracket of $x,y$}.  
\end{enumerate}
\item {\sc Hyperbolicity:} For all $z_1,z_2\in W^s(x,R)$, $\dist_{\Lambda}(f^n(z_1),f^n(z_2))\xrightarrow[n\to\infty]{}0$,
and for all $z_1,z_2\in W^u(x,R)$, $\dist_{\Lambda}(f^{-n}(z_1),f^{-n}(z_2))\xrightarrow[n\to\infty]{}0$.
The rates are exponential.
\item {\sc Markov property:} Let $R_0,R_1\in\mathfs R$. If $x\in R_0$ and $f(x)\in R_1$,
then $f[W^s(x,R_0)]\subset W^s(f(x),R_1)$ and $f^{-1}[W^u(f(x),R_1)]\subset W^u(x,R_0)$.
\end{enumerate}
\end{prop}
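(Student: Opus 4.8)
The plan is to derive all three properties from the corresponding symbolic-level facts established for the Markov cover $\mathfs Z$ in Step 2, combined with the refinement combinatorics from Step 3. The key observation is that each $R\in\mathfs R$ has a canonical ``defining data'': a finite collection of sets $T^{\alpha\beta}_{ij}\in\mathfs T$ whose intersection is $R$, and in particular a distinguished $Z_i\in\mathfs Z$ with $R\subset Z_i$ (since $T^{us}_{ii}=Z_i$). The fibres $W^{u/s}(x,R)$ are by definition the traces on $R$ of the fibres $W^{u/s}(x,Z_i)$, further cut down by the $T^{\alpha\beta}_{ij}$'s containing $R$.

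\emph{Product structure.} For (1)(a): if $x\in R$ then $x\in W^u(x,Z_i)$ and $x\in T^{\alpha\beta}_{ij}$ for each $T^{\alpha\beta}_{ij}\supset R$ (by definition of $R=R(x)$), so $x$ lies in each set being intersected, hence $x\in W^u(x,R)$; and $W^u(x,R)\subset R$ because $R=\bigcap\{T\in\mathfs T:T\owns x\}$ and each factor $W^u(x,Z_i)\cap T^{\alpha\beta}_{ij}$ is contained in one of these $T$'s — here one must check, as in Bowen's argument, that membership of $z$ in $W^u(x,Z_i)\cap T^{\alpha\beta}_{ij}$ forces $z$ into the \emph{same} block $T^{\alpha\beta}_{ij}$, which uses that $u$-fibres of $Z_i$ are either equal or disjoint (product structure of $\mathfs Z$) together with the Markov-type relation between the fibres of $Z_i$ and those of $Z_j$. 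This is exactly \cite[Prop. 11.7]{Sarig-JAMS} and goes through verbatim. For (1)(b), disjointness of $u$-fibres in $R$ follows from disjointness of $u$-fibres in $Z_i$. For (1)(c), existence and uniqueness of $[x,y]$: uniqueness is immediate since $W^u(x,R)\cap W^s(y,R)\subset V^u(x,Z_i)\cap V^s(y,Z_i)$ and a $u$-admissible manifold meets an $s$-admissible manifold at most once \cite[Prop. 4.11]{Sarig-JAMS}; existence requires showing $[x,y]_{Z_i}$ (which exists by the product structure of $\mathfs Z$) actually lies in $R$, i.e. in every $T^{\alpha\beta}_{ij}\supset R$ — this is the content of the Bowen refinement lemma and is where the precise definition of the $T^{\alpha\beta}_{ij}$ as ``$W^u$ meets $Z_j$ and/or $W^s$ meets $Z_j$'' is used, since those conditions depend only on the $u$-fibre and $s$-fibre separately and are therefore preserved under taking Smale brackets.

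\emph{Hyperbolicity.} Since $W^s(x,R)\subset W^s(x,Z_i)\subset V^s(x,Z_i)=V^s[\{v_j\}_{j\ge 0}]$ for an appropriate regular gpo with $v_0=$ the double chart indexing $Z_i$, the exponential contraction under forward iteration is inherited directly from the Hyperbolicity property of $V^s[\un v^+]$ stated in \S\ref{Section_GPO}; similarly for $W^u$ under backward iteration. No new work is needed.

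\emph{Markov property.} This is the one step that needs genuine care, and I expect it to be the main obstacle. The argument mirrors the proof for $\mathfs Z$ (the Symbolic Markov property): given $x\in R_0$, $f(x)\in R_1$, pick $\un v\in\Sigma^\#(\mathfs G)$ with $\pi(\un v)=x$. One would \emph{like} to say $v_0$ indexes $R_0$ and $v_1$ indexes $R_1$, but that is false in general — a point can be coded by gpos whose zeroth symbols index different $Z$'s. The correct approach, following \cite[Prop. 11.8]{Sarig-JAMS}, is: the Markov property for $\mathfs Z$ gives $f[W^s(x,Z(v_0))]\subset W^s(f(x),Z(v_1))$; then one checks that if $y\in W^s(x,R_0)$, the block-structure constraints defining $R_0$ (which, via the overlapping-charts property, are inherited on $f$-images thanks to the fact that $W^s$-fibres of one $Z$ sit inside $V^s$ of any overlapping $Z$) force $f(y)$ into the blocks defining $R_1$. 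Concretely: for each $T^{\alpha\beta}_{ij}\supset R_1$, one uses that $f(y)\in W^s(f(x),Z(v_1))$ and that the defining conditions ``$W^u(\cdot,Z_i)$ meets $Z_j$'' / ``$W^s(\cdot,Z_i)$ meets $Z_j$'' transfer from $x$ to $f(x)$ via the Markov property of $\mathfs Z$ applied to the pair $Z(v_0),Z(v_1)$ and the overlapping-charts property relating these to $Z_i$. The whole argument is combinatorial and uses only the properties of $\mathfs Z$ and $\mathfs T$ already listed; since those properties were re-established in our setting in Step 2 (with the singular set $\mathfrak S$ handled there), the proof of \cite[Prop. 11.8]{Sarig-JAMS} applies without change. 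I would therefore write: ``The three properties follow exactly as in \cite[\S 11]{Sarig-JAMS}, using the product structure, local finiteness, symbolic Markov property, and overlapping-charts property of $\mathfs Z$ established in Step 2, together with Proposition \ref{prop-Z-and-R}.''
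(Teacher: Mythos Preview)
Your proposal is correct and takes essentially the same approach as the paper, which simply cites \cite{Bowen-LNM} and \cite[Prop.~11.5--11.7]{Sarig-JAMS}; you have spelled out the content of those references, correctly identifying that everything reduces to the product structure, symbolic Markov property, and overlapping-charts property of $\mathfs Z$ established in Step~2. One minor remark: your discussion of the Markov property introduces an unnecessary detour through a gpo $\un v$ for $x$ --- the actual argument is purely combinatorial at the level of $\mathfs T$ (first show that $y\in W^s(x,R_0)$ forces $y$ to lie in exactly the same $T^{\alpha\beta}_{ij}$'s as $x$, then transfer this to $f(y)$ and $f(x)$), but since you arrive at the right conclusion and correctly defer to the cited propositions, this does not affect correctness.
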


\begin{proof} This follows from the Markov properties of $\mathfs Z$ as in \cite{Bowen-LNM}.
See \cite[Prop. 11.5--11.7]{Sarig-JAMS} for a proof using the notation of this paper.
%
%
%
%
%
%
\end{proof}

\subsection*{Step 4: Symbolic coding for $f:\Lambda\to\Lambda$ \cite{Adler-Weiss-PNAS,Sinai-MP-U-diffeomorphisms}}

Let $\mathfs R$ denote the partition we constructed in the previous section.
Suppose $R,S\in\mathfs R$. We say that {\em $R$ connects to $S$}, and write $R\to S$,
whenever $\exists x\in R$ s.t. $f(x)\in S$. Equivalently, $R\to S$ iff $R\cap f^{-1}(S)\neq \emptyset$.

\noindent
\medskip
{\sc The dynamical graph of $\mathfs R$:} This is the directed graph $\widehat{\mathfs G}$
with set of vertices $\mathfs R$ and set of edges $\{(R,S)\in\mathfs R\x\mathfs R: R\to S \}.$

\medskip
\noindent
{\sc Fundamental observation \cite{Adler-Weiss-PNAS,Sinai-MP-U-diffeomorphisms}:}
Suppose $m\leq n$ are integers, and $R_m\to\cdots\to R_n$ is a finite path on $\widehat{\mathfs G}$, then
$$
_\ell[R_m,\ldots,R_n]:=f^{-\ell}(R_m)\cap f^{-\ell-1}(R_{m+1})\cap\cdots\cap f^{-\ell-(n-m)}(R_n)\neq \emptyset.
$$

{\em Proof.\/} This can be seen by induction on $n-m$ as follows: If $n-m=0$ or $1$ there is nothing to prove.
Assume by induction that the statement holds for $m-n$, then $\exists x\in {_\ell[}R_m,\ldots,R_n]$
and $\exists y\in R_n$ s.t. $f(y)\in R_{n+1}$. Let $z:=[f^n(x),y]$,
then $f^{-n}(z)\in  {_\ell[}R_m,\ldots,R_{n+1}]$ by the Markov property. \hfill $\square$


\medskip
The sets $_\ell[R_m,\ldots,R_n]$ can be related to cylinders in $\Sigma^\#(\mathfs G)$ as follows.
Define for every path $v_m\to\cdots\to v_n$ on $\mathfs G$ (not $\widehat{\mathfs G}$) the set
$$
Z_\ell(v_m,\ldots,v_n):=\{\pi(\un{u}):\un{u}\in\Sigma^\#(\mathfs G), u_i=v_i\text{ for }i=\ell,\ldots,\ell+n-m\}.
$$

\begin{lem}\label{Lem_R_Z}
For all doubly infinite path $\cdots\to R_0\to R_{1}\to\cdots$ on $\widehat{\mathfs G}$ there
is a gpo $\un{v}\in\Sigma(\mathfs G)$ s.t. for every $n$, $R_n\subset Z(v_n)$ and
$_{-n}[R_{-n},\ldots,R_n]\subset Z_{-n}(R_{-n},\ldots,R_n)$.
\end{lem}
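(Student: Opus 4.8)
The plan is to extract the gpo $\un v$ by a diagonal/compactness argument from the fact that each $R_n$, being an element of $\mathfs R$, refines $\mathfs Z$ and hence lies inside some $Z(u)$ with $u\in\mathfs A$. First I would fix, for each $n$, a symbol $v_n\in\mathfs A$ such that $R_n\subset Z(v_n)$; this is possible by Proposition \ref{prop-Z-and-R} (every element of $\mathfs R$ is contained in a unique element of $\mathfs Z$, and $\mathfs Z=\{Z(v):v\in\mathfs A\}$). The content of the lemma is that the sequence $\un v=\{v_n\}_{n\in\Z}$ can be chosen to be an actual path on $\mathfs G$, i.e. $v_n\epto v_{n+1}$ for all $n$, and that the cylinders match up as claimed. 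The natural way to get the edge relation is to use the ``fundamental observation": since $\cdots R_{-n}\to\cdots\to R_n\to\cdots$ is an infinite path on $\widehat{\mathfs G}$, for each $n$ the set $_{-n}[R_{-n},\dots,R_n]$ is nonempty, so we may pick $x^{(n)}\in{}_{-n}[R_{-n},\dots,R_n]$. Then $f^i(x^{(n)})\in R_{i}\subset Z(v_i)$ for $|i|\le n$, so for each such $i$ there is a regular gpo through $v_i$ whose projection is $f^i(x^{(n)})$; pushing the shadowing data around, $f^i(x^{(n)})$ lies in the Pesin chart image $\Psi_{x(v_i)}([-\eta_i,\eta_i]^2)$. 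The key local-finiteness input is Lemma \ref{Lemma_Finite_Degree}: each vertex of $\mathfs G$ has finite in- and out-degree, so by a Cantor diagonal argument one can pass to a subsequence of the $x^{(n)}$ along which the induced choices of neighboring symbols stabilize, producing a genuine bi-infinite path $\un v\in\Sigma(\mathfs G)$ with $R_n\subset Z(v_n)$ for all $n$.

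Next I would verify the cylinder inclusion $_{-n}[R_{-n},\dots,R_n]\subset Z_{-n}(v_{-n},\dots,v_n)$. Let $x\in{}_{-n}[R_{-n},\dots,R_n]$; then $f^i(x)\in R_i\subset Z(v_i)$ for $-n\le i\le n$. Applying the Markov property of $\mathfs R$ and the product (bracket) structure, one builds — exactly as in the proof of the fundamental observation, but now using the symbolic data attached to the $R_i$'s via $\mathfs Z$ — a regular gpo $\un u\in\Sigma^\#(\mathfs G)$ with $u_i=v_i$ for $i=-n,\dots,n$ and $\pi(\un u)=x$. Concretely: take a regular gpo $\un w^-$ with $w^-_{-n}=v_{-n}$ shadowing a point in $V^u$-direction agreeing with $x$, a regular gpo $\un w^+$ with $w^+_n=v_n$ agreeing with $x$ in the $V^s$-direction, then splice along the finite segment $v_{-n},\dots,v_n$; the symbolic Markov property of $\mathfs Z$ and the overlapping-charts property guarantee that the spliced sequence is still a gpo in $\mathfs A^\Z$, and that it shadows exactly $x$. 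This shows $x\in Z_{-n}(v_{-n},\dots,v_n)$, giving the desired inclusion; the inclusion $R_n\subset Z(v_n)$ is the $n=0$-segment case (and was already arranged in the construction of $\un v$).

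Finally I would double-check that the splicing step lands in $\Sigma(\mathfs G)$, i.e. that the new sequence satisfies (GPO1)+(GPO2) at the two junctions $i=-n$ and $i=n$: this is where Theorem \ref{Thm_Inverse} enters — two regular gpos shadowing the same orbit have symbols that agree ``up to bounded error," so the windows $p^{u/s}$ of the two pieces are comparable (parts (3) and (5)) and the coordinate changes $\Psi^{-1}\circ\Psi$ are close to $\pm\mathrm{Id}$ (part (4)), which is precisely what makes the overlap condition $\overset{\epsilon}{\approx}$ survive the splice and lets one re-define the exponents $p^u_i,p^s_i$ consistently via (GPO2).

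The main obstacle I anticipate is the bookkeeping of \emph{which} symbol $v_n\in\mathfs A$ to attach to $R_n$ so that the edge relations $v_n\epto v_{n+1}$ hold simultaneously for all $n$: a priori the containment $R_n\subset Z(v_n)$ does not pin down $v_n$ (several double charts can have the same $Z$), and a careless choice could fail (GPO1)/(GPO2) between consecutive indices. The diagonal extraction resolves this, but it relies essentially on Lemma \ref{Lemma_Finite_Degree} (finite degree of $\mathfs G$) together with the local finiteness of $\mathfs Z$ and the discreteness of $\mathfs A$ — so the real work is to show these finiteness properties force the compatible choices to exist, rather than to perform any new estimate. Everything else is a reprise of \cite{Sarig-JAMS}, where the analogue of this lemma (relating the refined partition back to the Markov cover) is established in the diffeomorphism setting; the argument carries over verbatim once $\mathfs S$-related quantities are controlled, which they are on the relevant Pesin charts by \eqref{Pesin-Inside} and Theorem \ref{Thm_Inverse}(2).
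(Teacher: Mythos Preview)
Your overall architecture (diagonal extraction to build $\un v$, then a splicing argument for the cylinder inclusion) is the right one and matches the proof in \cite[Lemma~12.2]{Sarig-JAMS} that the paper cites. But two pieces of your execution are off, and they are exactly the places you flag as ``obstacles.''

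\medskip
\textbf{Construction of $\un v$.} You begin by fixing $v_n$ with $R_n\subset Z(v_n)$ and then try to retrofit the edges. This is the wrong order. The clean move is: pick $x^{(n)}\in{}_{-n}[R_{-n},\ldots,R_n]$ (nonempty by the fundamental observation), and note that $x^{(n)}\in R_0\subset\bigcup\mathfs Z=\pi(\Sigma^\#(\mathfs G))$, so there is a \emph{single} regular gpo $\un w^{(n)}\in\Sigma^\#(\mathfs G)$ with $\pi(\un w^{(n)})=x^{(n)}$. Then $f^i(x^{(n)})=\pi(\sigma^i\un w^{(n)})\in R_i\cap Z(w^{(n)}_i)$. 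Now use the one fact you are missing: \emph{if $R\in\mathfs R$ meets $Z_j\in\mathfs Z$, then $R\subset Z_j$}. This holds because $Z_j=T_{jj}^{us}\in\mathfs T$, so any $y\in R\cap Z_j$ has $T_{jj}^{us}\in\{T\in\mathfs T:T\ni y\}$, whence $R=R(y)\subset T_{jj}^{us}=Z_j$. Thus $R_i\subset Z(w^{(n)}_i)$ for all $|i|\le n$, and the edges $w^{(n)}_i\epto w^{(n)}_{i+1}$ are automatic since $\un w^{(n)}$ is already a gpo. The diagonal extraction then uses the local finiteness of $\mathfs Z$ (not Lemma~\ref{Lemma_Finite_Degree}): for each fixed $i$ there are only finitely many $v\in\mathfs A$ with $Z(v)\supset R_i$, so $\{w^{(n)}_i\}_n$ has a constant subsequence, and $\Sigma(\mathfs G)$ is closed. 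Your version, with a separate gpo ``through $v_i$'' for each $i$, never assembles these into one path.

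\medskip
\textbf{The splice.} Your appeal to Theorem~\ref{Thm_Inverse} for junction compatibility is a red herring. Given $y\in{}_{-n}[R_{-n},\ldots,R_n]$, take $\un a\in\Sigma^\#(\mathfs G)$ with $a_0=v_{-n}$, $\pi(\un a)=f^{-n}(y)$ (exists since $f^{-n}(y)\in R_{-n}\subset Z(v_{-n})$), and $\un b\in\Sigma^\#(\mathfs G)$ with $b_0=v_n$, $\pi(\un b)=f^n(y)$. Set $u_i=a_{i+n}$ for $i\le -n$, $u_i=v_i$ for $-n\le i\le n$, $u_i=b_{i-n}$ for $i\ge n$. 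At the junctions the symbols \emph{coincide} ($u_{-n}=a_0=v_{-n}$ and $u_n=b_0=v_n$), so every edge $u_i\epto u_{i+1}$ is literally an edge already present in $\un a$, $\un v$, or $\un b$; no comparison of windows via Theorem~\ref{Thm_Inverse} is needed. That $\pi(\un u)=y$ follows from the uniqueness clause in Theorem~\ref{Thm_Shadowing}: for $|i|\le n$ one has $f^i(y)\in R_i\subset Z(v_i)\subset\Psi_{x(v_i)}([-Q_\epsilon,Q_\epsilon]^2)$ by the overlapping charts property, and for $|i|>n$ the containment comes from $\un a$ or $\un b$ shadowing $f^{\mp n}(y)$.
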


The proof proceeds as follows: for each $n\geq 0$ take $x_n\in _{-n}[R_{-n},\ldots,R_n]$,
write $x_n=\pi(\un v^{(n)})$ for $\un v^{(n)}\in\Sigma^\#(\mathfs G)$, and then apply a
diagonal argument to construct $\un v$. See \cite[Lemma 12.2]{Sarig-JAMS} for the details.

\begin{prop}
Every vertex of $\wh{\mathfs G}$ has finite outgoing and ingoing degrees.
\end{prop}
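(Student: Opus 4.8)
The plan is to deduce the finite-degree property of $\wh{\mathfs G}$ from the finite-degree property of $\mathfs G$ (Lemma \ref{Lemma_Finite_Degree}) together with the fact that $\mathfs Z$ is locally finite and each $Z\in\mathfs Z$ contains only finitely many elements of $\mathfs R$ (Proposition \ref{prop-Z-and-R}). First I would fix $R\in\mathfs R$. Since $\mathfs R$ refines $\mathfs Z$, there is some $v\in\mathfs A$ with $R\subseteq Z(v)$. I will show that whenever $R\to S$ in $\wh{\mathfs G}$, the element $S$ must be contained in $Z(w)$ for one of the finitely many $w\in\mathfs A$ with $v\epto w$; since each such $Z(w)$ contains only finitely many elements of $\mathfs R$, this bounds the outgoing degree of $R$.

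To carry this out, suppose $R\to S$, so there is $x\in R$ with $f(x)\in S$. By the covering property of $\mathfs R$ (Proposition \ref{prop-Z-and-R}), I may take $x\in\NUH_\chi^\#(f)$, so $x=\pi(\un{v})$ for some regular gpo $\un{v}\in\Sigma^\#(\mathfs G)$, and then $f(x)=\pi(\sigma\un{v})$. Now I would invoke Lemma \ref{Lem_R_Z}: choosing an infinite path on $\wh{\mathfs G}$ through the edge $R\to S$, there is a gpo $\un{u}\in\Sigma(\mathfs G)$ with $R\subseteq Z(u_0)$ and $S\subseteq Z(u_1)$, and $u_0\epto u_1$. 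Because $R\subseteq Z(u_0)\cap Z(v)$ and $Z(u_0),Z(v)\neq\emptyset$, the local finiteness of $\mathfs Z$ (``even better'' form: $\#\{a\in\mathfs A:Z(a)\cap Z\neq\emptyset\}<\infty$) shows $u_0$ ranges over a finite subset of $\mathfs A$ as $S$ varies; then by Lemma \ref{Lemma_Finite_Degree}, $u_1$ ranges over a finite set as well; and finally $S\subseteq Z(u_1)$, and each $Z(u_1)$ contains only finitely many elements of $\mathfs R$ (Proposition \ref{prop-Z-and-R}), so only finitely many $S$ are possible. The argument for the ingoing degree is the symmetric one, using $f^{-1}$ and the finite ingoing degree of $\mathfs G$.

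The main obstacle I anticipate is the bookkeeping needed to guarantee that $R$ itself determines a finite set of admissible symbols $u_0$: a priori $R$ might be contained in $Z(v)$ for several different $v$, and the edge relation in $\wh{\mathfs G}$ is defined through the dynamics of $f$, not through the combinatorics of $\mathfs G$, so one must pass carefully through Lemma \ref{Lem_R_Z} to replace ``$R\to S$'' by the symbolic statement ``$u_0\epto u_1$ with $R\subseteq Z(u_0)$, $S\subseteq Z(u_1)$''. Once that translation is in place the finiteness is immediate from the three ingredients above, so the remaining details are routine and can be left to a reference to \cite[Prop. 12.4]{Sarig-JAMS}.
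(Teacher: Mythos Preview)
Your proposal is correct and follows essentially the same route as the paper: lift the edge $R\to S$ in $\wh{\mathfs G}$ to an edge $u_0\epto u_1$ in $\mathfs G$ via Lemma \ref{Lem_R_Z}, use local finiteness of $\mathfs Z$ to bound the possibilities for $u_0$, use Lemma \ref{Lemma_Finite_Degree} to bound $u_1$, and finish with Proposition \ref{prop-Z-and-R}. The paper does both directions at once by lifting a length-three path $R_{-1}\to R_0\to R_1$, but the logic is identical. Your detour through $x\in\NUH_\chi^\#(f)$ and $x=\pi(\un{v})$ is unnecessary (and not obviously justified, since $R\cap f^{-1}(S)$ need not meet $\NUH_\chi^\#(f)$), but you never actually use it, so it does no harm.
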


\begin{proof}
Fix $R_0\in\mathfs R$. For every  path $R_{-1}\to R_0\to R_1$ in $\widehat{\mathfs G}$,
find a path $v_{-1}\to v_0\to v_1$ in $\mathfs G$ s.t. $Z(v_i)\supset R_i$ for $|i|\leq 1$.
Since $\mathfs Z$ is locally finite, there are finitely many possibilities for $v_0$. Since
every vertex of $\mathfs G$ has finite degree, there are also only finitely many possibilities
for $v_{-1},v_1$. Since every element in $\mathfs Z$ contains at most finitely many elements
in $\mathfs R$, there is a finite number of possibilities for $R_{-1},R_1$.
\end{proof}

Let $\Sigma(\wh{\mathfs G}):=\{\textrm{doubly infinite paths on $\wh{\mathfs G}$}\}
=\{\un{R}\in \mathfs R^\Z:R_i\to R_{i+1},\forall i\}$, equipped with the metric
$d(\un{R},\un{S}):=\exp[-\min\{|i|:R_i\neq S_i\}]$, and the action of the left shift map
$\sigma:\Sigma(\wh{\mathfs G})\to \Sigma(\wh{\mathfs G})$, $\sigma(\un{R})_i=R_{i+1}$. Let
$$
{\Sigma}^\#(\widehat{\mathfs G}):=\left\{\un{R}\in\Sigma(\widehat{\mathfs G}):
\{R_i\}_{i\leq 0}, \{R_i\}_{i\geq 0}\textrm{ contain constant subsequences} \right\}.
$$

Since $\pi:\Sigma(\mathfs G)\to\Lambda$ is H\"older continuous, there are constants
$C>0,\theta\in (0,1)$ s.t. for every finite path $v_{-n}\to\cdots\to v_n$ on $\mathfs G$,
$\diam(Z_{-n}(v_{-n},\ldots,v_n))\leq C\theta^n$. By  Lemma \ref{Lem_R_Z},
$\diam(_{-n}[R_{-n},\ldots,R_n])\leq C\theta^n$ for every finite path $R_{-n}\to\cdots\to R_n$
on $\widehat{\mathfs G}$. This allows us to make the following definition.

\medskip
\noindent
{\sc Symbolic dynamics for $f$:\/} Let
 $\wh{\pi}:\Sigma(\wh{\mathfs G})\to\Lambda$ be defined by
 $$\wh{\pi}(\un{R}):=\textrm{ The unique point in }\bigcap_{n=0}^\infty\ov{ _{-n}[R_{-n},\ldots,R_n]}.$$

\begin{thm}\label{Thm_Symbolic_Dynamics_For_f}
The following holds for all $\e$ small enough.
\begin{enumerate}[$(1)$]
\item $\wh{\pi}\circ\sigma=f\circ\wh{\pi}$.
\item $\wh{\pi}:\Sigma(\wh{\mathfs G})\to\Lambda$ is H\"older continuous.
\item $\wh{\pi}[\Sigma^\#(\wh{\mathfs G})]$ has full $\mu_{\Lambda}$--measure.
\item Every $x\in \widehat{\pi}[\Sigma^\#(\widehat{\mathfs G})]$ has finitely many pre-images
in $\Sigma^\#(\widehat{\mathfs G})$. If $\mu$ is ergodic, this number is equal a.e. to a constant.
\item Moreover, there exists $N:\mathfs R\x\mathfs R\to\N$ s.t. if $x=\widehat{\pi}(\un{R})$ where
$R_i=R$ for infinitely many $i<0$ and $R_i=S$ for infinitely many $i>0$, then
$\#\{\un{S}\in\Sigma^\#(\widehat{\mathfs G}):\pi(\un{S})=x\}\leq N(R,S)$.
\end{enumerate}
\end{thm}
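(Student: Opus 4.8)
The plan is to prove parts (1)--(5) in order, leveraging the corresponding facts about the Markov extension $\sigma:\Sigma(\mathfs G)\to\Lambda$ established in Step 1 and the combinatorial structure of $\mathfs R$ from Steps 2--3. Part (1), that $\wh\pi\circ\sigma = f\circ\wh\pi$, is immediate: the set $\ov{_{-(n+1)}[R_{-(n+1)},\ldots,R_{n+1}]}$ equals $f^{-1}$ of $\ov{_{-n}[\sigma(\un R)_{-n},\ldots,\sigma(\un R)_{n}]}$ shifted appropriately, so intersecting over $n$ and applying $f$ commutes with the shift; the uniqueness of the intersection point (from $\diam(_{-n}[R_{-n},\ldots,R_n])\leq C\theta^n$) makes this rigorous. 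Part (2) follows from the same diameter estimate: if $\un R,\un S$ agree on $[-n,n]$ then $\wh\pi(\un R),\wh\pi(\un S)$ both lie in $_{-n}[R_{-n},\ldots,R_n]$, so $\dist_\Lambda(\wh\pi(\un R),\wh\pi(\un S))\leq C\theta^n\leq C(e^{-1})^{n}$, which is Hölder (indeed Lipschitz after reparametrizing the metric on $\Sigma(\wh{\mathfs G})$).

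For part (3), I would show $\wh\pi[\Sigma^\#(\wh{\mathfs G})]\supseteq \NUH^\#_\chi(f)\setminus(\text{null set})$. Given $x\in\NUH^\#_\chi(f)$, Proposition \ref{PropOnto} gives a regular gpo $\un v\in\Sigma^\#(\mathfs G)$ shadowing $x$. Then $x\in\bigcap_n Z(v_n)$, and since $\mathfs R$ is a partition of a full-measure set refining $\mathfs Z$ (Proposition \ref{prop-Z-and-R}), each $f^i(x)$ lies in a unique $R_i\in\mathfs R$ with $R_i\subseteq Z(v_i)$; the sequence $\un R=\{R_i\}$ is a path on $\wh{\mathfs G}$ because $f(f^i(x))=f^{i+1}(x)$ witnesses $R_i\to R_{i+1}$, and $\wh\pi(\un R)=x$ by the nesting property. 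Regularity of $\un R$ is the delicate point: since $\mathfs Z$ is locally finite and $\un v$ is regular, only finitely many $Z(v)$ are visited infinitely often by $\{v_i\}_{i\geq 0}$, each containing finitely many elements of $\mathfs R$, so $\{R_i\}_{i\geq 0}$ takes finitely many values, forcing a constant subsequence; same for $i\leq 0$. Hence $\un R\in\Sigma^\#(\wh{\mathfs G})$.

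Parts (4) and (5) are the heart of the matter and where I expect the real work. The key is Lemma \ref{Lem_R_Z}: any path $\cdots R_{-1}\to R_0\to R_1\cdots$ on $\wh{\mathfs G}$ is ``covered'' by a gpo $\un v\in\Sigma(\mathfs G)$ with $R_n\subseteq Z(v_n)$. For a point $x=\wh\pi(\un R)$ with $R_i=R$ infinitely often for $i<0$ and $R_i=S$ infinitely often for $i>0$, I would argue: any $\un S\in\Sigma^\#(\wh{\mathfs G})$ with $\wh\pi(\un S)=x$ gives rise (via Lemma \ref{Lem_R_Z}) to a gpo $\un w$ with $\wh\pi(\un w)=x$ at the level of $\Sigma(\mathfs G)$, and this $\un w$ is forced to be regular because $\un S$ is; then Theorem \ref{Thm_Inverse} combined with the discreteness of $\mathfs A$ bounds the number of possibilities for each $w_i$ (hence each $Z(v_i)$) in terms of the ``size'' of the double charts seen along the orbit of $x$, which is in turn controlled by the fact that $v_i\epto v_{i+1}$ visits a recurrent chart infinitely often on each side; the recurrent charts on the negative side are determined (up to finitely many choices) by $R$, and on the positive side by $S$. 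Since each $Z(v)$ contains only finitely many elements of $\mathfs R$, this yields a bound $N(R,S)<\infty$ on $\#\{\un S\}$, which is part (5); part (4) follows by taking $N(\mu):=N(R,S)$ for the $\mu$--a.e. recurrent pair $(R,S)$ when $\mu$ is ergodic (a.e.\ point visits the same recurrent symbols), and finiteness alone for general $x$ in the image. The main obstacle is making precise how regularity of $\un S$ propagates to a uniformly bounded collection of ``admissible'' gpo-symbols along the whole orbit, and tracking that the bound depends only on the recurrent data $(R,S)$ and not on $x$ itself; this requires carefully combining the inverse shadowing theorem, local finiteness of $\mathfs Z$, and the Bowen refinement, essentially following \cite[\S 12]{Sarig-JAMS} with the singular set $\mathfs S$ entering only through the already-established parts of Theorem \ref{Thm_Inverse}.
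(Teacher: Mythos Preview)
Your treatment of parts (1)--(3) is essentially the paper's argument. One small omission in (1): since $f$ has singularities on $\mathfrak S$, the step ``apply $f$ and commute with closure'' needs the observation that $\overline{_{-n-1}[R_{-n-1},\ldots,R_{n+1}]}\subset\overline{Z(v_0)}\subset\Psi_{x_0}([-Q_\epsilon(x_0),Q_\epsilon(x_0)]^2)\subset\Lambda\setminus\mathfrak S$, so $f$ is continuous there. The paper makes this explicit.

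Your approach to (4)--(5), however, has a genuine gap. Bounding the number of possible symbols $w_i$ (or $R_i$) at each coordinate $i$ does \emph{not} bound the number of full preimage sequences $\un S$: a product of finite sets indexed by $\Z$ is typically uncountable. The Inverse Property after Step~1 already gives coordinate-wise finiteness, yet the paper explicitly warns that this ``does \emph{not} imply that $\pi$ is finite-to-one or even countable-to-one.'' The actual argument (in the appendix, following Bowen) is a contradiction argument of a different shape: define $N(R,S)=N(R)N(S)$ where $N(R)$ counts pairs $(R',v')$ with $R'$ \emph{affiliated} to $R$ and $R'\subset Z(v')$; assume $N+1$ preimages $\un R^{(0)},\ldots,\un R^{(N)}$; lift each via Lemma~\ref{Lem_R_Z} to a gpo $\un v^{(j)}$; observe that all $R_i^{(j)}$ are affiliated (since all $Z(v_i^{(j)})\ni f^i(x)$); then at a time $-k$ with $R_{-k}^{(0)}=R$ and a time $\ell$ with $R_\ell^{(0)}=S$, pigeonhole forces two distinct preimages $j_1\neq j_2$ to agree on the quadruples $(R^{(j)}_{-k},v^{(j)}_{-k};R^{(j)}_\ell,v^{(j)}_\ell)$; finally, a Smale-bracket construction produces two points $z_A\neq z_B$ that are nonetheless both shadowed by a single concatenated gpo $\un c$, contradicting uniqueness of shadowing. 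The bracket construction and the overlapping-charts property of $\mathfs Z$ are the missing ingredients in your sketch --- they are what converts ``few symbols at the recurrent times'' into ``few sequences.''
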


\begin{proof}
(1) If $\un{R}\in\Sigma(\wh{\mathfs G})$, then $\wh{\pi}[\sigma(\un{R})]=f[\wh{\pi}(\un{R})]$:
\begin{align*}
\{\widehat{\pi}[\sigma(\un{R})]\}&=\bigcap_{n\geq 0}\overline{_{-n}[R_{-n+1},\ldots,R_{n+1}]}\supseteq \bigcap_{n\geq 0}\overline{_{-(n+2)}[R_{-n-1},\ldots,R_{n+1}]}
\end{align*}
\begin{align*}
&\overset{!}{=} \bigcap_{n\geq 0} \overline{f\bigl(\,_{-n-1}[R_{-n-1},\ldots,R_{n+1}]\,\bigr)} \overset{!}\supset\bigcap_{n\geq 0} f\bigl(\,\overline{_{-n-1}[R_{-n-1},\ldots,R_{n+1}]}\,\bigr)\\
&\overset{!}{=}f\left(\bigcap_{n\geq 0} \overline{_{-n-1}[R_{-n-1},\ldots,R_{n+1}]}\,\right)=\{f[\widehat{\pi}(\un{R})]\}.
\end{align*}
The equalities $\overset{!}{=}$ are because $f$ is invertible. To justify $\overset{!}{\supset}$,
it is enough to show that $f$ is continuous on an open neighborhood of
$C:=\overline{_{-n-1}[R_{-n-1},\ldots,R_{n+1}]}$. Fix some $v_0=\Psi_{x_0}^{p^u_0,p^s_0}$
s.t. $Z(v_0)\supset R_0$, then
$C\subset \overline{R_0}\subset \ov{Z(v_0)}\subset \Psi_{x_0}([-Q_\epsilon(x_0),Q_{\epsilon}(x_0)]^2)
\subset \Lambda\setminus\mathfrak S$. So $f$ is continuous on $C$.

\medskip
\noindent
(2) is because of the inequality $\diam(_{-n}[R_{-n},\ldots,R_n])\leq C\theta^n$ mentioned above.

\medskip
\noindent
(3) is because for every $x\in \NUH_{\chi}^\#(f)$, $x=\wh{\pi}(\un{R})$ where $R_i:=$ unique
element of $\mathfs R$ which contains $f^i(x)$. Clearly $\un{R}\in \Sigma(\wh{\mathfs G})$.
To see that $\un{R}\in \Sigma^\#(\wh{\mathfs G})$, we use Lemma \ref{Lem_R_Z} to construct
a regular gpo $\un{v}\in \Sigma^\#(\mathfs G)$ s.t. $x=\pi(\un{v})$, with $R_i\subset Z(v_i)$.
Every $Z(v)$ contains at most finitely many elements of $\mathfs R$ (Proposition \ref{prop-Z-and-R}).
Therefore, the regularity of $\un{v}$ implies the regularity of $\un{R}$.

\medskip
\noindent
(4) follows from (5) and the $f$--invariance of
$x\mapsto \#\{\un{R}\in\Sigma^\#(\wh{\mathfs G}): \widehat{\pi}(\un{R})=x\}$.

\medskip
\noindent
(5) is proved using Bowen's method \cite[pp. 13--14]{Bowen-Regional-Conference}, see also
\cite[p. 229]{Parry-Pollicott-Asterisque}. The proof is the same as in \cite{Sarig-JAMS}, but since
the presentation there has an error, we decided to include the complete details in the appendix.
\end{proof}

The next lemma is used in \cite{Ledrappier-Lima-Sarig}.
Recall from Lemma \ref{Lemma_Lyap_Exp} that there is a set $\Lambda_\chi^*$ of
full $\mu_\Lambda$--measure s.t. every $x\in\Lambda_\chi^*$ has tangent unit vectors
$\vec{v}^s_x,\vec{v}^u_x\in T_x\Lambda$ s.t.
$\lim_{n\to\infty}\frac{1}{n}\log\|df^n_x\vec{v}^s_x\|_{f^n(x)}<-\chi$ and
$\lim_{n\to\infty}\frac{1}{n}\log\|df^n_x\vec{v}^u_x\|_{f^n(x)}>\chi$.
The maps $x\in\Lambda_\chi^*\mapsto \vec{v}^s_x,\vec{v}^u_x$ are not necessarily
H\"older continuous with respect to the Riemannian metric (they may not even be globally defined).
But the symbolic metric is so much stronger than the Riemannian metric that the following  holds.

\begin{lem}\label{Lemma-Holder-property-stable-direction}
The maps $\un{R}\in\Sigma(\wh{\mathfs G})\mapsto \vec{v}^s_{\wh\pi(\un R)},\vec{v}^u_{\wh\pi(\un R)}$
are H\"older continuous with respect to the symbolic metric.
\end{lem}

\noindent
Lemma \ref{Lemma-Holder-property-stable-direction}
is a version of \cite[Prop. 12.6]{Sarig-JAMS} in our setup, and is proved similarly.

\subsection*{Step 5: Symbolic coding for $\vf:M\to M$}\label{section-coding-flow}

Let $\widehat{\pi}:\Sigma(\widehat{\mathfs G})\to \Lambda$ be the symbolic coding for
$f:\Lambda\to\Lambda$ given by Theorem \ref{Thm_Symbolic_Dynamics_For_f}.
Recall that $R:\Lambda\to(0,\infty)$ denotes the roof function of $\Lambda$. By the choice
of $\Lambda$, $R$ is bounded away from zero and infinity, and there is a global constant
$\mathfrak C$ s.t. $\sup_{x\in\Lambda\setminus\mathfrak S}\|dR_x\|<\mathfrak C$,
see Lemma \ref{Lemma_Smooth_Section}. Let
$$
r:\Sigma(\widehat{\mathfs G})\to (0,\infty),\  r:=R\circ\wh{\pi}.
$$
This function is also bounded away from zero and infinity, and since
$\wh{\pi}:\Sigma(\widehat{\mathfs G})\to\Lambda$ is H\"older and Pesin charts are connected
subsets of $\Lambda\setminus\mathfrak S$, $r$ is H\"older continuous.

Let $\sigma_r:\wh{\Sigma}_r\to\wh{\Sigma}_r$ denote the topological Markov flow with roof function
$r$ and base map $\sigma:\Sigma(\wh{\mathfs G})\to \Sigma(\wh{\mathfs G})$
(see page \pageref{TopMarkovFlowDefiPage} for definition).
Recall that the {\em regular part} of $\wh{\Sigma}_r$ is
$
\wh{\Sigma}_r^\#:=\{(\un{x},t): \un{x}\in \Sigma^\#(\widehat{\mathfs G}),\ 0\leq t<r(\un{x})\}
$.
This is a $\sigma_r$--invariant set, which contains all the periodic orbits of $\sigma_r$.
By the Poincar\'e recurrence theorem, $\wh{\Sigma}_r^\#$ has full measure with respect to
every $\sigma_r$--invariant probability measure. Let
$$
\wh{\pi}_r:\wh{\Sigma}_r\to M\,,\ \wh{\pi}_r(\un{x},t):=\vf^t[\widehat{\pi}(\un{x})].
$$
The following claims follow directly from Theorem \ref{Thm_Symbolic_Dynamics_For_f}:
\begin{enumerate}[$(1)$]
\item  $\wh{\pi}_r\circ\sigma^t_r=\vf^t\circ\wh{\pi}_r$ for all $t\in\R$.
\item $\wh{\pi}_r[\wh{\Sigma}^\#_r]$ has full measure with respect to $\mu$.
\item Every $p\in \wh{\pi}_r[\wh{\Sigma}^\#_r]$ has finitely many pre-images in $\wh{\Sigma}_r^\#$.
In case $\mu$ is ergodic, $p\mapsto \#(\wh{\pi}_r^{-1}(p)\cap\wh{\Sigma}^\#_r)$ is $\vf$--invariant,
whence constant almost everywhere.
\item Moreover, there exists $N:\mathfs R\x\mathfs R\to\N$ s.t. if $p=\wh{\pi}_r(\un{x},t)$
where $x_i=R$ for infinitely many $i<0$ and $x_i=S$ for infinitely many $i>0$, then
$\#\{(\un{y},s)\in\wh{\Sigma}^\#_r:\wh{\pi}_r(\un{y},s)=p\}\leq N(R,S)$.
\end{enumerate}
This proves all parts of Theorem \ref{ThmSymbolicDynamics}, except for the H\"older continuity of $\wh{\pi}_r$.

\subsection*{Step 6: H\"older continuity of $\wh{\pi}_r$}

Every topological Markov flow is continuous with respect to a natural metric, introduced by Bowen and Walters.
We will show that $\wh{\pi}_r:\wh{\Sigma}_r\to M$ is H\"older continuous with respect to this metric.
First we recall the definition of the Bowen-Walters metric.
Let $\sigma_r:\Sigma_r\to\Sigma_r$ denote a general topological Markov flow
(cf. page \pageref{TopMarkovFlowDefiPage}).
Suppose first that $r\equiv 1$ (constant suspension). Let $\psi:\Sigma_1\to\Sigma_1$ be the
suspension flow, and make the following definitions \cite{Bowen-Walters-Metric}:
\begin{enumerate}[$\circ$]
\item {\em Horizontal segments:} ordered pairs $[z,w]_h\in \Sigma_1\x\Sigma_1$ where
$z=(\un{x},t)$ and $w=(\un{y},t)$ have the same height $0\leq t<1$.
The {\em length} of a horizontal segment $[z,w]_h$ is defined to be
$\ell([z,w]_h):=(1-t)d(\un{x},\un{y})+td(\sigma(\un{x}),\sigma(\un{y}))$,
where $d$ is the metric on $\Sigma$ given by $d(\un{x},\un{y}):=\exp[-\min\{|n|:x_n\neq y_n\}]$.
\item {\em Vertical segments:} ordered pairs $[z,w]_v\in\Sigma_1\x\Sigma_1$ where $w=\psi^t(z)$
for some $t$. The {\em length} of a vertical segment $[z,w]_v$ is
$\ell([z,w]_v):=\min\{|t|>0:w=\psi^t(z)\}$.
\item {\em Basic paths} from $z$ to $w$:
$\gamma:=(z_0=z\xrightarrow[]{t_0}z_1\xrightarrow[]{t_1}\cdots\xrightarrow[]{t_{n-2}}z_{n-1}\xrightarrow[]{t_{n-1}}z_n=w)$
with $t_i\in\{h,v\}$ s.t. $[z_{i},z_{i+1}]_{t_{i}}$ is a horizontal segment when $t_{i}=h$, and a
vertical segment when $t_{i}=v$. Define $\ell(\gamma):=\sum_{i=0}^{n-1} \ell([z_i,z_{i+1}]_{t_i})$.
\item
{\em Bowen-Walters Metric on $\Sigma_1$:} $d_1(z,w):=\inf\{\ell(\gamma)\}$ where
$\gamma$ ranges over all basic paths from $z$ to $w$.
\end{enumerate}

Next we consider the general case $r\not\equiv 1$. The idea is to use a canonical bijection from
$\Sigma_r$ to $\Sigma_1$, and declare that it is an isometry.

\medskip
\noindent
{\sc Bowen-Walters Metric on $\Sigma_r$ \cite{Bowen-Walters-Metric}:}
$d_r(z,w):=d_1(\vartheta_r(z),\vartheta_r(w))$, where $\vartheta_r:\Sigma_r\to\Sigma_1$
is the map $\vartheta_r(\un{x},t):=(\un{x},t/r(\un{x}))$.
\label{Bowen-Walters-Metric-Page}

\begin{lem}\label{Lemma-BW}
Assume $r$ is bounded away from zero and infinity, and H\"older continuous.
Then $d_r$ is a metric, and there are constants $C_1,C_2,C_3>0$, $0<\kappa<1$ which only
depend on $r$ such that for all $z=(\un{x},t),w=(\un{y},s)$ in $\Sigma_r$:
\begin{enumerate}[$(1)$]
\item $d_r\bigl(z,w\bigr)\leq C_1[d(\un{x},\un{y})^\kappa+|t-s|]$.
\item Conversely:
\begin{enumerate}[{\rm (a)}]
\item If $\left|\frac{t}{r(\un{x})}-\frac{s}{r(\un{y})}\right|\leq\frac{1}{2}$ then
$d(\un{x},\un{y})\leq C_2 d_r(z,w)$ and $|s-t|\leq C_2 d_r(z,w)^\kappa$.
\item If $\frac{t}{r(\un{x})}-\frac{s}{r(\un{y})}>\frac{1}{2}$ then
$d(\sigma(\un{x}),\un{y})\leq C_2 d_r(z,w)$ and $|t-r(x)|,s\leq C_2 d_r(z,w)$.
\end{enumerate}
\item For all $|\tau|<1$, $d_r(\sigma_r^{\tau}(z),\sigma_r^{\tau}(w))\leq C_3 d_r(z,w)^\kappa$.
\end{enumerate}
\end{lem}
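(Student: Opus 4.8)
The plan is to transfer everything to the constant-roof case via the bijection $\vartheta_r$, where the geometry is explicit, and then pay the (H\"older) price of the change of variables. First I would record the basic facts about $d_1$ on $\Sigma_1$: a horizontal segment at height $t$ has length between $e^{-1}d(\un x,\un y)$ and $2d(\un x,\un y)$ (since $\min\{|n|:\sigma(\un x)_n\ne\sigma(\un y)_n\}\geq \min\{|n|:x_n\ne y_n\}-1$), and a vertical segment of length $|t|\le \tfrac12$ realizes the flow displacement honestly. From these, the na\"ive two-leg basic path ``go vertical to a common height, then go horizontal'' gives, for $z=(\un x,t),w=(\un y,t')$ in $\Sigma_1$,
\begin{equation*}
d_1(z,w)\ \le\ C\bigl[d(\un x,\un y)+|t-t'|\bigr],
\end{equation*}
while the lower bound $d_1(z,w)\gtrsim d(\un x,\un y)$ when the heights are within $\tfrac12$ (modulo the wrap $t\mapsto t+1\sim\sigma$) is obtained by projecting any basic path to the base and to the height coordinate and using that horizontal legs cannot change the base class faster than their length, vertical legs cannot change it at all, and the total height budget controls how far a path can ``wrap.'' This is essentially the content of \cite{Bowen-Walters-Metric}; I would cite it and only reprove what is needed for explicit constants.

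Then I would push these estimates through $\vartheta_r(\un x,t)=(\un x,t/r(\un x))$. For part (1): $d_r(z,w)=d_1(\vartheta_r z,\vartheta_r w)\le C[d(\un x,\un y)+|t/r(\un x)-s/r(\un y)|]$, and since $r$ is H\"older and bounded below, $|t/r(\un x)-s/r(\un y)|\le |t-s|/\inf r + \esssup|t|\cdot \Hol(1/r)\,d(\un x,\un y)^\alpha\le C_1'(|t-s|+d(\un x,\un y)^\alpha)$; absorbing $d(\un x,\un y)\le d(\un x,\un y)^\kappa$ for $\kappa:=\alpha<1$ (after noting $d\le 1$) gives (1). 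For part (2), run the corresponding $d_1$ lower bounds at $\vartheta_r z,\vartheta_r w$: the hypothesis $|t/r(\un x)-s/r(\un y)|\le\tfrac12$ is exactly ``heights within $\tfrac12$'' so $d(\un x,\un y)\le C\,d_r(z,w)$; to recover $|s-t|$ one uses this together with $|t/r(\un x)-s/r(\un y)|\le C\,d_r(z,w)$ and the H\"older control of $r$, getting $|s-t|\le C(d_r(z,w)+d(\un x,\un y)^\alpha)\le C_2 d_r(z,w)^\kappa$ since $d(\un x,\un y)\le C d_r(z,w)$ and $d_r(z,w)\le d_r(z,w)^\kappa$ (for $d_r$ bounded, which holds as the flow space is bounded in this metric). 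The wrap case $t/r(\un x)-s/r(\un y)>\tfrac12$ is symmetric, replacing $\un x$ by $\sigma(\un x)$ and reading off that $t$ must be near $r(\un x)$ and $s$ near $0$.

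For part (3), $\sigma_r^\tau$ with $|\tau|<1$ moves $(\un x,t)$ either within the same fiber (if $t+\tau$ stays in $[0,r(\un x))$), a pure vertical displacement of size $|\tau|$, costing $\le C|\tau|^\kappa$ after the $\vartheta_r$ rescaling; or across at most one roof, in which case $\sigma_r^\tau(\un x,t)=(\sigma\un x, t+\tau-r(\un x))$ and one splits the motion as: vertical up to the roof, apply $\sigma$, vertical down — estimating each leg with parts (1)–(2) and the Lipschitz/H\"older continuity of $\sigma$ on $(\Sigma,d)$. I expect the main obstacle to be bookkeeping in part (2)(b) and part (3): handling the roof-crossing case uniformly requires care that $\tau$ small forces \emph{at most one} crossing (true since $r$ is bounded below by a positive constant and $|\tau|<1$ can be assumed $<\inf r$ after relabeling, or one simply iterates finitely often), and then tracking how the H\"older modulus of $r$ and of $\sigma$ compose — the exponent $\kappa$ that comes out is the product/minimum of $\alpha_r$ (the H\"older exponent of $r$) and the metric exponent on $\Sigma$, and one must be consistent about which $\kappa$ is used in all three items. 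None of the individual estimates is deep; the lemma is a careful but routine translation of the Bowen–Walters construction, so I would present the $d_1$ facts compactly (citing \cite{Bowen-Walters-Metric}) and spend the detailed writing on the $\vartheta_r$ change of variables and the roof-crossing case of (3).
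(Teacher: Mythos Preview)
Your plan is essentially the paper's: reduce to $d_1$ via $\vartheta_r$, use a two-leg basic path for (1), extract base and height lower bounds from arbitrary basic paths for (2), and case-split on roof crossings for (3), iterating finitely many short-time steps to cover $|\tau|<1$. Two refinements in the paper are worth adopting. For (2), instead of tracking wrapping along general basic paths, the paper first applies the constant-roof flow $\psi^\tau$ with $\tau=\tfrac12-t/r(\un x)$ (which is bi-Lipschitz on $d_1$ with constant $1+2e^2|\tau|$) to move $\vartheta_r(z)$ to height $\tfrac12$; then any basic path of length $<\tfrac14$ stays inside $\Sigma\times[\tfrac14,\tfrac34]$ and cannot cross $\Sigma\times\{0\}$, so the horizontal legs sum to at least $e^{-1}d(\un x,\un y)$ and the vertical legs to at least the height gap. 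For (3), the case analysis is two-dimensional: whether $|t/r(\un x)-s/r(\un y)|\le\tfrac12$ (so that (2)(a) or (2)(b) feeds the argument) crossed with whether the number of roof crossings agree ($m=n$) or not ($m\ne n$), giving four cases; the negative-$\tau$ case is dispatched by the time-reversal conjugacy $\Theta(\un x,t)=(\widehat{\sigma\un x},r(\un x)-t)$ rather than repeated.

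One sentence in your sketch needs correction: ``a pure vertical displacement of size $|\tau|$, costing $\le C|\tau|^\kappa$'' is not the relevant estimate. The target is $d_r(\sigma_r^\tau z,\sigma_r^\tau w)\le C\,d_r(z,w)^\kappa$, and even when neither point crosses a roof the normalized heights move by $\tau/r(\un x)$ and $\tau/r(\un y)$, so the new height gap $|\epsilon-\eta|$ picks up a term $|\tau|\cdot|r(\un x)^{-1}-r(\un y)^{-1}|$ controlled by $d(\un x,\un y)^\alpha$ and hence, via (2), by $d_r(z,w)^\alpha$. The work in every case is exactly this: use (2) on $(z,w)$ to bound the old coordinate gaps, compute the new coordinates, bound $|\epsilon-\eta|$ and the new base distance, then reassemble with (1).
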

\noindent
See the appendix for a proof.

\begin{lem}
The map $\wh{\pi}_r:\wh{\Sigma}_r\to M$ is H\"older continuous with respect to the Bowen-Walters metric.
\end{lem}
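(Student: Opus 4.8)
The plan is to use Lemma~\ref{Lemma-BW} to convert closeness in the Bowen--Walters metric $d_r$ into closeness of the underlying symbolic sequences and of the heights, and then to push this forward through $\wh{\pi}$ (H\"older by Theorem~\ref{Thm_Symbolic_Dynamics_For_f}(2)) and through the flow $\vf$. Two elementary facts about $\vf$ will be used repeatedly: $\vf$ moves points with speed at most $S_0:=1+\max\|X_p\|$, so $\dist_M(\vf^a(q),\vf^b(q))\le S_0|a-b|$; and $p\mapsto\vf^t(p)$ is Lipschitz with a constant $L$ uniform for $t\in[-1,1]$, since $(t,p)\mapsto\vf^t(p)$ is $C^{1+\beta}$ on $[-1,1]\times M$. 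Recall $\wh{\pi}_r(\un{x},t)=\vf^t(\wh{\pi}(\un{x}))$, $0\le t<r(\un{x})$, and $r=R_\Lambda\circ\wh{\pi}\le\sup R_\Lambda<\mathfrak r<1$, so all flow times occurring below lie in $[0,1)$. Fix any $\delta_0\in(0,1)$; since $\dist_M\le\diam M$ while $d_r(z,w)\ge\delta_0$ off the diagonal neighborhood, it suffices to produce $C,\alpha'>0$ with $\dist_M(\wh{\pi}_r(z),\wh{\pi}_r(w))\le C\,d_r(z,w)^{\alpha'}$ whenever $d_r(z,w)<\delta_0$.

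Write $z=(\un{x},t)$, $w=(\un{y},s)$, $z\neq w$. Exactly one of $|\tfrac{t}{r(\un{x})}-\tfrac{s}{r(\un{y})}|\le\tfrac12$, $\tfrac{t}{r(\un{x})}-\tfrac{s}{r(\un{y})}>\tfrac12$, $\tfrac{s}{r(\un{y})}-\tfrac{t}{r(\un{x})}>\tfrac12$ holds, and the last two reduce to the middle one after interchanging $z$ and $w$, so I treat the first two. In the first, Lemma~\ref{Lemma-BW}(2a) gives $d(\un{x},\un{y})\le C_2 d_r(z,w)$ and $|t-s|\le C_2 d_r(z,w)^\kappa$. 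By the triangle inequality through $\vf^t(\wh{\pi}(\un{y}))$ and the two facts above,
\[
\dist_M\bigl(\wh{\pi}_r(z),\wh{\pi}_r(w)\bigr)\le L\,\dist_M\bigl(\wh{\pi}(\un{x}),\wh{\pi}(\un{y})\bigr)+S_0|t-s|\le L\,\Hol_\alpha(\wh{\pi})\bigl(C_2 d_r(z,w)\bigr)^\alpha+S_0 C_2 d_r(z,w)^\kappa,
\]
where $\alpha$ is the H\"older exponent of $\wh{\pi}$; with $\alpha':=\min\{\alpha,\kappa\}$ and $d_r(z,w)<1$ this is $\le C\,d_r(z,w)^{\alpha'}$.

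In the second alternative, Lemma~\ref{Lemma-BW}(2b) gives $d(\sigma(\un{x}),\un{y})\le C_2 d_r(z,w)$, $|t-r(\un{x})|\le C_2 d_r(z,w)$, and $s\le C_2 d_r(z,w)$. The point now is the identity
\[
\vf^{r(\un{x})}\bigl(\wh{\pi}(\un{x})\bigr)=\vf^{R_\Lambda(\wh{\pi}(\un{x}))}\bigl(\wh{\pi}(\un{x})\bigr)=f_\Lambda\bigl(\wh{\pi}(\un{x})\bigr)=\wh{\pi}\bigl(\sigma(\un{x})\bigr),
\]
the last equality being $f\circ\wh{\pi}=\wh{\pi}\circ\sigma$ (Theorem~\ref{Thm_Symbolic_Dynamics_For_f}(1)). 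Inserting $\vf^{r(\un{x})}(\wh{\pi}(\un{x}))=\wh{\pi}(\sigma(\un{x}))$ as intermediate points and using the two facts about $\vf$,
\[
\dist_M\bigl(\wh{\pi}_r(z),\wh{\pi}_r(w)\bigr)\le S_0\,|t-r(\un{x})|+\Hol_\alpha(\wh{\pi})\,d\bigl(\sigma(\un{x}),\un{y}\bigr)^\alpha+S_0\, s,
\]
and each term is at most a constant times $d_r(z,w)^\alpha$ (using $d_r(z,w)<1$), so again $\dist_M(\wh{\pi}_r(z),\wh{\pi}_r(w))\le C\,d_r(z,w)^{\alpha'}$, completing the estimate.

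The only delicate point, I expect, is the second alternative: one has to correctly interpret the ``wrap--around'' encoded by Lemma~\ref{Lemma-BW}(2b) --- namely that $z$ lies just below the roof over $\un{x}$ while $w$ lies just above the base over $\un{y}\approx\sigma(\un{x})$ --- and recognize that $\vf^{r(\un{x})}\circ\wh{\pi}=\wh{\pi}\circ\sigma$ is precisely the gluing that makes the three--term triangle estimate collapse to a bound by $d_r(z,w)^\alpha$. Everything else is a routine combination of Lemma~\ref{Lemma-BW}, the H\"older continuity of $\wh{\pi}$, and $\|X\|\le S_0$.
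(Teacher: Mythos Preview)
Your proof is correct and follows essentially the same approach as the paper's: both split into the two cases of Lemma~\ref{Lemma-BW}(2), use the triangle inequality with the H\"older continuity of $\wh{\pi}$ and the uniform Lipschitz/speed bounds on $\vf^t$ for bounded $t$, and in the wrap--around case invoke the identity $\vf^{r(\un{x})}(\wh{\pi}(\un{x}))=\wh{\pi}(\sigma(\un{x}))$. Your explicit mention of the symmetric third case and of the bound $r<1$ is a small cosmetic addition.
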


\begin{proof}
Fix $(\un{x},t),(\un{y},s)\in\wh{\Sigma}_r$. If $\left|\frac{t}{r(\un{x})}-\frac{s}{r(\un{y})}\right|\leq\frac{1}{2}$ then
\begin{align*}
&\dist_M(\wh{\pi}_r(\un{x},t),\wh{\pi}_r(\un{y},s))=\dist_M(\vf^t(\wh{\pi}(\un{x})),\vf^s(\wh{\pi}(\un{y})))\\
&\leq \dist_M(\vf^t(\wh{\pi}(\un{x})),\vf^s(\wh{\pi}(\un{x})))+\dist_M(\vf^s(\wh{\pi}(\un{x})),\vf^s(\wh{\pi}(\un{y})))\\
&\leq \max_{p\in M}\|X_p\|\cdot |t-s|+\textrm{Lip}(\vf^s)\Hol(\wh{\pi})d(\un{x},\un{y})^\delta
\end{align*}
where $X_p$ is the vector field of $\vf$, and $\delta$ is the H\"older exponent
of $\wh{\pi}:\Sigma(\wh{\mathfs G})\to \Lambda$.
The first summand is bounded by $\const d_r(z,w)^{\kappa}$, by Lemma \ref{Lemma-BW}(2)(a).
The second summand is bounded by $\const d_r(z,w)^{\delta}$, because $\vf$ is a flow of a
Lipschitz (even $C^{1+\beta}$) vector field, therefore there are global constants $a,b$ s.t.
$\Lip(\vf^s)\leq b e^{a|s|}$ \cite[Lemma 4.1.8]{Abraham-Marsden-Ratiu} and so
$\Lip(\vf^s)\leq b^{a\sup R}=O(1)$. It follows that
$\dist_M(\wh{\pi}_r(\un{x},t),\wh{\pi}_r(\un{y},s))\leq \const d_r((\un{x},t),(\un{y},s))^{\min\{\kappa,\delta\}}$.

Now assume that $\frac{t}{r(\un{x})}-\frac{s}{r(\un{y})}>\frac{1}{2}$. Since
$\vf^{r(\un{x})}[\wh{\pi}(\un{x})]=\wh{\pi}[\sigma(\un{x})]$, we have
\begin{align*}
&\dist_M(\wh{\pi}_r(\un{x},t),\wh{\pi}_r(\un{y},s))=\dist_M(\vf^t[\wh{\pi}(\un{x})],\vf^s[\wh{\pi}(\un{y})])\\
&\leq \dist_M(\vf^t[\wh{\pi}(\un{x})],\vf^{r(\un{x})}[\wh{\pi}(\un{x})])+
\dist_M(\wh{\pi}[\sigma(\un{x})],\wh{\pi}[\un{y}])+\dist_M(\wh{\pi}[\un{y}],\vf^s[\wh{\pi}(\un{y})])\\
&\leq \max_{p\in M}\|X_p\|\cdot(|t-r(\un{x})|+|s|)+\Hol(\wh{\pi}) d(\sigma(\un{x}),\un{y})^\delta\leq \const d_r((\un{x},t),(\un{y},s))^{\delta},
\end{align*}
by Lemma \ref{Lemma-BW}(2)(b).

In both cases we find that
$\dist_M(\wh{\pi}_r(\un{x},t),\wh{\pi}_r(\un{y},s))\leq \const d_r((\un{x},t),(\un{y},s))^\gamma$,
where $\gamma:=\min\{\kappa,\delta\}$.
\end{proof}

\part{Applications}

\section{Measures of maximal entropy}

We use the symbolic coding of Theorem \ref{ThmSymbolicDynamics} to show that a geodesic flow on
a closed smooth surface with positive topological entropy can have at most countably many ergodic
measures of maximal entropy. This application requires dealing with non-ergodic measures.

\begin{lem}\label{Lemma_NonAtomic_Ergodic_Decomp}
Let $\vf$ be a continuous flow on a compact metric space $X$. If $\vf$ has uncountably many ergodic
measures of maximal entropy, then $\vf$ has at least one measure of maximal entropy with
non-atomic ergodic decomposition.
\end{lem}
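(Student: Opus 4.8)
The plan is to recognize the measures of maximal entropy as the barycenters of probability measures living on the ergodic ones, and then to feed in a descriptive-set-theory fact. Let $h$ denote the common entropy value of the ergodic measures of maximal entropy, let $\mathcal M$ be the (weak-$*$ compact, convex) set of $\vf$-invariant Borel probability measures on $X$, and let $\mathcal E\subset\mathcal M$ be the subset of ergodic members. I will use the following standard facts about continuous flows on compact metric spaces: a measure is ergodic iff it is an extreme point of $\mathcal M$; the extreme points of the metrizable compact convex set $\mathcal M$ form a $G_\delta$; the entropy map $\mu\mapsto h_\mu(\vf)$ is affine and Borel measurable on $\mathcal M$; and every $\mu\in\mathcal M$ has a unique ergodic decomposition $\mu=\int e\,d\tau_\mu(e)$ with $\tau_\mu$ a Borel probability measure on $\mathcal E$, satisfying $h_\mu(\vf)=\int h_e(\vf)\,d\tau_\mu(e)$.

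First I would isolate the set $E:=\{e\in\mathcal E:h_e(\vf)=h\}$ of ergodic measures of maximal entropy. By the facts above, $\mathcal E$ is a Borel subset of the Polish space $\mathcal P(X)$ and $\{\mu:h_\mu(\vf)=h\}$ is Borel, so $E$ is a Borel (hence analytic) subset of a Polish space. The hypothesis is that $E$ is uncountable; by the perfect set theorem $E$ then contains a homeomorphic copy of the Cantor set, and pushing a non-atomic Borel probability measure on the Cantor set forward along such an embedding yields a \emph{non-atomic} Borel probability measure $\nu$ on $E$.

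Next I would form its barycenter: the Borel probability measure $\mu$ on $X$ defined by $\mu(A):=\int_E e(A)\,d\nu(e)$ for Borel $A\subseteq X$. It is $\vf$-invariant because each $e\in E$ is. Since $\nu$ is a probability measure carried by ergodic invariant measures, uniqueness of the ergodic decomposition forces $\tau_\mu=\nu$, and then $h_\mu(\vf)=\int_E h_e(\vf)\,d\nu(e)=h$, so $\mu$ is a measure of maximal entropy. Its ergodic decomposition $\tau_\mu=\nu$ is non-atomic, which is precisely the assertion of the lemma.

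The substance of the argument is entirely soft: the dynamical content is confined to the affinity, Borel measurability, and ergodic-decomposition identity for entropy, plus the basic correspondence between ergodicity and extreme points. The one step deserving care is the descriptive-set-theoretic passage from ``$E$ uncountable'' to ``$E$ carries a non-atomic measure'', which is why it matters that $E$ is a genuinely Borel (or at least analytic) subset of a Polish space rather than an arbitrary subset; once that is in place, the ergodic-decomposition correspondence transports the non-atomic measure $\nu$ to the desired measure of maximal entropy.
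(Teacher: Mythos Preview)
Your proof is correct and follows essentially the same route as the paper: identify the set of ergodic measures of maximal entropy as an uncountable Borel subset of a Polish space, invoke the perfect set theorem to put a non-atomic probability $\nu$ on it, form the barycenter, and use affinity of entropy to conclude. The only difference is cosmetic: the paper spells out from scratch why $\mu\mapsto\mu(E)$, the set of ergodic measures, and the entropy map are Borel (its Claims 1--3), whereas you cite these as standard facts together with the $G_\delta$ structure of extreme points and uniqueness of the ergodic decomposition.
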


\begin{proof}
%
%
Let $M_\vf(X)$ denote the space of $\vf$--invariant probability measures, together with the weak star topology.
This is a compact metrizable space \cite[Thm 6.4]{Walters-Book}.
The following claims are standard, but we could not find them in the literature.

\medskip
\noindent
{\sc Claim 1.\/} {\em Suppose $E\subset X$ is Borel measurable, then
$\mu\mapsto \mu(E)$ is Borel measurable.}

\medskip
\noindent
{\em Proof.\/} Let $\mathfs M:=\{E\subset X:E\textrm{ is Borel, and }\mu\mapsto \mu(E)\textrm{ is Borel measurable}\}$.
Let $\mathfs A$ denote the collection of Borel sets $E$ for which there are $f_n\in C(X)$ s.t.
$0\leq f_n\leq 1$ and $f_n(x)\xrightarrow[n\to\infty]{} 1_E(x)$ everywhere.
\begin{enumerate}[$\circ$]
\item $\mathfs A$ is an algebra, because if $0\leq f_n,g_n\leq 1$ and $f_n\to 1_A, g_n\to 1_B$,
then $f_n g_n\to 1_{A\cap B}$, $(1-f_n)\to 1_{X\setminus A}$, and $(f_n+g_n-f_n g_n)\wedge 1\to 1_{A\cup B}$.
\item $\mathfs A$ generates the Borel $\sigma$--algebra $\mathfs B(X)$, because it contains
every open ball $B_r(x_0)$: take $f_n(x):=\vf_n[d(x,x_0)]$ where $\vf_n\in C(\R)$ and
$1_{[0,r-\frac{1}{n}]}\leq \vf_n\leq 1_{[0,r)}$.
\item $\mathfs M\supset\mathfs A$: if $A\in\mathfs A$, then by the dominated convergence theorem
$\mu(A)=\lim\limits_{n\to\infty} \int f_n d\mu$ for the $f_n\in C(X)$ s.t. $0\leq f_n\leq 1$ and $f_n\to 1_A$.
Since $\mu\mapsto \int f_n d\mu$ is continuous, $\mu\mapsto \mu(A)$ is Borel measurable.
\item $\mathfs M$ is closed under increasing unions and decreasing intersections.
\end{enumerate}
By the monotone class theorem \cite[Prop. 3.1.14]{Srivastva-Borel-Sets-Book}, $\mathfs M$ contains
the $\sigma$--algebra generated by $\mathfs A$, whence $\mathfs M=\mathfs B(X)$. The claim follows.

\medskip
\noindent
{\sc Claim 2.\/}
{\em  $E_{\vf}(X):=\{\mu\in M_{\vf}(X): \mu\textrm{ is ergodic}\}$ is a Borel subset of $M_{\vf}(X)$.}

\medskip
\noindent
{\em Proof.\/}
Fix a countable dense collection $\{f_n\}_{n\geq 1}\subset C(X)$, $0\leq f_n\leq 1$, then
$\mu$ is ergodic iff
$\limsup_{k\to\infty}\int|\frac{1}{k}\int_0^k f_n\circ\vf^t dt -\int f_n d\mu|d\mu=0$ for every $n$.
This is a countable collection of Borel conditions.

\medskip
\noindent
{\sc Claim 3.\/} {\em The entropy map $\mu\mapsto h_{\mu}(\vf)$ is Borel measurable.}

\medskip
\noindent
{\em Proof.\/} Let $T:=\vf^1$ (the time-one map of the flow $\vf$), then $h_{\mu}(\vf)=h_{\mu}(T)$.
Thus $h_\mu(\vf)=h_\mu(T)=\lim\limits_{n\to\infty} h_{\mu}(T,\alpha_n)$ for any sequence of finite
Borel partitions $\alpha_n$ s.t. $\max\{\diam(A): A\in\alpha_n\}\xrightarrow[n\to\infty]{}0$ \cite[Thm 8.3]{Walters-Book}.
The claim follows, since it easily follows from claim 1 that $\mu\mapsto h_{\mu}(T,\alpha_n)$ is Borel measurable.

\medskip
Let $E_{\max}(X)$ denote the set of ergodic measures with maximal entropy. By claims 2 and 3,
this is a Borel subset of $M_\vf(X)$. By the assumptions of the lemma, $E_{\max}(X)$ is uncountable.
Every uncountable Borel subset of a compact metric space carries a non-atomic Borel probability measure,
because it contains a subset homeomorphic to the Cantor set \cite[Thm 3.2.7]{Srivastva-Borel-Sets-Book}.
Let $\nu$ be a non-atomic Borel probability measure s.t. $\nu[E_{\max}(X)]=1$, and let
$m:=\int_{E_{\max}(X)}\mu\,  d\nu(\mu)$. This is a $\vf$--invariant measure with non-atomic ergodic decomposition.
Since the entropy map is affine \cite[Thm 8.4]{Walters-Book}, $m$ has maximal entropy.
\end{proof}

\begin{thm}\label{Thm-countable-max-meas}
Suppose $\vf$ is a $C^{1+\beta}$ flow with positive speed and positive topological entropy on a $C^\infty$
closed three dimensional manifold, then $\vf$ has at most countably many ergodic measures of maximal entropy.
\end{thm}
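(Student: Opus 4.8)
I would argue by contradiction, reducing the statement to the thermodynamic formalism of countable Markov shifts. Suppose $\vf$ has uncountably many ergodic measures of maximal entropy. By Lemma \ref{Lemma_NonAtomic_Ergodic_Decomp} there is a measure of maximal entropy $\mu$ whose ergodic decomposition is non-atomic. Write $h:=h_{\mathrm{top}}(\vf)>0$. Since the entropy map is affine, $\mu$--a.e.\ ergodic component has entropy $h>0$, hence is $\chi_0$--hyperbolic for any fixed $0<\chi_0<h$ (Ruelle's inequality, as recalled before Theorem \ref{ThmSymbolicDynamics}); being an average of $\chi_0$--hyperbolic measures, $\mu$ itself is $\chi_0$--hyperbolic. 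Apply Theorem \ref{ThmSymbolicDynamics}: we obtain a topological Markov flow $\sigma_r:\Sigma_r\to\Sigma_r$ with base $\sigma:\Sigma(\widehat{\mathfs G})\to\Sigma(\widehat{\mathfs G})$, roof $r:\Sigma(\widehat{\mathfs G})\to\R^+$ H\"older and bounded away from $0$ and $\infty$, and a factor map $\pi_r:\Sigma_r\to M$ satisfying (1)--(5), with $\pi_r$ finite-to-one on $\Sigma_r^\#$ and $\mu(\pi_r[\Sigma_r^\#])=1$.

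\textbf{Lifting.} The next step is to lift $\mu$ to a $\sigma_r$--invariant probability $\widehat\mu$ on $\Sigma_r$ with $(\pi_r)_*\widehat\mu=\mu$. This is a standard consequence of the fact that $\pi_r$ is finite-to-one on the Borel set $\Sigma_r^\#$, which has full measure for every invariant measure and satisfies $\mu(\pi_r[\Sigma_r^\#])=1$: a Borel section of $\pi_r$ (Lusin--Novikov) pushes $\mu$ to a probability on $\Sigma_r$, and an accumulation point of its flow-averages is a suitable $\widehat\mu$ (using that $r$ is bounded to control escape of mass). Because $\pi_r$ is a factor map, $h_{\widehat\mu}(\sigma_r)\geq h_\mu(\vf)=h$; because $\pi_r$ is countable-to-one on a full-measure set, $h_{\mathrm{top}}(\sigma_r)=\sup_\nu h_\nu(\sigma_r)=\sup_\nu h_{(\pi_r)_*\nu}(\vf)\le h<\infty$, so in fact $h_{\widehat\mu}(\sigma_r)=h=h_{\mathrm{top}}(\sigma_r)$, i.e.\ $\widehat\mu$ is a measure of maximal entropy for $\sigma_r$. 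Pushing the ergodic decomposition of $\widehat\mu$ forward by $\pi_r$ gives the ergodic decomposition of $\mu$ (factors of ergodic measures are ergodic), so the non-atomicity of the latter forces the ergodic decomposition of $\widehat\mu$ to be non-atomic; moreover, affinity of entropy shows that $\widehat\mu$--a.e.\ ergodic component is itself a measure of maximal entropy for $\sigma_r$.

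\textbf{Reduction to a countable Markov shift.} It then suffices to show that $\sigma_r$ has at most countably many ergodic measures of maximal entropy, since a countable set cannot support the non-atomic ergodic-decomposition measure of $\widehat\mu$. Since $r$ is bounded, every $\sigma$--invariant probability $\nu$ has $\int r\,d\nu<\infty$, and Abramov's formula $h_{\nu_r}(\sigma_r)=h_\nu(\sigma)/\int r\,d\nu$ together with the variational principle for countable Markov shifts gives $P_\sigma(-hr)=0$ and identifies the ergodic measures of maximal entropy of $\sigma_r$ with the ergodic equilibrium measures of the locally H\"older potential $-hr$ on $\Sigma(\widehat{\mathfs G})$. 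An ergodic invariant measure on $\Sigma(\widehat{\mathfs G})$ is carried by a single topologically transitive subshift, namely the one on the communicating class of the (countably many) vertices it charges with positive weight, as follows from Poincar\'e recurrence plus ergodicity. By Sarig's uniqueness theorem for equilibrium measures of locally H\"older potentials with finite Gurevich pressure on transitive countable Markov shifts, each transitive component carries at most one ergodic equilibrium measure of $-hr$; hence there are at most countably many ergodic equilibrium measures of $-hr$, hence at most countably many ergodic measures of maximal entropy of $\sigma_r$ --- the desired contradiction. I expect the main technical point to be the lifting step (producing $\widehat\mu$ and verifying it has the same entropy and a non-atomic ergodic decomposition), since this is where the finite-to-one structure of $\pi_r$ from Theorem \ref{ThmSymbolicDynamics}(5) and the boundedness of $r$ must be used carefully; the subsequent reduction is a routine application of Abramov's formula and the countable-state thermodynamic formalism.
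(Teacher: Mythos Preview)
Your strategy is the paper's: contradiction via Lemma~\ref{Lemma_NonAtomic_Ergodic_Decomp}, $\chi_0$--hyperbolicity from Ruelle's inequality, the coding of Theorem~\ref{ThmSymbolicDynamics}, a lift of $\mu$ to $\Sigma_r$, and then Abramov's formula plus Buzzi--Sarig uniqueness to bound the ergodic measures of maximal entropy of $\sigma_r$. The reduction to equilibrium states of $-hr$ on the transitive components of $\Sigma(\widehat{\mathfs G})$ is exactly what the paper does.

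The one genuine gap is the lifting step. Your Krylov--Bogoliubov argument (push $\mu$ via a Borel section, average under the flow, take a weak-$*$ accumulation point) needs tightness of the averaged family, and this does \emph{not} follow from boundedness of $r$: the non-compactness of $\Sigma_r$ is in the base $\Sigma(\widehat{\mathfs G})$, a shift on a countably infinite alphabet, not in the fibre $[0,r(\un x))$. Nothing you have written prevents mass from escaping to infinity in the symbolic coordinate, so the limit could be a strict sub-probability and need not project to $\mu$.

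The paper avoids this entirely by writing down an explicit lift. Since $\pi_r$ is finite-to-one on $\Sigma_r^\#$ and $\mu(\pi_r[\Sigma_r^\#])=1$, one sets
\[
\widehat\mu(E)\ :=\ \int_{\pi_r(\Sigma_r^\#)}\frac{1}{|\pi_r^{-1}(p)\cap\Sigma_r^\#|}\sum_{\pi_r(\un x,t)=p}1_E(\un x,t)\,d\mu(p)
\]
(the integrand is Borel by a Lusin--Novikov decomposition of the graph of $\pi_r\!\upharpoonright_{\Sigma_r^\#}$). This is manifestly a probability, it is $\sigma_r$--invariant because $p\mapsto|\pi_r^{-1}(p)\cap\Sigma_r^\#|$ is $\vf$--invariant and $\pi_r$ intertwines the flows, and it projects to $\mu$ by construction; no compactness or averaging is needed. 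Entropy equality is then Abramov--Rokhlin for finite-to-one factors, and the rest of your argument goes through as you wrote it.
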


\begin{proof}
Let $h:=$ topological entropy of $\vf$, and assume by way of contradiction that $\vf$ has uncountably
many ergodic measures of maximal entropy. By Lemma \ref{Lemma_NonAtomic_Ergodic_Decomp},
$\vf$ has a measure of maximal entropy $\mu$ with a non-atomic ergodic decomposition.

By the variational principle \cite[Thm 8.3]{Walters-Book}, $h_\mu(\vf)=h$.
By the affinity of the entropy map \cite[Thm 8.4]{Walters-Book}, almost every ergodic component
$\mu_x$ of $\mu$ has entropy $h$. Fix some $0<\chi_0<h$.
By the Ruelle entropy inequality \cite{Ruelle-Entropy-Inequality}, a.e. ergodic component
$\mu_x$ is $\chi_0$--hyperbolic. Consequently, $\mu$ is $\chi_0$--hyperbolic.

This places us in the setup considered in part 2, and allows us to apply Theorem
\ref{ThmSymbolicDynamics} to $\mu$. We obtain a coding ${\pi}_r: {\Sigma}_r\to M$ s.t.
$\mu[ {\pi}( {\Sigma}^\#_r)]=1$ and ${\pi}_r: {\Sigma}^\#_r\to M$ is finite-to-one
(though not necessarily bounded-to-one).

\medskip
\noindent
{\sc Lifting Procedure:\/} {\em Define a measure $ \wh{\mu}$ on $ {\Sigma}_r$ by setting for
$E\subset  {\Sigma}_r$ Borel
\begin{equation}\label{mu-hat}
\wh{\mu}(E):=\int_{ {\pi}_r( {\Sigma}^\#_r)} \biggl(\frac{1}{| {\pi}_r^{-1}(p)\cap  {\Sigma}^\#_r|}\sum_{ {\pi}_r(\un{x},t)=p} 1_E(\un{x},t)\biggr)d\mu(p),
\end{equation}
then $\wh{\mu}$ is  a $\sigma_r$--invariant measure, $\wh{\mu}\circ\pi_r^{-1}=\mu$,
and $h_{\wh{\mu}}(\sigma_r)=h_\mu(\vf)$.}

\medskip
\noindent
{\em Proof.\/} We start by clearing away all the measurability concerns. Let $X:={\Sigma}_r$ and
$Y:=M\uplus X$ (disjoint union). Define $f:{\Sigma}_r\to Y$ by $f\upharpoonright_{{\Sigma}_r^\#}=\pi_r$
and $f\upharpoonright_{{\Sigma}_r\setminus{\Sigma}_r^\#}={\rm Id}$, then $f:X\to Y$ is a
countable-to-one Borel map between polish spaces. Such maps send Borel sets to Borel sets
\cite[Thm 4.12.4]{Srivastva-Borel-Sets-Book}, so $\pi_r({\Sigma}^\#_r)=f({\Sigma}^\#_r)$ is Borel.

Next we show that the integrand in (\ref{mu-hat}) is Borel. Let $B:=\{(x,f(x)):f(x)\in M\}$.
This is a Borel subset of $X\x Y$, because the graph of a Borel function is
Borel \cite[Thm 4.5.2]{Srivastva-Borel-Sets-Book}. For every $y\in Y$, $B_y:=\{x\in X:(x,y)\in B\}$
is countable, because either $y\in M$ and $B_y:=\pi_r^{-1}(y)\cap{\Sigma}^\#_r$, or $y\not\in M$
and then $B_y=\emptyset$. By Lusin's theorem \cite[Thm 5.8.11]{Srivastva-Borel-Sets-Book},
there are countably many partially defined Borel functions $\vf_n:M_n\to X$ s.t.
$B=\bigcup_{n=1}^\infty \{(\vf_n(y),y):y\in M_n\}$. Write $B=\biguplus_{n=1}^\infty \{(\vf_n(y),y):y\in M_n'\}$,
$M_n':=\{y\in M_n: k<n, y\in M_k\Rightarrow \vf_n(y)\neq \vf_k(y)\}$. Then for every $y\in M$,
$$
\pi_r^{-1}(y)=\{\vf_n(y):n\geq 1, y\in M_n'\},\textrm{ and }m\neq n\Rightarrow \vf_m(y)\neq\vf_n(y).
$$
Thus, the integrand in (\ref{mu-hat}) equals
$\sum_{n=1}^\infty 1_{M_n'}(p)1_E(\vf_n(p))\big/\sum_{n=1}^\infty 1_{M_n'}(p)
$, a Borel measurable function.

Now that we know that (\ref{mu-hat}) makes sense it is a trivial matter to see that it defines a measure $\wh{\mu}$.
This measure is $\sigma_r$--invariant because of the $\vf$--invariance of $\mu$ and the commutation
relation $\pi_r\circ\sigma_r=\vf\circ\pi_r$. It has the same entropy as $\mu$, because finite-to-one factor
maps preserve entropy \cite{Abramov-Rokhlin}.

\medskip
\noindent
{\sc Projection Procedure:\/} {\em Every  $\sigma_r$--invariant probability measure $\wh{m}$ on
${\Sigma}_r$ proj\-ects to a $\vf$--invariant probability measure $m:=\wh{m}\circ\pi_r^{-1}$ on
$M$ with the same entropy.}

\medskip
\noindent
{\em Proof.\/} By the Poincar\'e recurrence theorem, every $\sigma_r$--invariant probability measure
is carried by ${\Sigma}^\#_r$, therefore $\pi_r:({\Sigma}_r,\wh{m})\to (M,m)$ is a finite-to-one factor map.
Such maps preserve entropy.

\medskip
Combining the lifting procedure and the projection procedure we see that the supremum of the entropies
of $\vf$--invariant measures on $M$ equals the supremum of the entropies of $\sigma_r$--invariant
measures on $\Sigma_r$, and therefore $\wh{\mu}$ given by (\ref{mu-hat}) is a measure of maximal
entropy for $\sigma_r$.

\medskip
\noindent
{\sc Claim.\/}\label{ClaimThm6.2} {\em $\sigma_r$ has at most countably many ergodic measures of maximal entropy.}

\medskip
\noindent
{\em Proof.\/} We recall the well-known relation between measures of maximal entropy for
$\sigma_r$ and equilibrium measures for the shift map $\sigma:{\Sigma}\to{\Sigma}$ \cite{Bowen-Ruelle-SRB}:
$S:=\Sigma\x\{0\}$ is a Poincar\'e section for $\sigma_r:{\Sigma}_r\to {\Sigma}_r$, therefore every
measure of maximal entropy $\wh{\mu}$ for $\sigma_r$ can be put in the form
$\wh{\mu}=
\frac{1}{{\int_{{\Sigma}}r d\wh{\mu}_\Sigma}}{\int_{{\Sigma}} \int_0^{r(\un{x})}\delta_{(\un{x},t)}dt\, d\wh{\mu}_\Sigma(\un{x})}
$
where $\wh{\mu}_\Sigma$ is a shift-invariant measure on $\Sigma$. The denominator is well-defined,
because $r$ is bounded away from zero and infinity. If $\wh{\mu}$ is ergodic, then $\wh{\mu}_\Sigma$ is ergodic.

By the Abramov formula,
$h_{\wh{\mu}}(\sigma_r)=h_{\wh{\mu}_\Sigma}(\sigma)\big/\int_\Sigma r d\wh{\mu}_\Sigma$.
Similar formulas hold for all other $\sigma_r$--invariant probability measures $m$ and the measures
$m_\Sigma$ they induce on ${\Sigma}$. Since $\wh{\mu}$ is a measure of maximal entropy,
$h_{m_\Sigma}(\sigma)\big/\int_\Sigma r dm_\Sigma=h_{m}(\sigma_r)\leq h$ (the maximal possible entropy)
for all $\sigma$--invariant measures $m_\Sigma$. This is equivalent to saying that
$h_{m_\Sigma}(\sigma)+\int_\Sigma (-hr) dm_\Sigma\leq 0$, with equality iff $h_m(\sigma_r)=h$.
Thus, if $\wh{\mu}$ is a measure of maximal entropy for $\sigma_r$, then $\wh{\mu}_\Sigma$
is an equilibrium measure for $-h r$, where $h$ is the value of the maximal entropy. Also,
the topological pressure $P(-hr):=\sup\{h_{\nu}(\sigma)-h\int_\Sigma r d\nu\}=0$, where the
supremum ranges over all $\sigma$--invariant probability measures $\nu$ on $\Sigma$.

Recall that $r:{\Sigma}\to\R$ is H\"older continuous. By \cite{Buzzi-Sarig}, a H\"older continuous potential
on a topologically transitive countable Markov shift has at most one equilibrium measure. If the condition
of topological transitivity is dropped, then there are at most countably many such measures,
one for each transitive component with maximal topological entropy \cite{Gurevich-Topological-Entropy}
(see the proof of \cite[Thm. 5.3]{Sarig-JAMS}). It follows that there are at most countably many possibilities for
$\wh{\mu}_\Sigma$, and therefore at most countably many possibilities for $\wh{\mu}$.

\medskip
We can now obtain the contradiction which proves the theorem. Consider the ergodic decomposition
of $\wh{\mu}$ defined by (\ref{mu-hat}). Almost every ergodic component is a measure of maximal entropy
(because the entropy function is affine). By the claim there are at most countably many different such measures.
Therefore the ergodic decomposition of $\wh{\mu}$ is atomic: $\wh{\mu}=\sum p_i \wh{\mu}_i$ with
$\wh{\mu}_i$ ergodic and $p_i\in(0,1)$ s.t. $\sum p_i=1$. Projecting to $M$, and noting that factors
of ergodic measures are ergodic, we find that $\mu=\sum p_i \mu_i$ where $\mu_i:=\wh{\mu}_i\circ\pi_r^{-1}$
are ergodic. This is an atomic ergodic decomposition for $\mu$. But the ergodic decomposition is unique,
and we assumed that $\mu$ has a non-atomic ergodic decomposition.
\end{proof}

%

\section{Mixing for equilibrium measures on topological Markov flows}
%

Let $\sigma_r:\Sigma_r\to\Sigma_r$ be a topological Markov flow, together with
the Bowen-Walters metric. Let $\Phi:\Sigma_r\to\R$ be bounded and continuous.

\medskip
\noindent
{\sc The topological pressure of $\Phi$:} {
$P(\Phi):=\sup\{h_{\mu}(\sigma_r)+\int\Phi d\mu\}$, where the supremum ranges over
all $\sigma_r$--invariant probability measures $\mu$ on $\Sigma_r$.}

\medskip
\noindent
{\sc Equilibrium measure for $\Phi$:} {
A $\sigma_r$--invariant probability measure $\mu$ on $\Sigma_r$ s.t.
$h_{\mu}(\sigma_r)+\int\Phi d\mu=P(\Phi)$.}

\begin{thm}\label{theorem dichotomy for suspension}
Suppose $\mu$ is an  equilibrium measure of a bounded H\"older continuous potential for
a topological Markov flow $\sigma_r:\Sigma_r\to\Sigma_r$. If $\sigma_r$ is topologically transitive,
then the following are equivalent:
\begin{enumerate}[$(1)$]
\item If $e^{i\theta r}=h/h\circ\sigma$ for some H\"older continuous $h:\Sigma\to S^1$
and $\theta\in\R$, then $\theta=0$ and $h=\const$.
\item $\sigma_r$ is weak mixing.
\item $\sigma_r$ is  mixing.
\end{enumerate}
\end{thm}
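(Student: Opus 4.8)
The plan is to close the cycle $(1)\Rightarrow(3)\Rightarrow(2)\Rightarrow(1)$, of which $(3)\Rightarrow(2)$ is immediate; the two substantive steps are $\neg(1)\Rightarrow\neg(2)$ and $(1)\Rightarrow(3)$.

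\medskip\noindent\emph{From the cocycle equation to an obstruction to weak mixing ($\neg(1)\Rightarrow\neg(2)$, i.e. $(2)\Rightarrow(1)$).} Suppose (1) fails, so there are a H\"older $h\colon\Sigma\to S^1$ and $\theta\in\R$ with $e^{i\theta r}=h/h\circ\sigma$ but not both $\theta=0$ and $h$ constant. If $\theta\neq0$, put $F(\un x,s):=h(\un x)e^{-i\theta s}$ on $\Sigma_r$. The identity $h(\un x)e^{-i\theta r(\un x)}=h(\sigma\un x)$ shows that $F$ descends continuously across the identification $(\un x,r(\un x))\sim(\sigma\un x,0)$, and a one-line computation gives $F\circ\sigma_r^\tau=e^{-i\theta\tau}F$ for all $\tau\in\R$. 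Since $|F|\equiv1$ and the fibres have positive length, $F$ is non-constant, so $\sigma_r$ has the non-zero eigenvalue $-\theta$ and is not weak mixing. If instead $\theta=0$ and $h$ is non-constant, then $h\circ\sigma=h$ and $(\un x,s)\mapsto h(\un x)$ is a non-constant continuous $\sigma_r$-invariant function, so $\sigma_r$ is not ergodic. Either way $\neg(2)$ holds.

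\medskip\noindent\emph{Reduction of $(1)\Rightarrow(3)$ to the base.} Let $\mu$ be the given equilibrium measure of a bounded H\"older potential $\Phi$ on $\Sigma_r$. Exactly as in the proof of Theorem~\ref{Thm-countable-max-meas}, via the Bowen--Ruelle correspondence over the cross-section $\Sigma\times\{0\}$, $\mu$ is induced by a shift-invariant measure $\bar\mu$ on $\Sigma$ which is the equilibrium state of a bounded H\"older potential $\phi$ on $\Sigma$ (with $\int\phi\,d\bar\mu+P(\Phi)\int r\,d\bar\mu=$ normalized pressure). Since $\sigma$ is the Poincar\'e return map of the topologically transitive flow $\sigma_r$ to a cross-section on which $r$ is bounded, $\sigma\colon\Sigma\to\Sigma$ is topologically transitive; by the thermodynamic formalism for countable Markov shifts (\cite{Buzzi-Sarig}, and the discussion in \cite{Sarig-JAMS}), $\bar\mu$ is the unique equilibrium state of $\phi$ and is mixing (in fact exact). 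Fourier decomposition of $\int F\cdot G\circ\sigma_r^t\,d\mu$ along the fibres then reduces mixing of $\sigma_r$ to the statement that for every $\theta\neq0$ the twisted Ruelle operator $L_{\phi-P(\phi)-i\theta r}$ on bounded H\"older functions over $\Sigma$ has no eigenvalue of modulus $1$.

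\medskip\noindent\emph{Peripheral spectrum and a Liv\v{s}ic argument.} I would argue by contraposition. If $\sigma_r$ is not mixing, extract $\theta\neq0$, $\alpha\in\R$ and a non-zero H\"older $g$ with $L_{\phi-P(\phi)-i\theta r}g=e^{i\alpha}g$. A Wielandt/Liv\v{s}ic-type argument in the style of \cite[pp.~228--229]{Parry-Pollicott-Asterisque}, run on the topologically transitive countable Markov shift via Sarig's Ruelle--Perron--Frobenius theorem and uniform H\"older estimates, shows that $|g|$ is proportional to the density of $\bar\mu$ and that $h:=g/|g|$ satisfies $e^{i\theta r}=e^{i\alpha}\,h/h\circ\sigma$. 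One then removes the constant $e^{i\alpha}$ (using the group structure of the solutions $(\theta,\alpha)$, or by composing with the eigenfunctions built in the first step) to obtain a genuine solution $e^{i\theta' r}=h'/h'\circ\sigma$ with $\theta'\neq0$, contradicting (1). The main obstacle is precisely this last step: establishing the dichotomy ``not mixing $\Rightarrow$ a modulus-one eigenvalue'' and carrying out the Liv\v{s}ic computation on a \emph{non-compact} countable Markov shift requires controlling $L_{\phi-P(\phi)-i\theta r}$ on the weighted space on which the RPF theorem is available, exploiting positive recurrence of $\phi$, and keeping the H\"older constants uniform along orbits --- the compact SFT case being the classical one in \cite{Parry-Pollicott-Asterisque}.
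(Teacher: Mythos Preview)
Your treatment of $(2)\Rightarrow(1)$ via the eigenfunction $F(\un{x},s)=h(\un{x})e^{-i\theta s}$ matches the paper exactly; this is the easy direction.

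For the hard direction the paper does not give a self-contained proof: it separates $(1)\Rightarrow(2)$ (Parry--Pollicott, via a Liv\v{s}ic argument upgrading an $L^2$ flow eigenfunction to a H\"older solution of the cocycle equation) from $(2)\Rightarrow(3)$ (Ratner's argument, using that the base equilibrium state is a $K$-automorphism so that weak mixing of the suspension implies $K$, hence mixing), and refers to \cite{Ledrappier-Lima-Sarig} for the extension to countable alphabets via \cite{Buzzi-Sarig}.

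Your route is different and, as written, has a genuine gap. The assertion that ``Fourier decomposition along the fibres reduces mixing of $\sigma_r$ to the absence of modulus-one eigenvalues of $L_{\phi-P(\phi)-i\theta r}$ for $\theta\neq0$'' is not correct without further input: absence of a peripheral eigenvalue does not by itself force correlations to decay. In the finite-alphabet case this works because the twisted operators are quasi-compact, so ``no peripheral eigenvalue'' upgrades to a spectral gap; on a countable Markov shift quasi-compactness is not automatic, and you have not supplied a substitute. Equally, your contrapositive step ``if $\sigma_r$ is not mixing then some $L_{\phi-P(\phi)-i\theta r}$ has a unimodular eigenvalue'' is exactly where this fails --- non-mixing gives you an $L^2$ obstruction, not a H\"older eigenvector of a transfer operator, and bridging that gap is essentially the whole content of $(2)\Rightarrow(3)$. (A secondary issue: even granting an eigenvalue $e^{i\alpha}$, your sketch for eliminating $\alpha$ is incomplete; the resulting $F$ is not well-defined on $\Sigma_r$ unless $\alpha=0$.)

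The cleanest fix is to follow the paper's decomposition: prove $(1)\Rightarrow(2)$ by starting from an $L^2$ eigenfunction of the flow and using the Gibbs/local-product structure of $\bar\mu$ (available from \cite{Buzzi-Sarig}) to obtain a H\"older $h$ with $e^{i\theta r}=h/h\circ\sigma$; then prove $(2)\Rightarrow(3)$ separately by invoking that $\bar\mu$ is Bernoulli (hence $K$) and that a suspension over a $K$-automorphism is $K$ as soon as it is weak mixing.
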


$(3)\Rightarrow(2)\Rightarrow(1)$ because if $e^{i\theta r}=h/h\circ\sigma$, then
$F(x,t)=e^{-i\theta t}h(x)$ is an eigenfunction of the flow.
$(1)\Rightarrow(2)\Rightarrow(3)$ are known in the special case when $\Sigma$ is
a subshift of finite type: Parry and Pollicott proved $(1)\Rightarrow(2)$ \cite{Parry-Pollicott-Asterisque},
and Ratner proved $(2)\Rightarrow(3)\Rightarrow$ Bernoulli \cite{Ratner-Flows-Bernoulli}, \cite{Ratner-Flows-K}.
Dolgopyat showed us a different proof of $(2)\Rightarrow(3)$ (private communication).
These proofs can be pushed through to the countable alphabet case with some effort,
using the thermodynamic formalism for countable Markov shifts \cite{Buzzi-Sarig}.
The details can be found in  \cite[Thm 4.6]{Ledrappier-Lima-Sarig}.

\medskip
The following theorem is a symbolic analogue of Plante's necessary and sufficient condition
for a transitive Anosov flow to be a constant suspension of an Anosov
diffeomorphism \cite{Plante-Anosov-Flows-Dichotomy}, see also \cite{Bowen-Symbolic-Flows}.

\begin{thm}\label{theorem-dichotomy-2}
Let $\sigma_r:\Sigma_r\to\Sigma_r$ be a topologically transitive topological Markov flow.
Either every equilibrium measure of a bounded H\"older continuous potential is mixing,
or there is $\Sigma_r'\subset\Sigma_r$ of full measure s.t. $\sigma_r:\Sigma_r'\to\Sigma_r'$
is topologically conjugate to a topological Markov flow with constant roof function.
\end{thm}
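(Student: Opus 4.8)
The strategy is to invoke the preceding dichotomy theorem (Theorem \ref{theorem dichotomy for suspension}) and then analyze what happens in the ``bad'' case, where the cohomological equation $e^{i\theta r}=h/h\circ\sigma$ has a nontrivial solution. So the plan is: first, if no such nontrivial solution exists, then by Theorem \ref{theorem dichotomy for suspension} every equilibrium measure of a bounded Hölder potential is mixing, and we are in the first alternative — nothing more to prove. Second, suppose there exist $\theta\neq 0$ and a Hölder continuous $h:\Sigma\to S^1$, non-constant, with $e^{i\theta r}=h/h\circ\sigma$. I would then use this $h$ and $\theta$ to exhibit a time change / conjugacy that straightens $r$ to a constant. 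Concretely, write $h=e^{2\pi i \psi}$ locally and observe that the function $r$ is cohomologous (over $\sigma$) to the constant $\frac{2\pi k}{\theta}$ plus a coboundary $g-g\circ\sigma$ coming from a branch of $\frac{1}{\theta}\arg h$; the usual suspension-space argument then gives a bijection $\Sigma_r\to\Sigma_{c}$ (with $c=2\pi/|\theta|$, or an integer multiple thereof) intertwining the flows, defined on the set where the relevant lifts behave well.

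The key steps, in order: \emph{(i)} Reduce to the case that a nontrivial solution of $e^{i\theta r}=h/h\circ\sigma$ exists, as above. \emph{(ii)} From $h:\Sigma\to S^1$ Hölder, extract a measurable (indeed Hölder on cylinders, but with jumps of size $2\pi$) function $\varphi:\Sigma\to\R$ with $h=e^{i\varphi}$, so that $\theta r = \varphi\circ\sigma-\varphi + 2\pi n(\cdot)$ for an integer-valued $n(\cdot)$; since $r>0$ is bounded and $\varphi$ is bounded, $n(\cdot)$ takes finitely many values, and by passing to the ergodic/transitive structure one gets that $r$ is cohomologous to a function taking values in $\frac{2\pi}{\theta}\Z$. \emph{(iii)} Because $\sigma_r$ is topologically transitive, this "finite value group" collapses: the set of achievable values of $\sum$ of the integer cocycle around loops generates a subgroup of $\Z$, say $d\Z$; replacing $\theta$ by $\theta/d$ if necessary, one may assume $r$ is cohomologous via a \emph{bounded} transfer function $g$ to the constant $c:=2\pi/\theta$ — i.e. $r = c + g\circ\sigma - g$ on a shift-invariant set $\Sigma'$ of full measure (full measure w.r.t. every invariant measure, by Poincaré recurrence applied to the regular part). \emph{(iv)} Define $\Sigma_r' := \{(\un{x},t)\in\Sigma_r : \un{x}\in\Sigma'\}$ and the map $\Phi(\un{x},t):=(\un{x}, t - g(\un{x}) \bmod c)$ appropriately (taking care of the boundary identification $(\un{x},r(\un{x}))\sim(\sigma\un{x},0)$, which is exactly what $r=c+g\circ\sigma-g$ is engineered to respect); check that $\Phi$ is a bijection onto $\Sigma_c'$ and conjugates $\sigma_r$ to the constant-roof flow $\sigma_c$. \emph{(v)} Observe $\sigma_c$ is itself a topological Markov flow with constant roof function, completing the second alternative.

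The main obstacle I expect is step \emph{(iii)}: passing from "$r$ is $S^1$-valued-cohomologous to an integer multiple of $2\pi/\theta$" to "$r$ is $\R$-valued-cohomologous, via a bounded and reasonably regular transfer function, to an honest constant." One must argue that the finitely-many-values cocycle $n(\cdot)$, which a priori only lives up to a measurable coboundary, can be reduced using topological transitivity to a genuine constant after rescaling the period; this is the analogue of the classical fact (as in Parry--Pollicott \cite{Parry-Pollicott-Asterisque} and Plante \cite{Plante-Anosov-Flows-Dichotomy}) that the group of periods of a transitive flow is either dense or cyclic, and the cyclic case forces a constant suspension. One also has to be slightly careful that the exceptional set (where $g$ is not defined, or where the lifts fail) is genuinely of full measure for \emph{every} invariant probability measure — this is guaranteed because it is contained in the complement of $\Sigma^\#$, which has full measure by Poincaré recurrence. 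The regularity of $g$ (Hölder on cylinders) is inherited from that of $h$ via the standard Livšic-type argument on transitive countable Markov shifts, and is enough for the conjugacy $\Phi$ to make sense on $\Sigma_r'$.
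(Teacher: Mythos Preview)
Your step (iii) contains a genuine gap. From $e^{i\theta r}=h/h\circ\sigma$ you correctly deduce that $r$ is cohomologous (via a bounded H\"older transfer function $U$) to a function taking values in $c\Z$ with $c=2\pi/\theta$. But the further claim---that transitivity lets you reduce this to $r$ being cohomologous to the \emph{constant} $c$---is false. The Plante-type period-group argument only tells you that the group generated by the periods of closed orbits is $c\Z$; it does \emph{not} say every period equals $c$. Concretely: take $\Sigma$ the full $2$-shift and suppose $r+U\circ\sigma-U$ equals $c$ on $_0[0]$ and $2c$ on $_0[1]$. The fixed point $0^\infty$ then has flow-period $c$ and the fixed point $1^\infty$ has flow-period $2c$; if $r$ were cohomologous to a constant $c'$, both fixed points would have period $c'$. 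So your proposed conjugacy $\Phi(\un{x},t)=(\un{x},t-g(\un{x})\bmod c)$ cannot work: it keeps the same base $\Sigma$, but a constant-$c$ suspension over $\Sigma$ has every $\sigma$-fixed point giving a closed orbit of period exactly $c$.

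The paper's proof handles this via a different mechanism. After reducing (as you do) to a $c\Z$-valued roof, it passes to an induced system on a small cylinder to make the roof $r^\ast$ H\"older and $c\Z$-valued, hence locally constant; then it recodes so that $r^\ast$ depends only on the zeroth coordinate (Step~2); finally it performs a \emph{tower construction} (Step~3): each vertex $v$ is replaced by a chain of $\tilde r(v)/c$ vertices, producing a new, larger topological Markov shift over which the roof becomes the constant $c$. The conjugacy therefore changes the base shift, not just the height coordinate---this is the missing idea in your plan.
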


\begin{proof}
If $\Sigma$ is a finite set, then $\Sigma_r$ equals a single closed orbit, and the claim is trivial.
From now on assume that $\Sigma$ is infinite.

Assume $\sigma_r$ is not mixing, then $\exp[i\theta r]=h/h\circ\sigma$ with $h:\Sigma\to S^1$
H\"older continuous and $\theta\neq 0$. Write $\theta=2\pi/c$ and put $h$ in the form $h=\exp[i\theta U]$,
where $U:\Sigma\to\R$ is H\"older continuous. Necessarily $r+U\circ\sigma-U\in c\Z$.
We are free to change $U$ on every partition set by a constant in $c\Z$ to make sure $U$
is bounded and positive. Fix $N>2\|U\|_\infty/\inf(r)$.

\medskip
\noindent
{\sc Construction:\/}
{\em There is a cylinder  $A=_{-m}[y_{-m},\ldots,y_n]$ s.t.:
\begin{enumerate}[{\em (i)}]
\item $m,n>0$ and $y_{-m}=y_n$.
\item $n_A(\cdot)>N$ on $A$, where $n_A(\un{x}):=\inf\{n\geq 1: \sigma^n(\un{x})\in A\}$.
\item $\un{x},\un{x}'\in A\Rightarrow |U(\un{x})-U(\un{x}')|<N\inf(r)$.
\end{enumerate}}

\medskip
To find $A$, take $\un{y}\in\Sigma$ with dense orbit. Since $\Sigma$ is infinite, $\sigma^k(\un{y})$ are distinct.
Therefore, $\un{y}$ has a cylindrical neighborhood $C$ s.t. $\sigma^k(C)\cap C=\emptyset$ for $k=1,\ldots,N$.
Choose $m,n>0$ so large that ${_{-m}[}y_{-m},\ldots,y_n]\subset C$, and $|U(\un{x})-U(\un{x}')|<N\inf(r)$
for all $\un{x},\un{x}'\in C$. Since $\un{y}$ has a dense orbit, every symbol appears in $\un{y}$ infinitely
often in the past and in the future, therefore we can choose $m,n$ so that $y_{-m}=y_n$.
The cylinder $A={_{-m}[}y_{-m},\ldots,y_n]$ satisfies (i), (ii) and (iii), because $A\subset C$.

\medskip
Since $\mu$ is ergodic and globally supported, the following set has full $\mu$--measure:
$\Sigma_r':=\{z\in \Sigma_r: \sigma_r^t(z)\in A\x\{0\}\textrm{ infinitely often in the past and in the future}\}.$

\medskip
\noindent
{\sc Step 1:\/} {\em  $\sigma_r:\Sigma_r'\to\Sigma_r'$ is topologically conjugate to a topological Markov
flow $\sigma_{r^\ast}:\Sigma^\ast_{r^\ast}\to \Sigma^\ast_{r^\ast}$ whose roof function $r^*$  takes values in $c\Z$.}

\medskip
\noindent
{\em Proof.\/} $A\x\{0\}$ is a Poincar\'e section for $\sigma_r:\Sigma_r'\to\Sigma_r'$.
The roof function is
$r_A:=r+r\circ\sigma+\cdots+r\circ\sigma^{n_A-1}$.
By  (ii),  $\inf(r_A)>N\inf(r)$, so  $0<U<\inf(r_A)$.

Let
$S:=\{\sigma_r^{U(\un{x})}(\un{x},0):(\un{x},0)\in\Sigma_r'\}$. This is a Poincar\'e section for
$\sigma_r:\Sigma_r'\to\Sigma_r'$, and its roof function is
$r_A^\ast:=r_A+U\circ\sigma^{n_A}-U$ (this is always positive because $U<\inf(r_A)$).
All the values of $r_A^\ast$ belong to $c\Z$, as can be seen from the identity
$r_A^\ast=\sum_{k=0}^{n_A-1}(r+U\circ\sigma-U)\circ\sigma^k$.
We claim that the section map of $S$ is topologically conjugate to a topological Markov shift.
Let $V$ denote the collection of  sets of the form
$$
\<\un{B}\>:=\{\sigma_r^{U(\un{x})}(\un{x},0): \un{x}\in {_{-m}[}\un{A},\un{B},\un{A}]\},
$$
where $\un{A}=(y_{-m},\ldots,y_n)$ is the word defining $A$, and $\un{B}$ is any other word s.t.
$_{-m}[\un{A},\un{B},\un{A}]\neq \emptyset$ for which the only appearances of $\un{A}$ in
$(\un{A},\un{B},\un{A})$ are at the beginning and at the end.

It is easy to see that $\sigma_r^{U(\un{x})}(\un{x},0)\in S$ iff
$\un{x}=(\ldots, \un{A},\un{B}^1,\un{A},\un{B}^2,\un{A},\ldots)$ with
$\<\un{B}^i\>\in V$, and that any sequence $\{\<\un{B}^i\>\}_{i\in\Z}\in V^\Z$ appears this way.
Let $\pi:S\to V^\Z$ be the map $\pi(\un{x})=\{\<\un{B}^i\>\}_{i\in\Z}$. Since $\un{A}$ appears in
$(\un{A},\un{B}^i,\un{A})$ only at the beginning and the end, $\pi\circ\sigma_r^{r_A^{\ast}}=\sigma \circ \pi$,
with $\sigma=$ the left shift on $V^\Z$. So the section map of $S$ is topologically conjugate to
the shift on $V^\Z$. Let $\Sigma^\ast:=V^\mathbb Z$.
The roof function with respect to this new coding is $r^\ast:=r_A^\ast\circ\pi^{-1}$.
Direct calculations show that the H\"older continuity of $r$ implies the H\"older continuity of $r^\ast$.
So $\sigma_{r^\ast}:\Sigma^\ast_{r^\ast}\to \Sigma^\ast_{r^\ast}$ is a TMF, and
$\sigma_r:\Sigma_r'\to \Sigma_r'$ is topologically conjugate to $\sigma_{r^\ast}$.

\medskip
\noindent
{\sc Step 2:} {\em  $\sigma_{r^\ast}:\Sigma^\ast_{r^\ast}\to \Sigma^\ast_{r^\ast}$ is topologically
conjugate to a topological Markov flow $\sigma_{\wt{r}}:\wt{\Sigma}_{\wt{r}}\to \wt{\Sigma}_{\wt{r}}$
where $\wt{r}$ takes values in $c\Z$, and $\wt{r}(\un{x})=\wt{r}(x_0)$.}

\medskip
\noindent
{\em Proof.\/} Since $r^\ast$ is H\"older continuous and takes values in $c\Z$, there must be some
$n_0>0$ s.t. $r^\ast$ is constant on every cylinder of the form $_{-n_0}[a_{-n_0},\ldots,a_{n_0}]$.
Take $\wt{\pi}(\un{x},t):=(\{\un{x}^i\}_{i\in\Z},t)$, where $\un{x}^i:=(x_{-n_0+i},\ldots,x_{n_0+i})$.
The reader can check that the collection of $\{\un{x}^i\}_{i\in\Z}$ thus obtained is a topological
Markov shift $\wt{\Sigma}$, and that $\wt{r}(\{\un{x}^i\}_{i\in\Z})$ only depends on the first symbol $\un{x}^0$.

\medskip
\noindent
{\sc Step 3:\/} {\em  $\sigma_{\wt{r}}:\wt{\Sigma}_{\wt{r}}\to \wt{\Sigma}_{\wt{r}}$ is topologically conjugate
to a topological Markov flow $\sigma_{\wh{r}}:\wh{\Sigma}_{\wh{r}}\to \wh{\Sigma}_{\wh{r}}$ where
$\wh{r}$ is constant equal to $c$.}

\medskip
\noindent
{\em Proof.\/} The set $\{(\un{x},kc): \un{x}\in\wt{\Sigma}, k\in\Z,\ 0\leq kc<\textrm{value of $\wt{r}$ on }_0[x_0]\}$
is a Poincar\'e section for the suspension flow with constant roof function (equal to $c$).
The section map is conjugate to a topological Markov shift $\wh{\Sigma}$ which we now describe.

Let $\wt{\mathfs G}=\mathfs G(\wt{V},\wt{E})$ be the graph of $\wt{\Sigma}$.
Let $\wh{\Sigma}=\Sigma(\wh{\mathfs G})$, where $\wh{\mathfs G}$ has the set of vertices
$\wh{V}:=\{{v\choose k}:v\in\wt{V}, 0\leq kc< \textrm{value of $\wt{r}$ on $_0[v]$}\}$ and edges
${v\choose k}\to {v\choose k+1}$ when ${v\choose k+1}\in \wt{V}$, and
${v\choose k}\to {w\choose 0}$ when ${v\choose k+1}\not\in \wt{V}$ and $v\to w$ in $\wt{E}$.
The conjugacy $\wh{\pi}:\wt{\Sigma}_{\wt{r}}\to \wh{\Sigma}_{\wh{r}}$ is
$\wh{\pi}(\un{x},t):=(\sigma^{\lfloor t/c\rfloor}(\un{y}),t-\lfloor t/c\rfloor c)$, where $\un{y}$ is given by
$(\ldots;{x_0\choose 0},{x_0\choose 1},\ldots,$
${x_0\choose \wt{r}(x_0)/c-1};{x_1\choose 0},{x_1\choose 1},\ldots,{x_1\choose \wt{r}(x_1)/c-1};\ldots)$
with ${x_0\choose 0}$ at the zeroth coordinate.
\end{proof}

\section{Counting simple closed orbits}\label{SectionClosedOrbits}

Let $\pi(T):=\#\{[\gamma]:\text{$\gamma$ is a simple closed geodesic s.t. }\ell[\gamma]\leq T\}$.
In this section we prove the following generalization of Theorem \ref{ThmClosedOrbits}.

\begin{thm}\label{ThmClosedOrbits(General)}
Suppose $\vf$ is a $C^{1+\beta}$ flow with positive speed and positive topological entropy $h$
on a $C^\infty$ closed three dimensional manifold $M$. If $\vf$ has a measure of maximal entropy,
then $\pi(T)\geq C\frac{e^{hT}}{T}$ for all $T$ large enough and $C>0$.
\end{thm}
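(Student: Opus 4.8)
\emph{Plan of proof.} The strategy is to transfer the counting problem to the symbolic model of Theorem~\ref{ThmSymbolicDynamics}, count prime closed orbits of the resulting topological Markov flow, and push the count down to $M$ while controlling multiplicities with property~(5). First, since the entropy map is affine, we may take an \emph{ergodic} measure of maximal entropy $\mu$ for $\varphi$; fix $0<\chi_0<h$. Because $h_\mu(\varphi)=h>\chi_0$, Ruelle's inequality gives that $\mu$ is $\chi_0$--hyperbolic, so Theorem~\ref{ThmSymbolicDynamics} applies and yields a topological Markov flow $\sigma_r:\Sigma_r\to\Sigma_r$ together with a coding $\pi_r$ satisfying $(1)$--$(5)$. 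Running the Lifting Procedure from the proof of Theorem~\ref{Thm-countable-max-meas}, lift $\mu$ to a $\sigma_r$--invariant measure of maximal entropy for $\sigma_r$, pass to an ergodic component $\widehat\mu$ of it (still an MME, by affinity of entropy), and let $\widehat\mu_\Sigma$ be the shift--invariant measure it induces on the base $\Sigma$. As in that proof, $\widehat\mu_\Sigma$ is an ergodic equilibrium measure for the bounded H\"older potential $-hr$ with $P(-hr)=0$; by the thermodynamic formalism for countable Markov shifts \cite{Buzzi-Sarig} it is carried by a topologically transitive subshift $\Sigma(\mathfs G')$, on which $-hr$ is positive recurrent with zero pressure. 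From now on I would work with the transitive topological Markov flow $\sigma_r:\Sigma(\mathfs G')_r\to\Sigma(\mathfs G')_r$. (Since $\pi_r$ is finite--to--one on $\Sigma_r^\#$ and finite--to--one factor maps preserve entropy, no invariant measure of $\Sigma(\mathfs G')_r$ has entropy exceeding $h$; this pins down the Gurevich entropies appearing below.)

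Next I would count prime closed orbits of the transitive symbolic flow, applying the mixing/constant--suspension dichotomy (the theorem following Theorem~\ref{theorem dichotomy for suspension}). If $\sigma_r$ is mixing on $\Sigma(\mathfs G')_r$, invoke the prime orbit theorem for mixing, positive recurrent topological Markov flows --- the countable analogue of the prime geodesic theorem, a consequence of the thermodynamic formalism of countable Markov shifts, cf.\ \cite{Ledrappier-Lima-Sarig} --- to get at least $C_1 e^{hT}/T$ prime closed orbits of length $\le T$ passing through a prescribed state, for $T$ large. If instead $\sigma_r$ is, on a subset of full measure, topologically conjugate to a suspension with constant roof $c$ over a topologically transitive positive recurrent countable Markov shift, then that base shift has Gurevich entropy $hc$, so by Gurevich's periodic--point theorem it has at least $C_1 e^{nhc}/n$ prime periodic orbits of period $n$ through a fixed state; summing over $n\le T/c$ again yields $\ge C_1 e^{hT}/T$ prime closed flow orbits of length $\le T$ through a fixed state (all periodic orbits lie in the full--measure subset). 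Fixing $R_0\in\mathfs R$ contained in that state, we obtain at least $C_1 e^{hT}/T$ prime closed $\sigma_r$--orbits of length $\le T$ whose itineraries contain $R_0$.

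Finally I would push this count down to $M$. A prime closed $\sigma_r$--orbit $\gamma_{\mathrm{sym}}$ of length $\ell_{\mathrm{sym}}\le T$ is mapped by $\pi_r$ onto a periodic $\varphi$--orbit $\gamma$; its prime period $\ell_0$ satisfies $\ell_{\mathrm{sym}}/\ell_0\in\N$, so $\ell_0\le T$, and because $\ell_0$ is the prime period the curve $\gamma$ is simple and contributes to $\pi(T)$. The key point is that this assignment is boundedly--to--one once we restrict to orbits through $R_0$: choose a point $p\in\gamma$ which is the $\pi_r$--image of a periodic sequence containing $R_0$ (so $R_0$ occurs infinitely often in both directions of that sequence); every prime closed $\sigma_r$--orbit through $R_0$ that projects onto $\gamma$ must meet the fibre $\pi_r^{-1}(p)\cap\Sigma_r^\#$ (it projects onto all of $\gamma$ and, being periodic, lies in $\Sigma_r^\#$), distinct orbits meet it in distinct points, and so by property~(5) there are at most $N(R_0,R_0)$ of them. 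Hence $\gamma_{\mathrm{sym}}\mapsto[\gamma]$ is at most $N(R_0,R_0)$--to--one from $\{\text{prime closed }\sigma_r\text{--orbits of length }\le T\text{ through }R_0\}$ into $\{[\gamma]:\gamma\text{ simple},\ \ell(\gamma)\le T\}$, which gives $\pi(T)\ge N(R_0,R_0)^{-1}C_1 e^{hT}/T$ for $T$ large, with $C:=N(R_0,R_0)^{-1}C_1>0$.

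The main obstacle is the multiplicity bookkeeping in this last step: it is precisely the restriction to orbits through a single symbol $R_0$, together with property~(5), that produces the \emph{uniform} bound $N(R_0,R_0)$ on the number of symbolic orbits lying over a given geometric orbit, and hence the sharp rate $e^{hT}/T$ --- a cruder argument (bounding multiplicity only in terms of the number of section--crossings) would lose a factor of $T$. A second, more analytic, point is the invocation in the counting step of the correct prime orbit / periodic--point theorem with the \emph{sharp} exponent $h$ in the countable Markov setting, which relies on the positive recurrence furnished by the equilibrium measure $\widehat\mu_\Sigma$ and on the mixing dichotomy of Section~\ref{SectionClosedOrbits}.
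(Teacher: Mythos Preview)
Your plan is correct and follows the paper's route: code by a topological Markov flow, restrict to a transitive component carrying an equilibrium measure for $-hr$, apply the mixing/constant--roof dichotomy, count closed symbolic orbits, and push down via property~(5). Your multiplicity bookkeeping (bound prime symbolic orbits through a fixed $R_0$ lying over a given $\gamma$ by $|\pi_r^{-1}(p)\cap\Sigma_r^\#|\le N(R_0,R_0)$) is a clean variant of the paper's, which instead works with pairs $(\un{y},n)$ where $\un{y}$ lies in a fixed cylinder $A$, $\sigma^n\un{y}=\un{y}$, $|r_n(\un{y})-T|<2\epsilon$, and bounds $\#\Theta^{-1}([\gamma])\le c_0 n$ before dividing by $n$; the non-mixing case via Gurevich is handled the same way in both.

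The one substantive difference is in the mixing case, where you invoke as a black box a ``prime orbit theorem for mixing positive recurrent topological Markov flows'' giving $\ge C_1 e^{hT}/T$ prime orbits of length $\le T$ \emph{through a prescribed state}. This localized form (through a fixed state, not a count of all orbits) is not a standard citable result, and the paper does not invoke one --- this is precisely the step the paper proves directly. Its argument: weight the periodic points above by $e^{-hr_n(\un{y})}$, use the Gibbs bound for the equilibrium measure of $-hr$ (from \cite{Buzzi-Sarig}) to convert the resulting sum $S(T)$ into the $\nu$--measure of a union $U_T$ of cylinders, verify the inclusion $U_T\times[0,\epsilon]\supset (A\times[0,\epsilon])\cap\sigma_r^{-T}(A\times[0,\epsilon])$, and then apply mixing of the \emph{flow} $\sigma_r$ (not of the base shift) to conclude $\liminf_T\mu(U_T\times[0,\epsilon])>0$. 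Your black box, once unpacked, is exactly this computation; treating it as a citation hides the main analytic content of the mixing case.
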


\noindent
This implies Theorem \ref{ThmClosedOrbits}, because every $C^\infty$ flow admits a measure of
maximal entropy. Indeed, by a theorem of Newhouse \cite{Newhouse-Entropy}, $\vf^1:M\to M$
admits a measure of maximal entropy $m$, and $\mu:=\int_0^1 m\circ\vf^{t}dt$ has maximal entropy for $\vf$.

\subsection*{Discussion}
Theorem \ref{ThmClosedOrbits(General)} strengthens Katok's bound
$\liminf_{T\to\infty}\frac{1}{T}\log\pi(T)\geq h$,
see \cite{KatokIHES,Katok-Closed-Geodesics} for general flows, and it improves Macarini
and Schlenk's bound $\liminf_{T\to\infty}\frac{1}{T}\log\pi(T)>0$ for the class of Reeb flows in \cite{Macarini-2011}.
If one assumes more on the flow, then much better bounds for $\pi(T)$ are known:
\begin{enumerate}[$(1)$]
\item {\em Geodesic flows on closed hyperbolic surfaces:\/} $\pi(T)\sim e^{t}/t$ \cite{Huber-Closed-Geodesics}.
\item {\em Topologically weak mixing Anosov flows} (e.g. geodesic flows on closed surfaces
with negative curvature): $\pi(T)\sim Ce^{hT}/T$ \cite{Margulis-Closed-Orbits} where
$C=1/h$ (C. Toll, unpublished). See \cite{Pollicott-Sharp-PNT-li-Paper} for estimates of the error term.
The earliest estimates for $\pi(T)$ in variable curvature are due to Sina{\u \i} \cite{Sinai-Closed-Geodesics}.
\item {\em Topologically weak mixing Axiom A flows:\/} $\pi(T)\sim e^{hT}/hT$ \cite{Parry-Pollicott-PNT}.
See \cite{Pollicott-Sharp-Error-Term} for an estimate of the error term.
\item {\em Geodesic flows on compact rank one manifolds\/:}
$C_1\frac{e^{hT}}{T}\leq \pi_0(T)\leq C_2\frac{e^{hT}}{T}$
for some $C_1, C_2>0$, where $\pi_0(T)$ counts the homotopy classes of simple closed geodesics with
length less than $T$ \cite{Knieper-Closed-Orbits,Knieper-Handbook-Chapter}.
\item {\em Geodesic flows for certain non-round spheres:\/} for certain metrics constructed by
\cite{Donnay,Burns-Gerber}, $\pi(T)\sim e^{hT}/hT$ \cite{Weaver}.
\end{enumerate}

We cannot give upper bounds for $\pi(T)$ as in (1)--(5), because in the general setup we consider
there can be compact invariant sets with lots of closed geodesics but zero topological entropy
(e.g. embedded flat cylinders). Such sets have zero measure for any ergodic measure with positive
entropy, and they lie outside the ``sets of full measure" that we can control using the methods of this paper.
Adding to our pessimism is the existence of $C^r$ $(1<r<\infty)$ surface diffeomorphisms with
super-exponential growth of periodic points \cite{Kaloshin-Super-Exponential}.
The suspension of these examples gives $C^r$ flows with super-exponential growth of closed orbits.
To the best of our knowledge, the problem of doing this in $C^\infty$ is still open.


\subsection*{Preparations for the proof of Theorem \ref{ThmClosedOrbits(General)}}

Fix an ergodic measure of maximal entropy for $\vf$, and apply Theorem \ref{ThmSymbolicDynamicsErgodic}
with this measure. The result is a topological Markov flow $\sigma_r:{\Sigma}_r\to{\Sigma}_r$
together with a H\"older continuous map $\pi_r:{\Sigma}_r\to M$, satisfying (1)--(6) in
Theorem \ref{ThmSymbolicDynamicsErgodic}.

We saw in the proof of Theorem \ref{Thm-countable-max-meas} (see page \pageref{ClaimThm6.2})
that if $\vf$ has a measure of maximal entropy, then $\sigma_r$ has a measure of maximal entropy.
By the ergodic decomposition, $\sigma_r$ has an ergodic measure of maximal entropy.
Fix such a measure $\mu$, and write
$\mu=\frac{1}{\int_\Sigma r d\nu}\int_\Sigma\left( \int_0^{r(\un{x})}\d_{(\un{x},t)}dt\right)d\nu(x)$.
The induced measure $\nu$ is an ergodic shift invariant measure on $\Sigma$. When we proved
Theorem \ref{Thm-countable-max-meas}, we saw that $\nu$ is an equilibrium measure for $\phi=-hr$.
Like all ergodic shift invariant measures, $\nu$ is supported on a topologically transitive topological
Markov shift $\Sigma'\subseteq \Sigma$ \cite{Aaronson-Denker-Urbanski}. There is no loss of generality in
assuming that $\sigma:\Sigma\to\Sigma$ is topologically transitive (otherwise we work with $\Sigma'$).

\subsection*{Proof of  Theorem \ref{ThmClosedOrbits(General)} when  $\mu$ is mixing.}

Fix $0<\epsilon<10^{-1}\inf(r)$.
Since $r$ is H\"older, there are $H>0$ and $0<\alpha<1$ s.t. $|r(\un{x})-r(\un{y})|\leq H d(\un{x},\un{y})^\alpha$.
Recall that $d(\un{x},\un{y})=\exp[-\min\{|n|:x_n\neq y_n\}]$. For every $\ell\geq 1$,
if $x_{-n_0}^{n_0+\ell}=y_{-n_0}^{n_0+\ell}$ then
$$
|r_\ell(\un{x})-r_\ell(\un{y})|\leq \sum_{i=0}^{\ell-1}H d(\sigma^i(\un{x}),\sigma^i(\un{y}))^\alpha\leq H\sum_{i=0}^{\ell-1} e^{-\alpha \min\{n_0+i,n_0+\ell-i\}}<\frac{2He^{-\alpha n_0}}{1-e^{-\alpha}}\cdot
$$
Choose $n_0$ s.t.
$\sup\{|r_\ell(\un{x})-r_\ell(\un{y})|:x_{-n_0}^{n_0+\ell}=y_{-n_0}^{n_0+\ell},\ell\geq 1\}<\epsilon$.
Fix some cylinder $A:={_{-n_0}[}a_{-n_0},\ldots,a_{n_0}]$ s.t. $\nu(A)\neq 0$, and let
$\Upsilon(T):=\biguplus_{n= 1}^\infty\Upsilon(T,n)$, where
$$
\Upsilon(T,n):=\{(\un{y},n): \un{y}\in A,\sigma^n(\un{y})=\un{y}, |r_n(\un{y})-T|<2\epsilon\}.
$$
Given $(\un{y},n)\in\Upsilon(T,n)$, let
$\gamma_{\un{y},n}:[0,r_n(\un{y})]\to M$, $\gamma_{\un{y},n}(t)=\pi_r[\sigma_r^t(\un{y},0)]$.
This is a closed orbit with length $\ell(\gamma_{\un{y},n})=r_n(\un{y})\in [T-2\epsilon,T+2\epsilon]$.
But $\gamma_{\un{y},n}(t)$ is not necessarily simple, because $\pi$ is not injective.
Let $\gamma_{\un{y},n}^s:=\gamma_{\un{y},n}\upharpoonright_{[0,\ell(\gamma_{\un{y},n})/N]}$,
where $N=N(\un{y},n):=\#\{0\leq t<\ell(\gamma_{\un{y},n}):\gamma_{\un{y},n}(t)=\gamma_{\un{y},n}(0)\}$.
Then $\gamma_{\un{y},n}^s$ is a simple closed orbit. We have $N=1$ iff $\gamma_{\un{y},n}$ is simple, and
$N<\ell(\gamma_{\un{y},n})/\inf(r)$, because an orbit with length less than $\inf(r)$ cannot be closed.

We obtain a map $\Theta:\Upsilon(T)\to\{\textrm{$[\gamma]$: $\gamma$ is a simple closed orbit
s.t. $\ell(\gamma)\leq T+2\epsilon$}\}$,
$$
\Theta:(\un{y},n)\mapsto [\gamma_{\un{y},n}^s].
$$
The map $\Theta$ is not one-to-one, but there is a uniform bound on its non-injectivity:
\begin{equation}\label{Theta-Bound}
1\leq \frac{\#\Theta^{-1}([\gamma_{\un{y},n}^s])}{n}\leq c_0.
\end{equation}
Here is the proof. Suppose $(\un{y},n),(\un{z},m) \in \Upsilon(T)$ and
$[\gamma_{\un{y},n}^s]=[\gamma_{\un{z},m}^s]$, then:
\begin{enumerate}[$\circ$]
\item $N(\un{y},n)=N(\un{z},m)$: $\left[\frac{T-2\epsilon}{N(\un{y},n)},\frac{T+2\epsilon}{N(\un{y},n)}\right]$ and
$\left[\frac{T-2\epsilon}{N(\un{z},m)},\frac{T+2\epsilon}{N(\un{z},m)}\right]$ both contain
$\ell=\ell(\gamma_{\un{y},n}^s)$ $=\ell(\gamma_{\un{z},m}^s)$.
But $N(\un{y},n),N(\un{z},m)<\frac{T+2\epsilon}{\inf(r)}$, and
$\left[\frac{T-2\epsilon}{j},\frac{T+2\epsilon}{j}\right]$ are pairwise disjoint for $j=1,\ldots,[\frac{T+2\epsilon}{\inf(r)}]$,
because $\epsilon<\frac{1}{10}\inf(r)$.
\item $[\gamma_{\un{y},n}]=[\gamma_{\un{z},m}]$, because $[\gamma_{\un{y},n}^s]=[\gamma_{\un{z},m}^s]$
and $N(\un{y},n)=N(\un{z},m)$.
\item $n=m$, because $n$, $m$ are the number of times $\gamma_{\un{y},n}$, $\gamma_{\un{z},m}$
enter $\Lambda_0:=\pi_r({\Sigma}\x\{0\})$, and equivalent closed orbits enter $\Lambda_0$ the same number of times.
\item $\pi_r(\un{z},0)=\pi_r(\sigma^k(\un{y}),0)$ for some $k=0,\ldots,n-1$, because
$\pi_r(\un{z},0)\in \gamma_{\un{y},n}\cap\Lambda_0$.
\item $\un{y},\un{z}\in{\Sigma}^\#$, and $y_i=a_0$ for  infinitely many $i<0$ and infinitely many $i>0$.
\end{enumerate}
By Theorem \ref{ThmSymbolicDynamicsErgodic}(5) there is a constant $c_0:=N(a_0,a_0)$
s.t. if $x_i=a_0$ for infinitely many $i>0$ and  infinitely many $i<0$, then
$\#\{\un{z}\in{\Sigma}^\#:\wh{\pi}(\un{z})=\wh{\pi}(\un{x})\}\leq c_0$. Thus
$\#\Theta^{-1}([\gamma_{\un{y},n}^s])\leq
\#[{\Sigma}_r^\#\cap \bigcup_{k=0}^{n-1}\pi_r^{-1}\{\pi_r(\sigma^k(\un{y}),0)\}]\leq c_0 n.$
Also $\#\Theta^{-1}([\gamma_{\un{y},n}^s])\geq n$, because
$[\gamma_{\sigma^k(\un{y}),n}^s]=[\gamma_{\un{y},n}^s]$ for $k=0,\ldots,n-1$.
This proves (\ref{Theta-Bound}).

\medskip
By the inequality (\ref{Theta-Bound}) and the fact shown above that
$[\gamma_{\un{y},n}^s]=[\gamma_{\un{z},m}^s]\Rightarrow m=n$,
\begin{align*}\label{S(T)-def}
&\ \#\{[\gamma]:\textrm{$\gamma$ simple closed orbit s.t. }\ell(\gamma)\leq T+2\epsilon\}\geq \notag\\
&\geq \#\{[\gamma_{\un{y},n}^s]:(\un{y},n)\in\Upsilon(T)\}=\sum_{n=1}^\infty\#\{[\gamma_{\un{y},n}^s]:(\un{y},n)\in\Upsilon(T,n)\}\notag\\
&\asymp \sum_{n=1}^\infty \frac{\#\Upsilon(T,n)}{n}\ ,\textrm{ where $A_n\asymp B_n$ means $\exists C,N_0$ s.t. $\forall n>N_0$, $C^{-1}\leq \frac{A_n}{B_n}\leq C$} \notag\\
&=\sum_{n=1}^\infty \left(\frac{1}{n}\sum_{\sigma^n(\un{y})=\un{y}}1_A(\un{y}) 1_{[-2\epsilon,2\epsilon]}(r_n(\un{y})-T)\right) \notag\\
&\asymp \frac{e^{hT}}{T}\sum_{n=1}^\infty \sum_{\sigma^n(\un{y})=\un{y}}1_A(\un{y}) 1_{[-2\epsilon,2\epsilon]}(r_n(\un{y})-T) e^{-h r_n(\un{y})}\\
&= \frac{e^{hT}}{T}S(T), \text{ where  }S(T):=\sum_{n=1}^\infty \sum_{\sigma^n(\un{y})=\un{y}}1_A(\un{y}) 1_{[-2\epsilon,2\epsilon]}(r_n(\un{y})-T) e^{-h r_n(\un{y})}.
\end{align*}
To prove the theorem, it is enough to show that $\liminf S(T)>0$.

Recall that $\nu$ is an equilibrium measure for $\phi=-hr$ and $P(-hr)=0$ (see the claim on
page \pageref{ClaimThm6.2}). The structure of such measures was found in \cite{Buzzi-Sarig}.
We will not repeat the characterization here, but we will simply note that it implies the following
uniform estimate \cite[page 1387]{Buzzi-Sarig}: $\exists C(a)>1$ s.t. for every cylinder of the form
${_0[}\un{b}]={_0[}\xi_{0},\ldots,\xi_{n}]$ with $\xi_n=a$,
$C(a)^{-1}\leq \frac{\nu({_0[}\un{b}])}{\exp(-h r_n(\un{y}))}\leq C(a)\textrm{ for all $\un{y}\in {_0[}\un{b}]$}$.
It follows that there is a constant $G=G(A)$ s.t.
$$
G^{-1}\leq\frac{\exp[-h r_n(\un{y})]}{\nu({_{-n_0}[a_{-n_0},\ldots,a_{-1};y_0,\ldots,y_{n-1};a_0,\ldots,a_{n_0}}])}\leq G
$$
for every $\un{y}\in {_{-n_0}[}a_{-n_0},\ldots,a_{-1};y_0,\ldots,y_{n-1};a_0,\ldots,a_{n_0}]$.
Let
$$
U_T:= \bigcup_{(\un{y},n)\in\Upsilon(T)}{_{-n_0}[a_{-n_0},\ldots,a_{-1};y_0,\ldots,y_{n-1};a_0,\ldots,a_{n_0}}],
$$
then $S(T)\asymp \nu[U_T]=(\epsilon^{-1}\int r d\nu)\mu(U_T\x [0,\epsilon])$.

We claim that
\begin{equation}\label{mixing-inclusion}
U_T\x [0,\epsilon]\supset (A\x [0,\epsilon])\cap \sigma_r^{-T}(A\x [0,\epsilon]).
\end{equation}
Once this is shown, we can use the mixing of $\mu$ to get $\liminf \mu(U_T\x [0,\epsilon])>0$,
whence $\liminf S(T)>0$.
Suppose $(\un{x},t),\sigma_r^T(\un{x},t)\in A\x [0,\epsilon]$, and write
$\sigma_r^T(\un{x},t)=(\sigma^n(\un{x}),t+T-r_n(\un{x}))$. Since $\un{x}\in A\cap \sigma^{-n}(A)$,
there exists $\un{y}\in A$ s.t. $\sigma^n(\un{y})=\un{y}$ and $y_{-n_0}^{n+n_0}=x_{-n_0}^{n+n_0}$.
By the choice of $n_0$, $|r_n(\un{x})-r_n(\un{y})|<\epsilon$. Since $t, t+T-r_n(\un{x})\in [0,\epsilon]$,
$|r_n(\un{x})-T|<\epsilon$, whence $|r_n(\un{y})-T|<2\epsilon$. So $(\un{x},t)\in U_T\times [0,\epsilon]$.
This proves (\ref{mixing-inclusion}).

\subsection*{Proof of  Theorem \ref{ThmClosedOrbits(General)} when $\mu$ is not mixing}

In this case, Theorem \ref{theorem-dichotomy-2} gives us a set of full measure
$\Sigma_r'\subset {\Sigma}_r$ s.t. $\sigma_r:\Sigma_r'\to\Sigma_r'$ is topologically conjugate
to a constant suspension over a topologically transitive topological Markov shift
${\sigma}_c:\wt{\Sigma}\x [0,c)\to \wt{\Sigma}\x [0,c)$. Let $\vartheta:\wt{\Sigma}\x [0,c)\to{\Sigma}_r'$
denote the topological conjugacy, and let $\wt{p}:\wt{\Sigma}\x [0,c)\to M$ be the map $\wt{p}:=\pi_r\circ\vartheta$.
The map $\wt{p}$ has the same finiteness-to-one properties of $\pi_r$, because looking carefully
at the proof of Theorem \ref{theorem-dichotomy-2}, we can see that if $\un{x}\in \wt{\Sigma}$
contains some symbol $v$ infinitely many times in its future (resp. past), then $\vartheta(\un{x},t)=(\un{y},s)$
where $\un{y}$ contains some symbol $a=a(v)$ infinitely many times in its future (resp. past).

Since $\sigma_r:{\Sigma}_r'\to{\Sigma}_r'$ has a measure of maximal entropy,
${\sigma}_c:\wt{\Sigma}\x[0,c)\to\wt{\Sigma}\x [0,c)$ has a measure of maximal entropy.
By the Abramov formula, ${\sigma}:\wt{\Sigma}\to \wt{\Sigma}$ has a measure of maximal entropy,
and the value of this entropy is $hc$.
Gurevich characterized the countable state topological Markov shifts which possess measures of maximal entropy \cite{Gurevich-Topological-Entropy,Gurevich-Measures-Of-Maximal-Entropy}.
His work shows that there are $p\in\N$, $C>0$, and a vertex $v$ s.t.
$\#\{\un{x}\in \wt{\Sigma}: x_0=v, {\sigma}^{np}(\un{x})=\un{x}\}\asymp e^{np\cdot hc}$.
Such $\un{x}$ determines a simple closed orbit $\gamma_{\un{x},np}^s:\left[0,\tfrac{npc}{N(\un{x},np)}\right]\to M$,
where $\gamma_{\un{x},np}^s(t)=\wt{p}[\wt{\sigma}_c^t(\un{x},0)]$ and
$N(\un{x},np):=\#\{0\leq t<npc:\wt{p}[\wt{\sigma}_c^t(\un{x},0)]=\wt{p}[(\un{x},0)]\}$.
We therefore get a map
$\Theta:\{(\un{x},n): x_0=v, \sigma^{np}(\un{x})=\un{x}\}\to \{[\gamma]: \textrm{$\gamma$ simple s.t. }\ell(\gamma)\leq npc\}$,
$$
\Theta(\un{x},n):=[\gamma^s_{\un{x},np}].
$$
Again $\Theta$ is not one-to-one, but again one can show that
$1\leq \frac{\#\Theta^{-1}([\gamma^s_{\un{x},np}])}{np}\leq C(v)$, where $C(v)=N(a(v),a(v))$.
Thus
$\#\{[\gamma_{\un{y},np}^s]:\un{y}\in\wt{\Sigma},y_0=v, \sigma^{np}(\un{y})=\un{y}  \}\asymp\frac{e^{np\cdot hc}}{n}$.
Since $\ell(\gamma_{\un{y},np}^s)\leq npc$,
$\#\{[\gamma]:\gamma\textrm{ is simple s.t. }\ell(\gamma)\leq T_n\}\geq \const \times \frac{e^{hT_n}}{T_n}$ for $T_n=npc$.
It follows that
$\#\{[\gamma]:\gamma\textrm{ is simple s.t. }\ell(\gamma)\leq T\}\geq \const\times \frac{e^{hT}}{T}$ for large $T$.\hfill$\Box$

\section*{Appendix A: Standard proofs}

\subsection*{Proof of Lemma \ref{LemmaTD}}
 $M$ is closed (compact and boundaryless) and smooth, so there is a constant
 $r_{\mathrm{inj}}>0$ s.t. for every $p\in M$,
$\exp_p:\{\vec{v}\in T_p M: \|\vec{v}\|_p\leq r_{\mathrm{inj}}\}\to M$ is $\sqrt{2}$--bi-Lipschitz
onto its image (see e.g. \cite{Spivak}, chapter 9). Fix $0<r<r_{\mathrm{inj}}$, and complete
$\vec{n}_p:=\frac{X_p}{\|X_p\|}$ to an orthonormal basis $\{\vec{n}_p,\vec{u}_p,\vec{v}_p\}$ of $T_p M$. Then
$$
J_p(x,y):=\exp_p(x\vec{u}_p+y\vec{v}_p)
$$
is a $C^\infty$ diffeomorphism from $U_r:=\{(x,y)\in \R^2:x^2+y^2\leq r\}$ onto $S_r(p)$ for all
$0<r<r_{\mathrm{inj}}$, proving that   $S=S_r(p)$ is a $C^\infty$ embedded disc.

We claim that $\dist_M(\cdot,\cdot)\leq \dist_S(\cdot,\cdot)\leq 2\dist_M(\cdot,\cdot)$.
The first inequality is obvious. For the second, suppose $z_1,z_2\in S$.
There are $\vec{v}_1,\vec{v}_2\perp X_p$ s.t. $\|\vec{v}_i\|_p\leq r$ and  $z_i=\exp_p(\vec{v}_i)$. Let
$\g(t):=\exp_p[t\vec{v}_2+(1-t)\vec{v}_1]$, $t\in[0,1]$.
Clearly $\g\subset S$, whence
$\dist_S(z_1,z_2)\leq \textrm{length}(\g)$. Since $\exp_p$ has bi-Lipschitz constant
$\sqrt{2}$, $\dist_S(z_1,z_2)\leq \sqrt{2}\|\vec{v}_1-\vec{v}_2\|_p\leq (\sqrt{2})^2\dist_M(z_1,z_2)$.

We  bound $\measuredangle(X_q,T_q S)$ for $q\in S$. If $\vec{u},\vec{v},\vec{w}\in\R^3\setminus\{\un{0}\}$,
then
$
|\measuredangle(\vec{u},\Span\{\vec{v},\vec{w}\})|\geq |\sin\measuredangle(\vec{u},\Span\{\vec{v},\vec{w}\})|\geq\left|\frac{\<\vec{u},\vec{v},\vec{w}\>}{\|\vec{u}\|\cdot\|\vec{v}\|\cdot\|\vec{w}\|}\right|,
$
where $\<\vec{u},\vec{v},\vec{w}\>$ is the signed volume of the parallelepiped with sides
$\vec{u},\vec{v},\vec{w}$. So for every $q\in S_r(p)$,
$$|\measuredangle(X_{q}, T_{q}S)|\geq A(p,\un{x}):=\left|\frac{\left<X_{J_p(\un{x})},(dJ_p)_{\un{x}}\frac{\del}{\del x},(dJ_p)_{\un{x}}\frac{\del}{\del y}\right>_{J_p(\un{x})}}{\|X_{J_p(\un{x})}\|_{J_p(\un{x})}\cdot\|(dJ_p)_{\un{x}}\frac{\del}{\del x}\|_{J_p(\un{x})}\cdot\|(dJ_p)_{\un{x}}\frac{\del}{\del y}\|_{J_p(\un{x})} }\right|,
$$
where $\un{x}=\un{x}(q)$ is characterized by $q=J_p(\un{x})$. By definition $A(p,\un{0})=1$, so
there is an open neighborhood $V_p$ of $p$ and $\delta_p>0$ s.t. $A(q,\un{x})>\frac{1}{2}$ on
$W_p:=V_p\x B_{\delta_p}(\un{0})$. Working in $M\x\R^3$, we cover $K:=M\times\{\un{0}\}$ by
a finite collection $\{W_{p_1},\ldots,W_{p_N}\}$, and let $r_{\mathrm{leb}}$ be a Lebesgue number.
Then $A(p,\un{x})>\frac{1}{2}$ for every $p\in M$ and $\|\un{x}\|<r_{\mathrm{leb}}$.
The lemma follows with $\mathfrak r_s:=\frac{1}{2}\min\{1,r_{\mathrm{inj}},r_{\mathrm{leb}}\}$.\hfill$\Box$

\subsection*{Uniform Inverse Function Theorem}

{\em Let $F:U\to V$ be a differentiable map between two open subsets of $\R^d$ s.t.
$\det(dF_{\un{x}})\neq 0$ for all $\un{x}\in U$. Suppose there are  $K,H,\beta$ s.t.
$\|dF_{\un{x}}\|, \|(dF_{\un{x}})^{-1}\|\leq K$ and
$\|dF_{\un{x}_1}-dF_{\un{x}_2}\|\leq H\|\un{x}_1-\un{x}_2\|^{\beta}$ for all $\un{x}_1,\un{x}_2\in U$.
%
If $\un{x}\in U$, $B_{\epsilon}(\un{x})\subset U$, and $0<\epsilon<2^{-\frac{\beta+1}{\beta}}(KH)^{-\frac{1}{\beta}}$, then:
\begin{enumerate}[$(1)$]
\item $F^{-1}$ is a well-defined differentiable open map on  $W:=B_{\delta}(F(\un{x}))$, $\delta:=\frac{\epsilon}{2K}$.
\item $\|(dF^{-1})_{\un{y}_1}-(dF^{-1})_{\un{y}_2}\|\leq H^\ast\|\un{y}_1-\un{y}_2\|^{\beta}$
for all $\un{y}_1,\un{y}_2\in W$, with $H^\ast:=K^3 H$.
\end{enumerate} }

\begin{proof} Track the constants in the fixed point theorem  proof of the
inverse function theorem (see e.g. \cite{Smart}).
\end{proof}

\subsection*{Proof of Lemma \ref{LemmaFB_1}.}Let $B:=\{\un{x}\in\R^3:\|\un{x}\|<1\}$.

Let $\mathfs V$ be a finite open cover of $M$ such that for every $V\in\mathfs V$:
\begin{enumerate}[(1)]
\item $V=C_V(B)$ where $C_V:B\to V$ is a $C^2$ diffeomorphism.
\item $C_V$ extends to a bi-Lipschitz $C^2$ map from a neighborhood of $\ov{B}$ onto $\ov{V}$.
\item $(dC_V)_{\un{x}}\frac{\partial}{\partial x},(dC_V)_{\un{x}}\frac{\partial}{\partial y},X_{C_V(\un{x})}$
are linearly independent for $\un{x}\in \ov{B}$.
\end{enumerate}

Since $M$ is compact and $X$ has no zeroes, $\|X_p\|$ is bounded from below.
This, together with the $C^{1+\beta}$ regularity of $X$, implies that $\vec{n}_p:=X_p/\|X_p\|$ is Lipschitz on $M$.
Apply the Gram-Schmidt procedure to
$\vec{n}_{C_V(\un{x})},(dC_V)_{\un{x}}\frac{\partial}{\partial x}$,
$(dC_V)_{\un{x}}\frac{\partial}{\partial y}$ for $\un{x}\in \ov{B}$. The result is a Lipschitz orthonormal
frame $\{\vec{n}_p, \vec{u}_p,\vec{v}_p\}$ for $T_p M$, $p\in \ov{V}$.

For every $p\in \ov{V}$, define the function $F_p(x,y,t):=\vf^t[\exp_p(x \vec{u}_p+y \vec{v}_p)]$.
Then $(dF_p)_{\un{0}}$ is non-singular for every $p\in\ov{V}$. Since $p\mapsto \det (dF_p)_{\un{0}}$
is continuous and $\ov{V}$ is compact, $\det(dF_p)_{\un{0}}$ is bounded away from zero for $p\in\ov{V}$.
Since $(p,x,y,t)\mapsto \det (dF_p)_{(x,y,t)}$ is uniformly continuous on $\ov{V}\x \ov{B}$,
$\exists\delta(V)>0$ s.t. $\det(dF_p)_{(x,y,t)}$ is bounded away from zero on
$\{(p,x,y,t):p\in\ov{V}, x^2+y^2\leq \delta(V)^2, |t|\leq \delta(V)\}$.

Fix
$0<\delta<\min\{\delta(V):V\in\mathfs V\}$ s.t. $\delta<{r_{\mathrm{leb}}}/{2S_0}$ where
$S_0:=1+\max_{p\in M}\|X_p\|$ and $r_{\mathrm{leb}}$ is a Lebesgue number for $\mathfs V$.
For every $p\in M$,
$F_p\bigl(\{(x,y,t):x^2+y^2\leq \delta^2,|t|\leq \delta\}\bigr)\subset B_{r_{\mathrm{leb}}}(p)$,
so $\exists V\in\mathfs V$ s.t. $F_p\bigl(\{(x,y,t):x^2+y^2\leq \delta^2,|t|\leq \delta\}\bigr)\subset V=\dom (C_V^{-1})$.
For this $V$,
$$
G=G_{p,V}:=C_V^{-1}\circ F_p: \{(x,y,t):x^2+y^2\leq \delta^2,|t|\leq \delta\}\to\R^3
$$
is a well-defined map, with Jacobian uniformly bounded away from zero.
A direct calculation shows that $\|dG_{(x,y,t)}\|, \|(dG_{(x,y,t)})^{-1}\|$ and the $\beta$--H\"older
norm of $dG$ are uniformly bounded by constants that do not depend on $p,V$.

By the uniform inverse function theorem, for every $0<\delta'\leq \delta$, the image
$G\bigl(\{(x,y,t):x^2+y^2\leq (\delta')^2,|t|\leq \delta'\}\bigr)$ contains a ball $B^\ast$ of
some fixed radius $\mathfrak d'(\delta')$ centered at $C_V^{-1}(p)$, and $G$ can be
inverted on $B^\ast$. So $F_p^{-1}$ is well-defined and smooth on $C_V(B^\ast)$.
Since $C_V$ is bi-Lipschitz, there is a constant $\mathfrak d(V,\delta')$ s.t.
$C_V(B^\ast)\supset B_{\mathfrak d(V,\delta')}(p)$, so $F_p^{-1}$ is well-defined and
smooth on $B_{\mathfrak d(V,\delta')}(p)$. The $C^{1+\beta}$ norm of the $F_p^{-1}$
there is uniformly bounded by a constant which only depends on $V$. Thus $(q,t)\mapsto \vf^t(q)$
can be inverted with bounded $C^{1+\beta}$ norm on $B_{\mathfrak d(V,\delta')}(p)$.
Let $K(V)$ denote a bound on the Lipschitz constant of the inverse function, and let
$\rho(V):=\delta/2K(V)$, then $(q,t)\mapsto \vf^t(q)$ is a diffeomorphism from
$S_{\rho(V)}(p)\x[-\rho(V),\rho(V)]$ onto ${\rm FB}_{\rho(V)}(p)$. Let
$\mathfrak r_f:=\min\{\rho(V):V\in\mathfs V\}$, then $(q,t)\mapsto\vf^t(q)$ is a diffeomorphism
from $S_{\mathfrak r_f}(q)\x [-\mathfrak r_f, \mathfrak r_f]$ onto ${\rm FB}_{\mathfrak r_f}(p)$.
The lemma follows with this $\mathfrak r_f$, and with
$\mathfrak d:=\min\{\mathfrak d(V,\frac{1}{2}\mathfrak r_f):V\in\mathfs V\}$.\hfill$\Box$

\subsection*{Proof of Lemma \ref{LemmaFB_2}}

We use the notation of the previous proof. Invert the function $F_p(x,y,t):=\vf^t[\exp_p(x\vec{u}_p+y\vec{v}_p)]$
on $B_{\mathfrak d}(p)$:
$$
F_p^{-1}(z)=(x_p(z),y_p(z),t_p(z)) \ \ \ (z\in B_{\mathfrak d}(p)).
$$
By the uniform inverse function theorem, the $C^{1+\beta}$ norm of $G^{-1}$ is bounded by
some constant independent of $p,V$.
Since $F_p^{-1}=G^{-1}\circ C_V^{-1}$, $C_V$ is bi-Lipschitz, and $\mathfs V$ is finite,
$x_p(\cdot),y_p(\cdot),t_p(\cdot)$ have uniformly bounded Lipschitz constants (independent of $p$),
and the differentials of $x_p, y_p, t_p$ are $\beta$--H\"older with uniformly bounded
H\"older constants (independent of $p$).
Clearly ${\mathfrak t}_p(z):=t_p(z)$ and ${\mathfrak q}_p(z):=\exp_p[x_p(z)\vec{u}_p+y_p(z)\vec{v}_p]$
are the unique solutions for
$z=\vf^{{\mathfrak t}_p(z)}[{\mathfrak q}_p(z)]$. Thus ${\mathfrak t}_p, {\mathfrak q}_p$ are
Lipschitz functions with Lipschitz constant bounded by some $\mathfrak L$ independent of $p$,
and $C^{1+\beta}$ norm bounded by some $\mathfrak H$ independent of $p$.
\hfill$\Box$

\subsection*{Proof of Lemma \ref{LemmaStandardSection}}

Cover $M$ by a finite number of flow boxes ${\rm FB}_r(z_i)$ with radius $r$.
The union of $S_r(z_i)$ is a Poincar\'e section, but this section is not necessarily standard,
because $S_r(z_i)$ are not necessarily pairwise disjoint.
To solve this problem we approximate each $S_r(z_i)$ by a finite ``net" of points $z^i_{jk}$,
and shift each $z^i_{jk}$ up or down along the flow to points $p^i_{jk}=\vf^{\theta^i_{jk}}(z^i_{jk})$
in such a way that $S_{R_0}(p^i_{jk})$ are pairwise disjoint for some $R_0<r$ which is still large
enough to ensure that $\bigcup S_{R_0}(z^i_{jk})$ is a Poincar\'e section.

We begin with the choice of some constants. Let:
\begin{enumerate}[$\circ$]
\item $h_0>0$ small, $K_0>1$ large (given to us). Without loss of generality,  $0<h_0<\mathfrak r_f$.
\item $r_{\mathrm{inj}}\in(0,1)$  s.t. $\exp_p:\{\vec{v}\in T_p M:\|\vec{v}\|\leq r_{\mathrm{inj}}\}\to M$
is $\sqrt{2}$--bi-Lipschitz  for all $p\in M$.
\item $S_0:=1+\max\|X_p\|$ and $\mathfrak r, \mathfrak d, \mathfrak L$
are as in Lemmas \ref{LemmaTD}--\ref{LemmaFB_2}.
Recall that $\mathfrak r,\mathfrak d\in (0,1)$ and  $\mathfrak L>1$.
\item $r_0:=\frac{1}{9}\mathfrak r \mathfrak d h_0 r_{\mathrm{inj}}/(K_0+S_0)$. Notice that $r_0<\frac{1}{9}\mathfrak r,\frac{1}{9}\mathfrak d, \frac{1}{9}h_0,\frac{1}{9}r_{\mathrm{inj}}$.
\end{enumerate}
By Lemma \ref{LemmaFB_1} and the compactness of $M$, it is possible to cover
$M$ by finitely many flow boxes ${\rm FB}_{r_0}(z_1),\ldots,{\rm FB}_{r_0}(z_N)$.
With this $N$ in mind, let:
\begin{enumerate}[$\circ$]
\item $\rho_0:=r_0(10 K_0 S_0 N \mathfrak L)^{-20}$. This is smaller than $r_0$.
\item $R_0:=K_0 \rho_0$. This is larger than $\rho_0$, but still much smaller than $r_0$.
\item $\d_0:=\rho_0/(8\mathfrak L^2)$. This is much smaller than $r_0$.
\item $\kappa_0:=\lceil 10^2 K_0 \mathfrak L^4\rceil$, a big integer.
\end{enumerate}

For every $i$, complete $\vec{n}_i:=X_{z_i}/\|X_{z_i}\|$ to an orthonormal basis
$\{\vec{u}_i,\vec{v_i},\vec{n}_i\}$ of $T_{z_i}M$, and let $J_i:\R^2\to M$ be the map
$$
J_i(x,y)=\exp_{z_i}(x\vec{u}_i+y\vec{v}_i),
$$
then $S_{r_0}(z_i)=J_i\bigl(\{(x,y):x^2+y^2\leq r_0^2\}\bigr)$.
The map $J_i$ is $\sqrt{2}$--bi-Lipschitz, because $r_0<r_{\mathrm{inj}}$.
Let $I:=\{(j,k)\in\Z^2:(j\d_0)^2+(k\d_0)^2\leq r_0^2\}$. Given $1\leq i\leq N$ and $(j,k)\in I$, define
$$
z_{jk}^i:=J_i(j\d_0,k\d_0).
$$
Then $\{z_{jk}^i: (j,k)\in I\}$ is a net of points in $S_{r_0}(z_i)$, and for all $(j,k)\neq (\ell,m)$:
\begin{equation}\label{GridEst}
\frac{1}{\sqrt{2}}\leq \frac{\dist_M(z_{jk}^i, z_{\ell m}^i)}{\d_0\sqrt{(j-l)^2+(k-m)^2}}\leq \sqrt{2}.
\end{equation}

We will construct points $p_{jk}^i:=\vf^{\theta_{jk}^i}(z_{jk}^i)$ with $\theta_{jk}^i\in [-r_0,r_0]$
s.t. ${S_{R_0}(p_{jk}^i)}$ are pairwise disjoint. The following claim will help us prove disjointness.

\medskip
\noindent
{\sc Claim.\/} {\em Suppose $p^i_{jk}=\vf^{\theta^i_{jk}}(z^i_{jk})$, $p^i_{\ell m}=\vf^{\theta^i_{\ell m}}(z^i_{\ell m})$,
where $\theta^i_{jk}, \theta^i_{\ell m}\in [-r_0,r_0]$. If
$
{S_{R_0}(p^i_{jk})}\cap {S_{R_0}(\vf^{\tau_1}(z^\gamma_{\alpha \beta}))}\neq \emptyset\textrm{ and }
 {S_{R_0}(p^i_{\ell m})}\cap {S_{R_0}(\vf^{\tau_2}(z^\gamma_{\alpha \beta}))}\neq \emptyset
$
for the same  $z^\gamma_{\alpha \beta}$ and some $\tau_1,\tau_2\in [-r_0,r_0]$,
then $\max\{|j-\ell|, |k-m|\}<\kappa_0$.}

\medskip
\noindent
In particular, $S_{R_0}(p^i_{jk})\cap S_{R_0}(p^i_{\ell m})\neq\emptyset\Rightarrow\max\{|j-\ell|, |k-m|\}<\kappa_0$
(take $z_{\alpha\beta}^{\gamma}=z^i_{\ell m}$, $\tau_1=\tau_2=\theta^i_{\ell m}$).

\medskip
\noindent
{\em Proof.\/}
${S_{R_0}(p^i_{jk})},{S_{R_0}(p^i_{\ell m})},z^\gamma_{\alpha\beta},
\vf^{\tau_1}(z^\gamma_{\alpha\beta}),\vf^{\tau_2}(z^\gamma_{\alpha\beta})$
are all contained in $B_{\mathfrak d}(z_i)$:
\begin{enumerate}[$\circ$]
\item ${S_{R_0}(p^i_{jk})}\subset B_{\mathfrak d}(z_i)$, because if $q\in {S_{R_0}(p^i_{jk})}$ then
$\dist_M(q,z_i)\leq \dist_M(q,p^i_{jk})+\dist_M(p^i_{jk},z^i_{jk})+\dist_M(z^i_{jk},z_i)\, \leq R_0+r_0 S_0+r_0<\mathfrak d$.  Similarly, ${S_{R_0}(p^i_{\ell m})}\subset B_{\mathfrak d}(z_i)$.

\item  $z^\gamma_{\alpha\beta}\in B_{\mathfrak d}(z_i)$:
$\dist_M(z^\gamma_{\alpha \beta},z_i)\leq \dist_M(z^\gamma_{\alpha \beta},\vf^{\tau_1}(z^\gamma_{\alpha \beta}))+\dist_M(\vf^{\tau_1}(z^\gamma_{\alpha \beta}),p^i_{jk})+\dist_M(p^i_{jk},z^i_{jk})+\dist_M(z^i_{jk},z_i)
\leq r_0 S_0+2R_0+r_0 S_0+r_0<\mathfrak d$.
\item $\vf^{\tau_1}(z^\gamma_{\alpha\beta})\in B_{\mathfrak d}(z_i)$:
$\dist_M(\vf^{\tau_1}(z^\gamma_{\alpha\beta}),z_i)\leq \dist_M(\vf^{\tau_1}(z^\gamma_{\alpha\beta}),p^i_{jk})+\dist_M(p^i_{jk},z^i_{jk})+\dist_M(z^i_{jk},z_i)<2R_0+r_0S_0+r_0<\mathfrak d$.
Similarly, $\vf^{\tau_2}(z^\gamma_{\alpha\beta})\in B_{\mathfrak d}(z_i)$.
\end{enumerate}

By Lemma \ref{LemmaFB_1}, the flow box coordinates ${\mathfrak t}_{z_i}(\cdot),{\mathfrak q}_{z_i}(\cdot)$
of $\vf^{\tau_1}(z^\gamma_{\alpha\beta})$, $\vf^{\tau_2}(z^\gamma_{\alpha\beta})$, $z^\gamma_{\alpha\beta}$,
and of every point in ${S_{R_0}(p^i_{jk})}, {S_{R_0}(p^i_{\ell m})}$ are well-defined.

Recall that ${\mathfrak t}_{z_i}, {\mathfrak q}_{z_i}$ have Lipschitz constants less than $\mathfrak L$.
In the set of circumstances we consider
$\dist_M(p^i_{jk},\vf^{\tau_1}(z^\gamma_{\alpha \beta}))\leq 2R_0$
and ${\mathfrak q}_{z_i}(p^i_{jk})=z^i_{jk}$, so
$$
\dist_M(z^i_{jk},{\mathfrak q}_{z_i}(z^\gamma_{\alpha \beta}))=\dist_M({\mathfrak q}_{z_i}(p^i_{jk}),{\mathfrak q}_{z_i}(\vf^{\tau_1}(z^\gamma_{\alpha \beta})))\leq 2 \mathfrak L R_0.
$$
Similarly, $\dist(z^i_{\ell m},{\mathfrak q}_{z_i}(z^\gamma_{\alpha \beta}))\leq 2\mathfrak L R_0$.
It follows that $\dist_M(z^i_{jk},z^i_{\ell m})\leq 4\mathfrak L R_0$. By (\ref{GridEst}),
$\max\{|j-\ell|,|k-m|\}\leq 4\sqrt{2}\mathfrak L R_0/\d_0=
4\sqrt{2}\mathfrak L K_0\rho_0\big/(\rho_0/8\mathfrak L^2)< \kappa_0$.

\medskip
The  claim is proved. We proceed to construct by induction $\theta_{jk}^i\in [-r_0,r_0]$ and
$p^i_{jk}:=\vf^{\theta^i_{jk}}(z^i_{jk})$ such that $\{{S_{R_0}(p^i_{jk})}:1\leq i\leq N, (j,k)\in I\}$ are pairwise disjoint.

\medskip
\noindent
{\sc Basis of induction:\/} {\em $\exists\theta_{jk}^1\in [-r_0,r_0]$ s.t.
$\{{S_{R_0}(p^1_{jk})}\}_{(j,k)\in I}$ are pairwise disjoint.}

\medskip
\noindent
{\em Construction:\/}
Let $\wh{\s}:\{0,1,\ldots,\kappa_0-1\}\x \{0,1,\ldots,\kappa_0-1\}\to \{1,\ldots,\kappa_0^2\}$ be a bijection,
and set $\sigma_{jk}:=\wh{\sigma}\bigl(j\mathrm{\ mod\,}\kappa_0,k\mathrm{\ mod\,}\kappa_0\bigr)$.
This has the effect that
$$
0<\max\{|j-\ell|,|k-m|\}<\kappa_0\Longrightarrow|\sigma_{jk}-\sigma_{\ell m}|\geq 1.
$$
We let $\theta^1_{jk}:=2R_0\mathfrak L\s_{jk}$ and $p^1_{jk}:=\vf^{\theta^1_{jk}}(z^1_{jk})$.
It is easy to check that $0<\theta^1_{jk}<r_0$.
One shows as in the proof of the claim that ${S_{R_0}(p^1_{jk})}\subset B_{\mathfrak d}(z_1)$,
therefore ${\mathfrak t}_{z_1}$ is well-defined on ${S_{R_0}(p^1_{jk})}$.
Since $\Lip({\mathfrak t}_{z_1})\leq \mathfrak L$ and ${\mathfrak t}_{z_1}(p^1_{jk})=\theta^1_{jk}$,
\begin{equation}\label{T-range}
{\mathfrak t}_{z_1}\bigl[{S_{R_0}(p^1_{jk})}\bigr]\subset \bigl(\theta^1_{jk}-\mathfrak LR_0, \theta^1_{jk}+\mathfrak LR_0\bigr).
\end{equation}

Now suppose $(j,k)\neq (\ell,m)$. If $\max\{|j-\ell|,|k-m|\}\geq \kappa_0$,
then ${S_{R_0}(p^1_{jk})}\cap {S_{R_0}(p^1_{\ell m})}=\emptyset$, because of the claim.
If $\max\{|j-\ell|,|k-m|\}<\kappa_0$, then
$|\theta_{jk}^1-\theta_{\ell m}^1|\geq 2R_0\mathfrak L$. By (\ref{T-range}),
${\mathfrak t}_{z_1}\bigl[{S_{R_0}(p^1_{jk})}\bigr]\cap {\mathfrak t}_{z_1}\bigl[{S_{R_0}(p^1_{\ell m})}\bigr]=\emptyset$,
and again ${S_{R_0}(p^1_{jk})}\cap {S_{R_0}(p^1_{\ell m})}=\emptyset$.

\medskip
\noindent
{\sc Induction step:\/} {\em  If  $\exists\theta^i_{jk}\in[-r_0,r_0]$ s.t.
$\{{S_{R_0}(p^{i}_{jk})}:1\leq i\leq n,(j,k)\in I\}$ are pairwise disjoint, then $\exists\theta^i_{jk}\in[-r_0,r_0]$
s.t. $\{{S_{R_0}(p^{i}_{jk})}:1\leq i\leq n+1,(j,k)\in I\}$ are pairwise disjoint.}

 \medskip
 Fix $(j,k)\in I$. We divide $\{p^i_{\ell,m}: 1\leq i\leq n, (\ell,m)\in I\}$ into two groups:
 \begin{enumerate}[$\circ$]
 \item {\em ``Dangerous"} (for $p^{n+1}_{jk}$): $\exists\theta\in[-r_0,r_0]$ s.t.
 ${S_{R_0}(p^{i}_{\ell m})}\cap {S_{R_0}(\vf^\theta(z^{n+1}_{jk}))}\neq \emptyset$;
 \item {\em ``Safe"} (for $p^{n+1}_{jk}$): not dangerous.
 \end{enumerate}
Here we employ the terminology ``safe'' when the induction step follows directly from the
basis of induction, and ``dangerous'' otherwise. Indeed, no matter how we define $\theta^{n+1}_{jk}$,
${S_{R_0}(p^{n+1}_{jk})}\cap {S_{R_0}(p^{i}_{\ell m})}=\emptyset$ for all safe $p^i_{\ell m}$.
But the dangerous $p^i_{\ell m}$ will introduce constraints on the possible values of $\theta^{n+1}_{jk}$.

By the claim, if $p^i_{\ell_1,m_1}, p^i_{\ell_2,m_2}$ are dangerous for $p^{n+1}_{jk}$, then
$|\ell_1-\ell_2|, |m_1-m_2|<\kappa_0$. It follows that there are at most $4\kappa_0^2N$
dangerous points for a given $p^{n+1}_{jk}$.

Let $W_{n+1}(p^i_{\ell m}):={\mathfrak t}_{z_{n+1}}\bigl[{S_{R_0}(p^{i}_{\ell m})}\bigr]$.
Since $\mathrm{Lip}({\mathfrak t}_{z_{n+1}})\leq \mathfrak L$, $W_{n+1}(p^i_{\ell m})$
is a closed interval of length less than $w_0:=2\mathfrak L R_0$.

If $p^i_{\ell m}$ is dangerous for $p^{n+1}_{jk}$, then we call $W_{n+1}(p^i_{\ell m})$ a
``dangerous interval" for $p^{n+1}_{jk}$. Let $W^{n+1}_{jk}$ denote the union of all dangerous
intervals for $p^{n+1}_{jk}$, and define
$$
T^{n+1}(j,k):=\bigcup_{|j'-j|, |k'-k|<\kappa_0}W^{n+1}_{j' k'}.
$$
This is a union of no more than $16\kappa_0^4 N$ intervals of length less than $w_0$ each.

Cut $[-r_0,r_0]$ into four equal ``quarters": $Q_1:=[-r_0,-\frac{r_0}{2}],\ldots,Q_4:=[\frac{r_0}{2},r_0]$.
If we subtract $n<L/w$ intervals of length less than $w$ from an interval of length $L$,
then the remainder must contain at least one interval of length $(L-nw)/(n+1)$.
It follows that for every $s=1,\ldots,4$,
$$
Q_s\setminus T^{n+1}(\kappa_0\lfloor \tfrac{j}{\kappa_0}\rfloor,\kappa_0\lfloor \tfrac{k}{\kappa_0}\rfloor)\supset\textrm{ an interval of length }
\frac{r_0}{32\kappa_0^4 N+2}-w_0\gg10\kappa_0^2 w_0.
$$
Let $\tau^{n+1}(j,k)$ denote the
\begin{enumerate}[$\circ$]
\item center of such an interval in $Q_1$, when $(\lfloor\frac{j}{\kappa_0}\rfloor,\lfloor\frac{k}{\kappa_0}\rfloor)=(0,0)\mod 2$,
\item center of such an interval in $Q_2$, when $(\lfloor\frac{j}{\kappa_0}\rfloor,\lfloor\frac{k}{\kappa_0}\rfloor)=(1,0)\mod 2$,
\item center of such an interval in $Q_3$, when $(\lfloor\frac{j}{\kappa_0}\rfloor,\lfloor\frac{k}{\kappa_0}\rfloor)=(0,1)\mod 2$,
\item center of such an interval in $Q_4$, when $(\lfloor\frac{j}{\kappa_0}\rfloor,\lfloor\frac{k}{\kappa_0}\rfloor)=(1,1)\mod 2$.
\end{enumerate}

\medskip
Define $\theta^{n+1}_{jk}:=
\tau^{n+1}\bigl(\kappa_0\lfloor\frac{j}{\kappa_0}\rfloor,\kappa_0\lfloor\frac{k}{\kappa_0}\rfloor\bigr)+3w_0\sigma_{jk}$.
This belongs to $[-r_0,r_0]$, because $\tau^{n+1}\bigl(\kappa_0\lfloor\frac{j}{\kappa_0}\rfloor, \kappa_0\lfloor\frac{k}{\kappa_0}\rfloor\bigr)$ is the center of an interval in $Q_s$ of radius at least $5\kappa_0^2 w_0>3w_0\sigma_{jk}$,
and $Q_s\subset [-r_0,r_0]$.
Moreover, since $\mathrm{Lip}(\mathfrak t_{z_{n+1}})\leq \mathfrak L$ and $w_0=2\mathfrak L R_0$,
we have $\mathfrak t_{z_{n+1}}[S_{R_0}(p^{n+1}_{jk})]\subset \bigl[\theta^{n+1}_{jk}-w_0,\theta^{n+1}_{jk}+w_0\bigr]$,
which by the definition of $\tau^{n+1}(j,k)$, lies outside
$T^{n+1}(\kappa_0\lfloor\tfrac{j}{\kappa_0}\rfloor,\kappa_0\lfloor \tfrac{k}{\kappa_0}\rfloor)$. Thus
\begin{equation}\label{t-coordinate-inclusion}
\mathfrak t_{z_{n+1}}[S_{R_0}(p^{n+1}_{jk})]\subset
[-r_0,r_0]\setminus T^{n+1}(\kappa_0\lfloor\tfrac{j}{\kappa_0}\rfloor,\kappa_0\lfloor \tfrac{k}{\kappa_0}\rfloor).
\end{equation}
We use this to show that  ${S_{R_0}(p^{n+1}_{jk})}\cap {S_{R_0}(p^{i}_{\ell m})}=\emptyset$
for $(\ell,m)\in I$, $i\leq n$. If $p^i_{\ell m}$ is safe for $p^{n+1}_{jk}$, then there is nothing to prove.
If it is dangerous, ${\mathfrak t}_{z_{n+1}}\bigl[{S_{R_0}(p^{i}_{\ell m})}\bigr]\subset W^{n+1}_{jk}\subset T^{n+1}\bigl(\kappa_0\lfloor\frac{j}{\kappa_0}\rfloor, \kappa_0\lfloor\frac{k}{\kappa_0}\rfloor\bigr)$.
By (\ref{t-coordinate-inclusion}), ${S_{R_0}(p^{n+1}_{jk})}\cap {S_{R_0}(p^{i}_{\ell m})}=\emptyset$.

Next we show that  ${S_{R_0}(p^{n+1}_{jk})}$ is disjoint from every
${S_{R_0}(p^{n+1}_{\ell m})}$ s.t. $(\ell,m)\neq (j,k)$. There are three cases:
 \begin{enumerate}[$\circ$]
\item $\max\{|j-\ell|, |k-m|\}\geq \kappa_0$: use the claim.

\medskip
\item $0<\max\{|j-\ell|, |k-m|\}< \kappa_0$ and $(\lfloor\tfrac{j}{\kappa_0}\rfloor, \lfloor\tfrac{k}{\kappa_0}\rfloor)=(\lfloor\tfrac{\ell}{\kappa_0}\rfloor,\lfloor\tfrac{m}{\kappa_0}\rfloor)$: in this case
$|\theta^{n+1}_{jk}-\theta^{n+1}_{\ell m}|\geq 3w_0$. Since
${\mathfrak t}_{z_{n+1}}\bigl[{S_{R_0}(p^{n+1}_{jk})}\bigr]\subset [\theta^{n+1}_{jk}-w_0,\theta^{n+1}_{jk}+w_0]$
and
${\mathfrak t}_{z_{n+1}}\bigl[{S_{R_0}(p^{n+1}_{\ell m})}\bigr]\subset [\theta^{n+1}_{\ell m}-w_0,\theta^{n+1}_{\ell m}+w_0]$,
${\mathfrak t}_{z_{n+1}}\bigl[{S_{R_0}(p^{n+1}_{jk})}\bigr]\cap{\mathfrak t}_{z_{n+1}}\bigl[{S_{R_0}(p^{n+1}_{\ell m})}\bigr]=\emptyset$. So ${S_{R_0}(p^{n+1}_{jk})}\cap {S_{R_0}(p^{n+1}_{\ell m})}=\emptyset$.

\medskip
\item $0<\max\{|j-\ell|, |k-m|\}< \kappa_0$ and $(\lfloor\tfrac{j}{\kappa_0}\rfloor, \lfloor\tfrac{k}{\kappa_0}\rfloor)\neq (\lfloor\tfrac{\ell}{\kappa_0}\rfloor,\lfloor\tfrac{m}{\kappa_0}\rfloor)$: in this case
$$
\max\bigl\{\bigl|\lfloor\tfrac{j}{\kappa_0}\rfloor-\lfloor\tfrac{\ell}{\kappa_0}\rfloor\bigr|,
\bigl|\lfloor\tfrac{k}{\kappa_0}\rfloor-\lfloor\tfrac{m}{\kappa_0}\rfloor\bigr|\bigr\}=1,
$$
so $\tau^{n+1}\bigl(\kappa_0\lfloor\frac{j}{\kappa_0}\rfloor, \kappa_0\lfloor\frac{k}{\kappa_0}\rfloor\bigr)$,
$\tau^{n+1}\bigl(\kappa_0\lfloor\frac{\ell}{\kappa_0}\rfloor, \kappa_0\lfloor\frac{m}{\kappa_0}\rfloor\bigr)$
fall in different $Q_s$. Necessarily
${\mathfrak t}_{z_{n+1}}\bigl[{S_{R_0}(p^{n+1}_{jk})}\bigr]\cap {\mathfrak t}_{z_{n+1}}\bigl[{S_{R_0}(p^{n+1}_{\ell m})}\bigr]=\emptyset$, so ${S_{R_0}(p^{n+1}_{jk})}\cap {S_{R_0}(p^{n+1}_{\ell m})}=\emptyset$.
\end{enumerate}
This concludes the inductive step, and the construction of $\theta^i_{jk}$.

\medskip
\noindent
{\sc Completion of the proof:\/}
{\em For every $r\in[\rho_0,R_0]$, $\Lambda_r:=\biguplus_{i=1}^N\biguplus_{(j,k)\in I}S_{r}(p^i_{jk})$
is a standard Poincar\'e section with roof function bounded above by $h_0$.}

\medskip
We saw that the union is disjoint for $r=R_0$, therefore it is disjoint for all $r\leq R_0$.
We will show that the union is a Poincar\'e section with roof function bounded by $h_0$
for $r=\rho_0$, and then this statement will follow for all $r\geq \rho_0$.

Given  $p\in M$, we must find  $0<R<h_0$ s.t. $\vf^R(p)\in\Lambda_{\rho_0}$.
Since $M\subset \bigcup_{i=1}^N {\rm FB}_{r_0}(z_i)$, $\exists i$ s.t. $\vf^{4r_0}(p)\in {\rm FB}_{r_0}(z_i)$.
Therefore $\vf^{4r_0}(p)=\vf^t(z)$ for some $z\in S_{r_0}(z_i)$, $|t|<r_0$, whence $\vf^{4r_0-t}(p)\in S_{r_0}(z_i)$.

Write $\vf^{4r_0-t}(p)=J_i(x,y)$ for some $(x,y)$ s.t. $x^2+y^2\leq r_0^2$, and choose $(j,k)\in I$
s.t. $|x-j\d_0|, |y-k\d_0|<\d_0$. Since $J_i$ is $\sqrt{2}$--bi-Lipschitz, $\dist_M(\vf^{4r_0-t}(p),z^i_{jk})<2\d_0$.
It follows that $\dist_M(\vf^{4r_0-t}(p),p^i_{jk})<2\d_0+r_0 S_0<\mathfrak d$. This places
$\vf^{4r_0-t}(p)$ inside ${\rm FB}_{\mathfrak r_f}(p^i_{jk})$. Let $\dist_S$ denote the intrinsic
distance on $S_{\mathfrak r_s}(p^i_{jk})$. We have $\dist_S\leq 2\dist_M$ (see Lemma \ref{LemmaTD}),
therefore, since $p^i_{jk}={\mathfrak q}_{p^i_{jk}}(z^i_{jk})$,
\begin{align*}
&\dist_S({\mathfrak q}_{p^i_{jk}}(\vf^{4r_0-t}(p)),p^i_{jk})=
\dist_S({\mathfrak q}_{p^i_{jk}}(\vf^{4r_0-t}(p)),{\mathfrak q}_{p^i_{jk}}(z^i_{jk}))\leq\\
&\leq 2\dist_M({\mathfrak q}_{p^i_{jk}}(\vf^{4r_0-t}(p)),{\mathfrak q}_{p^i_{jk}}(z^i_{jk}))
\leq 2\mathfrak L \dist_M(\vf^{4r_0-t}(p),z^i_{jk})<4\mathfrak L \d_0<\rho_0.
\end{align*}
Thus $\vf^R(p)\in S_{\rho_0}(p^i_{jk})\subset\Lambda_{\rho_0}$ for
$R:=4r_0-t-{\mathfrak t}_{p^i_{jk}}[\vf^{4r_0-t}(p)]$.

Now $|{\mathfrak t}_{p^i_{jk}}[\vf^{4r_0-t}(p)]|\leq 2r_0$, because $|\theta^i_{jk}|\leq r_0$ and
$|{\mathfrak t}_{p^i_{jk}}[\vf^{4r_0-t}(p)]+\theta^i_{jk}|=
|{\mathfrak t}_{p^i_{jk}}[\vf^{4r_0-t}(p)]-{\mathfrak t}_{p^i_{jk}}[z^i_{jk}]|\leq \mathfrak L \dist_M(\vf^{4r_0-t}(p),z^i_{jk})
<2\mathfrak L\d_0<r_0$.
Also $|t|<r_0$. So $r_0<R<7r_0$. Since $r_0<\frac{1}{9}h_0$, we conclude that $0<R<h_0$.

\subsection*{Proof of Theorem \ref{Thm_Symbolic_Dynamics_For_f}(5)}

The proof is motivated by \cite{Bowen-Regional-Conference}.
Say that $R,R'\in\mathfs R$ are {\em affiliated}, if there are $Z,Z'\in\mathfs Z$ s.t.
$R\subset Z$, $R'\subset Z'$, and $Z\cap Z'\neq\emptyset$. Let $N(R,S):=N(R)N(S)$, where
$$
N(R):=\#\{(R',v')\in\mathfs R\x\mathfs A:\textrm{$R'$ is affiliated to $R$ and $Z(v')\supset R'$}\}.
$$
This is finite, because of the local finiteness of $\mathfs Z$.
Let $x=\pi(\un{R})$ where $R_i=R$ for infinitely many $i<0$ and $R_i=S$ for infinitely many $i>0$.
Let $N:=N(R,S)$, and suppose by way of contradiction that $x$ has $N+1$ different pre-images
$\un{R}^{(0)},\ldots,\un{R}^{(N)}\in\Sigma^{\#}(\widehat{\mathfs G})$, with $\un{R}^{(0)}=\un{R}$.
Write $\un{R}^{(j)}=\{R^{(j)}_k\}_{k\in\Z}$. By Lemma \ref{Lem_R_Z} there are $\un{v}^{(j)}\in \Sigma(\mathfs G)$
s.t. for every $n$,
$$
_{-n}[R^{(j)}_{-n},\ldots,R^{(j)}_n]\subset Z_{-n}(v^{(j)}_{-n},\ldots,v^{(j)}_n)\textrm{ and } R^{(j)}_n\subset Z(v^{(j)}_n).
$$
For every $j$, $\un{v}^{(j)}\in \Sigma^\#({\mathfs G})$, because $\un{R}^{(j)}\in\Sigma^\#(\widehat{\mathfs G})$
and $\mathfs Z$ is locally finite. It follows that $\pi(\un{v}^{(j)})\in Z_{-n}(v^{(j)}_{-n},\ldots,v^{(j)}_n)$
for all $n$.\footnote{At this point the proof given in \cite{Sarig-JAMS} has a mistake.
There it is claimed that $\pi(\un{v}^{(j)})\in Z_{-n}(v_{-n}^{(j)},\ldots,v^{(j)}_n)$ without making
the assumption that $\un{R}^{(j)}\in\Sigma^\#(\wh{\mathfs G})$. }

Since $x=\wh{\pi}(\un{R}^{(j)})\in\ov{_{-n}[R_{-n},\ldots,R_n]}\subset \ov{Z_{-n}(v^{(j)}_{-n},\ldots,v^{(j)}_{n})}$, and since the diameter of $Z_{-n}(v^{(j)}_{-n},\ldots,v^{(j)}_{n})$ tends to zero as $n\to\infty$ by the H\"older continuity of $\pi$,
$\pi(\un{v}^{(j)})=x$.
Thus $Z(v^{(0)}_i),\ldots,Z(v^{(N)}_i)$ all intersect (they  contain $f^i(x)=\pi[\sigma^i(\un{v}^{(j)})]$). This and the inclusion
$R^{(j)}_i\subset Z(v^{(j)}_i)$ give that $R^{(0)}_i,\ldots, R^{(N)}_i$ are affiliated for all $i$.

In particular, if $k,\ell>0$ satisfy $R^{(0)}_{-k}=R$ and $R^{(0)}_{\ell}=S$ (there are infinitely many such $k,\ell$), then there are at most $N=N(R)N(S)$ possibilities for the quadruple $(R^{(j)}_{-k},Z(v^{(j)}_{-k});R^{(j)}_\ell,Z(v^{(j)}_\ell))$, $j=0,\ldots,N$. By the pigeonhole principle, there are $0\leq j_1,j_2\leq N$ s.t. $j_1\neq j_2$ and
$$
(R^{(j_1)}_{-k},v^{(j_1)}_{-k})=(R^{(j_2)}_{-k},v^{(j_2)}_{-k})\textrm{ and }(R^{(j_1)}_\ell,v^{(j_1)}_\ell)=(R^{(j_2)}_\ell,v^{(j_2)}_\ell).
$$
We can also guarantee that
$$
(R^{(j_1)}_{-k},\ldots,R^{(j_1)}_{\ell})\neq (R^{(j_2)}_{-k},\ldots,R^{(j_2)}_{\ell}).
$$
To do this fix in advance some $m$ s.t. $(R^{(j)}_{-m},\ldots,R^{(j)}_{m})\ (j=0,\ldots,N)$ are all different, and work with $k,\ell>m$.

Now let $\un{A}:=\un{R}^{(j_1)}$, $\un{B}:=\un{R}^{(j_2)}$, $\un{a}:=\un{v}^{(j_1)}$, $\un{b}:=\un{v}^{(j_2)}$.  Write
$A_{-k}=B_{-k}=:B$, $A_{\ell}=B_{\ell}=:A$, $a_{-k}=b_{-k}=:b$, and $a_\ell=b_\ell=:a$. Choose
$$
x_A\in {_{-k}[}A_{-k},\ldots,A_\ell]\textrm{ and }x_B\in {_{-k}[}B_{-k},\ldots,B_\ell]
$$
and two points $z_A, z_B$ by the equations
\begin{align*}
f^{-k}(z_A)&:=[f^{-k}(x_B),f^{-k}(x_A)]\in W^u(f^{-k}(x_B),B)\cap W^s(f^{-k}(x_A),B)\\
f^{\ell}(z_B)&:=[f^{\ell}(x_B),f^{\ell}(x_A)]\in W^u(f^{\ell}(x_B),A)\cap W^s(f^{\ell}(x_A),A).
\end{align*}
This makes sense, because $f^{-k}(x_A), f^{-k}(x_B)\in B$ and $f^{\ell}(x_A), f^{\ell}(x_B)\in A$.
One checks using the Markov property of $\mathfs R$ that $z_A\in  {_{-k}[}A_{-k},\ldots,A_\ell]$,
and $z_B\in {_{-k}[}B_{-k},\ldots,B_\ell]$. Since $(A_{-k},\ldots,A_\ell)\neq (B_{-k},\ldots,B_\ell)$
and the elements of $\mathfs R$ are pairwise disjoint, $z_A\neq z_B$.
We will obtain the contradiction we are after by showing that $z_A=z_B$.

Since $f^{\ell}(z_A)\in A_{\ell}=A\subset Z(a)$ and $f^{-k}(z_B)\in B_{-k}=B\subset  Z(b)$,
there are $\un{\alpha},\un{\beta}\in\Sigma^\#(\mathfs G)$ s.t. $z_A=\pi(\un{\alpha}), z_B=\pi(\un{\beta})$,
$\alpha_\ell=a$, $\beta_{-k}=b$. Let $\un{c}=\{c_i\}_{i\in\Z}$ where $c_i=\beta_i$ for $i\leq -k$,
$c_i=a_i$ for $-k<i<\ell$, and $c_i=\alpha_i$ for $i\geq \ell$.
%
%
%
This belongs to $\Sigma^\#(\mathfs G)$, because $\un{\alpha}, \un{\beta}\in \Sigma^\#(\mathfs G)$
and $\beta_{-k}=b=a_{-k}$ and $a_\ell=a=\alpha_\ell$. We will show that $z_A=\pi(\un{c})=z_B$.
Write $c_i=\Psi_{x_i}^{p^u_i,p^s_i}$ $(i\in\Z)$. By the definition of $z_A,z_B$ and the Markov property,
$f^{-k}(z_A),f^{-k}(z_B)$ both belong to $W^u(f^{-k}(x_B),B)$, thus
$$
W^u(f^{-k}(z_A),B)=W^u(f^{-k}(z_B),B)=W^u(\pi(\s^{-k}\un{\beta}),B)\subset V^u[(c_i)_{i\leq -k}].
$$
It follows that $f^{i}(z_A),f^{i}(z_B)\in \Psi_{x_i}([-Q_\epsilon(x_i),Q_\epsilon(x_i)]^2)$ for all $i\leq -k$.

Similarly, $f^\ell(z_A),f^\ell(z_B)$ both belong to $W^s(f^\ell(x_A),A)$, whence
$$
W^s(f^{\ell}(z_A),A)=W^s(f^{\ell}(z_B),A)=W^s(\pi(\s^{\ell}\un{\alpha}),A)\subset V^s[(c_i)_{i\geq \ell}].
$$
It follows that $f^{i}(z_A),f^{i}(z_B)\in \Psi_{x_i}([-Q_\epsilon(x_i),Q_\epsilon(x_i)]^2)$ for all $i\geq \ell$.
For $-k< i< \ell$, $f^i(z_A), f^i(z_B)\in A_i\cup B_i\subset Z(a_i)\cup Z(b_i)$.
The sets $Z(a_i), Z(b_i)$ intersect, because as we saw above:
\begin{enumerate}[$\circ$]
\item $x=\pi(\un{a})\in Z_{-k}(a_{-k},\ldots,a_\ell)$, whence $f^i(x)\in Z(a_i)$.
\item $x=\pi(\un{b})\in Z_{-k}(b_{-k},\ldots,b_\ell)$, whence $f^i(x)\in Z(b_i)$.
\end{enumerate}
By the overlapping charts property of $\mathfs Z$ (see \S 5) and since $a_i=c_i$ for $-k< i< \ell$,
$$
Z(a_i)\cup Z(b_i)\subset \Psi_{x_i}([-Q_{\epsilon}(x_i),Q_{\epsilon}(x_i)]^2)\ \text{ for }i=-k+1,\ldots,\ell-1.
$$
In summary, $f^i(z_A), f^i(z_B)\in\Psi_{x_i}([-Q_{\epsilon}(x_i),Q_{\epsilon}(x_i)]^2)$ for all $i\in\Z$.
As shown in the proof of the shadowing lemma (Thm. \ref{Thm_Shadowing}),
$\un{c}$ shadows both $z_A$ and $z_B$, whence $z_A=z_B$.\hfill$\Box$

\medskip
\noindent
{\bf Remark.} We take this opportunity to correct a mistake in \cite{Sarig-JAMS}.
Theorem 12.8 in \cite{Sarig-JAMS} (the analogue of the statement we just proved) is
stated wrongly as a bound for the number of all pre-images of $x\in\wh{\pi}[\Sigma^\#(\wh{\mathfs G})]$.
But what is actually proved there (and all that is needed for the remainder of the paper)
is just a bound on the number of pre-images which belong to $\Sigma^\#(\wh{\mathfs G})$
(denoted there by $\Sigma_\chi^\#$). Thus the statements of Theorems 1.3 and 1.4 in \cite{Sarig-JAMS}
should be read as bounds on the number of pre-images in $\Sigma^\#_\chi$ (denoted here by
$\Sigma^\#(\wh{\mathfs G})$), and not as bounds on the number of pre-images in $\Sigma_\chi$
(denoted here by $\Sigma(\wh{\mathfs G})$). The other results or proofs in \cite{Sarig-JAMS}
are not affected by these changes, since
$\Sigma_\chi\setminus\Sigma^\#_\chi$ does not contain any periodic orbits, and because
$\Sigma_\chi\setminus\Sigma^\#_\chi$ has zero measure for every shift invariant probability
measure (Poincar\'e recurrence theorem).

\subsection*{Proof of Lemma \ref{Lemma-BW}}

Let $\psi:\Sigma_1\to\Sigma_1$ be the {\em constant} suspension flow, then:
\begin{enumerate}[$\circ$]
\item
For every horizontal segment $[z,w]_h$,
$|\tau|<1\Longrightarrow \left|\frac{\ell([\psi^\tau(z),\psi^\tau(w)]_h)}{\ell([z,w]_h)}-1\right|\leq 2e^2|\tau|$.
This uses the trivial bound $ d(\un{x},\un{y})/d(\sigma^k(\un{x}),\sigma^k(\un{y}))\in [e^{-1},e]$
for $|k|\leq 1$ and the metric $d(\un{x},\un{y}):=\exp[-\min\{|n|:x_n\neq y_n\}]$.
\item For every vertical segment $[z,w]_v$, $\ell([\psi^{\tau}(z),\psi^{\tau}(w)]_v)=\ell([z,w]_v)$ for all $\tau$.
\item Thus for all $z,w\in \Sigma_1$,
$|\tau|<1\Longrightarrow (1+2e^2|\tau|)^{-1}\leq \frac{d_1(\psi^\tau(z),\psi^\tau(w))}{d_1(z,w)}\leq (1+2e^2|\tau|)$.
\end{enumerate}

\noindent
{\sc Claim:\/} {\em $d_r$ is a metric on $\Sigma_r$.}

\medskip
\noindent
{\em Proof.\/} It is enough to show that $d_1$ is a metric. Symmetry and the triangle inequality are obvious;
we show that $d_1(z,w)=0\Rightarrow z=w$.
Let $z=(\un{x},t)$, $w=(\un{y},s)$, $\tau:=\frac{1}{2}-t$. If $d_1(z,w)=0$, then $d_1(\psi^\tau(z),\psi^\tau(w))=0$.
Let $\gamma=(z_0,z_1,\ldots,z_n)$ be a basic path from $\psi^\tau(z)$ to $\psi^\tau(w)$ with length
less than $\epsilon$, with $\epsilon<\frac{1}{3}$ fixed but arbitrarily small.
Write $z_i=(\un{x}_i,t_i)$, then $\psi^\tau(z)=(\un{x}_0,t_0)$ and $\psi^\tau(w)=(\un{x}_n,t_n)$.

Since the lengths of the vertical segments of $\wt{\gamma}$ add up to less than $\epsilon$ and $t_0=\frac{1}{2}$,
$\wt{\gamma}$ does not leave $\Sigma\x [\frac{1}{2}-\epsilon,\frac{1}{2}+\epsilon]$.
It follows that $|t_n-t_0|<\epsilon$. Since $\epsilon$ was arbitrary, $t_n=t_0$, and $\psi^\tau(z)$, $\psi^\tau(w)$
have the same second coordinate.

Since $\wt{\gamma}$ does not leave $\Sigma\x [\frac{1}{2}-\epsilon,\frac{1}{2}+\epsilon]$,
it does not cross $\Sigma\x\{0\}$. Writing a list of the horizontal segments
$[(\un{x}_{i_k},t_{i_k}),(\un{x}_{i_k+1},t_{i_k+1})]_h$, we find that $\un{x}_{i_k+1}=\un{x}_{i_{k+1}}$.
By the triangle inequality
$\epsilon>d_1(\psi^\tau(\un{x},t),\psi^\tau(\un{y},t))\geq
e^{-1}\sum d(\un{x}_{i_k},\un{x}_{i_{k+1}})\geq e^{-1}d(\un{x}_{0},\un{x}_{n})$.
Since $\epsilon$ is arbitrary, $\un{x}_0=\un{x}_n$, and $\psi^\tau(z)$, $\psi^\tau(w)$ have the same first coordinate.
Thus $\psi^\tau(z)=\psi^\tau(w)$, whence $z=w$.

\medskip
\noindent
{\sc Part (1):} {\em $d_r((\un{x},t),(\un{y},s))\leq \const[d(\un{x},\un{y})^\alpha+|t-s|]$,
where $\alpha$ denotes the H\"older exponent of $r$.}

\medskip
\noindent
{\em Proof.\/} $d_r((\un{x},t),(\un{y},s))\equiv d_1((\un{x},\frac{t}{r(\un{x})}),(\un{y},\frac{s}{r(\un{y})}))$.
The basic path $(\un{x},\frac{t}{r(\un{x})})$, $(\un{x},\frac{s}{r(\un{y})})$, $(\un{y},\frac{s}{r(\un{y})})$
shows that
$d_1((\un{x},\frac{t}{r(\un{x})}),(\un{y},\frac{s}{r(\un{y})}))
\leq\bigl|\frac{t}{r(\un{x})}-\frac{s}{r(\un{y})}\bigr|+ed(\un{x},\un{y})
\leq\frac{1}{\inf(r)}\bigl[|t-s|+\Hol_\alpha(r)d(\un{x},\un{y})^\alpha\bigr]+e d(\un{x},\un{y})
\leq\const[d(\un{x},\un{y})^\alpha+|t-s|]$.

\medskip
\noindent
{\sc Part (2):} {\em Let $\alpha$ denote the H\"older exponent of $r$. There is a constant $C_2$ which only depends on $r$ s.t. for all $z=(\un{x},t)$, $w=(\un{y},s)$ in $\Sigma_r$:
\begin{enumerate}[{\rm (a)}]
\item If $\bigl|\frac{t}{r(\un{x})}-\frac{s}{r(\un{y})}\bigr|\leq \frac{1}{2}$, then $d(\un{x},\un{y})\leq C_2 d_r(z,w)$ and $|s-t|\leq C_2 d_r(z,w)^\alpha$.
\item If $\frac{t}{r(\un{x})}-\frac{s}{r(\un{y})}>\frac{1}{2}$, then $d(\sigma(\un{x}),\un{y})\leq C_2 d_r(z,w)$ and $|t-r(x)|,s\leq C_2 d_r(z,w)$.
\end{enumerate}}

\noindent
{\em Proof.\/}
These estimates are trivial when $d_r(z,w)$ is bounded away from zero, so it is enough to prove
part (2) for $z,w$ s.t. $d_r(z,w)<\epsilon_0$, with $\epsilon_0$ a positive constant that will be chosen later.

Suppose $\bigl|\frac{t}{r(\un{x})}-\frac{s}{r(\un{y})}\bigr|<\frac{1}{2}$
and let $\tau:=\frac{1}{2}-\frac{t}{r(\un{x})}$ (a number in $(-\frac{1}{2},\frac{1}{2}]$), then
\begin{align*}
&\,d_r(z,w)=d_1(\vartheta_r(z),\vartheta_r(w))\geq (1+2e^2|\tau|)^{-1}d_1(\psi^\tau[\vartheta_r(z)],\psi^\tau[\vartheta_r(w)])\\
&\geq(1+2e^2)^{-1}d_1((\un{x},\tfrac{1}{2}),(\un{y},\tfrac{1}{2}+\delta)),\textrm{ where }\delta:=\tfrac{s}{r(\un{y})}-\tfrac{t}{r(\un{x})}.
\end{align*}
Notice that $(\un{y},\frac{1}{2}+\delta)\in\Sigma_1$, because $|\delta|<\frac{1}{2}$.

Suppose $\epsilon_0(1+2e^2)<\frac{1}{4}$, then  $d_1((\un{x},\tfrac{1}{2}),(\un{y},\tfrac{1}{2}+\delta))<\frac{1}{4}$. The basic paths whose lengths approximate $d_1((\un{x},\tfrac{1}{2}),(\un{y},\tfrac{1}{2}+\delta))$ are not long enough to leave $\Sigma\x[\frac{1}{4},\frac{3}{4}]$,   and  they cannot cross $\Sigma\x\{0\}$. For such paths the lengths of the vertical  segments add up to at least $\delta$, and the lengths of the horizontal segments add up to at least $e^{-1}d(\un{x},\un{y})$. Since $d_1((\un{x},\tfrac{1}{2}),(\un{y},\tfrac{1}{2}+\delta))\leq (1+2e^2)d_r(z,w)$,
$$
d(\un{x},\un{y})\leq e(1+2e^2)d_r(z,w)\textrm{ and } \left|\delta\right|\leq (1+2e^2)d_r(z,w).
$$
In particular, $d(\un{x},\un{y})\leq \const d_r(z,w)$, and
$|s-t|=\bigl|r(\un{y})\tfrac{s}{r(\un{y})}-r(\un{x})\tfrac{t}{r(\un{x})}\bigr|\leq \sup(r)|\delta|+|r(\un{y})-r(\un{x})|
\leq (1+2e^2)\sup(r)d_r(z,w)+\Hol_\alpha(r)d(\un{x},\un{y})^\alpha\leq \const d_r(z,w)^\alpha,$
where the last inequality uses our estimate for $d(\un{x},\un{y})$ and the finite diameter of   $d_r$.
This proves part (a) when $\bigl|\frac{t}{r(\un{x})}-\frac{s}{r(\un{y})}\bigr|<\frac{1}{2}$.
If $\frac{t}{r(\un{x})}-\frac{s}{r(\un{y})}=\frac{1}{2}$, repeat the previous argument with $\tau:=0.49-\frac{t}{r(\un{x})}$.

For part (b), suppose $\frac{t}{r(\un{x})}-\frac{s}{r(\un{y})}>\frac{1}{2}$, and let
$\tau:=\frac{r(\un{x})-t}{r(\un{x})}+\frac{1}{2}$. Now $\psi^\tau[\vartheta_r(z)]=(\sigma(\un{x}),\frac{1}{2})$
and $\psi^\tau[\vartheta_r(w)]=(\un{y},\frac{1}{2}+\delta')$, where
$\delta':=1-\bigl(\frac{t}{r(\un{x})}-\frac{s}{r(\un{y})}\bigr)$. As before,
$$
d(\sigma(\un{x}),\un{y})\leq e(1+2e^2)d_r(z,w)\textrm{ and } \left|\delta'\right|\leq (1+2e^2)d_r(z,w).
$$
Using  $s\leq r(\un{y})\delta'$, $|r(\un{x})-t|\leq r(\un{x})\delta'$, we see that $s,|t-r(\un{x})|<(1+2e^2)\sup(r) d_r(z,w)$.

\medskip
\noindent
{\sc Part (3):\/} {\em There are constants $C_3>0, 0<\kappa<1$ which only depend on $r$ s.t.
for all $z,w\in\Sigma_r$ and $|\tau|<1$, $d_r(\sigma_r^{\tau}(z),\sigma_r^{\tau}(w))\leq C_3 d_r(z,w)^\kappa$.}

\medskip
\noindent
{\em Proof.\/}
We will only discuss the case $\tau>0$. The case $\tau<0$ can be handled similarly, or deduced from
the following symmetry: Let $\wh{\Sigma}:=\{\wh{\un{x}}:\un{x}\in\Sigma\}$ where $\wh{x}_i:=x_{-i}$,
and let $\wh{r}(\un{x}):=r(\wh{\sigma\un{x}})$ (a function on $\wh{\Sigma}$). Then
$\Theta(\un{x},t)=(\wh{\sigma\un{x}},r({\un{x}})-t)$ is a bi-Lipschitz map from $\Sigma_r$ to
$\wh{\Sigma}_{\wh{r}}$, and $\Theta\circ\sigma_r^{-\tau}=\sigma_{\wh{r}}^{\tau}\circ\Theta$.
This symmetry reflects the representation of the flow $\sigma_r^{-t}$ with respect to the Poincar\'e
section $\Sigma\x\{0\}$.

We will construct  a constant $C_3'$ s.t. for all $z,w\in\Sigma_r$, if $0<\tau<\frac{1}{2}\inf(r)$, then
$d_r(\sigma_r^{\tau}(z),\sigma_r^{\tau}(w))\leq C_3' d_r(z,w)^\alpha$. Part (3) follows with $\kappa:=\alpha^{N}$,
$C_3:=(C_3')^{\frac{1}{1-\alpha}}$, $N:=\lceil 1/\min\{1,\frac{1}{2}\inf(r)\}\rceil$.
We will also limit ourselves to the case when $C_2 d_r(z,w)<\frac{1}{2}\inf(r)$;
part (3) is trivial when $d_r(z,w)$ is bounded away from zero.

Let $z:=(\un{x},t), w:=(\un{y},s)$. Since $\tau>0$,
$\sigma_r^{\tau}(z)=(\sigma^{m}(\un{x}),\epsilon r(\sigma^{m}(\un{x}))$ and
$\sigma_r^{\tau}(w)=(\sigma^{n}(\un{y}),\eta r(\sigma^{n}(\un{y}))$ where $0\leq \epsilon,\eta<1$
and $m,n\geq 0$. Notice that $m,n\in\{0,1\}$, (because $0<\tau<\frac{1}{2}\inf(r)$, so
$\sigma_r^t(z),\sigma_r^t(w)$ cannot cross $\Sigma\x\{0\}$ twice).

\medskip
\noindent {\bf Case 1:} $\bigl|\frac{t}{r(\un{x})}-\frac{s}{r(\un{y})}\bigr|\leq\frac{1}{2}$ and $m=n$. Then:
\begin{align*}
&\, d_r(\sigma_r^{\tau}(z),\sigma_r^{\tau}(w))=d_1((\sigma^m(\un{x}),\epsilon),(\sigma^m(\un{y}),\eta))\\
& \leq d_1((\sigma^m(\un{x}),\epsilon),(\sigma^m(\un{y}),\epsilon))
+d_1((\sigma^m(\un{y}),\epsilon),(\sigma^m(\un{y}),\eta))\\
& \leq e d(\sigma^m(\un{x}),\sigma^m(\un{y}))+|\epsilon-\eta|\\
&\leq e^2 d(\un{x},\un{y})+|\epsilon-\eta|
\leq e^2 C_2 d_r(z,w)+|\epsilon-\eta|,\textrm{ by part (2)(a).}
\end{align*}
Since $m=n$,
$|\epsilon-\eta|=\bigl|\frac{t+\tau-r_{m}(\un{x})}{r(\sigma^{m}(\un{x}))}-
\frac{s+\tau-r_{m}(\un{y})}{r(\sigma^{m}(\un{y}))}\bigr|\leq
\frac{1}{\inf(r)^2}[I_1+I_2+I_3]$, where:
\begin{enumerate}[$\circ$]
\item $I_1=|tr(\sigma^m(\un{y}))-sr(\sigma^m(\un{x}))|\leq t|r(\sigma^m(\un{x}))-r(\sigma^m(\un{y}))|+
|t-s| r(\sigma^m(\un{x}))\leq\sup(r)[e^\alpha C_2^\alpha\Hol_\alpha(r)+C_2]d_r(z,w)^\alpha$ by part (2)(a).
\item $I_2=\tau|r(\sigma^m(\un{x}))-r(\sigma^m(\un{y}))|\leq e^\alpha C_2^\alpha\inf(r)\Hol_\alpha(r) d_r(z,w)^\alpha$,
because $m\leq 1$.
\item $I_3=|r_m(\un{x})r(\sigma^m(\un{y}))-r_m(\un{y})r(\sigma^m(\un{x}))|\leq |r_m(\un{x})-r_m(\un{y})|r(\sigma^m(\un{y}))+r_m(\un{y})\cdot|r(\sigma^m(\un{x}))-r(\sigma^m(\un{y}))|\leq
\const\Hol_\alpha(r)d_r(z,w)^\alpha$, again because $m\leq 1$.
\end{enumerate}
Thus $|\epsilon-\eta|\le\const d_r(z,w)^\alpha$, where the constant
only depends on $r$. It follows that $d_r(\sigma_r^{\tau}(z),\sigma_r^{\tau}(w))\leq \const d_r(z,w)^\alpha$
where the constant only depends on $r$.

\medskip
\noindent {\bf Case 2:} $\bigl|\frac{t}{r(\un{x})}-\frac{s}{r(\un{y})}\bigr|\leq \frac{1}{2}$ and $m\not=n$.
We can assume that $n=m+1$, thus:
\begin{align*}
&\, d_r(\sigma_r^{\tau}(z),\sigma_r^{\tau}(w))=d_1((\sigma^{m}(\un{x}),\epsilon),(\sigma^{m+1}(\un{y}),\eta))\\
& \leq d_1((\sigma^{m}(\un{x}),\epsilon),(\sigma^m(\un{y}),\epsilon))
+d_1((\sigma^m(\un{y}),\epsilon),(\sigma^{m+1}(\un{y}),\eta))\\
& \leq e d(\sigma^m(\un{x}),\sigma^m(\un{y}))+1-\epsilon+\eta
\leq e^2 C_2 d_r(z,w)+1-\epsilon+\eta,\textrm{ by part (2)(a).}
\end{align*}
In our scenario, $t+\tau-r_{m+1}(\un{x})$ is negative, and $s+\tau-r_{m+1}(\un{y})$ is non-negative.
The distance between these two numbers is bounded by $|t-s|+|r_{m+1}(\un{x})-r_{m+1}(\un{y})|$,
whence by $\const d_r(z,w)^\alpha$. So $|t+\tau-r_{m+1}(\un{x})|,|s+\tau-r_{m+1}(\un{y})|\leq \const d_r(z,w)^\alpha$.
Since $1-\epsilon=\frac{|t+\tau-r_{m+1}(\un{x})|}{r(\sigma^m\un{x})}$,
$\eta=\frac{s+\tau-r_{m+1}(\un{y})}{r(\sigma^{m+1}\un{y})}$, and the denominators are at
least $\inf(r)$, there is a constant which only depends on $r$ s.t. $1-\epsilon,\eta<\const d_r(z,w)^\alpha$.
It follows that $d_r(\sigma_r^{\tau}(z),\sigma_r^{\tau}(w))\leq \const d_r(z,w)^\alpha$.

\medskip
\noindent
{\bf Case 3:} $\frac{t}{r(\un{x})}-\frac{s}{r(\un{y})}>\frac{1}{2}$ and $m=n$. We have:
\begin{align*}
&\,d_r(\sigma_r^\tau(z),\sigma_r^\tau(w))=d_1((\sigma^m(\un{x}),\epsilon),(\sigma^m(\un{y}),\eta))\\
&\leq d_1((\sigma^m(\un{x}),\epsilon),(\sigma^{m+1}(\un{x}),\eta))+d_1((\sigma^{m+1}(\un{x}),\eta),(\sigma^m(\un{y}),\eta))\\
&\leq 1-\epsilon+\eta+e^2 C_2 d_r(z,w),\textrm{ by part (2)(b), and since $m\leq 1$.}
\end{align*}
Because $t+\tau-r_{m+1}(\un{x})<0\leq s+\tau-r_m(\un{y})$, it follows by part (2)(b) that
\begin{align*}
&\, |t+\tau-r_{m+1}(\un{x})|,|s+\tau-r_m(\un{y})|\leq |t-s-r_{m+1}(\un{x})-r_m(\un{y})|\\
&\leq |t-r(\un{x})|+s+|r_m(\sigma(\un{x}))-r_m(\un{y})|\leq 2C_2 d_r(z,w)+\const d_r(z,w)^\alpha.
\end{align*}
As in case 2, this means that $d_r(\sigma_r^{\tau}(z),\sigma_r^{\tau}(w))<\const d_r(z,w)^\alpha$.

\medskip
\noindent
{\bf Case 4:} $\frac{t}{r(\un{x})}-\frac{s}{r(\un{y})}>\frac{1}{2}$ and $m\neq n$.
Recall that $C_2 d_r(z,w),\tau<\frac{1}{2}\inf(r)$. By part (2)(b), $s\leq \frac{1}{2}\inf(r)$,
thus $s+\tau<\inf(r)$. Necessarily $n=0$, $m=1$, $m=n+1$, so:
\begin{align*}
&\, d_r(\sigma_r^\tau(z),\sigma_r^\tau(w))=d_1((\sigma^{n+1}(\un{x}),\epsilon),(\sigma^{n}(\un{y}),\eta))\\
&\leq d_1((\sigma^{n+1}(\un{x}),\epsilon),(\sigma^{n+1}(\un{x}),\eta))+d_1((\sigma^{n+1}(\un{x}),\eta),(\sigma^{n}(\un{y}),\eta))\\
&\leq |\epsilon-\eta|+ed(\sigma(\un{x}),\un{y})\ \ (\because n=0)\\
&\leq |\epsilon-\eta|+eC_2 d_r(z,w),\ \ \textrm{by part (2)(b)}.
\end{align*}
We have
$|\epsilon-\eta|=\bigl|\frac{t+\tau-r(\un{x})}{r(\sigma\un{x})}-\frac{s+\tau}{r(\un{y})}\bigr|\leq \frac{1}{\inf(r)^2}[I_1+I_2]$,
where by part (2)(b):
\begin{enumerate}[$\circ$]
\item $I_1:=|[t-r(\un{x})]r(\un{y})-s r(\sigma\un{x})|\leq 2\sup(r)C_2 d_r(z,w)$.
\item $I_2:=\tau|r(\sigma\un{x})-r(\un{y})|\leq \frac{1}{2}\inf(r)\Hol(r)C_2^\alpha d_r(z,w)^\alpha$.
\end{enumerate}
It follows that $d_r(\sigma_r^\tau(z),\sigma_r^\tau(w))\leq \const d_r(z,w)^\alpha$
where the constant only depends on $r$. This completes the proof of part (3).\hfill$\Box$

\section{Acknowledgements}
The authors  thank Anatole Katok, Fran\c{c}ois Ledrappier and Federico Rodriguez-Hertz for  useful and inspiring  discussions. Special thanks  go to  Dmitry Dolgopyat for showing us a proof that weak mixing $\Rightarrow$ mixing in Theorem \ref{theorem dichotomy for suspension},   Giovanni Forni for suggesting the  application to Reeb flows, and  Edriss Titi for giving us an elementary proof that $C^{1+\epsilon}$ vector fields generate flows with  $C^{1+\epsilon}$ time $t$ maps.

\bibliographystyle{alpha}
\bibliography{Flow9}{}

\def\cprime{$'$} \def\cprime{$'$} \def\cprime{$'$}
\begin{thebibliography}{KSLP86}

\bibitem[ADU93]{Aaronson-Denker-Urbanski}
Jon Aaronson, Manfred Denker, and Mariusz Urba{\'n}ski.
\newblock Ergodic theory for {M}arkov fibred systems and parabolic rational
  maps.
\newblock {\em Trans. Amer. Math. Soc.}, 337(2):495--548, 1993.

\bibitem[AMR88]{Abraham-Marsden-Ratiu}
R.~Abraham, J.~E. Marsden, and T.~Ratiu.
\newblock {\em Manifolds, tensor analysis, and applications}, volume~75 of {\em
  Applied Mathematical Sciences}.
\newblock Springer-Verlag, New York, second edition, 1988.

\bibitem[AR62]{Abramov-Rokhlin}
L.~M. Abramov and V.~A. Rohlin.
\newblock Entropy of a skew product of mappings with invariant measure.
\newblock {\em Vestnik Leningrad. Univ.}, 17(7):5--13, 1962.

\bibitem[AW67]{Adler-Weiss-PNAS}
R.~L. Adler and B.~Weiss.
\newblock Entropy, a complete metric invariant for automorphisms of the torus.
\newblock {\em Proc. Nat. Acad. Sci. U.S.A.}, 57:1573--1576, 1967.

\bibitem[AW70]{Adler-Weiss-Similarity-Toral-Automorphisms}
Roy~L. Adler and Benjamin Weiss.
\newblock {\em Similarity of automorphisms of the torus}.
\newblock Memoirs of the American Mathematical Society, No. 98. American
  Mathematical Society, Providence, R.I., 1970.

\bibitem[BG89]{Burns-Gerber}
Keith Burns and Marlies Gerber.
\newblock Real analytic {B}ernoulli geodesic flows on {$S^2$}.
\newblock {\em Ergodic Theory Dynam. Systems}, 9(1):27--45, 1989.

\bibitem[Bow70]{Bowen-MP-Axiom-A}
Rufus Bowen.
\newblock Markov partitions for {A}xiom {${\rm A}$} diffeomorphisms.
\newblock {\em Amer. J. Math.}, 92:725--747, 1970.

\bibitem[Bow73]{Bowen-Symbolic-Flows}
Rufus Bowen.
\newblock Symbolic dynamics for hyperbolic flows.
\newblock {\em Amer. J. Math.}, 95:429--460, 1973.

\bibitem[Bow75]{Bowen-LNM}
Rufus Bowen.
\newblock {\em Equilibrium states and the ergodic theory of {A}nosov
  diffeomorphisms}.
\newblock Lecture Notes in Mathematics, Vol. 470. Springer-Verlag, Berlin,
  1975.

\bibitem[Bow78]{Bowen-Regional-Conference}
Rufus Bowen.
\newblock {\em On {A}xiom {A} diffeomorphisms}.
\newblock American Mathematical Society, Providence, R.I., 1978.
\newblock Regional Conference Series in Mathematics, No. 35.

\bibitem[BP07]{Barreira-Pesin-Non-Uniform-Hyperbolicity-Book}
Luis Barreira and Yakov Pesin.
\newblock {\em Nonuniform hyperbolicity}, volume 115 of {\em Encyclopedia of
  Mathematics and its Applications}.
\newblock Cambridge University Press, Cambridge, 2007.
\newblock Dynamics of systems with nonzero Lyapunov exponents.

\bibitem[BR75]{Bowen-Ruelle-SRB}
Rufus Bowen and David Ruelle.
\newblock The ergodic theory of {A}xiom {A} flows.
\newblock {\em Invent. Math.}, 29(3):181--202, 1975.

\bibitem[BS03]{Buzzi-Sarig}
J{\'e}r{\^o}me Buzzi and Omri Sarig.
\newblock Uniqueness of equilibrium measures for countable {M}arkov shifts and
  multidimensional piecewise expanding maps.
\newblock {\em Ergodic Theory Dynam. Systems}, 23(5):1383--1400, 2003.

\bibitem[BW72]{Bowen-Walters-Metric}
Rufus Bowen and Peter Walters.
\newblock Expansive one-parameter flows.
\newblock {\em J. Diff. Equations}, 12:180--193, 1972.

\bibitem[CBP02]{Contreras-Paternain}
Gonzalo Contreras-Barandiar{\'a}n and Gabriel~P. Paternain.
\newblock Genericity of geodesic flows with positive topological entropy on
  {$S^2$}.
\newblock {\em J. Differential Geom.}, 61(1):1--49, 2002.

\bibitem[Don88]{Donnay}
Victor~J. Donnay.
\newblock Geodesic flow on the two-sphere. {I}. {P}ositive measure entropy.
\newblock {\em Ergodic Theory Dynam. Systems}, 8(4):531--553, 1988.

\bibitem[EM70]{Ebin-Marsden}
David~G. Ebin and Jerrold Marsden.
\newblock Groups of diffeomorphisms and the motion of an incompressible fluid.
\newblock {\em Ann. of Math. (2)}, 92:102--163, 1970.

\bibitem[Gur69]{Gurevich-Topological-Entropy}
B.~M. Gurevi{\v{c}}.
\newblock Topological entropy of a countable {M}arkov chain.
\newblock {\em Dokl. Akad. Nauk SSSR}, 187:715--718, 1969.

\bibitem[Gur70]{Gurevich-Measures-Of-Maximal-Entropy}
B.~M. Gurevi{\v{c}}.
\newblock Shift entropy and {M}arkov measures in the space of paths of a
  countable graph.
\newblock {\em Dokl. Akad. Nauk SSSR}, 192:963--965, 1970.

\bibitem[Hub59]{Huber-Closed-Geodesics}
Heinz Huber.
\newblock Zur analytischen {T}heorie hyperbolischen {R}aumformen und
  {B}ewegungsgruppen.
\newblock {\em Math. Ann.}, 138:1--26, 1959.

\bibitem[Hut10]{Hutchings}
Michael Hutchings.
\newblock Taubes's proof of the {W}einstein conjecture in dimension three.
\newblock {\em Bull. Amer. Math. Soc. (N.S.)}, 47(1):73--125, 2010.

\bibitem[Kal00]{Kaloshin-Super-Exponential}
Vadim~Yu. Kaloshin.
\newblock Generic diffeomorphisms with superexponential growth of number of
  periodic orbits.
\newblock {\em Comm. Math. Phys.}, 211(1):253--271, 2000.

\bibitem[Kat80]{KatokIHES}
A.~Katok.
\newblock Lyapunov exponents, entropy and periodic orbits for diffeomorphisms.
\newblock {\em Inst. Hautes \'Etudes Sci. Publ. Math.}, (51):137--173, 1980.

\bibitem[Kat82]{Katok-Closed-Geodesics}
A.~Katok.
\newblock Entropy and closed geodesics.
\newblock {\em Ergodic Theory Dynam. Systems}, 2(3-4):339--365 (1983), 1982.

\bibitem[KH95]{Katok-Hasselblatt-Book}
Anatole Katok and Boris Hasselblatt.
\newblock {\em Introduction to the modern theory of dynamical systems},
  volume~54 of {\em Encyclopedia of Mathematics and its Applications}.
\newblock Cambridge University Press, Cambridge, 1995.
\newblock With a supplementary chapter by Katok and Leonardo Mendoza.

\bibitem[Kni97]{Knieper-Closed-Orbits}
G.~Knieper.
\newblock On the asymptotic geometry of nonpositively curved manifolds.
\newblock {\em Geom. Funct. Anal.}, 7(4):755--782, 1997.

\bibitem[Kni02]{Knieper-Handbook-Chapter}
Gerhard Knieper.
\newblock Hyperbolic dynamics and {R}iemannian geometry.
\newblock In {\em Handbook of dynamical systems, {V}ol.\ 1{A}}, pages 453--545.
  North-Holland, Amsterdam, 2002.

\bibitem[KSLP86]{Katok-Strelcyn}
Anatole Katok, Jean-Marie Strelcyn, F.~Ledrappier, and F.~Przytycki.
\newblock {\em Invariant manifolds, entropy and billiards; smooth maps with
  singularities}, volume 1222 of {\em Lecture Notes in Mathematics}.
\newblock Springer-Verlag, Berlin, 1986.

\bibitem[KU07]{Katok-Ugarcovici-Symbolic}
Svetlana Katok and Ilie Ugarcovici.
\newblock Symbolic dynamics for the modular surface and beyond.
\newblock {\em Bull. Amer. Math. Soc. (N.S.)}, 44(1):87--132, 2007.

\bibitem[KW02]{Knieper-Weiss-Generic-Positive-entropy}
Gerhard Knieper and Howard Weiss.
\newblock {$C^\infty$} genericity of positive topological entropy for geodesic
  flows on {$S^2$}.
\newblock {\em J. Differential Geom.}, 62(1):127--141, 2002.

\bibitem[LLS16]{Ledrappier-Lima-Sarig}
Fran{\c{c}}ois Ledrappier, Yuri Lima, and Omri Sarig.
\newblock Ergodic properties of equilibrium measures for smooth three
  dimensional flows.
\newblock {\em Comment. Math. Helv.}, 91(1):65--106, 2016.

\bibitem[LS82]{Ledrappier-Strelcyn}
Fran{\c{c}}ois Ledrappier and Jean-Marie Strelcyn.
\newblock A proof of the estimation from below in {P}esin's entropy formula.
\newblock {\em Ergodic Theory Dynam. Systems}, 2(2):203--219 (1983), 1982.

\bibitem[Mar69]{Margulis-Closed-Orbits}
G.~A. Margulis.
\newblock Certain applications of ergodic theory to the investigation of
  manifolds of negative curvature.
\newblock {\em Funkcional. Anal. i Prilo\v zen.}, 3(4):89--90, 1969.

\bibitem[MS11]{Macarini-2011}
Leonardo Macarini and Felix Schlenk.
\newblock Positive topological entropy of {R}eeb flows on spherizations.
\newblock {\em Math. Proc. Cambridge Philos. Soc.}, 151(1):103--128, 2011.

\bibitem[New89]{Newhouse-Entropy}
Sheldon~E. Newhouse.
\newblock Continuity properties of entropy.
\newblock {\em Ann. of Math. (2)}, 129(2):215--235, 1989.

\bibitem[Pes76]{Pesin-Izvestia-1976}
Ja.~B. Pesin.
\newblock Families of invariant manifolds that correspond to nonzero
  characteristic exponents.
\newblock {\em Izv. Akad. Nauk SSSR Ser. Mat.}, 40(6):1332--1379, 1440, 1976.

\bibitem[Pla72]{Plante-Anosov-Flows-Dichotomy}
Joseph~F. Plante.
\newblock Anosov flows.
\newblock {\em Amer. J. Math.}, 94:729--754, 1972.

\bibitem[PP83]{Parry-Pollicott-PNT}
William Parry and Mark Pollicott.
\newblock An analogue of the prime number theorem for closed orbits of {A}xiom
  {A} flows.
\newblock {\em Ann. of Math. (2)}, 118(3):573--591, 1983.

\bibitem[PP90]{Parry-Pollicott-Asterisque}
William Parry and Mark Pollicott.
\newblock Zeta functions and the periodic orbit structure of hyperbolic
  dynamics.
\newblock {\em Ast\'erisque}, (187-188):268, 1990.

\bibitem[PS98]{Pollicott-Sharp-PNT-li-Paper}
Mark Pollicott and Richard Sharp.
\newblock Exponential error terms for growth functions on negatively curved
  surfaces.
\newblock {\em Amer. J. Math.}, 120(5):1019--1042, 1998.

\bibitem[PS01]{Pollicott-Sharp-Error-Term}
Mark Pollicott and Richard Sharp.
\newblock Error terms for closed orbits of hyperbolic flows.
\newblock {\em Ergodic Theory Dynam. Systems}, 21(2):545--562, 2001.

\bibitem[Rat69]{Ratner-MP-three-dimensions}
M.~E. Ratner.
\newblock Markov decomposition for an {U}-flow on a three-dimensional manifold.
\newblock {\em Mat. Zametki}, 6:693--704, 1969.

\bibitem[Rat73]{Ratner-MP-n-dimensions}
M.~Ratner.
\newblock Markov partitions for {A}nosov flows on {$n$}-dimensional manifolds.
\newblock {\em Israel J. Math.}, 15:92--114, 1973.

\bibitem[Rat74]{Ratner-Flows-Bernoulli}
M.~Ratner.
\newblock Anosov flows with {G}ibbs measures are also {B}ernoullian.
\newblock {\em Israel J. Math.}, 17:380--391, 1974.

\bibitem[Rat78]{Ratner-Flows-K}
M.~Ratner.
\newblock Bernoulli flow over maps of the interval.
\newblock {\em Israel J. Math.}, 31(3-4):298--314, 1978.

\bibitem[Rue78]{Ruelle-Entropy-Inequality}
David Ruelle.
\newblock An inequality for the entropy of differentiable maps.
\newblock {\em Bol. Soc. Brasil. Mat.}, 9(1):83--87, 1978.

\bibitem[Sar13]{Sarig-JAMS}
Omri~M. Sarig.
\newblock Symbolic dynamics for surface diffeomorphisms with positive entropy.
\newblock {\em J. Amer. Math. Soc.}, 26(2):341--426, 2013.

\bibitem[Ser81]{Series-Symbolic-Acta}
Caroline Series.
\newblock Symbolic dynamics for geodesic flows.
\newblock {\em Acta Math.}, 146(1-2):103--128, 1981.

\bibitem[Ser87]{Series-ICM}
Caroline Series.
\newblock Symbolic dynamics for geodesic flows.
\newblock In {\em Proceedings of the {I}nternational {C}ongress of
  {M}athematicians, {V}ol. 1, 2 ({B}erkeley, {C}alif., 1986)}, pages
  1210--1215, Providence, RI, 1987. Amer. Math. Soc.

\bibitem[Ser91]{Series-Green-Book}
Caroline Series.
\newblock Geometrical methods of symbolic coding.
\newblock In {\em Ergodic theory, symbolic dynamics, and hyperbolic spaces
  ({T}rieste, 1989)}, Oxford Sci. Publ., pages 125--151. Oxford Univ. Press,
  New York, 1991.

\bibitem[Sin66]{Sinai-Closed-Geodesics}
Ja.~G. Sina{\u\i}.
\newblock Asymptotic behavior of closed geodesics on compact manifolds with
  negative curvature.
\newblock {\em Izv. Akad. Nauk SSSR Ser. Mat.}, 30:1275--1296, 1966.

\bibitem[Sin68a]{Sinai-Construction-of-MP}
Ja.~G. Sina{\u\i}.
\newblock Construction of {M}arkov partitionings.
\newblock {\em Funkcional. Anal. i Prilo{\v z}en.}, 2(3):70--80 (Loose errata),
  1968.

\bibitem[Sin68b]{Sinai-MP-U-diffeomorphisms}
Ja.~G. Sina{\u\i}.
\newblock Markov partitions and {U}-diffeomorphisms.
\newblock {\em Funkcional. Anal. i Prilo{\v z}en}, 2(1):64--89, 1968.

\bibitem[Sma74]{Smart}
D.~R. Smart.
\newblock {\em Fixed point theorems}.
\newblock Cambridge University Press, London, 1974.
\newblock Cambridge Tracts in Mathematics, No. 66.

\bibitem[Spi79]{Spivak}
Michael Spivak.
\newblock {\em A comprehensive introduction to differential geometry. {V}ol.
  {I}}.
\newblock Publish or Perish Inc., Wilmington, Del., second edition, 1979.

\bibitem[Sri98]{Srivastva-Borel-Sets-Book}
S.~M. Srivastava.
\newblock {\em A course on {B}orel sets}, volume 180 of {\em Graduate Texts in
  Mathematics}.
\newblock Springer-Verlag, New York, 1998.

\bibitem[Wal82]{Walters-Book}
Peter Walters.
\newblock {\em An introduction to ergodic theory}, volume~79 of {\em Graduate
  Texts in Mathematics}.
\newblock Springer-Verlag, New York, 1982.

\bibitem[Wea14]{Weaver}
Bryce Weaver.
\newblock Growth rate of periodic orbits for geodesic flows over surfaces with
  radially symmetric focusing caps.
\newblock {\em J. Mod. Dyn.}, 8(2):139--176, 2014.

\end{thebibliography}

\end{document}